\setlist{itemsep=4pt, topsep=4pt}
\def\chapter{%
  \if@openright\cleardoublepage\else\clearpage\fi
  \thispagestyle{plain}\global\@topnum\z@
  \@afterindenttrue \secdef\@chapter\@schapter}
\def\@chapter[#1]#2{\refstepcounter{chapter}%
  \ifnum\c@secnumdepth<\z@ \let\@secnumber\@empty
  \else \let\@secnumber\thechapter \fi
  \typeout{\chaptername\space\@secnumber}%
  \def\@toclevel{0}%
  \ifx\chaptername\appendixname \@tocwriteb\tocappendix{chapter}{#2}%
  \else \@tocwriteb\tocchapter{chapter}{#2}\fi
  \chaptermark{#1}%
  \addtocontents{lof}{\protect\addvspace{10\p@}}%
  \addtocontents{lot}{\protect\addvspace{10\p@}}%
  \@makechapterhead{#2}\@afterheading}
\def\@schapter#1{\typeout{#1}%
  \let\@secnumber\@empty
  \def\@toclevel{0}%
  \ifx\chaptername\appendixname \@tocwriteb\tocappendix{chapter}{#1}%
  \else \@tocwriteb\tocchapter{chapter}{#1}\fi
  \chaptermark{#1}%
  \addtocontents{lof}{\protect\addvspace{10\p@}}%
  \addtocontents{lot}{\protect\addvspace{10\p@}}%
  \@makeschapterhead{#1}\@afterheading}
\newcommand\chaptername{Chapter}
\def\@makechapterhead#1{\global\topskip 7.5pc\relax
  \begingroup
  \fontsize{\@xivpt}{18}\bfseries\centering
    \ifnum\c@secnumdepth>\m@ne
      \leavevmode \hskip-\leftskip
      \rlap{\vbox to\z@{\vss
          \centerline{\normalsize\mdseries
              \uppercase\@xp{\chaptername}\enspace\thechapter}
          \vskip 3pc}}\hskip\leftskip\fi
     #1\par \endgroup
  \skip@34\p@ \advance\skip@-\normalbaselineskip
  \vskip\skip@ }
\def\@makeschapterhead#1{\global\topskip 7.5pc\relax
  \begingroup
  \fontsize{\@xivpt}{18}\bfseries\centering
  #1\par \endgroup
  \skip@34\p@ \advance\skip@-\normalbaselineskip
  \vskip\skip@ }
\def\appendix{\par
  \c@chapter\z@ \c@section\z@
  \let\chaptername\appendixname
  \def\thechapter{\@Alph\c@chapter}}
\newcounter{chapter}
\newif\if@openright
\def\@cite#1#2{{\m@th\upshape\bfseries%
[{#1\if@tempswa{\m@th\upshape\mdseries, #2}\fi}]}}
\theoremstyle{plain}
\newtheorem{thm}{Theorem}[section]
\newtheorem{cor}[thm]{Corollary}
\newtheorem{ass}[thm]{Standing Assumption}
\newtheorem{prop}[thm]{Proposition}
\newtheorem{lem}[thm]{Lemma}
\newtheorem{sublem}[thm]{Sublemma}
\theoremstyle{definition}
\newtheorem{defn}[thm]{Definition}
\newtheorem{war}[thm]{Warning}
\newtheorem{ex}[thm]{Example}
\newtheorem{quest}[thm]{Question}
\newtheorem{conj}[thm]{Conjecture}
\theoremstyle{remark}
\newtheorem{rem}[thm]{Remark}
\numberwithin{equation}{subsection}
\renewcommand{\bold}[1]{\medskip \noindent {\bf #1 }\nopagebreak}
\newcommand{\nc}{\newcommand}
\newcommand{\rnc}{\renewcommand}
\newcommand{\e}{\varepsilon}
\nc\bA{\mathbb{A}}
\nc\bB{\mathbb{B}}
\nc\bC{\mathbb{C}}
\nc\bD{\mathbb{D}}
\nc\bE{\mathbb{E}}
\nc\bF{\mathbb{F}}
\nc\bG{\mathbb{G}}
\nc\bH{\mathbb{H}}
\nc\bI{\mathbb{I}}
\nc{\bJ}{\mathbb{J}} 
\nc\bK{\mathbb{K}}
\nc\bL{\mathbb{L}}
\nc\bM{\mathbb{M}}
\nc\bN{\mathbb{N}}
\nc\bO{\mathbb{O}}
\nc\bP{\mathbb{P}}
\nc\bQ{\mathbb{Q}}
\nc\bR{\mathbb{R}}
\nc\bS{\mathbb{S}}
\nc\bT{\mathbb{T}}
\nc\bU{\mathbb{U}}
\nc\bV{\mathbb{V}}
\nc\bW{\mathbb{W}}
\nc\bY{\mathbb{Y}}
\nc\bX{\mathbb{X}}
\nc\bZ{\mathbb{Z}}
\nc\cA{\mathcal{A}}
\nc\cB{\mathcal{B}}
\nc\cC{\mathcal{C}}
\rnc\cD{\mathcal{D}}
\nc\cE{\mathcal{E}}
\nc\cF{\mathcal{F}}
\nc\cG{\mathcal{G}}
\rnc\cH{\mathcal{H}}
\nc\cI{\mathcal{I}}
\nc{\cJ}{\mathcal{J}} 
\nc\cK{\mathcal{K}}
\rnc\cL{\mathcal{L}}
\nc\cM{\mathcal{M}}
\nc\cN{\mathcal{N}}
\nc\cO{\mathcal{O}}
\nc\cP{\mathcal{P}}
\nc\cQ{\mathcal{Q}}
\rnc\cR{\mathcal{R}}
\nc\cS{\mathcal{S}}
\nc\cT{\mathcal{T}}
\nc\cU{\mathcal{U}}
\nc\cV{\mathcal{V}}
\nc\cW{\mathcal{W}}
\nc\cY{\mathcal{Y}}
\nc\cX{\mathcal{X}}
\nc\cZ{\mathcal{Z}}
\nc\bfA{\mathbf{A}}
\nc\bfB{\mathbf{B}}
\nc\bfC{\mathbf{C}}
\nc\bfD{\mathbf{D}}
\nc\bfE{\mathbf{E}}
\nc\bfF{\mathbf{F}}
\nc\bfG{\mathbf{G}}
\nc\bfH{\mathbf{H}}
\nc\bfI{\mathbf{I}}
\nc{\bfJ}{\mathbf{J}} 
\nc\bfK{\mathbf{K}}
\nc\bfL{\mathbf{L}}
\nc\bfM{\mathbf{M}}
\nc\bfN{\mathbf{N}}
\nc\bfO{\mathbf{O}}
\nc\bfP{\mathbf{P}}
\nc\bfQ{\mathbf{Q}}
\nc\bfR{\mathbf{R}}
\nc\bfS{\mathbf{S}}
\nc\bfT{\mathbf{T}}
\nc\bfU{\mathbf{U}}
\nc\bfV{\mathbf{V}}
\nc\bfW{\mathbf{W}}
\nc\bfY{\mathbf{Y}}
\nc\bfX{\mathbf{X}}
\nc\bfZ{\mathbf{Z}}
\newcommand{\bk}{{\mathbf{k}}}
\nc{\dmo}{\DeclareMathOperator}
\nc{\wt}{\widetilde}
\rnc{\Re}{\operatorname{Re}}
\rnc{\Im}{\operatorname{Im}}
\rnc{\span}{\operatorname{span}}
\dmo{\rank}{rank}
\dmo{\End}{End}
\dmo{\Hom}{Hom}
\dmo{\Jac}{Jac}
\dmo{\Id}{Id}
\dmo{\Ann}{Ann}
\dmo{\Area}{Area}
\dmo{\CP}{\bC P^1}
\dmo{\rk}{rk}
\dmo{\rel}{rel}
\dmo{\ra}{\rightarrow}
\dmo{\TwistC}{\mathrm{Twist}(\bfC, \cM)}
\dmo{\Twist}{\mathrm{Twist}}
\dmo{\Star}{Star}
\dmo{\Sch}{Sch}
\rnc{\Col}{\operatorname{Col}}
\nc{\Cold}{\operatorname{Col}^{doub}}
\nc{\ColdB}{\operatorname{Col}^{doub}_{\bfB}}
\nc{\what}{\widehat}
\nc{\whatc}[1]{\widehat{#1}^c}
\nc{\ColOne}{\Col_{\bfC_1}}
\nc{\ColOneX}{\ColOne(X,\omega)}
\nc{\ColdX}{\operatorname{Col}^{doub}\left( X, \omega \right)}
\nc{\ColTwo}{\Col_{\bfC_2}}
\nc{\ColTwoX}{\ColTwo(X,\omega)}
\nc{\ColOneTwo}{\Col_{\bfC_1, \bfC_2}}
\nc{\ColOneTwoX}{\ColOneTwo(X,\omega)}
\nc{\MOne}{\cM_{\bfC_1}}
\nc{\MTwo}{\cM_{\bfC_2}}
\nc{\MOneTwo}{\cM_{\bfC_1, \bfC_2}}
\dmo{\For}{\cF}
\nc{\GL}{\mathrm{GL}^+(2, \bR)}
\title{High rank  invariant subvarieties}
\author[Apisa]{Paul~Apisa}
\author[Wright]{Alex~Wright}
\dedicatory{Dedicated to Maryam Mirzakhani, \\ who contributed to key ideas in this project, \\ and whose vision continues to inspire us}
\begin{document}
\maketitle
\thispagestyle{empty}


\setcounter{tocdepth}{1} 
\tableofcontents



\section{Introduction}

\subsection{Main result.} Eskin, Mirzakhani, and Mohammadi showed that $GL^{+}(2, \bR)$ orbit closures of translation surfaces are properly immersed smooth sub-orbifolds, and Filip showed they are moreover algebraic varieties \cite{EM, EMM, Fi1}. Conversely, all irreducible closed $GL^{+}(2,\bR)$-invariant subvarieties of strata of translation surfaces, or invariant subvarieties for short, are $GL^{+}(2,\bR)$ orbit closures. 

Despite these strong structure theorems, as well as a great deal of older work preceding them and newer work building upon them, it remains a major open problem to classify invariant subvarieties. In this paper we give a significant portion of such a classification. 

\begin{thm}\label{T:main}
Let $\cM$ be an invariant subvariety of genus $g$ translation surfaces with $\rank(\cM)\geq \frac{g}2+1$. Then $\cM$ is either a connected component of a stratum, or the locus of all holonomy double covers of surfaces in a stratum of quadratic differentials.  
\end{thm}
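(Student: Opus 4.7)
The plan is induction on a complexity ordering of the ambient stratum, say lexicographic in $(g,n)$ with $n$ the number of zeros. In the base cases the hypothesis $\rank(\cM) \ge g/2+1$ is close to or at the rank of the full stratum, and dimension counts combined with the Eskin-Mirzakhani-Mohammadi and Filip structure theorems rule out any invariant subvariety other than a stratum component or a holonomy double cover locus of a stratum of quadratic differentials.

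For the inductive step, I would analyze degenerations of $\cM$ using the Mirzakhani-Wright description of boundary components of invariant subvarieties: collapsing an equivalence class of parallel cylinders produces an invariant subvariety $\cM'$ in a boundary stratum of smaller genus $g'$ and/or fewer zeros. The quantitative heart of the argument is a rank-preservation inequality, namely that for a suitable choice of cylinder equivalence class to collapse, $\rank(\cM') \ge g'/2 + 1$ continues to hold on the boundary. The specific slack $+1$ in the hypothesis is chosen precisely so that a well-chosen collapse --- for instance, one whose core curves span a non-separating symplectic pair reducing genus by $2$ while reducing rank by at most $1$ --- reproduces the inequality downstairs. Iterating, each boundary component $\cM'$ is then inductively either a stratum component or a holonomy double cover locus.

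With boundary components pinned down, Wright's cylinder deformation theorem places large real spans of cylinder classes inside the tangent space of $\cM$. Working with surfaces of $\cM$ close to the boundary, the known tangent space of $\cM'$ lifts to the boundary stratum and is supplemented by cylinder deformations along the collapsing direction; a tangent-dimension count then shows these already span the full tangent space of either the stratum component or of the double cover locus containing $\cM$, forcing equality. The dichotomy in the conclusion is distinguished by whether generic surfaces in $\cM$ admit a holomorphic involution with quadratic-differential quotient, equivalently by whether the tangent space to $\cM$ is invariant under the relevant sign involution on absolute cohomology. The step I expect to be the main obstacle is the inductive rank inequality together with the existence of ``good'' cylinder equivalence classes to collapse in every direction on every surface in a high-rank $\cM$, which demands a detailed analysis of cylinder configurations and of when collapsing is rank-efficient; a secondary challenge is reconciling involution structures when different boundary degenerations return different answers (stratum versus double cover), in order to piece together a globally consistent picture of $\cM$.
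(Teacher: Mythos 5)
Your high-level strategy (induct via cylinder degenerations, propagate the high-rank inequality to the boundary, then pull the classification back to $\cM$) matches the skeleton of the paper, but the step where you pull the answer back from the boundary to $\cM$ has a genuine gap. Knowing that a boundary component $\cM'$ is a full stratum component or a quadratic double does \emph{not} let you conclude the same for $\cM$ by a tangent-dimension count: near the boundary, the tangent space of $\cM$ that you can account for is only $T(\cM')$ plus the one or two collapsing directions, i.e.\ $\dim\cM = \dim\cM' + 1$ or $+2$, whereas the ambient stratum (or double-cover locus) containing $\cM$ can have much larger dimension. So "the lifted tangent space plus cylinder deformations span everything" simply fails in general; a proper subvariety can perfectly well degenerate onto a full stratum component. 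The paper circumvents this by never trying to deduce "$\cM$ is a stratum" from "$\partial\cM$ is a stratum." Instead it works with the \emph{geminal} property (every cylinder is free or has a twin), imported from the companion paper: a high-rank geminal subvariety is a stratum component or quadratic double (Theorem \ref{T:Geminal}), failure of geminality is certified by a concrete cylinder configuration, and the whole argument is a minimal-counterexample contradiction in which that certificate is arranged to \emph{persist} on the degeneration. Without some such transportable local certificate, your induction does not close.

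Two further points where the proposal underestimates the difficulties. First, the rank--genus bookkeeping: a generic single cylinder collapse that drops rank by $1$ typically drops genus by only $1$, which destroys the inequality $\rank \ge g/2+1$ downstairs; your remark about collapsing a "symplectic pair" gestures at the fix but the actual mechanism is the \emph{double degeneration} (collapse an equivalence class, prove via the rank-reducing case of the Cylinder Degeneration Dichotomy that the resulting saddle-connection graph is rel-scalable, then contract it), and proving that suitable classes exist and that the result loses two genus occupies Sections \ref{S:degens}--\ref{S:Goldilocks}. Second, degenerations can disconnect the surface and create marked points; the paper needs the primality theory of Chen--Wright plus a high-rank-specific connectivity lemma (Lemma \ref{L:ConnectedSurfacesinBoundary}) and a separate analysis of free marked points to keep the induction inside the class of objects it applies to. Your proposal is silent on both, and each is load-bearing.
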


%
%


Recall that the tangent space $T_{(X,\omega)}(\cM)$ to $\cM$ at a point $(X,\omega)$ is a subspace  of the relative cohomology group $H^1(X, \Sigma, \bC)$, where $\Sigma$ is the set of zeros of $\omega$. There is a natural map $p:H^1(X, \Sigma, \bC) \to H^1(X, \bC)$ from relative to absolute cohomology, and rank is defined by 
$$\rank(\cM) = \frac12 \dim p(T_{(X,\omega)}(\cM)).$$
This is a positive integer that is at most the genus $g$. We say that $\cM$ has \emph{high rank} if $\rank(\cM)\geq \frac{g}2+1$.

Rank is the most important notion of the size of an invariant subvariety. Even when an orbit closure is not explicitly known, its rank can often be bounded below either by finding cylinders \cite[Theorem 1.10]{Wcyl} or using analytic techniques \cite[Section 7]{MirWri2}. Rank also plays a central role in algebro-geometric descriptions of orbit closures \cite{Fi1}.  

In Theorem \ref{T:main} it is implicit that the surfaces do not have marked points, but a version with marked points follows immediately from known results, as in the proof of Theorem \ref{T:AAW} below.

In Corollary \ref{C:easy}, we will see that the conclusion of Theorem \ref{T:main} also holds if $\cM\subset \cH(2g-2)$ and $\rank(\cM)\geq \frac{g}2+\frac12$, which is a slight improvement in odd genus. 

\subsection{Context} 
Prior to the work of Eskin, Mirzakhani, and Mohammadi, McMullen classified orbit closures in genus 2 \cite{Mc5}; see also \cite{Mc, Ca}. More recently, orbit closures of full rank ($\rank(\cM)=g$) were classified by Mirzakhani and Wright \cite{MirWri2}; and orbit closures of rank at least 2 were classified in genus 3  primarily  by Aulicino and Nguyen \cite{NW, ANW, AN, aulicino2016rank}, and in hyperelliptic connected components of strata by Apisa \cite{Apisa2}. 

New orbit closures of rank 2 were recently constructed by Mukamel, McMullen and  Wright in \cite{MMW}, and, with Eskin, in \cite{EMMW}. 

See also \cite{Apisa:Rank1, Ygouf} for recent progress in rank 1 and \cite{EFW, BHM, LNW} for recent finiteness results.

\subsection{Hopes for a magic table.} To solve many dynamical and geometric problems about translations surfaces, including questions arising in the study of rational billiards, interval exchange transformations, and other applications, it is necessary to first know the orbit closure of the translation surface. Indeed, the orbit closure can be viewed as the arena for re-normalization, and the answers to many questions depend quantitatively on its geometry. 

Zorich wrote that his most ``optimistic hopes" for the study of translation surfaces were that a ``magic table" could be created, containing a list of orbit closures and their numerical invariants \cite[Section 9.3]{Z}.
Classification results like Theorem \ref{T:main} bring us closer to the complete realization of these hopes. 

\subsection{Mirzakhani's conjecture} 
Mirzakhani conjectured a classification of orbit closures of rank at least 2; see \cite[Remark 1.3]{ApisaWrightDiamonds} for a more detailed discussion. The new orbit closures constructed in \cite{MMW, EMMW}  disprove her conjecture, but we view Theorem \ref{T:main} as substantiating the vision behind it.

Mirzakhani participated in our early efforts to prove Theorem \ref{T:main}, and we are very grateful for the insights she contributed, especially in Sections \ref{S:degens} and \ref{S:DoubleDegen}, which we have marked as containing joint work with her. 

\subsection{Three key difficulties} 
Our approach to Theorem \ref{T:main} is inductive, with genus 2 as the base case. Using cylinder deformations, we can pass from a surface in $\cM$ to a surface  in a component $\cM'$ in the boundary of $\cM$. The first key difficulty is that it is not always possible to arrange for such codimension 1 degenerations $\cM'$ to have high rank. 

For this reason, we develop a notion of ``double degenerations". Given a collection of cylinders on a surface in $\cM$, in many cases this gives a degeneration of the surface in a codimension 2 component of the boundary of $\cM$. See Figure \ref{F:DDexample} for one of the most concrete examples when $\cM$ is a stratum.
\begin{figure}[h]\centering
\includegraphics[width=\linewidth]{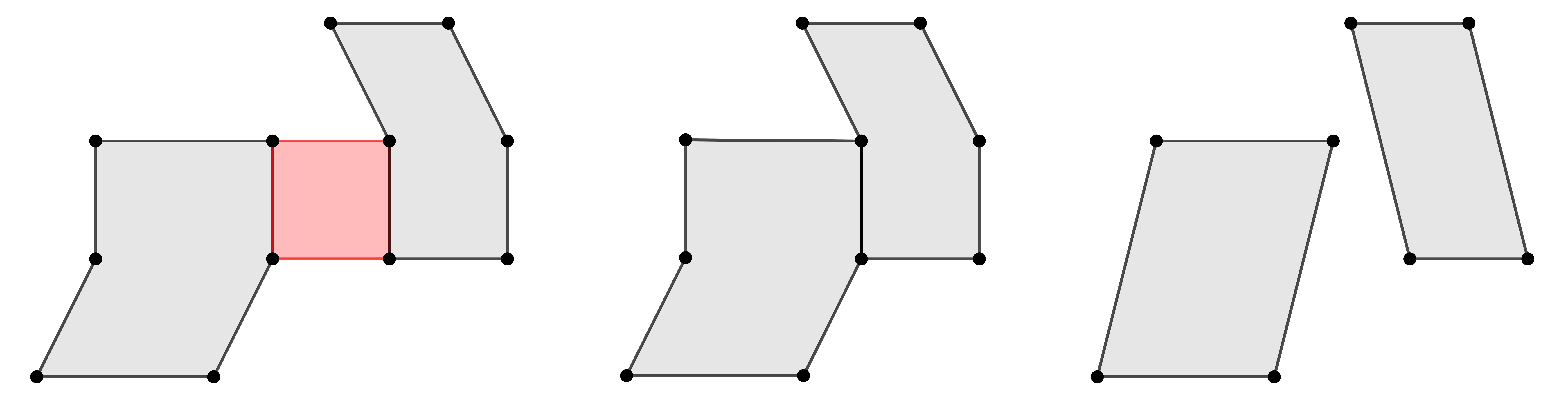}
\caption{From left to right: A surface in $\cH(4)$, with a vertical cylinder highlighted; the single degeneration of this cylinder is a surface in $\cH(1,1)$; and the double degeneration is a two component surface in $\cH(0)\times \cH(0)$. }
\label{F:DDexample}
\end{figure}
The second key difficulty is arranging for the component of the boundary containing a double degeneration to be high rank. 

Throughout, we must deal with the possibility that degenerations can disconnect the surface and create marked points.
Using a structure theorem from \cite{ChenWright}, we are able to easily show that the single degenerations we require are connected, but avoiding the possibility that double degenerations are disconnected is the third key difficulty in the paper. 

\subsection{Certificates}
Our proof that double degenerations exist is not explicit, and these degenerations themselves are surprisingly subtle and difficult to work with. So we were not able to rule out in general the possibility that the component of the boundary containing  a double degeneration satisfies the conclusion of Theorem \ref{T:main} even if $\cM$ does not. 

For this reason, we  rely on the main result of \cite{ApisaWrightGemini}, which shows that the failure of an invariant subvariety to satisfy the conclusion of Theorem \ref{T:main} must be witnessed by cylinders. Proceeding by contradiction, we consider a minimal dimensional counterexample $\cM$ to Theorem \ref{T:main}. The main result of \cite{ApisaWrightGemini} produces a collection of cylinders on a surface in $\cM$ which could not exist if $\cM$ satisfied the conclusion of Theorem \ref{T:main}. We think of these cylinders as being a certificate that $\cM$ does not satisfy the conclusion of Theorem \ref{T:main}. We are able to arrange for this certificate to persist on the double degeneration, thus showing that the associated codimension 2 boundary component of $\cM$ also does not satisfy the conclusion of Theorem \ref{T:main}. We arrange for this to contradict the fact that $\cM$ was chosen to be a minimal dimensional counterexample to Theorem \ref{T:main}.

\subsection{The Cylinder Degeneration Dichotomy} Most of the arguments in this paper are novel and broadly applicable to the classification problem, and we hope they will shed significant light on the structure of invariant subvarieties. 

We wish to highlight especially a general dichotomy for cylinder degenerations, proven in Section \ref{S:degens}, which arose from joint work with Mirzakhani. This result shows in particular that when cylinders degenerate to a collection of saddle connections, this collection is either acyclic or strongly connected when viewed as a directed graph. In Section \ref{S:DoubleDegen}, we show that in the acyclic case this graph can be contracted in a natural way, thus producing the double degeneration.

\subsection{Organization} In Section \ref{S:outline}, we reduce the proof of Theorem \ref{T:main} to two propositions. We then turn in Section \ref{S:boundary} to the boundary of invariant subvarieties, observing the ``symplectic compatibility" of the boundary and recalling the notion of primality from \cite{ChenWright}. 

Section \ref{S:CylinderDegenerations} develops the general theory of cylinder degenerations. This is followed in  Sections \ref{S:degens} and \ref{S:DoubleDegen} with the proof of the Cylinder Degeneration Dichotomy and the construction of double degenerations. 

In Section \ref{S:nested}, we prove that if a certain simple configuration of two cylinders appears on a surface in an invariant subvariety, then that subvariety satisfies the conclusion of Theorem \ref{T:main}. This allows us to avoid such simple configurations, which give rise to components of the boundary containing double degenerations that may not be high rank. 

In Sections \ref{S:hardest} and \ref{S:Goldilocks}, we produce the cylinders that will be double degenerated and then perform the double degeneration. These results are all phrased for invariant subvarieties of rank at least 2 or 3, with the high rank assumption only used to ensure prime degenerations are connected, via a short argument in Section \ref{S:Primality}.   

Sections \ref{S:Good} and \ref{S:RelMain} conclude the paper by handling the cases where only a single degeneration is required. 

\subsection{Prerequisites} 
We assume the reader is familiar with cylinder deformations and the What You See Is What You Get (WYSIWYG) partial compactification, as in \cite{Wcyl, MirWri, ChenWright}, but we do not assume familiarity with any other recent results. 

This paper is the culmination of a large project, which motivated and relies on \cite{ApisaWrightDiamonds} and \cite{ApisaWrightGemini}. In spite of this, here we have isolated the dependence on those two papers to a single statement, which appears as Theorem \ref{T:Geminal} below. So, it is not necessary or even helpful to have read those two papers.

\subsection{Next steps} 
The next step in this line of investigation would be the following. 

\begin{conj}[Almost High Rank Conjecture]
If $\cM$ is an orbit closure of surfaces of genus $g$  and 
$$\rank(\cM) = \frac{g}2+\frac12,$$
then $\cM$ is a locus of double covers of a component of a stratum of Abelian differentials, a locus of double covers of a hyperelliptic locus in a stratum of Abelian differentials, or the locus of all holonomy double covers of surfaces in a stratum of quadratic differentials. 
\end{conj}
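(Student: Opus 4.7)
The plan would be to extend the inductive strategy of Theorem \ref{T:main}, proceeding by induction on $\dim \cM$ with the known classifications in genera $2$ and $3$ as base cases. Throughout, the inductive hypothesis would combine Theorem \ref{T:main} and the conjecture itself, so that any proper boundary component $\cM'$ which is either high rank or almost high rank in its own genus is already classified.

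The first step is to strengthen Theorem \ref{T:Geminal} to produce, on any surface in $\cM$ that satisfies none of the three stated conclusions, a cylinder ``certificate'' which simultaneously excludes double covers of components of abelian strata, double covers of hyperelliptic loci, and holonomy double covers of quadratic differentials. Producing such a three-way certificate is a substantial project in itself: each of the three target families forces distinctive cylinder configurations (highly symmetric ones in the double-cover cases, and square-tiled-like configurations in the holonomy case), and a single surface must be shown to violate all three patterns at once. I would build this in stages, first treating ``locally'' the distinction from each target family and then combining the local obstructions.

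The second step is to apply the Cylinder Degeneration Dichotomy of Section \ref{S:degens} to the certificate cylinders, then double-degenerate them in the acyclic case via the construction of Section \ref{S:DoubleDegen} to obtain a codimension $2$ component $\cM'$ of the boundary. The critical --- and most difficult --- point is to control the rank of $\cM'$. When $\rank(\cM) = \frac{g}{2} + \frac{1}{2}$, the general inequality gives only $\rank(\cM') \geq \frac{g}{2} - \frac{3}{2}$. If the genus of $\cM'$ also drops by $2$, then $\cM'$ is almost high rank in its own genus and the inductive hypothesis applies; but if the genus drops by less, the bound becomes strictly subcritical and no classification is available. I would expect much of the work to go into engineering the certificate and degeneration so that the genus loss at least matches the rank loss, perhaps by forcing the double degeneration to cross two handles rather than creating marked points.

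Finally one would need to show that the certificate persists on $\cM'$, producing a smaller counterexample and contradicting the induction. The three-way conclusion creates a genuinely new difficulty here: holonomy double covers of quadratic differentials naturally appear as components of the boundary of abelian differential strata, so a certificate on $\cM$ might degenerate into a surface lying in such a locus in $\cM'$, causing the inductive assumption to apply trivially and the argument to collapse. Ruling out these ``false positive'' boundary components --- distinguishing a genuine obstruction on $\cM$ from an accidental coincidence at the boundary --- is, in my view, the principal obstacle to the conjecture and would likely require ideas beyond the cylinder-degeneration techniques developed in the present paper.
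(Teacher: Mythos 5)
This statement is the Almost High Rank Conjecture, which the paper explicitly presents as an open problem (``the next step in this line of investigation''); there is no proof in the paper to compare against, and what you have written is a research plan, not a proof. As a plan it is sensible in outline --- induction on dimension, a cylinder certificate of non-triviality, single or double degeneration, persistence of the certificate --- because that is precisely the architecture of the proof of Theorem \ref{T:main}. But two of your emphases are misplaced, and the step where the paper's argument genuinely breaks in the almost high rank regime is not the one you identify.

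First, the ``three-way certificate'' you describe as a substantial new project is essentially already available: \cite[Theorem 1.1]{ApisaWrightGemini} classifies \emph{all} geminal invariant subvarieties, including the two double-cover families that appear in the conjecture's conclusion; the paper only invokes Lemma \ref{L:NoCover} to discard those cases under the high rank hypothesis. So the reduction ``almost high rank and not geminal $\Rightarrow$ contradiction'' is the right target, and the certificate is the same one (an equivalence class not partitioned into free cylinders and twins). Second, the place where high rank is used irreplaceably, and which your plan does not address, is connectedness of the degenerations. Lemma \ref{L:NoCover} fails when $\rank(\cM)=\frac{g}{2}+\frac12$: Riemann--Hurwitz permits degree-two covers of genus $\frac{g+1}{2}$ surfaces, which is exactly why the extra conclusions appear. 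Correspondingly, in Lemma \ref{L:ConnectedSurfacesinBoundary} the rank-reducing inequality becomes $r-1\geq \frac{k}{2}(r-1)$, which no longer forces $k=1$: a double degeneration may produce a two-component surface, each component prime of the same rank, and the inductive hypothesis (stated for connected surfaces) does not apply. Handling disconnected boundary components --- or engineering degenerations that provably stay connected without the high rank inequality --- is the concrete obstruction, and it also infects Theorem \ref{T:AAW}, whose marked-point analysis again invokes Lemma \ref{L:NoCover}. Your proposed ``principal obstacle'' (false positives where the certificate lands in a quadratic-double boundary component) is handled in the paper by persistence of the non-geminal certificate together with Theorem \ref{T:AAW}, and does not appear to be the binding constraint.
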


Although there are many directions in which the classification problem can be explored, we think the following deserves special attention. 

\begin{quest}
Is every invariant subvariety of rank at least 3 trivial? More precisely, is every such subvariety a full locus of covers according to the definition given in \cite{ApisaWrightDiamonds}?
\end{quest}

Although we feel there is still insufficient evidence to upgrade this question to a conjecture, there are a number of reasons to hope for a positive answer. It would be interesting to investigate the special case of rank 3 orbit closures of dimension 6 in genus 6.

\bold{Acknowledgements.} 
We thank Maryam Mirzakhani for generously sharing her ideas, vision, and enthusiasm with us. 
We also thank Barak Weiss for helpful conversations, and Francisco  Arana-Herrera and Chris Zhang for helpful comments on an earlier draft. 

During the preparation of this paper, the first author was partially supported by NSF Postdoctoral Fellowship DMS 1803625, and the second author was partially supported by a Clay Research Fellowship,  NSF Grant DMS 1856155, and a Sloan Research Fellowship.

\section{Structure of the proof}\label{S:outline}


Here we show, using previous results, that Theorem \ref{T:main} follows from two technical results, namely Propositions \ref{P:NoRelMain} and \ref{P:RelMain} below. This gives the high level structure of our approach. The remainder of the paper will then develop the novel and broadly applicable results that we use to derive Propositions \ref{P:NoRelMain} and \ref{P:RelMain}.

We begin with the following observation. 

\begin{lem}\label{L:NoCover}
In an invariant subvariety $\cM$ of high rank, there exist Abelian differentials which are not translation covers of smaller genus Abelian differentials. 
\end{lem}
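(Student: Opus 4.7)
The plan is to argue by contradiction: suppose every $(X,\omega) \in \cM$ is a translation cover of some Abelian differential of genus strictly less than $g$. I would first promote this pointwise hypothesis to the global structural statement that $\cM$ itself is a locus of covers of a lower genus invariant subvariety, and then combine Riemann-Hurwitz with a rank bound to contradict the high rank hypothesis.

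For each triple $(d,h,T)$ consisting of a degree $d \geq 2$, a base genus $1 \leq h < g$, and a topological type $T$ of branched cover $X \to Y$, let $\cM^{(d,h,T)}$ denote the locus in $\cM$ of surfaces admitting a translation cover of type $T$. Each $\cM^{(d,h,T)}$ is a closed $\GL$-invariant subvariety of $\cM$, and by assumption $\cM = \bigcup_{(d,h,T)} \cM^{(d,h,T)}$. For each fixed pair $(d,h)$ there are only finitely many combinatorial types $T$, so this is a countable union. Since $\cM$ is an irreducible complex algebraic variety by Filip's theorem \cite{Fi1}, it cannot be exhausted by countably many proper closed subvarieties, and hence $\cM = \cM^{(d,h,T)}$ for some triple with $h < g$. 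Thus $\cM$ is a full locus of degree $d$ translation covers of an invariant subvariety $\cN$ of genus $h$.

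The rank bound is then a direct consequence. Since $\omega$ pulls back from $(Y,\eta)$ under the covering map $\pi$, any deformation of $(X,\omega)$ within $\cM$ also pulls back in cohomology, so $p(T_{(X,\omega)}\cM) \subseteq \pi^* H^1(Y,\bC)$. As $\pi^*$ is injective (because $\pi_*\pi^* = d\cdot \Id$), this gives $\dim p(T_{(X,\omega)}\cM) \leq 2h$, i.e. $\rank(\cM) \leq h$. Riemann-Hurwitz gives $2g - 2 \geq d(2h-2)$: when $h \geq 2$ and $d \geq 2$ this forces $h \leq (g+1)/2$, and when $h = 1$ it forces $\rank(\cM) \leq 1$, which already contradicts high rank for $g \geq 2$. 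In either case
$$\rank(\cM) \leq \tfrac{g+1}{2} < \tfrac{g}{2} + 1,$$
contradicting the high rank hypothesis.

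The main obstacle is the first step: passing from the pointwise existence of a cover to the global statement that $\cM$ is a full locus of covers of a fixed combinatorial type. The two ingredients that make this work are the discreteness (and finiteness at each fixed degree and base genus) of branched cover combinatorics, together with Filip's algebraicity theorem, which together preclude $\cM$ from being a countable union of proper closed subvarieties.
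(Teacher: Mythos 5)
Your proof is correct, and its core — bounding $\rank(\cM)$ by the base genus $h$ and then playing that off against Riemann--Hurwitz and the high rank hypothesis — is exactly the paper's argument. The only real divergence is in the ``promotion'' step you single out as the main obstacle. You handle it by stratifying $\cM$ into the countably many closed loci $\cM^{(d,h,T)}$ and invoking irreducibility (via Filip) plus a Baire-category-type argument to conclude one of them is everything. The paper does this in one line: since every surface is assumed to be a cover, in particular some surface with dense $\GL$-orbit is a cover of some fixed type, and its orbit closure — which is all of $\cM$ — is then contained in the (closed, invariant) locus of covers of that type. The paper's route trades your countability bookkeeping for the standard fact that dense orbits exist; both are valid, and yours has the mild advantage of not needing that input. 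Two small remarks: your conclusion that $\cM$ ``is a full locus of covers'' is stronger than what you've shown or what you need — containment in the cover locus already gives $\rank(\cM)\leq h$ — and in the $h=1$ case it is the rank bound $\rank(\cM)\leq h=1$, not Riemann--Hurwitz, that produces the contradiction (your sentence slightly conflates the two).
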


\begin{proof}
Suppose otherwise, and consider a surface with dense orbit in $\cM$ that is a translation cover. Let $g$ denote the genus of surfaces in $\cM$, and $h$ denote the genus of the codomain of the covering map. 

So $\cM$ is contained in an invariant subvariety of covers of surfaces in a genus $h$ stratum of Abelian differentials, and we see that $\cM$ has rank at most $h$. 

The Riemann-Hurwitz formula gives that 
$2-2g \leq 4-4h$, which is equivalent to $h \leq \frac{g}2 + \frac12$. This contradicts the assumption that $\cM$ has high rank.  
\end{proof}

In light of Lemma \ref{L:NoCover}, we can state the following special case of \cite[Theorem 1.1]{ApisaWrightGemini}. Recall that an orbit closure $\cM$ is said to be \emph{geminal} if for any cylinder $C$ on any $(X,\omega)\in \cM$, either 
\begin{itemize}
\item any cylinder deformation of $C$ remains in $\cM$, or 
\item there is a cylinder $C'$ such that $C$ and $C'$ are parallel and have the same height and circumference on $(X,\omega)$ as well as on all small deformations of $(X,\omega)$ in $\cM$, and any cylinder deformation that deforms $C$ and $C'$ equally remains in $\cM$.  
\end{itemize}
In the first case we say that $C$ is free, and in the second case we say that $C$ and $C'$ are twins. 

\begin{thm}\label{T:Geminal}
Any geminal invariant subvariety of high rank is a component of a stratum of Abelian differentials or the set of all holonomy double covers of surfaces in a stratum of quadratic differentials.
\end{thm}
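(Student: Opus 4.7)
The plan is to deduce this from the main classification theorem of \cite{ApisaWrightGemini} together with Lemma \ref{L:NoCover}. The statement in the excerpt is already flagged as a ``special case'' of \cite[Theorem 1.1]{ApisaWrightGemini}, so the work here is to identify the classification produced there and to eliminate all but two of the cases using the high rank hypothesis.

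The first step is to recall the classification of geminal invariant subvarieties from \cite{ApisaWrightGemini}. One expects that result to assert that a geminal invariant subvariety $\cM$ is one of: (a) a connected component of a stratum of Abelian differentials, (b) the locus of all holonomy double covers of surfaces in a stratum of quadratic differentials, or (c) a locus of translation covers of surfaces in a smaller genus geminal invariant subvariety (so in particular every surface in $\cM$ is a translation cover of a smaller genus Abelian differential). The proof proposal is simply: apply that theorem to the geminal invariant subvariety $\cM$ of high rank that is given in the hypothesis.

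The second step is to rule out case (c) using the high rank hypothesis. Since $\cM$ is high rank, Lemma \ref{L:NoCover} guarantees the existence of a surface $(X, \omega) \in \cM$ which is \emph{not} a translation cover of a smaller genus Abelian differential. But in case (c), \emph{every} surface in $\cM$ is such a translation cover. Hence case (c) is incompatible with the high rank assumption, leaving precisely the two options stated in the theorem.

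The main obstacle I anticipate is purely one of bookkeeping with the external reference: one must verify that the classification in \cite[Theorem 1.1]{ApisaWrightGemini} is phrased in a way that the ``non-trivial'' case is exactly the translation cover case above (as opposed to, say, a more general full locus of covers that could include hyperelliptic loci or other non-stratum pieces). If the cited theorem is stated in a slightly more flexible form, one would additionally need to check that any such exotic components which survive the high rank assumption collapse, via Lemma \ref{L:NoCover} and the Riemann--Hurwitz bound appearing in its proof, to the two listed cases. No new arguments beyond citing \cite{ApisaWrightGemini} and invoking Lemma \ref{L:NoCover} should be required.
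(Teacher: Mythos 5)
Your proposal matches the paper's proof: the paper likewise invokes \cite[Theorem 1.1]{ApisaWrightGemini} and notes that, beyond the two cases in the statement, it allows only additional possibilities (two of them) in which $\cM$ consists of covers of lower genus Abelian differentials, which are ruled out by Lemma \ref{L:NoCover}. The only difference is that you fold those into a single case (c), a bookkeeping point you already anticipate.
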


\begin{proof}
Without the high rank assumption, \cite[Theorem 1.1]{ApisaWrightGemini} allows for two additional possibilities where $\cM$ consists of covers of lower genus Abelian differentials; but these are ruled out by Lemma \ref{L:NoCover}. 
\end{proof}

Our approach to Theorem \ref{T:main} will be to prove the following: 

\begin{thm}\label{T:AllGeminal}
Every high rank invariant subvariety consisting of surfaces without marked points is geminal.
\end{thm}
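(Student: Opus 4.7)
The plan is to argue by contradiction via a minimal-dimension counterexample and then to degenerate it. Suppose $\cM$ is a high rank invariant subvariety without marked points that is \emph{not} geminal, and assume $\cM$ has minimal dimension among all such counterexamples. The strategy is to produce a strictly lower dimensional invariant subvariety $\cM'$ sitting in the boundary of $\cM$ that is still high rank, still contains no marked points, and is still non-geminal, thus contradicting minimality.

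The first step is to extract a \emph{certificate} of non-geminality. Combining Theorem \ref{T:Geminal} with the main theorem of \cite{ApisaWrightGemini}, non-geminality forces the existence of a distinguished configuration of cylinders on some $(X,\omega) \in \cM$ which directly witnesses that $\cM$ fails the conclusion of Theorem \ref{T:main}. The second step is to try to make this certificate ``move to the boundary.'' The simplest possibility is a single cylinder degeneration (collapse a cylinder from the configuration), landing in a codimension 1 component $\cM'$ of the boundary; when this works the argument reduces to Proposition \ref{P:RelMain}, handled in Sections \ref{S:Good} and \ref{S:RelMain}, with connectedness of the degeneration ensured by the primality structure of \cite{ChenWright} and the short argument of Section \ref{S:Primality}. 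When no single degeneration preserves high rank, a \emph{double degeneration} (codimension 2) is needed, corresponding to Proposition \ref{P:NoRelMain}.

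To construct the double degeneration, apply the Cylinder Degeneration Dichotomy of Section \ref{S:degens}: the saddle connections produced by collapsing the cylinders form a directed graph that is either acyclic or strongly connected. In the acyclic case, Section \ref{S:DoubleDegen} allows the graph to be contracted to produce the codimension 2 boundary component; the strongly connected case must be excluded by the nested cylinder analysis of Section \ref{S:nested}, where simple configurations that produce a non-high-rank codimension 2 boundary are shown to already force $\cM$ to satisfy the conclusion of Theorem \ref{T:main}. With the double degeneration in hand, one then verifies that the certificate cylinders persist on $\cM'$, that $\cM'$ is connected (no marked points are spuriously introduced), and that $\cM'$ remains high rank --- these are precisely the goals of Sections \ref{S:hardest} and \ref{S:Goldilocks}.

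The main obstacle will be simultaneously arranging the three conditions on $\cM'$: high rank, connectedness, and persistence of the certificate. Each can be secured in isolation by a careful choice of cylinders on $(X,\omega)$, but their joint achievability is delicate, since the double degeneration itself is nonexplicit and sensitive to the combinatorics of the dichotomy. Overcoming this requires choosing the certificate cylinders with enough freedom that the Cylinder Degeneration Dichotomy lands in the acyclic case, that the contracted graph does not disconnect the surface, and that the resulting codimension 2 stratum still has rank at least $\frac{g}{2}+1$. Once this choice is made, the inductive contradiction with the minimality of $\cM$ closes the argument.
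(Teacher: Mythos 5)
Your overall architecture matches the paper's: a minimal-dimension counterexample, a degeneration to a boundary component $\cM'$ that stays high rank and non-geminal, and a contradiction with minimality, with the rel/no-rel split governing whether a single or double degeneration is needed. However, there are two concrete gaps in how you close the induction.

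First, you never dispose of the base case. Proposition \ref{P:NoRelMain} requires $\rank(\cM)\geq 3$, so your minimal counterexample could a priori have rank $2$ and no rel, and then none of the degeneration machinery you invoke applies. The paper handles this by observing that high rank together with $\rank=2$ forces genus $2$, where McMullen's classification (or the short modern proofs cited in the paper) shows every rank $2$ invariant subvariety is a stratum, hence geminal; only then may one assume the minimal counterexample has rank at least $3$. Second, you assert that $\cM'$ can be arranged to ``still contain no marked points.'' That is not achievable: cylinder degenerations and double degenerations genuinely create marked points, and Propositions \ref{P:NoRelMain} and \ref{P:RelMain} only guarantee the absence of \emph{free} marked points. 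Since the inductive statement concerns subvarieties without marked points, minimality can only be applied to $\cF(\cM')$, the subvariety with marked points forgotten. One then needs the nontrivial Theorem \ref{T:AAW} (resting on the marked-point classifications of \cite{Apisa, ApisaWright}) to upgrade ``$\cF(\cM')$ is geminal, hence a stratum component or quadratic double'' to ``$\cM'$ itself is geminal,'' which is what contradicts the propositions. Without this step your induction does not close. (A smaller point: your description of the internal mechanics is also off in places --- e.g.\ the strongly connected case of the Cylinder Degeneration Dichotomy is not ``excluded'' but is exactly the rank-preserving case exploited in the rel setting --- but that concerns the proofs of the propositions rather than the deduction of Theorem \ref{T:AllGeminal} from them.)
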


This immediately implies Theorem \ref{T:main}. 

\begin{proof}[Proof of Theorem \ref{T:main} assuming Theorem \ref{T:AllGeminal}]
Let $\cM$ be high rank. Theorem \ref{T:AllGeminal} gives that $\cM$ is geminal, and hence Theorem \ref{T:Geminal} implies that $\cM$ is a component of a stratum or the set of all holonomy double covers of surfaces in a stratum of quadratic differentials. 
\end{proof}

Theorem \ref{T:AllGeminal} in turn will follow from the next two results. We begin with the following definitions.

\begin{defn}
Let $p$ be a marked point on a translation surface $(X, \omega)$ in an invariant subvariety $\cM$. The point $p$ is said to be \emph{free} if $\cM$ contains all surfaces obtained by moving $p$ while fixing the rest of the surface.
\end{defn}

\begin{defn}
A  \emph{rel vector} is defined to be an element of the subspace $\ker(p) \cap T_{(X, \omega)}(\cM)$, where $p$ is the map from relative to absolute cohomology. The dimension of this subspace is called the \emph{rel of $\cM$}. We say $\cM$ \emph{has no rel} if the rel of $\cM$ is zero, and that it \emph{has rel} if the rel is positive. 
\end{defn}

\begin{prop}\label{P:NoRelMain}
Suppose that $\cM$ is an invariant subvariety of genus $g$ surfaces with no marked points, and assume that $\cM$ has high rank, is not geminal, and has no rel. Additionally assume $\rank(\cM)\geq 3$. 

Then the boundary of $\cM$ contains an invariant subvariety $\cM'$ that 
\begin{enumerate}
\item consists of connected surfaces,
\item has $\rank(\cM')=\rank(\cM)-1$,  
\item is not geminal,  
\item does not have free marked points, and
\item consists of surfaces of genus at most $g-2$.
\end{enumerate}
\end{prop}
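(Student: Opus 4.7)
The plan is to construct $\cM'$ as a codimension-two boundary component of $\cM$ obtained by performing a double degeneration of a carefully chosen pair of cylinders on some $(X,\omega) \in \cM$. The argument mirrors the three key difficulties outlined in the introduction: producing a certificate of non-geminality that survives to the boundary, producing a codimension-two component of the correct rank, and ensuring connectedness.

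First I would extract a certificate of non-geminality. Since $\cM$ is not geminal, by definition there is a cylinder $C$ on some $(X,\omega)\in \cM$ that is neither free nor part of a twin pair, and the results of \cite{ApisaWrightGemini} refine this into a persistent certificate that can be arranged to survive suitable boundary degenerations. Preserving this certificate on $\cM'$ will yield conclusion (3). Next I would invoke the nested-cylinder classification of Section \ref{S:nested}, which shows that certain two-cylinder configurations force an invariant subvariety to be geminal; since $\cM$ is not geminal these configurations cannot appear on surfaces in $\cM$, and excluding them in the choice of degeneration will prevent free marked points from appearing on $\cM'$, giving (4). Combining these inputs with the cylinder-production arguments of Sections \ref{S:hardest} and \ref{S:Goldilocks}, I would exhibit a pair $(C_1, C_2)$ of cylinders whose cores are linearly independent in $H_1(X;\bR)$, that fall into the acyclic case of the Cylinder Degeneration Dichotomy of Section \ref{S:degens}, and whose double degeneration is compatible with the certificate.

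The double-degeneration machinery of Section \ref{S:DoubleDegen} then produces $\cM'$ as a codimension-two component of the boundary of $\cM$. Since $\cM$ has no rel, $\dim \cM = 2\rank(\cM)$ and hence $\dim \cM' = 2\rank(\cM) - 2$; the Goldilocks calibration is arranged so that the rank drops by exactly one, yielding (2). Linear independence of the cores of $C_1$ and $C_2$ in absolute homology simultaneously forces the genus to drop by at least two, giving (5). Connectedness of $\cM'$, conclusion (1), follows from the primality analysis of Section \ref{S:Primality}, and this is the point at which the hypothesis $\rank(\cM)\geq 3$ is consumed, since double degenerations can in principle disconnect and only sufficiently high rank rules this out.

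The main obstacle is the simultaneous choice of $(C_1, C_2)$ in the second step. The certificate cylinder $C$ is effectively forced on us by the non-geminal hypothesis, so the second cylinder must be calibrated relative to $C$ while satisfying several competing constraints: acyclicity so that the double degeneration is defined, linear independence of the cores for the genus and rank drops, compatibility with the certificate for conclusion (3), avoidance of the nested configurations of Section \ref{S:nested} for conclusion (4), and precise calibration for the rank drop to be exactly one. Reconciling all of these, while also preventing the disconnection phenomenon that the introduction flags as the third key difficulty, is where the technical weight of the argument lies.
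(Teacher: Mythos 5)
Your outline matches the paper's strategy at the coarsest level (a non-geminality certificate $\bfC$ that survives to the boundary, a degeneration losing two genus, primality for connectedness), but several of the mechanisms you propose are not the ones that make the argument work, and the hardest case is left unresolved. The central gap is your requirement that the degenerated cylinders have linearly independent core curves. The paper degenerates an entire equivalence class $\bfD$ produced by Theorem \ref{T:FindD}, and it cannot arrange for the cores of $\bfD$ to be independent: when all cylinders of $\bfD$ are homologous, Lemma \ref{L:DefinitionOfLC} only gives a genus drop of one. This is exactly why Theorem \ref{T:GoodDD} splits into two cases --- if some region between two homologous cylinders of $\bfD$ is entirely filled by $\bfD$, one finds a genus-two subsurface that collapses (Lemma \ref{L:DoubleDegenTwoGenus}); otherwise one abandons the double degeneration entirely and performs a \emph{single} degeneration of the auxiliary class $\bfD'$, using Lemma \ref{L:SingleDegenLoseTwo}. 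Your proposal has no analogue of this dichotomy and no way to force independence of the cores, so conclusion (5) is not established.

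Second, several attributions are off. Section \ref{S:nested} is not used to prevent free marked points: its role (via Proposition \ref{P:FindNested}) is to guarantee that every horizontal equivalence class other than $\bfC$ contains at least two cylinders, which is the input Theorem \ref{T:FindD} needs. Free marked points are excluded because the double degeneration has no rel (Corollary \ref{C:L:DDRelZero}), or by Lemma \ref{L:TwoSCnoFree} in the single-degeneration case. Likewise $\rank(\cM)\geq 3$ is not consumed by connectedness --- connectedness uses the high rank hypothesis via Lemma \ref{L:ConnectedSurfacesinBoundary} --- but rather by Theorem \ref{T:InnerMost}, which needs $\rank(\cM)-1\geq 2$ disjoint classes $\bfD_i$ so that both $\bfD$ and a distinct $\bfD'$ exist. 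Finally, since $\cM$ has no rel there is nothing to arrange about acyclicity: every cylinder degeneration is automatically rank-reducing and hence rel-scalable by Corollary \ref{C:ExactGraphsRel0}.
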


\begin{prop}\label{P:RelMain}
Suppose $\cM$ is an invariant subvariety of genus $g$ surfaces without marked points, and assume $\cM$ has high rank, is not geminal, and has rel. 

Then the boundary of $\cM$ contains an invariant subvariety $\cM'$ that 
\begin{enumerate}
\item consists of connected surfaces,
\item has $\rank(\cM')=\rank(\cM)$,  
\item is not geminal, and 
\item does not have free marked points. 
\end{enumerate}
\end{prop}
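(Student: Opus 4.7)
The plan is to build $\cM'$ as the target of a rel degeneration. Since $\cM$ has rel, pick a surface $(X,\omega)\in\cM$ with dense $GL^+(2,\bR)$-orbit and a nonzero rel vector $v\in\ker(p)\cap T_{(X,\omega)}\cM$. Because $v$ preserves absolute periods, continuing the deformation along $v$ until some saddle connection collapses produces a surface in a boundary component $\cM'$ with the same absolute-period dimension, so $\rank(\cM')=\rank(\cM)$ automatically, yielding conclusion (2). Only a single degeneration is needed, consistent with Proposition \ref{P:RelMain} being handled in Section \ref{S:RelMain}.

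To secure conclusion (3), I would first extract a certificate of non-geminality on $(X,\omega)$. By Theorem \ref{T:Geminal}, because $\cM$ is high rank but not geminal, some surface in $\cM$ must carry a cylinder that is neither free nor part of a twin pair; equivalently some parallel cylinder deformation fails to preserve $\cM$. After a small $GL^+(2,\bR)$-deformation we may arrange for such witnessing cylinders to live on the very same $(X,\omega)$ on which we chose the rel vector. The aim is then to choose $v$ so that the witnessing cylinders survive the rel degeneration, with their circumferences, heights, and the forbidden cylinder deformation all transported to $\cM'$, where they will certify that $\cM'$ is not geminal.

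For conclusions (1) and (4) we further need $v$ chosen so that the collapsing saddle connections (a) join \emph{distinct} zeros, so zeros merge into higher-order zeros rather than producing isolated marked points on the limit, and (b) form a prime collapse in the sense of \cite{ChenWright}. Condition (b) will let us invoke the short primality argument of Section \ref{S:Primality}, which converts high rank into connectedness of prime degenerations. Condition (a) blocks the creation of free marked points: a rel deformation cannot move an isolated zero away from its former partner once those two have merged.

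The main obstacle is that a single rel direction must simultaneously satisfy all four constraints: it must be prime, collapse only saddle connections between distinct zeros, preserve the witnessing cylinders set-theoretically, and preserve the particular forbidden cylinder deformation that certifies non-geminality. I expect the crux of the argument to be a dimension count inside $\ker(p)\cap T_{(X,\omega)}\cM$, showing that each ``bad'' condition excludes only a proper linear subspace (or a finite union of them) of rel directions, so that a good direction can be found in their complement. The most delicate of these subspace conditions will be the last one: even if the witnessing cylinders survive set-theoretically, we must verify that the forbidden deformation does not accidentally become a legal deformation in $\cM'$, and this is where the flexibility afforded by the presence of rel enters most crucially.
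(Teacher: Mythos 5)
Your route is genuinely different from the paper's: you propose degenerating along a rel direction, whereas the paper degenerates along a \emph{cylinder} direction, collapsing a typical deformation of an equivalence class $\bfC$ that is involved with rel (Lemmas \ref{L:typical-exists}, \ref{L:good-exists}, \ref{L:FreeMarkedPointAfterDegenerating}). Unfortunately your version has two genuine gaps. First, rank preservation is not ``automatic'' for a rel degeneration. The limit is governed by Lemma \ref{L:RankReducing}: the rank drops exactly when some vanishing cycle is nonzero on $T_{(X,\omega)}(\cM)$ but vanishes on $\ker(p)\cap T_{(X,\omega)}(\cM)$. For a rel degeneration the vanishing cycles are spanned by \emph{all} the saddle connections that collapse (Lemma \ref{L:RelLimit}), and a linear combination of several collapsing saddle connections can easily kill the rel direction you flowed along while remaining nonzero on the tangent space. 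So you must argue that you can collapse a single saddle connection joining distinct zeros, and your proposed mechanism for this --- a dimension count in $\ker(p)\cap T_{(X,\omega)}(\cM)$ showing each bad condition cuts out a proper subspace --- breaks down precisely in the minimal case $\rel(\cM)=1$, where that space is one complex dimension and a nontrivial linear condition excludes either nothing or everything. (The residual freedom of moving the basepoint and rotating is real but is not a linear-algebra argument, and you have not supplied a substitute.)

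The more serious gap is conclusion (3). Even if your witnessing cylinders survive with their heights and circumferences intact, this does not certify that $\cM'$ fails to be geminal: since $T_{(Y,\eta)}(\cM')\subset T_{(X,\omega)}(\cM)$, equivalence classes can only coarsen in the boundary, so cylinders that were not twins in $\cM$ can acquire twins in $\cM'$, and the partition into free cylinders and twin pairs that was impossible upstairs may become possible downstairs. The failure of geminality is not a linear condition on the degeneration direction, so its bad set is not a finite union of proper subspaces and cannot be avoided by genericity. This is exactly where the paper's proof concentrates all its effort: in the easy case (Lemma \ref{L:Rel1:Twist}) one degenerates a class $\bfD$ \emph{disjoint} from the certificate $\bfC$ so that $\sigma_\bfC$ survives verbatim; in the hard case, where $\Twist(\bfC,\cM)$ contains all the rel, the paper does not prove non-geminality of the boundary directly at all, but instead assumes every typical degeneration of $\bfC$ is geminal, deduces via Theorems \ref{T:Geminal} and \ref{T:AAW} that the resulting boundaries are quadratic doubles, and derives a contradiction from the incompatibility of the two holonomy involutions using the Masur--Zorich classification of generic cylinders. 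Some argument of that global, classification-based nature --- not a transversality count --- is needed to close your proof.
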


We will also need the following special case of results in \cite{Apisa, ApisaWright}. If $\cM$ is an invariant subvariety of surfaces with marked points, we let $\cF(\cM)$ denote the corresponding invariant subvariety with marked points forgotten.

\begin{thm}\label{T:AAW}
If $\cM'$ is an invariant subvariety of high rank, possibly with marked points but without free marked points, and $\cF(\cM')$ is a component of a stratum of Abelian differentials or the locus of holonomy double covers of surfaces in a stratum of quadratic differentials, then $\cM'$ is geminal.

Moreover, either $\cM'=\cF(\cM')$ and $\cM'$ is a component of a stratum of Abelian differentials, or $\cM'$ is a quadratic double. 
\end{thm}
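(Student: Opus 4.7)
The plan is to reduce this to the classification of marked-point loci over strata and over holonomy double cover loci, which appears in \cite{Apisa, ApisaWright}. The content of that classification, for our purposes, is that if $\cF(\cM')$ is a component of a stratum or a locus of holonomy double covers, then any marked points on surfaces of $\cM'$ are constrained to lie in one of a small number of distinguished positions: at preimages of singularities, at fixed points of the hyperelliptic-type involution $\tau$ present in the quadratic double case, or at $\tau$-orbits of size two; any other marked point is necessarily free. Combined with the assumption that $\cM'$ has no free marked points, this rigidly determines $\cM'$ up to a finite list of possibilities.

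From here the ``moreover'' clause is immediate in each case. When $\cF(\cM')$ is a stratum component, every additional marked point would be free, so the no-free-marked-points hypothesis forces $\cM'=\cF(\cM')$. When $\cF(\cM')$ is a quadratic double, the allowed marked points are precisely those respecting $\tau$ (fixed points or swapped pairs), and by the definition of quadratic double the resulting $\cM'$ is itself a quadratic double. This verifies the two alternatives.

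For geminality, I would handle the two cases separately using what is already known about cylinders on these very symmetric invariant subvarieties. In the stratum case $\cM'=\cF(\cM')$, and the cylinder deformation theorem of \cite{Wcyl} shows every cylinder on a surface in a stratum component is free, so $\cM'$ is trivially geminal. In the quadratic double case, a cylinder $C$ on $(X,\omega)\in\cM'$ projects to a cylinder or pair of half-cylinders on the underlying quadratic differential; either $\tau(C)=C$ and $C$ is $\tau$-symmetric (and hence free among $\tau$-symmetric deformations, which are exactly those preserving $\cM'$), or $\tau(C)=C'$ is a disjoint cylinder with the same height and circumference as $C$ persistently on all deformations in $\cM'$, and any deformation that moves $C$ and $C'$ equally preserves the $\tau$-equivariance and hence stays in $\cM'$. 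This exactly matches the geminal dichotomy.

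The main obstacle I expect is not the geminality verification itself, which is essentially bookkeeping about the involution, but rather citing the marked-point classification from \cite{Apisa, ApisaWright} in exactly the form needed and checking that its hypotheses are met in both the Abelian stratum and the quadratic double settings, especially regarding the convention for labeling zeros versus marking regular points. Care is also required to ensure that the high rank assumption on $\cM'$ transfers to $\cF(\cM')$, which it does since forgetting marked points does not change rank.
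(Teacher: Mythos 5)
Your overall strategy --- cite the marked-point classifications of \cite{Apisa, ApisaWright} and then verify geminality via the involution --- is the same as the paper's, but there are two genuine gaps, both located exactly in the ``hypothesis-checking'' you flag as an obstacle but do not carry out.

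First, your claim that ``when $\cF(\cM')$ is a stratum component, every additional marked point would be free'' is false for \emph{hyperelliptic} components of strata of Abelian differentials. There, \cite{Apisa} allows marked points that are fixed by, or exchanged in pairs by, the hyperelliptic involution, and such points are not free. The paper therefore splits the Abelian-stratum case into non-hyperelliptic (where all marked points are indeed free, forcing $\cM'=\cF(\cM')$) and hyperelliptic (where $\cM'$ can have involution-constrained marked points and the correct conclusion is that $\cM'$ is a \emph{quadratic double}, since a hyperelliptic component is a locus of holonomy double covers). Your version of the ``moreover'' clause would land in the wrong alternative in this case, and your geminality argument for the stratum case (``every cylinder is free'') also does not apply there; one must instead argue via the hyperelliptic involution as in the quadratic double case.

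Second, in the case where $\cF(\cM')$ is a quadratic double, the marked-point classification of \cite{ApisaWright} that you want to invoke requires the underlying stratum of quadratic differentials to be non-hyperelliptic; for hyperelliptic quadratic strata there are additional constrained point positions and the statement is different. Ruling this out is precisely where the high rank hypothesis enters: if $(X,q)$ were hyperelliptic with involution $\tau$, its holonomy double cover would be a translation cover of the holonomy double cover of $(X,q)/\tau$, contradicting Lemma \ref{L:NoCover}. Your remark that high rank merely ``transfers to $\cF(\cM')$'' misses this essential use of the hypothesis; without it the appeal to \cite{ApisaWright} is not justified.
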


Following \cite{ApisaWrightDiamonds} and \cite{ApisaWrightGemini}, we say $\cM'$ is a quadratic double if $\cF(\cM')$ is the locus of holonomy double covers of surfaces in a stratum of quadratic differentials and the marked points in $\cM'$ occur in pairs exchanged by the holonomy involution or at fixed points of the holonomy involution, with no further constraints. We will sometimes use this terminology even when there are no marked points, since it is more concise. 

\begin{proof}
First assume $\cF(\cM')$ is a non-hyperelliptic component of a stratum of Abelian differentials. In this case, \cite{Apisa} implies, with no additional assumptions, that all marked points are free; since here there are no free marked points, we get that $\cM'=\cF(\cM')$ is a component of a stratum and hence geminal. 

Next assume $\cF(\cM')$ is a hyperelliptic component of a stratum of Abelian differentials of genus at least two. In this case, \cite{Apisa} implies, with no additional assumptions, that all marked points are free, or occur in pairs exchanged by the hyperelliptic involution, or are fixed by the hyperelliptic involution. Since here there are no free marked points, $\cM$ is a quadratic double. We also see that
every cylinder $C'$ on a surface in $\cM'$ is either fixed by the hyperelliptic involution, in which case it is free, or its image is another cylinder $C'$, in which case $C$ and $C'$ are a pair of twins. In particular, $\cM'$ is geminal.

Finally assume that $\cF(\cM')$ is a quadratic double. We first claim that the associated stratum of quadratic differentials is not hyperelliptic. Indeed, if $(X,q)$ is hyperelliptic with hyperelliptic involution $\tau$, then its holonomy double cover is a translation cover of the holonomy double cover of $(X,q)/\tau$; so the claim follows from Lemma \ref{L:NoCover}. 

Because the associated stratum of quadratic differentials is not hyperelliptic,  \cite{ApisaWright} gives that, without any additional assumptions, all marked points are free, or occur in pairs exchanged by the holonomy involution, or are fixed points for the holonomy involution. We see that $\cM'$ is geminal and a quadratic double as in the previous case. 
\end{proof}

\begin{proof}[Proof of Theorem \ref{T:AllGeminal} assuming Propositions \ref{P:NoRelMain} and \ref{P:RelMain}]
Suppose, in order to find a contradiction, that Theorem \ref{T:AllGeminal} is not true, and let $\cM$ be a  counterexample of minimal dimension.

First we claim that $\cM$ cannot have rank 2. Indeed, in that case the high rank assumption implies that the genus is 2, and the only rank 2 invariant subvarieties in genus 2 are strata: this follows from McMullen's classification in genus 2 \cite{Mc5}, and very short proofs using newer technology have also been given in \cite[Lemma 3.2]{MirWri2} and \cite[Lemma 5.14, Proposition 5.16]{Wsurvey}. So assume $\cM$ has rank at least 3. 

If $\cM$ has no rel, let $\cM'$ be the invariant subvariety given by Proposition \ref{P:NoRelMain}. Because the genus has decreased by at least two, and the rank has decreased by one, $\cM'$ is  high rank.

If $\cM$ has rel, let $\cM'$ be the invariant subvariety given by Proposition \ref{P:RelMain}. Because the rank has not changed and the genus has not increased, $\cM'$ is high rank.

Since $\cM'$ is in the boundary of $\cM$, it has smaller dimension than $\cM$. Hence $\cF(\cM')$ also has smaller dimension than $\cM$. By our minimality assumption, $\cF(\cM')$ must be geminal, and hence Theorem \ref{T:Geminal} gives that $\cF(\cM')$ is a component of a stratum of Abelian differentials or a quadratic double. 

Thus Theorem \ref{T:AAW} gives that in fact $\cM'$ is geminal, contradicting the statement of  Proposition \ref{P:NoRelMain} or Proposition \ref{P:RelMain}. 
\end{proof}

\section{The boundary of an invariant subvariety}\label{S:boundary}

This section recalls and establishes foundational results on the boundary of an invariant subvariety. We begin in Section \ref{SS:Prime} by recalling the structure of invariant subvarieties of multi-component surfaces from \cite{ChenWright}; in Sections \ref{SS:Rank} and \ref{SS:SymplecticCompatibility} we prove new  results showing that the symplectic form on first cohomology is as compatible as could be imagined with invariant subvarieties and their boundaries; and we end in Section \ref{SS:CompabilityApplications} with some preliminary applications. 

\subsection{Prime boundary components}\label{SS:Prime}
In this subsection, we will suppose that $\cM$ is an invariant subvariety in a product of strata of Abelian differentials $\cH_1 \times \cdots \times \cH_k$. Thus, we are assuming that the $k$ components of surfaces in $\cM$ are ordered or labelled, but one can apply the same discussion when the components are not ordered or labelled by lifting to a product of strata, as in \cite{ChenWright}. We start by recalling a concept introduced in \cite{ChenWright}.

\begin{defn}
$\cM$ is called $\emph{prime}$ unless, possibly after reordering the components, there is some $1 \leq s < k$, and invariant subvarieties $\cM' \subseteq \cH_1 \times \cdots \times \cH_s$ and $\cM'' \subseteq \cH_{s+1} \times \cdots \times \cH_k$ such that $\cM = \cM' \times \cM''$. 
\end{defn}

The study of invariant subvarieties of multi-component surfaces reduces to the prime case, due to the following observation from \cite[Corollary 7.10]{ChenWright}.

\begin{thm}[Chen-Wright]\label{T:PrimeDecomp}
$\cM$ can be written uniquely as a product of prime subvarieties. 
\end{thm}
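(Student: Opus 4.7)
The plan is to prove existence by a straightforward iterative argument on the number of components, and to prove uniqueness by comparing any two prime decompositions via projections onto their constituent factor sets.

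For existence I would induct on $k$. If $\cM$ is already prime, there is nothing to do. Otherwise, by definition there is some partition of the components into two disjoint nonempty subsets and corresponding invariant subvarieties $\cM',\cM''$ with $\cM = \cM' \times \cM''$. Each of $\cM'$ and $\cM''$ has strictly fewer components, so the inductive hypothesis produces prime decompositions of each; concatenating these gives a prime decomposition of $\cM$.

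For uniqueness, suppose
$$\cM = \cM_1 \times \cdots \times \cM_a = \cN_1 \times \cdots \times \cN_b$$
are two prime decompositions, with $\cM_i$ supported on the components indexed by $S_i \subseteq \{1,\ldots,k\}$ and $\cN_j$ on those indexed by $T_j$. Fix $i$ and let $\pi_{S_i}$ denote projection onto the components in $S_i$. The first decomposition gives $\pi_{S_i}(\cM) = \cM_i$, while the tautological behavior of projection on a Cartesian product gives
$$\pi_{S_i}(\cM) = \prod_{j \, : \, S_i \cap T_j \neq \emptyset} \pi_{S_i \cap T_j}(\cN_j).$$
Thus $\cM_i$ is exhibited as a product of invariant subvarieties indexed by $\{j : S_i \cap T_j \neq \emptyset\}$. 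If this index set contained two distinct elements $j_1 \neq j_2$, then grouping the $j_1$ factor against the remaining factors would display $\cM_i$ as a product of two invariant subvarieties on the disjoint nonempty sets $S_i \cap T_{j_1}$ and $S_i \setminus T_{j_1}$ (the latter is nonempty because it contains $S_i \cap T_{j_2}$), contradicting the primality of $\cM_i$. Hence exactly one $j$ satisfies $S_i \cap T_j \neq \emptyset$, and for that $j$ we have $S_i \subseteq T_j$; by the symmetric argument $T_j \subseteq S_i$, so $S_i = T_j$ and $\cM_i = \pi_{S_i}(\cM) = \cN_j$. This sets up a bijection between the two decompositions.

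The main subtlety I expect is verifying that each $\pi_{S_i \cap T_j}(\cN_j)$ is an invariant subvariety of the appropriate product of strata, so that primality of $\cM_i$ can legitimately be invoked. Since this projection is $\GL$-equivariant and since the resulting factor sits as a genuine Cartesian factor of the known invariant subvariety $\cM_i$ (in particular, it is automatically closed), this should be routine, but it is the one place where one uses more than bare set-theoretic manipulation of products.
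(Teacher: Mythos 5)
The paper does not actually prove this statement: it is imported wholesale as \cite[Corollary 7.10]{ChenWright}, so there is no in-paper argument to compare yours against. Judged on its own, your proof is correct and is the natural elementary argument. The existence induction is immediate from the definition of prime. For uniqueness, the set-theoretic identities $\pi_{S_i}(\cM)=\cM_i$ and $\pi_{S_i}(\cM)=\prod_{j:S_i\cap T_j\neq\emptyset}\pi_{S_i\cap T_j}(\cN_j)$ are tautologies for Cartesian products (using that all factors are nonempty), and the subtlety you flag is genuinely the only point needing care, since in general projections of invariant subvarieties need not be closed --- this is exactly why Theorem \ref{T:PrimeRank}\eqref{I:Prime:Proj} only sandwiches $\pi_i(\cM)$ between $\cM_i-\cM_i'$ and $\cM_i$. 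Your resolution is valid: once the product of the projections is known to equal the closed set $\cM_i$, each nonempty Cartesian factor is closed (fix a point in the complementary factor and pass to limits), is $\GL$-invariant by equivariance of the projection, is connected as a continuous image of a connected set, and is locally linear because a set-theoretic identity $L=\pi_1(L)\times\pi_2(L)$ for a linear subspace $L$ forces $L=\pi_1(L)\oplus\pi_2(L)$. It would strengthen the write-up to say these three sentences explicitly rather than calling the step routine, precisely because the failure of projections to be subvarieties in general is a recurring trap in this subject.
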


The following result from \cite[Theorem 1.3]{ChenWright} strongly restricts the structure of prime invariant subvarieties, and will play a prominent role in the proof of Proposition \ref{P:NoRelMain}. Here $\pi_i$ denotes the projection onto the $i$-th factor.

\begin{thm}[Chen-Wright]\label{T:PrimeRank}
If $\cM$ is prime, then the following hold:
\begin{enumerate}
    \item\label{I:Prime:Proj} There is an invariant subvariety $\cM_i$ and a finite union $\cM_i'$ of proper invariant subvarieties of $\cM_i$ such that 
    \[ \cM_i - \cM_i' \subseteq \pi_i(\cM) \subseteq \cM_i. \]
    \item\label{I:Prime:Absolute} Locally in $\cM$, the absolute periods on any component of a multi-component surface in $\cM$ determines the absolute periods on all other components.
    \item\label{I:Prime:Rank} The rank of  $\cM_i$ is independent of $i$.
\end{enumerate}
\end{thm}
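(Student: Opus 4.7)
The plan is to work with the absolute tangent space $V := p\bigl(T_{(X,\omega)}\cM\bigr)$, where $p : H^1(X,\Sigma,\bC) \to H^1(X,\bC)$, and to analyze its interaction with the direct sum decomposition $H^1(X,\bC) = \bigoplus_{i=1}^k H^1(X_i,\bC)$ coming from the $k$ connected components $X_1,\ldots,X_k$. For $I \subseteq \{1,\ldots,k\}$, set $V_I := V \cap \bigoplus_{i \in I} H^1(X_i,\bC)$. The central claim, to be established by contradiction, is that primality of $\cM$ forces $V_I = 0$ for every proper nonempty $I$. This is precisely Part (\ref{I:Prime:Absolute}), since the kernel of the projection $V \to H^1(X_i,\bC)$ is exactly $V_{\{1,\ldots,k\}\setminus\{i\}}$.

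Part (\ref{I:Prime:Proj}) is largely formal. The set $\pi_i(\cM)$ is $\GL$-invariant and, by Filip's algebraicity together with Chevalley's theorem, constructible. Its closure $\cM_i$ is therefore an invariant subvariety by Eskin--Mirzakhani--Mohammadi, and constructibility forces the complement $\cM_i \setminus \pi_i(\cM)$ to be contained in a finite union of proper invariant subvarieties. Granting Part (\ref{I:Prime:Absolute}), Part (\ref{I:Prime:Rank}) then follows at once: injectivity of $V \to H^1(X_i,\bC)$ gives $\dim \pi_i(V) = \dim V$, and since $p_i(T\cM_i) = \pi_i(V)$ on a dense open subset, $\rank(\cM_i) = \frac{1}{2}\dim V$ is independent of $i$.

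For the central claim in Part (\ref{I:Prime:Absolute}), suppose some $V_I$ with $\emptyset \neq I \subsetneq \{1,\ldots,k\}$ is nonzero, and choose $I$ minimal with this property. By the Avila--Eskin--Möller theorem, $V$ carries a natural real structure, and this restricts both to $V_I$ and to a well-defined complementary piece. Using Wright's cylinder deformation theorem together with the analysis of how tangent vectors to $\cM$ exponentiate into genuine local deformations, elements of $V_I$ can be realized by deformations of $(X,\omega)$ in $\cM$ that fix the components indexed by $I^c$. Sliding along these deformations, combined with the field-of-definition results underlying Filip's algebraicity, should upgrade the ``$I$-piece'' of $\cM$ to an honest invariant subvariety $\cM_I \subseteq \prod_{i \in I}\cH_i$, and symmetrically for $I^c$. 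A dimension count then identifies $\cM$ with $\cM_I \times \cM_{I^c}$, contradicting primality.

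The hardest step will be converting the algebraic splitting of $V$ into a genuine product decomposition of $\cM$. The existence of a nonzero $V_I$ gives only infinitesimal and absolute-period information: deformations in $V_I$ move $(X,\omega)$ only along the components in $I$ at the level of absolute periods, but the relative periods on the components in $I^c$ might still depend in a nontrivial way on the absolute periods on components in $I$. Ruling out this coupling is what forces the genuine product structure $\cM = \cM_I \times \cM_{I^c}$, and this is where one must work hardest; it requires combining Filip's algebraicity with a careful accounting of the kernel of $p$ restricted to $T\cM$, i.e.\ the rel directions between different components.
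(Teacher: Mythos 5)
You are trying to prove a statement that this paper does not prove at all: Theorem \ref{T:PrimeRank} is imported verbatim from \cite[Theorem 1.3]{ChenWright}, and the paper's only commentary is that part (\ref{I:Prime:Absolute}) is the main content, part (\ref{I:Prime:Rank}) follows from it, and part (\ref{I:Prime:Proj}) is a preliminary observation. Your treatment of parts (\ref{I:Prime:Proj}) and (\ref{I:Prime:Rank}) is consistent with that assessment and is essentially fine (Chevalley plus Eskin--Mirzakhani--Mohammadi for the first; injectivity of $V\to H^1(X_i,\bC)$ for the third, granting the second).

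The gap is in part (\ref{I:Prime:Absolute}), which is the entire theorem. Your plan is the contrapositive: if $V_I\neq 0$ for some proper nonempty $I$, then $\cM=\cM_I\times\cM_{I^c}$. But the passage from a nonzero $V_I$ to a product decomposition is not a cleanup step at the end --- it \emph{is} the theorem, and you do not carry it out; you only assert that deformations in $V_I$ ``should upgrade'' to a factor $\cM_I$ and acknowledge that decoupling the relative periods ``is where one must work hardest.'' Two concrete obstructions: (i) a nonzero $V_I$ does not by itself give a splitting $V=V_I\oplus V_{I^c}$ --- $V$ could consist of $V_I$ together with ``diagonal'' classes lying in neither summand, so even the absolute-period splitting needs an argument (e.g.\ that $V_I$ is symplectic and its symplectic complement in $V$ lands in $\bigoplus_{i\in I^c}H^1(X_i,\bC)$); and (ii) even with $V=V_I\oplus V_{I^c}$, the tangent space $T_{(X,\omega)}(\cM)$ could contain rel classes coupling components in $I$ to components in $I^c$, in which case $\cM$ is not a product even though the absolute periods decouple. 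Ruling out (ii) is exactly the hard content of \cite[Theorem 1.3]{ChenWright}. Note also that the natural tool for killing such subspaces --- the flat-subbundle theorem \cite[Theorem 7.13]{ChenWright}, which this paper invokes in Lemma \ref{L:BoundarySymplectic} --- itself carries a primality hypothesis and is proved in \cite{ChenWright} alongside the statement you are trying to establish, so you cannot simply quote it here without addressing circularity. As written, your proposal is a correct outline of the problem rather than a proof of it.
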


The main statement in Theorem \ref{T:PrimeRank} is the second statement; the third statement follows from the second, and the first statement is just a preliminary observation. 

\subsection{Clarifying the definition of rank}\label{SS:Rank}
In the multi-component case, rank is defined as in the single component case: given a point $(X, \omega)$ in $\cM$ with singularities $\Sigma$, it is still the case that $T_{(X, \omega)}(\cM)$ is a subset of $H^1( X, \Sigma; \mathbb{C})$. If $p$ denotes the projection to absolute cohomology, then we define
\[ \mathrm{rank}(\cM) := \frac{1}{2} \dim p( T_{(X, \omega)}(\cM) ). \]

The absolute cohomology group $H^1(X, \bR)$ has a natural symplectic form. Indeed, if the components of $X$ are $X_1, \ldots, X_k$, then $H^1(X, \bR) = \bigoplus_{i=1}^k H^1(X_i,\bR)$, and the sum of the symplectic forms on the $H^1(X_i,\bR)$ is a symplectic form on $H^1(X)$. We denote the symplectic form by $\langle \cdot, \cdot\rangle$. 

Our first goal in this section is to show $p( T_{(X, \omega)}(\cM) )$ is symplectic, generalizing
%
%
a result of Avila-Eskin-M\"oller \cite{AEM} to the multi-component case.

\begin{lem}\label{L:BoundarySymplectic}
If $(X, \omega) \in \cM$ and $\cM$ is prime, then $p( T_{(X, \omega)}(\cM) )$ is symplectic and its dimension is twice the rank of $\cM_i$ for any $i$.
\end{lem}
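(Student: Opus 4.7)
My plan is to treat the two assertions in turn, starting with the dimension formula, which is more straightforward. By Theorem~\ref{T:PrimeRank}(\ref{I:Prime:Proj}), the projection $\pi_i\colon \cM \to \cM_i$ maps onto a Zariski-open subset of $\cM_i$, so the differential $T_{(X,\omega)}(\cM) \to T_{(X_i,\omega_i)}(\cM_i)$ is surjective. Composing with the projection to absolute cohomology on the $i$-th factor gives a surjection
\[
\psi_i\colon p(T_{(X,\omega)}(\cM)) \twoheadrightarrow p_i(T_{(X_i,\omega_i)}(\cM_i)).
\]
By Theorem~\ref{T:PrimeRank}(\ref{I:Prime:Absolute}), $\psi_i$ is also injective, since absolute periods on component $i$ determine absolute periods on all remaining components. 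Hence $\psi_i$ is an isomorphism and $\dim p(T_{(X,\omega)}(\cM)) = 2\rank(\cM_i)$, which is independent of $i$ by Theorem~\ref{T:PrimeRank}(\ref{I:Prime:Rank}).

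For symplecticity, the plan is to reduce to the single-component Avila-Eskin-M\"oller result via Hodge theory. Write $L := p(T_{(X,\omega)}(\cM))$ and $L_i := p_i(T_{(X_i,\omega_i)}(\cM_i))$; the symplectic form decomposes as $\sum_i \langle\cdot,\cdot\rangle_i$ and the Hodge complex structure on $H^1(X,\bR)$ decomposes accordingly as $J = \bigoplus_i J_i$. By AEM each $L_i$ is $J_i$-invariant, and the crucial additional step is to show that $L$ itself is $J$-invariant---equivalently, that the Chen-Wright identifications $\psi_j\circ\psi_i^{-1}\colon L_i \to L_j$ intertwine $J_i$ and $J_j$. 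Granting this, for any nonzero $v \in L$ the injectivity of each $\psi_i$ forces every coordinate $v_i$ to be nonzero, and the Hodge-Riemann bilinear relations yield
\[
\langle v, Jv\rangle \;=\; \sum_i \langle v_i, J_i v_i\rangle_i \;>\; 0,
\]
establishing nondegeneracy of $\langle\cdot,\cdot\rangle$ on $L$.

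The main obstacle is therefore the $J$-invariance of $L$ in the prime multi-component setting. I expect this to follow from Filip's theorem, which realizes $\cM$ as the base of a polarized variation of Hodge structure whose underlying local system has fiber $L$; in any such variation the Hodge decomposition on $L$ varies flatly, forcing the identification of absolute periods on different components to be a morphism of Hodge structures. As a more hands-on alternative, one might try to extract $J$-invariance directly from the $GL^+(2,\bR)$-equivariance of the correspondence, bootstrapping from the tautological plane spanned by $\Re\omega_i$ and $\Im\omega_i$ on each component and using AEM on each factor. Either way, once $J$-invariance is secured the dimension and symplecticity statements follow immediately from the computations above.
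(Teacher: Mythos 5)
Your dimension argument is essentially the paper's: both hinge on Theorem~\ref{T:PrimeRank}\eqref{I:Prime:Absolute} making $(\pi_i)_*\colon p(T_{(X,\omega)}(\cM))\to p(T_{(X_i,\omega_i)}(\cM_i))$ an isomorphism, and that part is fine.

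The symplecticity argument has a genuine gap, and you have correctly located it yourself: everything rests on the $J$-invariance of $L=p(T_{(X,\omega)}(\cM))$ inside $\bigoplus_i H^1(X_i,\bR)$, equivalently on the Chen--Wright identifications $\psi_j\circ\psi_i^{-1}$ being morphisms of Hodge structures. This is exactly the nontrivial content --- without it, $L$ sits as a ``graph'' inside $\bigoplus_i L_i$ and could perfectly well be isotropic even though each $L_i$ is symplectic by Avila--Eskin--M\"oller --- and neither of your two suggestions closes it. Filip's theorem is proved for invariant subvarieties of strata of connected surfaces; the objects here arise as boundary components of such subvarieties and live in products of strata, and extending the Hodge-theoretic semisimplicity/rigidity package to that setting is not something you can simply cite. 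The ``hands-on'' equivariance alternative is likewise only a plan. So as written the proof of nondegeneracy is an expectation, not an argument.

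The paper closes this with a short argument that avoids Hodge theory entirely and that you may want to internalize: let $\cL\subset T\cM$ be the subbundle of $v$ with $\langle p(v),p(w)\rangle=0$ for all $w\in T_{(X,\omega)}(\cM)$. This is flat and contains $\ker(p)$; it is proper because $\langle p(\Re\omega),p(\Im\omega)\rangle$ is the area of $(X,\omega)$, hence nonzero. The multi-component generalization of \cite[Theorem 5.1]{Wcyl}, namely \cite[Theorem 7.13]{ChenWright}, says any proper flat subbundle of $T\cM$ lies in $\ker(p)$, so $\cL=\ker(p)$, which is precisely nondegeneracy of the form on $p(T_{(X,\omega)}(\cM))$. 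Note that your ``tautological plane'' instinct is the right seed (it is what shows $\cL$ is proper), but the input that does the real work is the flat subbundle theorem, not $J$-invariance.
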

\begin{proof}
Consider the flat subbundle of $\cL$ of $T\cM$ whose fiber over a point $(X, \omega)$ in $\cM$ consists of elements $v$ such that $\langle p(v), p(w) \rangle = 0$ for all $w \in T_{(X, \omega)}(\cM)$. Clearly, $\cL$ contains $\ker(p)$ as a sub-bundle.

The symplectic pairing of the real and imaginary parts of $\omega$ is the area of $(X,\omega)$, and so in particular cannot be zero. Hence, the real and imaginary parts of $\omega$ are not in $\cL$, and hence $\cL$ is not all of $T\cM$.  

The statement of \cite[Theorem 7.13]{ChenWright}, which generalizes that of \cite[Theorem 5.1]{Wcyl}, gives precisely that any proper flat sub-bundle of $T\cM$ must be contained in $\ker(p)$. Hence $\cL = \ker(p)$. 

In other words, for every element $v$ of $T_{(X, \omega)}(\cM)$ that is not in the kernel of $p$, there is an element $w$ of $T_{(X, \omega)}(\cM)$ so that $\langle p(v), p(w) \rangle \ne 0$. This shows that $\langle \cdot, \cdot \rangle$ induces a nondegenerate skew-symmetric bilinear form on $p(T_{(X, \omega)}(\cM))$ and hence that $p(T_{(X, \omega)}(\cM))$ is symplectic as desired. 

The claim about dimension follows since, letting $(X_i, \omega_i)$ denote the $i$th component of $(X, \omega)$, $(\pi_i)_*: p(T_{(X, \omega)}(\cM)) \ra p(T_{(X_i, \omega_i)}(\cM_i))$ is an isomorphism by Theorem \ref{T:PrimeRank} \eqref{I:Prime:Absolute}. 
\end{proof}

It is now easy to generalize to the case when $\cM$ is not prime.

\begin{cor}\label{C:RankCharacterization}
If $(X, \omega) \in \cM$ and $\cM$ is any invariant subvariety of multi-component surfaces, then $p(T_{(X, \omega)}(\cM))$ is symplectic. 

Moreover, the rank of $\cM$ is half the dimension of a maximal dimensional symplectic subspace of $T_{(X, \omega)}(\cM)$ with respect to the form $\langle p(\cdot), p(\cdot) \rangle$. 
\end{cor}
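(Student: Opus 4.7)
The plan is to reduce the statement to the prime case, where Lemma \ref{L:BoundarySymplectic} already applies, and then finish by a short linear-algebra argument.

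First I would invoke Theorem \ref{T:PrimeDecomp} to write $\cM = \cM_1 \times \cdots \times \cM_\ell$, where each $\cM_j$ is prime. Correspondingly, the surface $(X,\omega)$ decomposes (up to reordering of components) as $(X_1,\omega_1) \sqcup \cdots \sqcup (X_\ell, \omega_\ell)$ with $(X_j,\omega_j) \in \cM_j$, and the tangent space splits as
\[ T_{(X,\omega)}(\cM) \;=\; \bigoplus_{j=1}^\ell T_{(X_j,\omega_j)}(\cM_j). \]
Applying $p$ componentwise, $p(T_{(X,\omega)}(\cM)) = \bigoplus_j p(T_{(X_j,\omega_j)}(\cM_j))$, and these summands lie in the symplectically orthogonal subspaces $H^1(X_j,\bR)$ of $H^1(X,\bR)$. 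By Lemma \ref{L:BoundarySymplectic} each summand $p(T_{(X_j,\omega_j)}(\cM_j))$ is symplectic, and a direct sum of mutually symplectically orthogonal symplectic subspaces is again symplectic. This proves the first assertion.

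For the second assertion, write $V = T_{(X,\omega)}(\cM)$ and $W = p(V)$, and consider the bilinear form $\beta(v_1,v_2) := \langle p(v_1), p(v_2)\rangle$ on $V$. Because $W$ is symplectic, $\beta$ descends to a nondegenerate form on $V/\ker(p|_V)$; equivalently, the radical of $\beta$ on $V$ is exactly $\ker(p)\cap V$. Hence if $U \subseteq V$ is any subspace on which $\beta$ is nondegenerate, then $U \cap \ker(p) = 0$, so $p|_U$ is injective and $\dim U \le \dim W = 2\rank(\cM)$. Conversely, choosing any linear section $s : W \to V$ of $p|_V$ produces a subspace $s(W)\subseteq V$ of dimension $2\rank(\cM)$ on which $\beta$ is nondegenerate (since $p$ carries it isomorphically onto the symplectic space $W$). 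Thus $2\rank(\cM)$ equals the maximal dimension of a $\beta$-symplectic subspace of $V$, as claimed.

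The substantive input is Lemma \ref{L:BoundarySymplectic}, which in turn rests on the Chen--Wright generalization of the Wright cylinder-deformation flat-bundle theorem. The only new ingredient here is that symplectic orthogonality of the $H^1(X_j,\bR)$ interacts cleanly with the prime decomposition, so the extension from the prime case to the general case is essentially formal. I do not anticipate a genuine obstacle beyond making sure the prime decomposition is applied correctly when components are not \emph{a priori} ordered (which is handled by passing to a labelled lift as in the convention adopted at the start of Section \ref{SS:Prime}).
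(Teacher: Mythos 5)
Your proof is correct and follows essentially the same route as the paper: decompose $\cM$ into prime factors via Theorem \ref{T:PrimeDecomp}, apply Lemma \ref{L:BoundarySymplectic} to each symplectically orthogonal summand, and deduce the second claim by linear algebra. The only difference is that you spell out the linear-algebra step for the second assertion, which the paper leaves as immediate.
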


\begin{proof}
By Theorem \ref{T:PrimeDecomp}, there are a collection of prime invariant subvarieties $\cM_1, \hdots, \cM_d$ such that $\cM = \cM_1 \times \cdots \times \cM_d$. This corresponds to a partition of $(X, \omega)$ as $(X_1, \omega_1) \sqcup \cdots \sqcup (X_d, \omega_d)$ where each $(X_i, \omega_i)$ is a union of connected components of $(X, \omega)$. We can write 
\[ p\left( T_{(X, \omega)}(\cM) \right) = p\left( T_{(X_1, \omega_1)} (\cM_1) \right) \oplus \cdots \oplus p\left( T_{(X_d, \omega_d)} (\cM_d) \right). \]
Lemma \ref{L:BoundarySymplectic} gives that each summand is symplectic, so the first claim holds. The second claim follows immediately from the first. 
\end{proof}

\subsection{Symplectic compatibility of the boundary}\label{SS:SymplecticCompatibility}

Following \cite{MirWri}, if $(X,\omega)$ is ``close" to a point $(Y,\omega_Y)$ in the boundary of the WYSIWYG compactification, we can view the tangent space $H^1(Y,\Sigma_Y)$ of the stratum of $(Y,\omega_Y)$ as a subspace of the tangent space $H^1(X,\Sigma_X)$ of the stratum of $(X,\omega)$. We now explain how the symplectic form on $H^1(X)$ is related to the symplectic form on $H^1(Y)$, showing that the induced bilinear form on $H^1(Y,\Sigma_Y)$ is the restriction of the corresponding bilinear form on $H^1(X,\Sigma_X)$. 

Let $X$ be an oriented surface. Let $f:X\to Y'$ be a map obtained as follows: First collapse a simple multi-curve $\gamma$ on $X$ to obtain a nodal surface. Then collapse a subset of the components of this nodal surface, so each component in this subset gets collapsed to a point. The result, $Y'$, is a collection of surfaces glued together at a collection of points. Thus we assume that each component of $X\setminus \gamma$ maps either to a point, or maps homeomorphically to a subset of $Y'$.

\begin{figure}[h]\centering
\includegraphics[width=0.9\linewidth]{}
\caption{An example of the collapse map $f$ and the gluing map $g$.}
\label{F:ColNorm}
\end{figure}

There is a possibly disconnected surface $Y$ equipped with a surjective gluing map $g:Y\to Y'$ that glues together finitely many points, as in Figure \ref{F:ColNorm}. The components of $Y$ correspond to the components of $X\setminus \gamma$ that don't map to a point under $f$. 

Now, let 
$$\Sigma_X\subset X,\quad \Sigma_{Y'}\subset Y', \quad\text{and}\quad \Sigma_Y \subset Y$$
be finite sets such that $f(\Sigma_X)=\Sigma_{Y'}$ and $\Sigma_Y=g^{-1}(\Sigma_{Y'})$, and such that $\Sigma_{Y'}$ contains $f(\gamma)$. The map 
$$g^*:H^1(Y',\Sigma_{Y'}) \to H^1(Y, \Sigma_{Y})$$ 
is an isomorphism, and the map
$$f^*\circ (g^*)^{-1}: H^1(Y,\Sigma_Y) \to H^1(X,\Sigma_X)$$
is an inclusion. So we can view $H^1(Y,\Sigma_Y)$ as a subspace of $H^1(X,\Sigma_X)$.  

If $(X, \omega)$ is a surface near a surface $(Y, \omega_Y)$ in the WYSIWYG boundary, this inclusion of  $H^1(Y,\Sigma_Y)$ into  $H^1(X,\Sigma_X)$ is the same one used \cite[Lemma 9.2]{MirWri}, which underlies the main results of \cite{MirWri}, \cite{ChenWright}. In this context we call $f$ a collapse map; it is produced in \cite[Proposition 2.4]{MirWri}, and can be seen to have the form above by factoring it through a  map to a nearby point in the Deligne-Mumford compactification.

Each component of $Y\setminus \Sigma_Y$ is homeomorphic to a subset of $X-(\gamma \cup \Sigma_X)$, and we pick the orientation on $Y$ corresponding to the orientation on $X$. Both $H^1(X,\Sigma_X)$ and $H^1(Y, \Sigma_Y)$ thus have natural bilinear forms, obtained by first mapping from relative cohomology to absolute cohomology, and then taking the symplectic pairing of the absolute cohomology classes. Working with relative rather than absolute cohomology means the bilinear form may be degenerate, but allows us to state the following. 

\begin{lem}\label{L:BilinearForm}
The bilinear form on $H^1(Y, \Sigma_Y)$ is the restriction of the bilinear form on $H^1(X,\Sigma_X)$. 
\end{lem}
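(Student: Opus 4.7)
The plan is to compute both sides as de Rham wedge integrals, using representatives that vanish near the relevant singular loci so that the pullback and descent operations act at the level of smooth forms rather than merely cohomology classes.

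The first step is to choose, for each pair of classes $\alpha, \beta \in H^1(Y,\Sigma_Y)$, smooth closed $1$-form representatives $\omega_\alpha, \omega_\beta$ that vanish identically on open neighborhoods of $\Sigma_Y$. Such representatives exist, since any smooth representative of a de Rham class can be modified by an exact form supported near the finite set $\Sigma_Y$. With this normalization, $\omega_\alpha$ and $\omega_\beta$ descend uniquely through $g$ to closed forms on $Y' \setminus \Sigma_{Y'}$ representing $(g^*)^{-1}\alpha$ and $(g^*)^{-1}\beta$, and their pullbacks through $f$ yield smooth closed $1$-forms $\omega_{\hat\alpha}, \omega_{\hat\beta}$ on $X$ representing $\hat\alpha, \hat\beta \in H^1(X,\Sigma_X)$. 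Concretely, $\omega_{\hat\alpha}$ is zero on a neighborhood of $\gamma$ and on all collapsed components of $X \setminus \gamma$, while on each non-collapsed component of $X \setminus (\gamma \cup \Sigma_X)$ it coincides with $\omega_\alpha$ under the orientation-preserving homeomorphism identifying this component with an open subset of $Y \setminus \Sigma_Y$.

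The final computation is then immediate. Using $\langle p(\alpha), p(\beta)\rangle_Y = \int_Y \omega_\alpha \wedge \omega_\beta$ and $\langle p(\hat\alpha), p(\hat\beta)\rangle_X = \int_X \omega_{\hat\alpha}\wedge\omega_{\hat\beta}$, one observes that the integrand on $X$ is supported on the non-collapsed components of $X \setminus (\gamma \cup \Sigma_X)$, where it agrees with the pullback of $\omega_\alpha \wedge \omega_\beta$. Summing over these components covers $Y \setminus \Sigma_Y$, which differs from $Y$ only by a finite (hence measure-zero) set, so
$$\int_X \omega_{\hat\alpha}\wedge\omega_{\hat\beta} \;=\; \int_Y \omega_\alpha\wedge\omega_\beta,$$
which is the asserted equality.

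The main obstacle in this outline is justifying the construction in the middle paragraph, namely that extending the pullbacks by zero across $\gamma$ and the collapsed components yields genuine smooth closed forms on $X$ that honestly represent the relative classes $\hat\alpha, \hat\beta$, rather than merely being cohomologous to such representatives. This hinges entirely on the vanishing near $\Sigma_Y$: it is what ensures the descent through $g$, the pullback by $f$, and the extension by zero each act coherently on smooth forms and produce something that is genuinely closed and globally smooth on $X$, matching the abstract definition of $\hat\alpha = f^*(g^*)^{-1}\alpha$.
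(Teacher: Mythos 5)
Your argument is correct, but it takes the dual route to the one in the paper. The paper's proof passes through Poincar\'e duality: it identifies $H^1(Y,\Sigma_Y)$ with $H_1(Y\setminus\Sigma_Y)$ and $H^1(X,\Sigma_X)$ with $H_1(X\setminus\Sigma_X)$, under which both bilinear forms become algebraic intersection numbers of curves, and then simply observes that the inclusion is induced by a homeomorphism of $Y\setminus\Sigma_Y$ onto a subset of $X$, which cannot change intersection numbers or their signs. You instead stay on the cohomology side and realize the pairing as a wedge integral of de~Rham representatives chosen to vanish near $\Sigma_Y$, so that descent through $g$, pullback through $f$, and extension by zero across $\gamma$ and the collapsed components all happen at the level of honest smooth forms; the equality of integrals then follows from the same underlying locality. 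Both proofs rest on the identical geometric fact (the non-collapsed part of $X$ is identified, preserving orientation, with $Y$ minus a finite set, and the pairing is supported there). What the paper's version buys is the complete absence of analytic bookkeeping: intersection numbers are manifestly topological, so there is no need to choose representatives, worry about the regularity of $f$ on the non-collapsed components, or verify that the extension by zero represents the class $f^*(g^*)^{-1}\alpha$ rather than merely being cohomologous to a representative --- precisely the points you flag as the delicate part of your construction. What your version buys is an explicit formula-level verification that may generalize more readily to settings where one wants to track the forms themselves (e.g.\ the real and imaginary parts of $\omega$) through the degeneration. The remaining technicalities in your write-up (smoothing the collapse map on the non-collapsed components, or equivalently checking that the transported form has the correct absolute and relative periods) are standard and do not constitute a gap.
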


\begin{proof}
There is a Poincare duality isomorphism from
$H^1(Y, \Sigma_Y)$  to $H_1(Y-\Sigma_Y)$, obtained by viewing $H^1(Y,\Sigma_Y)$ as the dual of $H_1(Y,\Sigma_Y)$, and defining an intersection number pairing between $H_1(Y,\Sigma_Y)$ and $H_1(Y-\Sigma_Y)$. This isomorphism sends the bilinear form on $H^1(Y, \Sigma_Y)$ defined above to the bilinear form on $H_1(Y-\Sigma_Y)$ defined by algebraic intersection number.

%

Similarly, $H^1(X,\Sigma_X)$ is isomorphic to $H_1(X-\Sigma_X)$.  The inclusion of $H_1(Y-\Sigma_Y)$ into $H^1(X,\Sigma_X)$ is induced by the homomorphism of $Y-\Sigma_Y$ to a subset of $X$, which does not affect the number of intersections between curves or their signs. 
\end{proof}

\subsection{Preliminary applications}\label{SS:CompabilityApplications}

In this subsection we will let $\cM'$ be a component of a boundary of an invariant subvariety $\cM$. 

\begin{cor}\label{C:RankofBoundary}
If $\cM'$ is a codimension $d$ boundary of $\cM$ then the rank of $\cM'$ is at least $\mathrm{rk}(\cM) - d$.
\end{cor}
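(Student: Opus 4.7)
The plan is to combine the symplectic characterization of rank from Corollary \ref{C:RankCharacterization} with the compatibility statement of Lemma \ref{L:BilinearForm}, reducing the claim to a purely linear-algebraic statement about symplectic vector spaces.

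First, I would fix a point $(Y, \omega_Y) \in \cM'$ lying in the boundary of $\cM$ and a nearby point $(X, \omega) \in \cM$ in the WYSIWYG sense. Via the collapse-and-glue construction recalled in Section \ref{SS:SymplecticCompatibility}, we have an inclusion
\[
T_{(Y,\omega_Y)}(\cM') \;\hookrightarrow\; T_{(X,\omega)}(\cM),
\]
where the right hand side has dimension $d$ greater than the left. By Lemma \ref{L:BilinearForm}, the bilinear form $\langle p(\cdot), p(\cdot)\rangle$ on $T_{(X,\omega)}(\cM)$ restricts to the corresponding form on $T_{(Y,\omega_Y)}(\cM')$. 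So the problem becomes: given a symplectic bilinear form (in the sense of Corollary \ref{C:RankCharacterization}) on a vector space $U$, and a subspace $U'$ of codimension $d$, bound the maximal symplectic subspace of $U'$ from below in terms of the maximal symplectic subspace of $U$.

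Next, I would invoke Corollary \ref{C:RankCharacterization} to choose a symplectic subspace $W \subseteq T_{(X,\omega)}(\cM)$ of dimension $2\,\mathrm{rk}(\cM)$ with respect to $\langle p(\cdot), p(\cdot)\rangle$, and form $W_0 := W \cap T_{(Y,\omega_Y)}(\cM')$, which has codimension $d_0 \le d$ in $W$. Now I would apply the following standard fact: if $(W, \Omega)$ is a symplectic vector space and $W_0 \subseteq W$ has codimension $d_0$, then the kernel of $\Omega|_{W_0}$ is $W_0 \cap W_0^{\perp_\Omega}$, which has dimension at most $d_0$ since $\dim W_0^{\perp_\Omega} = d_0$. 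Consequently $W_0$ contains a symplectic subspace of dimension at least $\dim W_0 - d_0 \ge 2\,\mathrm{rk}(\cM) - 2d$. Since this subspace lies inside $T_{(Y,\omega_Y)}(\cM')$ and is symplectic for the same bilinear form (by Lemma \ref{L:BilinearForm}), applying Corollary \ref{C:RankCharacterization} again yields
\[
2\,\mathrm{rk}(\cM') \;\ge\; 2\,\mathrm{rk}(\cM) - 2d,
\]
which is the desired inequality.

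There is no real obstacle here: once Corollary \ref{C:RankCharacterization} and Lemma \ref{L:BilinearForm} are in hand, the content reduces to the elementary symplectic-linear-algebra fact above. The only mild subtlety worth stating carefully is that the codimension bound on $W_0$ inside $W$ can be strict (if $W$ is not transverse to $T_{(Y,\omega_Y)}(\cM')$), but this only improves the estimate.
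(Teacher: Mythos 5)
Your proof is correct and follows essentially the same route as the paper: both use Corollary \ref{C:RankCharacterization} and Lemma \ref{L:BilinearForm} to reduce to the linear-algebra fact that a codimension $d$ subspace of a symplectic vector space contains a symplectic subspace of codimension $2d$, which you justify via the radical $W_0\cap W_0^{\perp_\Omega}$. No substantive differences.
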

\begin{proof}
By Corollary \ref{C:RankCharacterization}, rank is half the dimension of a maximal symplectic subspace of the tangent space. 

Pick a point $(X,\omega) \in \cM$ close to a point $(Y,\omega_Y)\in\cM'$. The main results of \cite{MirWri,ChenWright} allow us to view $T_{(Y,\omega_Y)}(\cM')$ as a subspace of $T_{(X,\omega)}(\cM)$. Lemma \ref{L:BilinearForm} gives that the bilinear form on $T_{(Y,\omega_Y)}(\cM')$ is the restriction of the bilinear form on $T_{(X,\omega)}(\cM)$.

There is a symplectic subspace of $S$ of $T_{(X,\omega)}(\cM)$ of dimension $2\rank(\cM)$. Basic linear algebra gives that $S\cap T_{(Y,\omega_Y)}(\cM')$ has dimension at least $2\rank(\cM)-d$. 
Any codimension $d$ subspace of a symplectic vector space contains a symplectic subspace of codimension $2d$. Hence $\cM'$ has rank at least $\rank(\cM)-d$.
\end{proof}

If $(X,\omega) \in \cM$ is close to a point $(Y,\omega_Y)\in \cM'$, then we can define the \emph{vanishing cycles}, using the notation of the previous subsection, as  
$$V=\ker(f_*: H_1(X,\Sigma) \to H_1(Y', \Sigma_{Y'})).$$
In this case, the annihilator $\Ann(V)\subset H^1(X,\Sigma)$ can be identified with $H^1(Y, \Sigma_{Y})$, and the main results of \cite{MirWri,ChenWright} identify $T_{(Y,\omega_Y)}(\cM')$ with $\Ann(V) \cap T_{(X,\omega)}(\cM)$.

\begin{lem}\label{L:RankReducing}
$\cM'$ has rank less than that of $\cM$ if and only if there is a vanishing cycle $v \in V$ that is nonzero as a functional on $T_{(X, \omega)}(\cM)$, but which is zero on $\ker(p)\cap T_{(X,\omega)}(\cM)$. 
\end{lem}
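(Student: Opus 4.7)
The plan is to translate the statement into linear algebra on the finite-dimensional vector space $T := T_{(X,\omega)}(\cM) \subset H^1(X, \Sigma)$. Set $K := \ker(p) \cap T$ and $T' := T \cap \Ann(V)$; by the main results of \cite{MirWri, ChenWright} recalled just before the lemma, $T' = T_{(Y,\omega_Y)}(\cM')$. Since rank is half the dimension of the image under $p$, and $p(T') \subseteq p(T)$ automatically, the conclusion $\rank(\cM') < \rank(\cM)$ is equivalent to $p(T') \subsetneq p(T)$.

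The first key step is a general linear algebra identity: $p(T') = p(T)$ if and only if $T' + K = T$. This is immediate from the fact that the quotient map $T \to T/K$ is canonically identified with $p: T \to p(T)$, so $p(T')$ fills up $p(T)$ precisely when $T' + K$ fills up $T$. Thus the rank strictly drops if and only if $\Ann_{T^*}(T' + K) \neq 0$, i.e., there is a functional on $T$ vanishing on both $T'$ and $K$ but not identically zero.

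The second key step is to recognize that such functionals are exactly (restrictions of) vanishing cycles vanishing on $K$. Restrict elements of $V \subseteq H_1(X,\Sigma)$ to $T$ to obtain a subspace $V|_T \subseteq T^*$. By the definition of $T' = T \cap \Ann(V)$, every $v \in V$ annihilates $T'$, so $V|_T \subseteq \Ann_{T^*}(T')$. A dimension count gives equality: since $T' = \Ann_T(V|_T)$, one has $\dim T' = \dim T - \dim V|_T = \dim T - \dim \Ann_{T^*}(T')$, so the containment is an equality. Therefore $V|_T = \Ann_{T^*}(T')$.

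Combining these two steps: a vanishing cycle $v \in V$ with $v|_T \neq 0$ and $v|_K = 0$ corresponds to a nonzero element of $V|_T \cap \Ann_{T^*}(K) = \Ann_{T^*}(T') \cap \Ann_{T^*}(K) = \Ann_{T^*}(T' + K)$, which is nonzero precisely when $T' + K \subsetneq T$, precisely when $p(T') \subsetneq p(T)$, precisely when $\rank(\cM') < \rank(\cM)$. There is no real obstacle beyond correctly setting up the annihilator identity $V|_T = \Ann_{T^*}(T')$; the remainder is a direct translation between ``rank drops'' and ``a functional separates $T$ from $T' + K$.''
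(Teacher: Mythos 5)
Your linear-algebra core is correct and, in fact, more complete than the paper's: the identity $V|_T = \Ann_{T^*}(T')$ via the dimension count, together with the chain ``$p(T')=p(T)$ iff $T'+K=T$ iff $\Ann_{T^*}(T'+K)=0$,'' is precisely a proof of the sublemma that the paper only states. So steps two through five are fine.

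The gap is in your first reduction. You justify ``$\rank(\cM')<\rank(\cM)$ iff $p(T')\subsetneq p(T)$'' by saying that rank is half the dimension of the image under $p$. But $\rank(\cM')$ is by definition half the dimension of $p_Y\bigl(T_{(Y,\omega_Y)}(\cM')\bigr)$, where $p_Y:H^1(Y,\Sigma_Y)\to H^1(Y)$ is the projection for the \emph{boundary} surface; it is not $\tfrac12\dim p(T')$ with $p$ the projection for $X$. Under the identification of $T_{(Y,\omega_Y)}(\cM')$ with $T'=\Ann(V)\cap T$ these two quantities need not agree: $Y'$ is obtained from $Y$ by gluing finitely many points, which can create absolute cycles of $Y'$ (and hence, after lifting, of $X$) that do not come from $H_1(Y)$, so a class in $T'$ killed by $p_Y$ may survive under $p$. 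Thus $\dim p(T')\geq\dim p_Y(T')$, with strict inequality possible --- exactly in the disconnected situations this lemma must cover. The equivalence you want is still true, but the correct justification is the one the paper gives: by Corollary \ref{C:RankCharacterization} applied to $\cM'$ together with Lemma \ref{L:BilinearForm}, $2\rank(\cM')$ equals the rank of the possibly degenerate form $\langle p(\cdot),p(\cdot)\rangle$ restricted to $T'$, i.e.\ the rank of the symplectic form restricted to the subspace $p(T')\subseteq p(T)$; since the form is nondegenerate on $p(T)$, that rank equals $\dim p(T)$ if and only if $p(T')=p(T)$. With this substitution the rest of your argument goes through unchanged.
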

\begin{proof}
The characterization of rank in Corollary \ref{C:RankCharacterization} together with Lemma \ref{L:BilinearForm} give that $\rank(\cM)=\rank(\cM')$ if and only if 
$$p(\Ann(V) \cap T_{(X, \omega)}(\cM)) = p(T_{(X, \omega)}(\cM)).$$
Recall the following statement from linear algebra. 

\begin{sublem}
Let $T: W_1 \ra W_2$ be a linear map between finite dimensional vector spaces, and let $U$ be a subspace of $W_1^*$. Then $T(\Ann(U)) = T(W_1)$ if and only if every non-zero element of $U$ is non-zero on $\ker(T)$. 
\end{sublem}
%
%
%
%
%

The lemma  follows by setting $W_1 = T_{(X, \omega)}(\cM)$, $T = p$, and $U$ to be the image of $V$ in $W_1^*$.
\end{proof}

%
%
%

\section{Cylinder degenerations}\label{S:CylinderDegenerations}

\subsection{The twist space.}
In this section we will suppose that $\bfC$ is an equivalence class of generic cylinders on a surface $(X, \omega)$ in an invariant subvariety $\cM$. Recall that a cylinder is said to be \emph{$\cM$-generic}, or just \emph{generic} if $\cM$ is clear from context, if its boundary saddle connections remain parallel to the core curve of the cylinder in a neighborhood of $(X, \omega)$ in $\cM$. 

Following \cite[Definition 3.10]{ApisaWrightDiamonds}, we define cylinders to be open subsets, so that a cylinder does not include its boundary saddle connections. We can view $\bfC$ as a set of cylinders, but we also frequently view it as a subset of the surface.  

For each cylinder $C\in  \bfC$, let $\gamma_C$ denote its core curve, and let $h_C$ denote its height. Orient the $\gamma_C, C\in \bfC$, consistently, which means that their holonomies are all positive multiples of each other. By the Cylinder Deformation Theorem, the \emph{standard deformation} $$\sigma_{\bfC} = \sum_{C\in \bfC} h_C \gamma_C^*$$
belongs to $T_{(X, \omega)}(\cM)$; see \cite[Theorem 1.1]{Wcyl} for the original statement, \cite[Section 4.1]{MirWri} for the reformulation we are using here, and \cite{BDG} for a novel proof and generalization. Here  $\gamma_C^*$ denotes the cohomology class dual to $\gamma_C$, using the same duality used in Section \ref{SS:SymplecticCompatibility} and \cite[Section 4.1]{MirWri}. 

Sometimes the tangent space contains other elements of the same form. 

\begin{defn}
The twist space $\TwistC$ is the complex vector subspace of $T_{(X, \omega)}(\cM)$ consisting of vectors that can be represented as $\sum_{C\in \bfC} a_C \gamma_C^*$ with $a_C\in\bC$. 
\end{defn}

This space is so-named because $\TwistC$ is the complexification of the real subspace of  $T_{(X, \omega)}(\cM)$ tangent to deformations of $(X,\omega)$ in $\cM$ that twist or shear the cylinders of $\bfC$ while leaving the complement of $\bfC$ unchanged. 

For any $v\in T_{(X, \omega)}(\cM)$ small enough, there is a surface in $\cM$ whose periods coordinates differ from those of $(X,\omega)$ by $v$. We denote that surface by $(X,\omega)+v$. 

If $v\in \TwistC$ is small enough, then $(X,\omega)+v$ can be obtained from $(X,\omega)$ by deforming the cylinders in $\bfC$. In this case there is natural piecewise linear map 
$$ T_v: (X,\omega) \to (X,\omega)+v,$$
whose derivative is the identity off of $\bfC$, and which shears and dilates the cylinders in $\bfC$. Different cylinders in $\bfC$ may be sheared and dilated different amounts.

Recall the following result from \cite[Theorem 1.5]{MirWri}, which is also discussed in \cite[Lemma 6.10]{ApisaWrightDiamonds}. 

\begin{thm}[Mirzakhani-Wright]\label{T:CDTConverse}
$p(\TwistC)= \bC \cdot p(\sigma_\bfC)$.
\end{thm}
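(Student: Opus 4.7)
The easy direction $\bC\cdot p(\sigma_\bfC)\subseteq p(\TwistC)$ is immediate: the Cylinder Deformation Theorem places $\sigma_\bfC$ in $T_{(X,\omega)}(\cM)$ and hence in $\TwistC$, and because $\TwistC$ is by definition a complex subspace of the tangent space, $\bC\cdot\sigma_\bfC\subseteq\TwistC$. Moreover $p(\sigma_\bfC)\ne 0$, since pairing against a cross-curve supported in a single cylinder $C_0\in\bfC$ gives $\pm h_{C_0}\ne 0$. So the line $\bC\cdot p(\sigma_\bfC)\subseteq p(\TwistC)$ is one complex dimension, and the task is to show equality.

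For the reverse inclusion, take an arbitrary $v=\sum_{C\in\bfC}a_C\gamma_C^*\in\TwistC$, pick a base cylinder $C_0\in\bfC$, set $\lambda:=a_{C_0}/h_{C_0}\in\bC$, and put $v':=v-\lambda\sigma_\bfC\in\TwistC$. Then the coefficient of $\gamma_{C_0}^*$ in $v'$ vanishes, and it suffices to show $p(v')=0$.

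The plan is to combine two ingredients. First, $p(\TwistC)$ is isotropic for the symplectic form on $p(T_{(X,\omega)}(\cM))$, since the core curves of distinct parallel cylinders are pairwise disjoint and each core curve is simple, so $\langle p(\gamma_C^*),p(\gamma_{C'}^*)\rangle=\langle\gamma_C,\gamma_{C'}\rangle=0$ for all $C,C'\in\bfC$. Second, and more substantially, one exploits the fact that $\bfC$ is an entire $\cM$-equivalence class rather than merely a parallel collection of cylinders on $(X,\omega)$: parallelism must persist on a full neighborhood of $(X,\omega)$ in $\cM$. Differentiating the parallelism identities $\Im\bigl(p_C\,\overline{p_{C_0}}\bigr)\equiv 0$ (where $p_C:=\int_{\gamma_C}\omega$) along $v'\in T_{(X,\omega)}(\cM)$ yields real-linear constraints on the coefficients $a_C-\lambda h_C$. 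Combined with the Eskin--Mirzakhani--Mohammadi theorem, which ensures $\cM$ is cut out locally by real-linear equations in period coordinates, and with the symplectic non-degeneracy on $p(T_{(X,\omega)}(\cM))$ established in Corollary~\ref{C:RankCharacterization}, these constraints pin down $p(v')=0$.

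The main obstacle is the final rigidity step: local isotropy alone does not bound the complex dimension of $p(\TwistC)$ by one, so the argument must genuinely invoke global orbit-closure structure to distinguish an equivalence class $\bfC$ from an arbitrary parallel collection of cylinders. This is where the Eskin--Mirzakhani--Mohammadi machinery, packaged in \cite[Theorem 1.5]{MirWri}, is indispensable, and it is the only part of the argument that goes beyond infinitesimal considerations at the single point $(X,\omega)$.
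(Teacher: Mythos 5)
First, a point of orientation: the paper does not prove Theorem \ref{T:CDTConverse} at all --- it imports it verbatim as \cite[Theorem 1.5]{MirWri} --- so the comparison here is with the source you would need to reconstruct. Judged on its own terms, your write-up has a genuine gap at exactly the step you yourself flag as the ``main obstacle.'' After the (correct) easy inclusion and the reduction, you assert that the differentiated parallelism constraints, ``combined with the Eskin--Mirzakhani--Mohammadi machinery, packaged in \cite[Theorem 1.5]{MirWri},'' pin down the conclusion. But \cite[Theorem 1.5]{MirWri} \emph{is} the statement being proved, so as written the argument is circular, and no actual derivation appears. There is also a misstep in the reduction itself: $p(v')=0$ is a sufficient target but not an achievable one in general. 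The core curves $\gamma_C$ may be linearly dependent in $H_1(X)$ (homologous cylinders are the basic example), in which case $p(v')$ can be a \emph{nonzero} multiple of $p(\sigma_\bfC)$ even though the $\gamma_{C_0}^*$-coefficient of $v'$ vanishes; the correct goal is $p(v')\in\bC\cdot p(\sigma_\bfC)$. (A smaller quibble: pairing $\sigma_\bfC$ with a cross curve only shows $\sigma_\bfC\neq 0$ in relative cohomology; to see $p(\sigma_\bfC)\neq 0$ pair with an absolute class, e.g.\ $\langle p(\sigma_\bfC),p(\omega)\rangle=\Area(\bfC)\neq 0$.)

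The ingredients you name do in fact suffice, and the missing computation is short; isotropy of $p(\TwistC)$ is not the operative fact. Since $\cM$-parallelism persists on a neighborhood and periods are holomorphic in period coordinates, the real-valued ratio $\omega'(\gamma_C)/\omega'(\gamma_{C_0})=c_C/c_{C_0}$ is locally constant on $\cM$; differentiating gives $\eta(\gamma_C)=(c_C/c_{C_0})\,\eta(\gamma_{C_0})$ for every $\eta\in T_{(X,\omega)}(\cM)$, i.e.\ the core curves give collinear functionals on the tangent space (the fact the paper invokes in Sublemma \ref{SL:PairWithCoreCurve}). Hence for any $u=\sum_C b_C\gamma_C^*\in\TwistC$ and any $\eta\in T_{(X,\omega)}(\cM)$ one has $\langle p(u),p(\eta)\rangle=\pm\sum_C b_C\,\eta(\gamma_C)=\pm K_u\,\eta(\gamma_{C_0})$ with $K_u=\sum_C b_C c_C/c_{C_0}$. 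Applying this to $u=v$ and to $u=\sigma_\bfC$ (for which $K_{\sigma_\bfC}=\Area(\bfC)/c_{C_0}\neq 0$) shows that $p(v)-(K_v/K_{\sigma_\bfC})\,p(\sigma_\bfC)$ lies in $p(T_{(X,\omega)}(\cM))$ and pairs to zero with all of it; non-degeneracy of the symplectic form on $p(T_{(X,\omega)}(\cM))$ (the Avila--Eskin--M\"oller input, cf.\ Lemma \ref{L:BoundarySymplectic}) then forces it to vanish, giving $p(v)\in\bC\cdot p(\sigma_\bfC)$. Writing this out would turn your sketch into a complete, non-circular proof.
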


This theorem says that, up to purely relative cohomology classes (the kernel of $p$), the only cylinder deformations of $\bfC$ that remain in $\cM$ are multiples of the standard deformation. It can be viewed as a partial converse to the Cylinder Deformation Theorem. The following consequence, recorded in \cite[Corollary 1.6]{MirWri},  can be viewed as a partial generalization of the Veech dichotomy.   

\begin{cor}[Mirzakhani-Wright]\label{C:ConstantModuli}
If $\cM$ has no rel, then  $\cM$-parallel cylinders have rational ratios of moduli.  
\end{cor}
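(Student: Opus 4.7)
The plan is to combine the no-rel hypothesis with Theorem \ref{T:CDTConverse} to pin down the twist space, and then exploit the horocycle flow via a Kronecker-type equidistribution argument on a ``shear torus'' for $\bfC$.

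First I will use no-rel: $\ker(p)\cap T_{(X,\omega)}\cM=0$ means $p$ is injective on $T_{(X,\omega)}\cM$ and in particular on $\Twist(\bfC,\cM)$. Combined with Theorem \ref{T:CDTConverse}, which asserts $p(\Twist(\bfC,\cM))=\bC\cdot p(\sigma_\bfC)$, this forces $\Twist(\bfC,\cM)=\bC\cdot\sigma_\bfC$, a complex one-dimensional subspace. Now enumerate $\bfC=\{C_1,\dots,C_n\}$ with heights $h_i$ and circumferences $w_i$, put $\vec h=(h_1,\dots,h_n)$, and consider the real shearing map $\Phi(\vec\epsilon)=(X,\omega)+\sum_i\epsilon_i\gamma_{C_i}^*$ from $\bR^n$ into the stratum. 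Since $\cM$ is locally affine in period coordinates, near $\vec\epsilon=0$ we have $\Phi^{-1}(\cM)=\Twist(\bfC,\cM)\cap\bR^n=\bR\vec h$, a real $1$-dimensional subspace. Shearing $C_i$ by $w_i$ is a Dehn twist around its core, which preserves $\cM$, so $\Phi^{-1}(\cM)$ is invariant under the lattice $\Lambda=w_1\bZ\oplus\cdots\oplus w_n\bZ$ and descends to a closed subset $\bar S$ of the torus $T=\bR^n/\Lambda$.

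Next I will bring in the horocycle flow, which preserves $\cM$ and shears each $C_i$ by $th_i$; its trajectory through $(X,\omega)$ descends in $T$ to the line $L=\{t\vec h\bmod\Lambda:t\in\bR\}\subseteq\bar S$. The closure of a one-parameter subgroup of a torus is itself a closed connected subgroup, hence a subtorus. After rescaling coordinates by $1/w_i$ so that $\Lambda$ becomes $\bZ^n$ and $\vec h$ becomes $(m_1,\dots,m_n)$, where $m_i=h_i/w_i$ is the modulus of $C_i$, the standard Kronecker computation identifies $\dim\overline L$ with the $\bQ$-linear rank of $\{m_1,\dots,m_n\}\subset\bR$.

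To conclude, $\overline L\subseteq\bar S$ passes through $0\in T$, where $\bar S$ is locally $1$-dimensional by the previous analysis. Since $\overline L$ is a subtorus its dimension is constant, so $\dim\overline L\le 1$; combined with the obvious $\dim\overline L\ge 1$ (as $\vec h\neq 0$), we obtain $\dim\overline L=1$, and hence $\{m_1,\dots,m_n\}$ has $\bQ$-rank exactly $1$, which is the desired rational commensurability of the moduli. The step I expect to require the most care is this final one: making sure that the local $1$-dimensionality of $\bar S$ at a single point controls the global dimension of the closed subtorus $\overline L$. This hinges on the elementary fact that a connected subtorus through the origin has constant dimension, together with the local affine structure of $\cM$ in period coordinates which gives the clean identification $\Phi^{-1}(\cM)=\bR\vec h$ near the origin.
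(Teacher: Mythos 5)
Your strategy is essentially the standard twist-torus argument for this fact; the paper itself gives no proof, simply recording the statement from \cite[Corollary 1.6]{MirWri}, and your argument is in substance the one underlying that reference. The reduction is correct: no rel makes $p$ injective on $T_{(X,\omega)}\cM$, so Theorem \ref{T:CDTConverse} gives $\Twist(\bfC,\cM)=\bC\cdot\sigma_\bfC$; the classes $\gamma_{C_i}^*$ are linearly independent (each pairs nontrivially only with cross curves of its own cylinder), so near the origin $\Phi^{-1}(\cM)$ is exactly $\bR\vec h$ (granting the standard convention that $\cM$ is a single linear branch near $(X,\omega)$); the Kronecker computation and the final dimension count on the closed connected subgroup $\overline L$ are fine.

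The one step that is wrong as written is the appeal to the horocycle flow. The horocycle orbit $u_t(X,\omega)$ shears the \emph{entire} surface, not just the cylinders of $\bfC$; it is not contained in the image of $\Phi$ and does not descend to the line $\{t\vec h \bmod \Lambda\}$ in the torus. The fact you actually need is that the cylinder-shear path $\Phi(t\vec h)=(X,\omega)+t\sigma_\bfC$ lies in $\cM$ for all $t\in\bR$, and that is precisely the Cylinder Deformation Theorem (the path is defined for all time since real shears never degenerate the surface). With that substitution --- which costs nothing, since you already invoke $\sigma_\bfC\in T_{(X,\omega)}(\cM)$ --- the containment $L\subseteq\bar S$ holds, $\overline L\subseteq\bar S$ follows because $\bar S$ is closed, and the rest of your argument goes through.
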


In particular, this implies that the ratio of moduli of $\cM$-parallel cylinders is locally constant in $\cM$.

\subsection{The definition of cylinder degenerations.} Thus, for any $v\in \TwistC$, the surface $(X,\omega)+tv$ is well defined and contained in $\cM$ for all $t>0$ sufficiently small. At the very least, this path is defined until the time $t_v$ when some cylinder in $\bfC$ reaches zero height. We assume from now on that the height of some cylinder in $\bfC$ decreases along this path, so $t_v<\infty$, and we denote by $\bfC_v$ the subset of cylinders whose heights go to zero as $t$ approaches $t_v$.  

\begin{ex}\label{E:Cv}
If $\bfC$ is horizontal and $v=\sum_{C\in \bfC} a_C \gamma_C^*$, then 
$$t_v = \min( -h_C / \Im(a_C) : \Im(a_C)<0),$$
and $C \in \bfC_v$ if and only if it realizes this minimum. 
\end{ex}

\begin{defn}
If $\bfC$ is an equivalence class of generic cylinders, and $v\in \TwistC$,  then the path $(X,\omega)+tv \in \cM, t\in [0,t_v)$ is called a \emph{collapse path} if it diverges in the stratum and if $\overline{\bfC}_v$ is not the whole surface. 
\end{defn}

The second assumption prevents the area from going to zero, and the first ensures that this path does in fact degenerate the surface. 

\begin{ex}
If $\bfC$ is horizontal and $v=-i \sigma_{\bfC} = -i \sum_{C\in \bfC} h_C \gamma_C^*$, then deforming in the $v$ direction vertically collapses all the cylinders in $\bfC$, so $\bfC_v=\bfC$. In this case $t_v=1$, and the path diverges in the stratum if and only if $\overline{\bfC}$ contains a vertical saddle connection.
\end{ex}

For each $t<t_v$, the map $$T_{tv} : (X,\omega)\to (X,\omega)+tv$$ 
has constant derivative on each cylinder in $\bfC$, and this derivative has a 1 eigenvector in the direction of the cylinders. For each cylinder in $\bfC_v$, the other eigenvalue of the derivative must be in $(0,1) \subseteq \mathbb{R}$, since the area of these cylinders decreases, and we will call the associated eigendirection the \emph{maximally contracted direction}. 

\begin{ex}\label{E:Derivative}
If $\bfC$ is horizontal and $v=\sum_{C\in \bfC} a_C \gamma_C^*$, the derivative of $T_{tv}$ on $C$ is 
$$\begin{pmatrix} 1 &  \frac{t\Re(a_C)}{h_C} \\ 0 & 1 + \frac{t\Im(a_C)}{h_C}  \end{pmatrix}.$$
\end{ex}

\begin{ex}
Figures \ref{F:MultivaluedGood} and \ref{F:MultivaluedBad} both illustrate collapse paths where $\bfC=\{C\}$ is a single horizontal cylinder, and $v=-ih_C \gamma_C^*$, so the degeneration path vertically collapses $\bfC$ and converges to a surface $\Col_v(X,\omega)$ where $\bfC$ is completely collapsed. In both examples, if we glue together the two points $z_1, z_2$ on the limit, it is possible to define a collapse map $\Col_v$. We can choose not to glue these points together, at the expense of allowing the map $\Col_v$ to be multivalued. 

\begin{figure}[h!]
\includegraphics[width=0.75\linewidth]{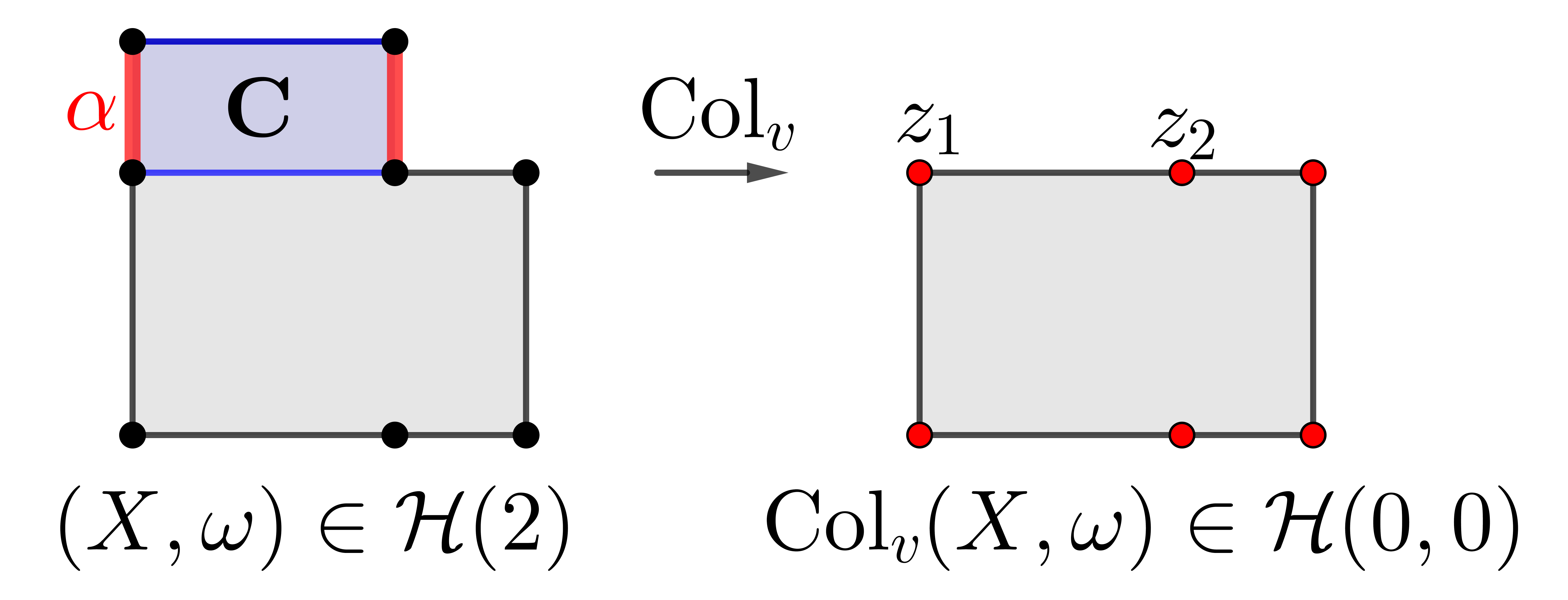}
\caption{For any point $p$ on the vertical segment $\alpha$, one may define $\Col_v(p)=\{z_1, z_2\}$.  }
\label{F:MultivaluedGood}
\end{figure}

\begin{figure}[h!]
\includegraphics[width=0.75\linewidth]{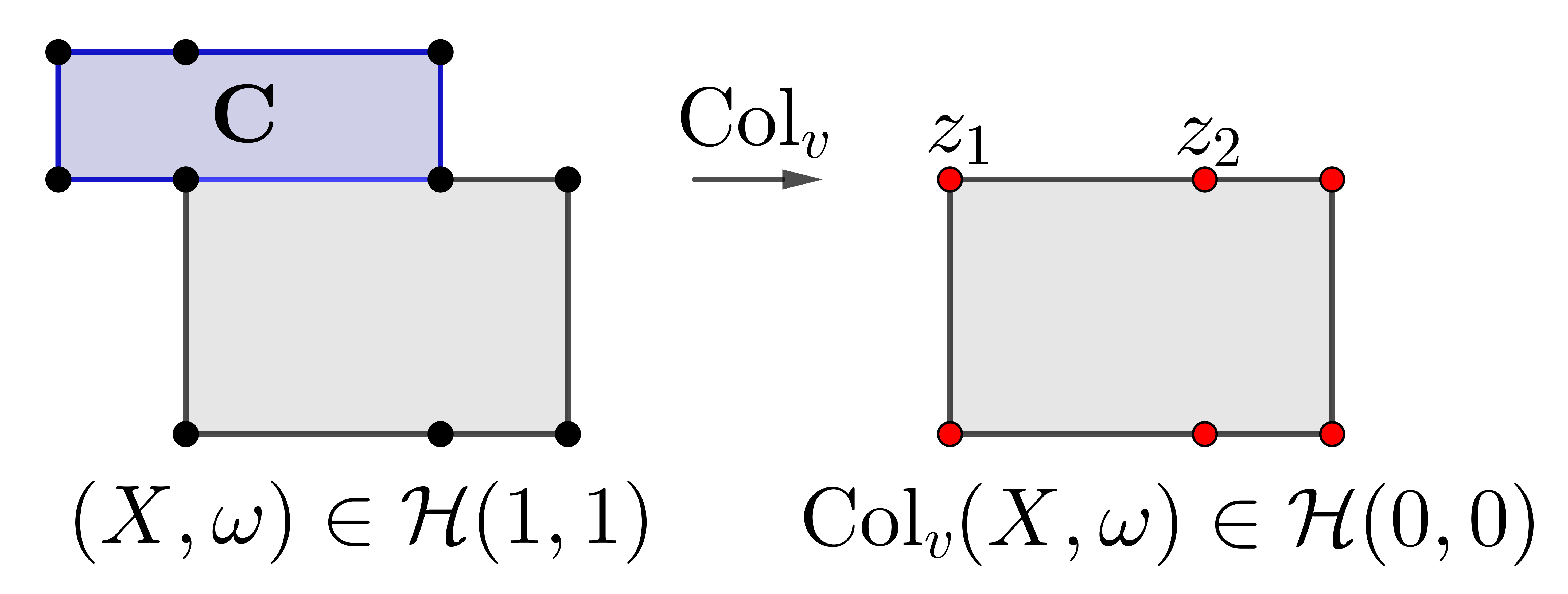}
\caption{For any point $p$ in the top left square, one may define $\Col_v(p)=\{z_1, z_2\}$.  }
\label{F:MultivaluedBad}
\end{figure}
Note that in Figure \ref{F:MultivaluedGood}, $\Col_v$ is only multivalued on single saddle connection, whereas in Figure \ref{F:MultivaluedBad}, $\Col_v$ is multivalued on an open set. 
\end{ex}

Recall from Section \ref{SS:SymplecticCompatibility} or \cite{ChenWright} that a point in the WYSIWYG partial compactification is obtained in two steps, first via a collapse and then ungluing a finite set of points. Because of the ungluing, multivalued collapse maps are typical in this context. 

In the next subsection, we verify the following.

\begin{lem}\label{L:Converge}
The collapse path converges as $t\to t_v$. The limit $\Col_v(X,\omega)$ is the image of PL map $$\Col_v:(X,\omega)\to \Col_v(X,\omega).$$
The derivative of $\Col_v$ is the identity off $\overline{\bfC}$; is constant and invertible on each cylinder of $\bfC\setminus \bfC_v$; and has kernel equal to the maximally contracted direction on each cylinder of $\bfC_v$. 

At a point $p$ where $\Col_v$ is multivalued, $\Col_v(p)$ is a finite subset of the singularities and marked points of $\Col_v(X)$.
$\Col_v$ is single-valued except possibly on a finite union of line segments. 
\end{lem}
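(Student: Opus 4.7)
The plan is to first apply a rotation to arrange that $\bfC$ is horizontal, and then use the explicit formula from Example \ref{E:Derivative} for the derivative of $T_{tv}$ on each cylinder $C \in \bfC$. Writing $v = \sum_{C \in \bfC} a_C \gamma_C^*$, this derivative is the matrix $\begin{pmatrix} 1 & t\Re(a_C)/h_C \\ 0 & 1+t\Im(a_C)/h_C \end{pmatrix}$. As $t \to t_v$, the lower-right entry stays bounded away from $0$ for $C \in \bfC \setminus \bfC_v$ (yielding an invertible limit), and tends to $0$ for $C \in \bfC_v$ (yielding a rank-one limit whose kernel is precisely the maximally contracted direction).

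Next I would construct $\Col_v(X, \omega)$ cylinder-by-cylinder. On the complement of $\overline{\bfC}$ the map $T_{tv}$ is the identity for all $t$, so this region is preserved unchanged in the limit. Each cylinder $C \in \bfC \setminus \bfC_v$ limits to a cylinder with modified height and twist determined by the limiting matrix above. Each $C \in \bfC_v$ collapses along the maximally contracted direction onto a horizontal segment of length equal to the circumference of $C$, with the top and bottom boundary saddle connections both identified with this segment. Gluing the resulting pieces along their shared boundaries yields a candidate target surface $\Col_v(X,\omega)$ together with a piecewise linear map $\Col_v$ whose derivative matches the description in the lemma. Convergence of the periods of $(X,\omega)+tv$ as $t \to t_v$ is automatic from their linear dependence on $t$, and the hypotheses that the path diverges in the stratum and that $\overline{\bfC}_v \neq X$ ensure that $\bfC_v$ is nonempty and that the limit has positive area, so $\Col_v(X,\omega)$ lies in the WYSIWYG boundary.

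The main difficulty is the multivalued behavior. Collapsing a cylinder $C \in \bfC_v$ identifies its top and bottom boundary saddle connections, which can merge distinct singularities of $(X,\omega)$ into a single node whose combinatorics need not correspond to a legitimate stratum boundary point. In the WYSIWYG partial compactification, as described in Section \ref{SS:SymplecticCompatibility}, any such node must be ungled into finitely many separate singularities or marked points. I would then show that $\Col_v$ becomes multivalued precisely at the preimages of this finite collection of ungled points. Since each such preimage lies on some collapsed saddle connection and the fiber of the maximal contraction through a given point is a single chord of the collapsing cylinder, the multivalued locus is a finite union of line segments, as required; in particular the two-dimensional pathology suggested by Figure \ref{F:MultivaluedBad} does not arise from a collapse path in the sense defined above, because only nodal identifications along collapsed saddle connections occur here. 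Verifying cleanly that the WYSIWYG ungluing introduces only finitely many separated points, rather than higher-dimensional strata of separations, is the main technical obstacle I anticipate.
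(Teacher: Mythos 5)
Your derivative computations and the broad outline of the construction are fine, but there is a genuine gap at exactly the point the lemma is really about, and you flag it yourself without resolving it: ruling out the two-dimensional multivalued behavior of Figure \ref{F:MultivaluedBad}. Your argument rests on the assertion that ``the fiber of the maximal contraction through a given point is a single chord of the collapsing cylinder.'' This is false in general. The fiber of the collapse through a point of $\bfC_v$ is a \emph{concatenation} of chords in the maximally contracted direction: a chord exits the top of one cylinder of $\bfC_v$ and may continue into another cylinder of $\bfC_v$ (or re-enter the same one), since adjacent boundaries of stacked collapsing cylinders lie in the interior of $\overline{\bfC}_v$. If such a concatenation returns to a cylinder it has already visited, an entire open region gets identified to a point --- this is precisely what happens in Figure \ref{F:MultivaluedBad}, which \emph{is} a collapse path in the sense of the definition (a single horizontal cylinder collapsed via $v=-ih_C\gamma_C^*$). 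So the pathology is not excluded by the definition of a collapse path; it is excluded only by the hypothesis that $\bfC$ consists of generic cylinders, which your proposal never uses.

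The paper's route is to prove (Lemmas \ref{L:Loop1} and \ref{L:Loop2}) that any closed loop in $\overline{\bfC}_v$ (respectively in $\overline{\bfC}$, avoiding singularities, in the case $\bfC_v=\bfC$) has zero holonomy in the collapsing direction; the first uses Theorem \ref{T:CDTConverse} to write $v$ as a multiple of $\sigma_\bfC$ plus a purely relative class, and the second is an inductive argument using Smillie--Weiss and genericity. From this one deduces (Lemma \ref{L:OnlyOnce}, Corollary \ref{C:OnlyOnce}) that a concatenation of collapsed segments avoiding singularities enters each cylinder of $\bfC_v$ at most once, hence terminates on the boundary of $\bfC_v$ or at a singularity in both directions; only then is the quotient well defined and the multivalued locus a finite union of segments. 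You need this (or an equivalent holonomy argument) --- appealing to the WYSIWYG ungluing does not help, because one must first know that the limit exists and what it is before the compactification's structure theory applies. Separately, your claim that convergence ``is automatic from the linear dependence of the periods on $t$'' is too quick: convergence in the WYSIWYG partial compactification is a statement about almost-isometric comparison maps off neighborhoods of the zeros (the criterion of \cite[Definition 2.2]{MirWri}), not about periods, and the paper constructs such maps $g_t$ explicitly. This second point is more routine, but the first is a real missing idea.
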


All but the final claim in this lemma should be viewed as intuitive, and all of the claims may even be clear in any given example. However there is non-trivial content to the final claim that $\Col_v$ is only as badly multivalued as the example in Figure \ref{F:MultivaluedGood}; we will give a proof that the behaviour in Figure \ref{F:MultivaluedBad} is ruled out by the requirement that $\bfC$ consists of generic cylinders. 

\begin{defn}
The limit $\Col_v(X,\omega)$ will be called the cylinder degeneration corresponding to the collapse path $(X,\omega)+tv$. 
The component of the boundary of $\cM$ containing $\Col_v(X,\omega)$ will be denoted $\cM_v$.
\end{defn}

\subsection{The proof of Lemma \ref{L:Converge}.}  The discussion in this section, which is somewhat technical, is not used the remainder of the paper, except for one appeal to Lemma \ref{L:Loop2} in the proof of Lemma \ref{L:FreeMarkedPointAfterDegenerating}. This section can thus be skipped on a first reading of this paper. 

Throughout this subsection we will assume that $\bfC$ is an $\cM$-equivalence class of cylinders on a surface $(X,\omega)$ in an invariant subvariety $\cM$ and that $v\in \TwistC$. We will assume without loss of generality that the cylinders in $\bfC$ are horizontal. 

We begin with two general lemmas, which may be of some independent interest. The first does not require our assumption that $\bfC$ consists of generic cylinders. 

\begin{lem}\label{L:Loop1}
If $\bfC_v \ne \bfC$, then the imaginary part of the holonomy of any closed loop in $\overline{\bfC}_v$ is zero. 
\end{lem}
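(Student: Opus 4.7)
The plan is to compute $\Im(\operatorname{hol}_\omega(\gamma))$ in two independent ways and reconcile them using Theorem \ref{T:CDTConverse}. Without loss of generality, $\bfC$ is horizontal. For a closed loop $\gamma\subset\overline{\bfC}_v$, the vertical displacement is directly $S:=\sum_{C\in\bfC_v}h_C(\gamma\cdot\gamma_C)$, since $\gamma$ never enters the interior of a cylinder outside $\bfC_v$. I would then track the imaginary holonomy along the collapse path $(X,\omega)+sv$ for $s\in[0,t_v)$: the heights of cylinders in $\bfC_v$ shrink linearly as $h_C(1-s/t_v)$, so the deformed loop $T_{sv}(\gamma)$ has imaginary holonomy $(1-s/t_v)S$ on the deformed surface. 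Equating this with the cohomological expression $S+s\Im(\langle v,\gamma\rangle)$ for every $s$ forces $\Im(\langle v,\gamma\rangle)=-S/t_v$.

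Theorem \ref{T:CDTConverse} then supplies a second expression for $\langle v,\gamma\rangle$. Writing $v=\lambda\sigma_\bfC+r$ with $r\in\ker(p)\cap\TwistC$ and using that $\gamma$ is an absolute cycle gives $\langle v,\gamma\rangle=\lambda\langle\sigma_\bfC,\gamma\rangle=\lambda S$, so $\Im(\langle v,\gamma\rangle)=S\Im(\lambda)$. Combining the two identities yields
$$S\bigl(1+t_v\Im(\lambda)\bigr)=0,$$
so either $S=0$, which is the lemma, or $\Im(\lambda)=-1/t_v$.

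The main obstacle will be to rule out the second possibility, and this is where the hypothesis $\bfC_v\neq\bfC$ must be used. My plan is as follows. Writing $r=\sum_C b_C\gamma_C^*$ with $b_C=a_C-\lambda h_C$, the assumption $\Im(\lambda)=-1/t_v$ forces $\Im(b_C)=\Im(a_C)+h_C/t_v$, which vanishes on $\bfC_v$ by the definition of $\bfC_v$ and is strictly positive on $\bfC\setminus\bfC_v$ (nonempty by hypothesis). Applying $\Im$ to the relation $p(r)=0$ and using Poincar\'e duality yields
$$\sum_{C\in\bfC\setminus\bfC_v}\Im(b_C)\,[\gamma_C]=0\qquad\text{in }H_1(X;\bR).$$
The finishing move is to pair this relation with $[\Re(\omega)]\in H^1(X;\bR)$: each pairing $\langle[\Re(\omega)],[\gamma_C]\rangle=\Re(\operatorname{hol}(\gamma_C))$ is the circumference of $C$, which is strictly positive because $\bfC$ is horizontal and the core curves are consistently oriented. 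Thus the pairing becomes a strictly positive sum equal to zero, a contradiction, so $\Im(\lambda)=-1/t_v$ cannot occur and $S=0$.
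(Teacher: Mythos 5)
Your proof is correct and follows essentially the same route as the paper's: both decompose $v$ via Theorem \ref{T:CDTConverse} as a multiple of $\sigma_{\bfC}$ plus a purely relative class, use that the purely relative part vanishes on closed loops, and exploit $\bfC_v \ne \bfC$ through a sign argument on the coefficients of that relative part (your pairing with $[\Re(\omega)]$ is the same positivity fact the paper invokes when it notes that a nonzero purely relative class in $\TwistC$ must have coefficients of both signs). The only difference is bookkeeping: the paper pins down $r_C$ as a fixed negative multiple of $h_C$ on $\bfC_v$ and evaluates $r$ on the loop directly, whereas you isolate the dichotomy $S=0$ or $\Im(\lambda)=-1/t_v$ and then exclude the second branch.
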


\begin{proof}
Let $\alpha$ be a closed loop in $\overline{\bfC}_v$. Recall that $\sigma_\bfC=\sum h_C \gamma_C^*$ denotes the standard deformation.

Without loss of generality, assume that $v$ is purely imaginary. By Theorem \ref{T:CDTConverse}, we can write $v$ as $v=c i \sigma_\bfC+r$, where $c\in \bR$, and $r\in \TwistC$ is purely relative. 

If  we write $r=\sum i r_C \gamma_C^*$, then the computation in Example \ref{E:Cv} shows that $C\in \bfC_v$ if and only if 
$$ t_v = -h_C / (r_C+ c h_C).$$

We now claim that $r_C$ is not zero when $C\in \bfC_v$. Indeed, the fact that $\bfC_v$ is neither empty nor all of $\bfC$ implies that $r$ is non-zero. Since $r$ is purely relative, this means that some $r_C$ are positive and some negative. On $(X, \omega) + tv$, the height of $C$ is $(1+tc+\frac{tr_C}{h_C}) h_C $ so we notice that the heights of cylinders with $r_C < 0$ have a smaller ratio to their original height than those of cylinders with $r_C >0$.   Since the cylinders in $\bfC_v$
reach height zero first, we have that $r_C < 0$ when $C \in \bfC_v$.


For all $C\in \bfC_v$, we compute that 
$$r_C = -\frac{c t_v+1}{t_v} h_C.$$
It is important that $r_C$ is a constant non-zero multiple of $h_C$, and that the constant $r_C/h_C$ does not depend on $C$. 

Hence, since $\alpha$ is contained in $\overline\bfC_v$, 
$$r(\alpha) = -i\frac{c t_v+1}{t_v} \sum h_C \gamma_C^*(\alpha).$$
Again since $\alpha$ is contained in $\overline\bfC_v$, the expression $\sum h_C \gamma_C^*(\alpha)$ computes the imaginary part of the holonomy of $\alpha$, and we get
$$r(\alpha) = -i\frac{c t_v+1}{t_v} \Im\left(\int_\alpha \omega\right).$$

Since $r$ is purely relative and $\alpha$ is a closed loop, by definition $r(\alpha)=0$. Hence the above equality gives $\Im(\int_\alpha \omega)=0$, as desired. 
\end{proof}

\begin{lem}\label{L:Loop2}
Assume that $\bfC$ consists of generic cylinders and that $\overline\bfC\neq (X,\omega)$. Then the imaginary part of the holonomy of any closed loop in $\overline{\bfC}$ that is disjoint from the singularities of $\omega$ is zero.  
\end{lem}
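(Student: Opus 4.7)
My plan is to reduce Lemma~\ref{L:Loop2} to Lemma~\ref{L:Loop1} via a case split on $|\bfC|$ followed by a decomposition argument.

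First, I would handle the case $|\bfC|=1$ directly. Here $\overline{\bfC}$ is the closure of a single horizontal cylinder $C$, which deformation retracts onto its horizontal core $\gamma_C$. Any closed loop $\alpha \subset \overline{C}\setminus\Sigma$ is homotopic within $\overline{C}\setminus\Sigma$ to a multiple of $\gamma_C$, whose imaginary holonomy is zero because $\gamma_C$ is horizontal.

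For the main case $|\bfC| \ge 2$, my plan is to show that for each cylinder $C_0 \in \bfC$, there exists a deformation $v_0 \in \TwistC$ which gives a valid collapse path with $\bfC_{v_0} = \bfC \setminus \{C_0\}$. Concretely, I would write $v_0 = -i\sigma_\bfC + r$, where $r \in \ker(p) \cap \TwistC$ is a purely relative element chosen so that only $C_0$'s collapse time is pushed past $t = 1$, while every other cylinder in $\bfC$ still collapses at $t = 1$. The assumption $\overline{\bfC} \ne X$ guarantees that $\overline{\bfC_{v_0}} \subseteq \overline{\bfC}$ is still a proper subset of $X$, so the collapse path is valid in the sense of the earlier definition. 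Lemma~\ref{L:Loop1} then applies to $v_0$ and yields that any closed loop in $\overline{\bfC \setminus \{C_0\}} \setminus \Sigma$ has zero imaginary holonomy.

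To conclude for a general closed loop $\alpha \subset \overline{\bfC} \setminus \Sigma$, I would use a Mayer--Vietoris style decomposition for the cover of $\overline{\bfC} \setminus \Sigma$ by the subspaces $\{\overline{\bfC \setminus \{C_0\}} \setminus \Sigma : C_0 \in \bfC\}$. Since these subspaces meet along neighborhoods of the shared saddle connections, a standard cutting and pasting along saddle connections expresses the homology class $[\alpha]$ as a sum of classes each supported in some $\overline{\bfC \setminus \{C_0\}} \setminus \Sigma$. Applying the previous step to each piece, the imaginary holonomies sum to zero, giving $\Im \int_\alpha \omega = 0$.

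The main obstacle will be the construction of the deformations $v_0$ when the purely relative part $\ker(p) \cap \TwistC$ is small --- for instance, when $\cM$ has no rel, so that $\TwistC = \bC \sigma_\bfC$ is one-dimensional and no $r$ of the type above exists. In that situation, the plan would be to use the genericity hypothesis on $\bfC$ together with $\overline{\bfC} \ne X$ to construct $v_0$ by combining $-i\sigma_\bfC$ with a deformation in $T_{(X,\omega)}(\cM)$ that is supported outside $\overline{\bfC}$: genericity ensures that such a combination remains a valid deformation without destroying the cylinder structure on $\bfC$, and $\overline{\bfC} \ne X$ ensures that the outside region provides enough additional tangent vectors. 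This interplay between the outside of $\overline{\bfC}$ and the genericity of $\bfC$ is the delicate part of the argument.
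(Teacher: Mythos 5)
Your reduction to Lemma~\ref{L:Loop1} has two genuine gaps, and the paper takes a different route precisely to get around them. First, the base case $|\bfC|=1$ is not a triviality: the closure $\overline{C}$ inside $X$ need not deformation retract onto the core curve, because the top and bottom boundaries of $C$ may be identified with each other along a saddle connection (this is the situation of Figures~\ref{F:MultivaluedGood} and~\ref{F:MultivaluedBad}). In that case $\overline{C}\setminus\Sigma$ contains a loop crossing $C$ once, whose imaginary holonomy is the height of $C$; ruling this out is exactly the content of the lemma for a single cylinder, so the direct argument is circular. Second, and more seriously, the deformation $v_0$ with $\bfC_{v_0}=\bfC\setminus\{C_0\}$ need not exist: by Theorem~\ref{T:CDTConverse}, $\TwistC$ is spanned by $\sigma_\bfC$ together with $\ker(p)\cap\TwistC$, and when the latter vanishes (e.g.\ when $\cM$ has no rel, which is the case Corollary~\ref{C:ExactGraphsRel0} most needs) every $v\in\TwistC$ collapses all of $\bfC$ simultaneously. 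Your proposed fix --- adding a deformation supported outside $\overline{\bfC}$ --- cannot help: such a deformation does not change the heights of the cylinders in $\bfC$, so it cannot separate their collapse times, and it takes $v_0$ outside $\TwistC$, where Lemma~\ref{L:Loop1} no longer applies. (The Mayer--Vietoris step also drops exactly the dangerous classes: a loop going up through $C_1$ and down through $C_2$, entering and exiting through saddle connections in different components of $\overline{C_1}\cap\overline{C_2}$, is not a sum of cycles each supported in the closure of a proper sub-collection when $|\bfC|=2$.)

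The paper's proof instead takes a counterexample $\cM$ of minimal dimension and degenerates something \emph{disjoint} from $\bfC$: Smillie--Weiss produces a cylinder equivalence class $\bfD$ disjoint from and non-parallel to $\bfC$, and collapsing $\bfD$ lands in a smaller-dimensional $\cM'$ on which $\bfC$ still consists of generic cylinders and $\overline{\bfC}\neq X$. By minimality the lemma holds in $\cM'$, which forces $\bfC$ to have become a \emph{proper} subset of an $\cM'$-equivalence class $\bfE$; then $v=-i\sigma_\bfC\in\Twist(\bfE,\cM')$ satisfies $\bfE_v=\bfC\subsetneq\bfE$, and Lemma~\ref{L:Loop1} applies. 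That is how the hypothesis $\bfC_v\neq\bfC$ of Lemma~\ref{L:Loop1} is manufactured when $\TwistC$ itself is too small to do it; some mechanism of this kind is unavoidable, and your proposal does not supply one.
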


Compared to the previous lemma, the additional assumption that the cylinders in $\bfC$ are generic and that the loop avoids singularities will ensure that the loop continues to be a closed loop and continues to be contained in $\overline{\bfC}$ even after degenerations and perturbations. So the assumptions in Lemma \ref{L:Loop2} should be considered as a more robust version of the assumptions in Lemma \ref{L:Loop1}. 

\begin{proof}
Suppose not in order to deduce a contradiction. Let $\cM$ be a counterexample with smallest possible dimension. 

Since $\overline\bfC\neq (X,\omega)$, Smillie-Weiss \cite[Corollary 6]{SW2} gives the existence of a cylinder $D$ disjoint from $\bfC$. Perturbing if necessary, we can assume that $D$ is not parallel to $\bfC$. 

Let $\bfD$ be the equivalence class of $D$. A short argument using the Cylinder Deformation Theorem gives that $\bfD$ is disjoint from $\bfC$; compare for example to \cite[Propostion 3.2]{NW}.

We can now degenerate $\bfD$ using a standard deformation to obtain a surface in a smaller dimensional orbit closure $\cM'$. Since $\bfC$ and $\bfD$ are not parallel and hence not adjacent, each saddle connection on the boundary of $\bfC$ persists under this degeneration, and $\bfC$ continues to consist of generic cylinders. Similarly the assumption that $\bfC$ does not cover the whole surface continues to hold. 

Since we considered a minimal counterexample, it must be that $\bfC$ is not an $\cM'$ equivalence class; the degeneration must have caused cylinders to become generically parallel to those in $\bfC$. So $\bfC$ is actually a strict subset of an $\cM'$-equivalence class $\bfE$. In this case, taking $v=-i\sigma_\bfC$ to be the standard  deformation, which decreases heights of cylinders in $\bfC$, we have that $v\in \Twist(\bfE, \cM')$ and $\bfE_v=\bfC$. Thus Lemma \ref{L:Loop1} gives that the imaginary part of the holonomy of any relevant closed loop is in fact zero, which contradicts the assumption that $\cM$ is a counterexample to the claim.
\end{proof}

For any $C\in \bfC_v$, define a \emph{collapsed segment} of $C$ to be a maximal line segment contained in $\overline{C}$ in the direction of the maximally contracted direction, oriented in the upwards direction. We do not require the endpoints to be singularities or marked points.

\begin{lem}\label{L:OnlyOnce} 
Suppose that $(X, \omega) \ne \overline{\bfC}_v$ and that $\bfC$ consists of generic cylinders if $\bfC_v = \bfC$.  Any path in $\overline{\bfC}_v$ that is a concatenation of collapsed segments and that is disjoint from the singularities can only enter each cylinder in $\bfC_v$ at most once. 
\end{lem}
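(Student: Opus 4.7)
The plan is to argue by contradiction: assuming that some path $\gamma$ as in the statement enters a cylinder $C \in \bfC_v$ twice, I will construct a closed loop $\eta$ in $\overline{\bfC}_v$ that avoids the singularities and has strictly positive imaginary holonomy. This contradicts Lemma \ref{L:Loop1} when $\bfC_v \neq \bfC$, and contradicts Lemma \ref{L:Loop2} (using the hypothesis that $\bfC$ consists of generic cylinders and that $\overline{\bfC} = \overline{\bfC}_v \neq (X,\omega)$) when $\bfC_v = \bfC$.

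For the construction, I will pick two collapsed segments $s_1$ and $s_2$ of $\gamma$ lying in $C$ that are consecutive along $\gamma$, meaning $\gamma$ does not re-enter $C$ between them. Let $q_1$ denote the top endpoint of $s_1$ and $p_2$ the bottom endpoint of $s_2$: both lie on $\partial C$ and, since $\gamma$ avoids the singularities, neither of these points is a singularity. Let $\gamma'$ be the subpath of $\gamma$ from $q_1$ to $p_2$, which by construction is a (possibly empty) concatenation of complete collapsed segments in $\overline{\bfC}_v$ whose interiors avoid $C$. I then close $\gamma'$ up by a path $\delta \subset \overline{C}$ from $p_2$ to $q_1$, chosen to avoid the finitely many singularities on $\partial C$, and set $\eta = \gamma' \cdot \delta$.

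The payoff is the following positivity computation: every collapsed segment is oriented upward in the maximally contracted direction and runs from the bottom to the top of its horizontal cylinder, so its imaginary holonomy equals the height of that cylinder, in particular strictly positive. Hence $\gamma'$ contributes a non-negative amount to $\Im \int_\eta \omega$, while $\delta$, which climbs from the bottom to the top of $C$, contributes exactly $h_C > 0$; this forces $\Im \int_\eta \omega > 0$, yielding the desired contradiction.

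The main subtlety I anticipate is the degenerate case in which $q_1 = p_2$ as points of $X$. This can in principle occur because the top and bottom boundaries of $C$ need not be disjoint in $X$ (they may share saddle connections), and it would make $\gamma'$ trivial. Even in this case, however, $\delta$ alone is a nontrivial closed loop in $\overline{C}$ that winds once from the bottom boundary to the top boundary of $C$, so its imaginary holonomy is still $h_C > 0$, and the argument goes through unchanged.
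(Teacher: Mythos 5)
Your proof is correct and follows essentially the same strategy as the paper's: form a closed loop in $\overline{\bfC}_v$, disjoint from the singularities, with strictly positive imaginary holonomy, and contradict Lemma \ref{L:Loop1} (when $\bfC_v \ne \bfC$) or Lemma \ref{L:Loop2} (when $\bfC_v = \bfC$). The only cosmetic difference is that you close the loop with a cross arc of $C$ contributing $h_C$, whereas the paper closes it along the core curve of $C$ contributing $0$; your handling of the degenerate case $q_1 = p_2$ is a sensible extra precaution.
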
 

\begin{proof}
Otherwise, we can form a closed loop $\alpha$ by starting in the middle of a cylinder $C$ in $\bfC_v$, traveling upwards along the  path until we return to $C$, and then travelling along the core curve of $C$. The imaginary part of the holonomy  of $\alpha$ is positive, contradicting Lemma \ref{L:Loop1} (when $\bfC_v \ne \bfC$) or Lemma \ref{L:Loop2} (when $\bfC_v = \bfC$). 
\end{proof}

The following immediate corollary of Lemma \ref{L:OnlyOnce} is the key to avoiding the behavior illustrated in Figure \ref{F:MultivaluedBad}. 

\begin{cor}\label{C:OnlyOnce}
Under the assumptions of Lemma \ref{L:OnlyOnce}, starting at any point in $\bfC_v$, it is possible to travel both upwards and downwards along a concatenation of collapsed segments and reach either a singularity or the boundary of $\bfC_v$.
\end{cor}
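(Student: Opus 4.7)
The plan is to construct the required upward path by iteratively concatenating collapsed segments, stopping only when forced to by a singularity or by the outer boundary of $\bfC_v$. Starting from a point $p \in \bfC_v$ (with the trivial case where $p$ is a singularity set aside), I would pick the cylinder $C_0 \in \bfC_v$ containing $p$ in its closure and follow the collapsed segment of $C_0$ through $p$ upwards to its upper endpoint $p_1$. Since collapsed segments are contained in $\overline{C_0}$ while the open cylinder $C_0$ contains no singularities and no boundary saddle connections, the interior of this segment avoids all singularities, and the endpoint $p_1$ lies on some saddle connection in $\partial C_0$.

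If $p_1$ is a singularity, the construction terminates at a singularity as required. Otherwise, the saddle connection containing $p_1$ borders at most one other cylinder $C_1$; if $C_1 \in \bfC_v$, I extend the path by the collapsed segment of $C_1$ through $p_1$ and repeat the analysis, and otherwise $p_1$ lies on the portion of $\partial \bfC_v$ separating $\bfC_v$ from its complement in the surface, so the construction terminates at the boundary of $\bfC_v$.

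The path produced this way is, by construction, a concatenation of collapsed segments whose interior is disjoint from all singularities, so Lemma \ref{L:OnlyOnce} applies and forces the visited cylinders $C_0, C_1, C_2, \ldots$ to be pairwise distinct. Since $\bfC_v$ is finite, the iteration must terminate in at most $|\bfC_v|$ steps, which gives the upward half of the claim. The downward half follows by the identical argument after reversing the upward orientation convention on collapsed segments.

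The only point requiring care is to guarantee that the iteration cannot loop indefinitely between cylinders of $\bfC_v$, and this is exactly the content of Lemma \ref{L:OnlyOnce} combined with finiteness of $\bfC_v$; beyond this, I anticipate no obstacle, as the corollary is essentially the pigeonhole-style converse of the preceding lemma.
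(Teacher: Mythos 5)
Your proposal is correct and matches the paper's intent: the paper states this as an immediate consequence of Lemma \ref{L:OnlyOnce}, and your argument simply spells out the implicit reasoning, namely that the path can only terminate at a singularity or at the boundary of $\bfC_v$, and by Lemma \ref{L:OnlyOnce} it visits each cylinder of $\bfC_v$ at most once, so finiteness of $\bfC_v$ forces termination.
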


It is possible to define the surface $\Col_v(X,\omega)$ and the map $\Col_v$ without Corollary \ref{C:OnlyOnce}, but we will make use of it to simplify the discussion.

\begin{proof}[Proof of Lemma \ref{L:Converge}:]
Assuming $v$ defines a cylinder degeneration, we will define $\Col_v(X,\omega)$ in several steps, starting from $(X,\omega)$. First, we perform the part of the cylinder degeneration that does not in fact degenerate any cylinders. Namely, we consider $(X,\omega)+t_v v_{\bfC\setminus \bfC_v}$, where we define 
$$v_{\bfC\setminus \bfC_v} = \sum_{C \in \bfC \setminus \bfC_v } a_C \gamma_C^*$$
to be the part of $v=\sum a_C \gamma_C^*$ corresponding to the cylinders in $\bfC\setminus \bfC_v$. Then, we cut out the interior of $\overline\bfC_v$, to obtain the translation surface with boundary $$ ((X,\omega)+t_v v_{\bfC\setminus \bfC_v'}) \setminus \mathrm{int}(\overline{\bfC}_v).$$ 
We then identify pairs of points on the boundary if they are joined by a concatenation of collapsed segments that are disjoint from the singularities. Since we have avoided singularities, the result is a punctured surface, and we fill in the punctures to obtain $\Col_v(X,\omega)$. Any non-singular filled-in points are declared to be marked points. 

We can now define the map $\Col_v$ as follows. Off $\overline{\bfC}_v$, it is induced by the cylinder deformation 
$$T_{t_v v_{\bfC\setminus \bfC_v}}: (X,\omega) \to (X,\omega)+t_v v_{\bfC\setminus \bfC_v}.$$
For each point $p$ in $\overline\bfC_v$ not reachable from a singularity via a concatenation of collapsed segments, $\Col_v(p)$ is equal to the single point of $\Col_v(X,\omega)$ obtained by identifying the two points on the boundary of $\overline\bfC_v$ reachable from $p$ via a concatenation of collapsed segments. 

All other points $p$ are in the set where $\Col_v$ is potentially multivalued, and we can define $\Col_v(p)$ to be the set of limits of $\Col_v(p_n)$, for sequences $p_n$ of points where $\Col_v$ has previously been defined with $p_n\to p$. The construction gives that $\Col_v(p)$ is a subset of the singularities and marked points of $\Col_v(X,\omega)$. By construction $\Col_v$ is multivalued off the set reachable from singularities by travelling along collapsed segments, and Corollary \ref{C:OnlyOnce} gives that that set is a finite union of line segments. 

Since the derivative of $\Col_v$ is as described in Lemma \ref{L:Converge}, it remains only to show that the collapse path converges to $\Col_v(X,\omega)$. We sketch this now, explaining how to extend the strategy of \cite[Lemma 3.1]{MirWri} to this slightly more general situation. 

We will use the criterion for convergence from \cite[Definition 2.2]{MirWri}, which requires that we build maps 
$$g_t: \Col_v(X,\omega)\setminus U \to (X,\omega)+tv$$
that distort the flat metric very little, where $U$ is a small neighbourhood of the zeros and marked points of $\Col_v(X,\omega)$ and where $t$ is close to $t_v$. The criterion also requires that the complement of the image of $g_t$ be small in a precise sense, which, in particular, is satisfied when this complement is contained in a small neighborhood of the zeros. 

To this end, let $U$ be the union of the $L^\infty$ balls centered at the zeros and marked points on $\Col_v(X,\omega)$ of radius $\e>0$.

%
%

If we restrict the collapse path to start at time $t$, the discussion above gives PL collapse maps  
$$f_t: (X, \omega)+tv \ra \Col_v(X, \omega)$$ 
for all $t \in [0, t_v)$. Here we avoid our previous notation for collapse maps, since it does not specify the domain surface.

The map $f_t$ is an isometry when restricted to the complement of $\overline{\bfC}$. By Example \ref{E:Derivative}, the derivative of the restriction of $f_t$ to a cylinder $C \in \bfC - \bfC_v$ is $$\begin{pmatrix} 1 &  \frac{(t_v-t)\Re(a_C)}{h_C} \\ 0 & 1 + \frac{(t_v - t)\Im(a_C)}{h_C}  \end{pmatrix}.$$
Notice that this matrix is arbitrarily close to the identity when $t$ is sufficiently close to $t_v$. 


Let $U'$ be an $\epsilon$ neighborhood of $\Col_v(\bfC_v)$ in the $L^\infty$ metric. We begin by defining $g_t$ to be $f_t^{-1}$ on the complement of $U'$. Ultimately, we want to extend this function over points in $U' \backslash U$. 

The cylinders in $\bfC_v$ give rise to a finite collection $\Col_v(\bfC_v)$ of horizontal saddle connections on $\Col_v(X,\omega)$. For each such saddle connection $s$, consider a rectangle $R_s$ centered on $s$ of width $\ell_s-2 \epsilon$ and height $2\epsilon$, where $\ell_s$ is the length of $s$. Let $\gamma^{top}_s$ and $\gamma^{bot}_s$ denote the top and bottom line segments of this rectangle. See Figure \ref{F:OpenUp}.

\begin{figure}[h!]
\includegraphics[width=\linewidth]{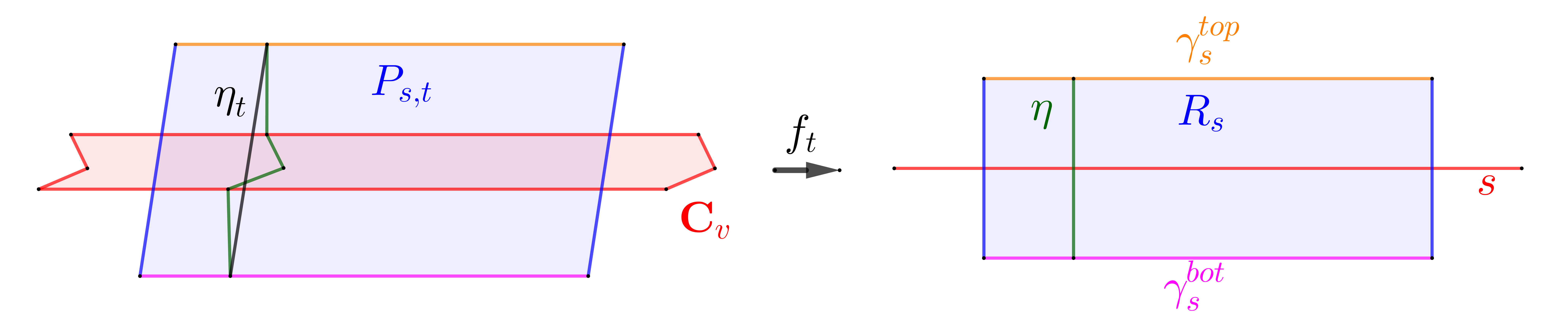}
\caption{$f_t^{-1}(\eta)$ is a sequence of line segments (shown on the left in green), which can be tightened to a straight line $\eta_t$. The red region on the left is a subset, $f_t^{-1}(s)\subset\bfC_v$.}
\label{F:OpenUp}
\end{figure}

Given a vertical line $\eta$ in $R_s$ joining a point in $\gamma^{top}_s$ to $\gamma^{bot}_s$ there is a corresponding sequence of lines $f_t^{-1}(\eta)$, which can be pulled tight, while fixing endpoints, to a geodesic representative $\eta_t$. Let $P_{s,t}$ be the union of all line segments formed this way. Since the height of the cylinders in $\bfC_v$ is proportional to $t-t_v$, which can be taken to be much smaller than $2\e$ (which is the height of $R_s$), $\eta_t$ is indeed a line segment and $P_{s,t}$ is a parallelogram. 


For each $s \in \Col_v(\bfC_v)$ extend $g_t$ to $R_s$ by defining $g_t|_{R_s}$ to be the linear map taking $R_s$ to $P_{s,t}$ that agrees with $g_t$ on $\gamma^{top}_s$ and $\gamma^{bot}_s$. It can be shown that the derivative of this linear map may be made arbitrarily close to the identity if $t$ is sufficiently close to $t_v$. 

We have now extended $g_t$ to a PL homeomorphism from $\Col_v(X, \omega) \setminus U$ to $(X, \omega) + tv$ whose derivatives, where defined, are arbitrarily close to the identity. It is also clear that, when $t$ is sufficiently close to $t_v$, the image of $g_t$ contains the complement of a $2\epsilon$ $L^\infty$ neighborhood of the zeros of $(X, \omega) + tv$.  

Since, where defined, the derivatives of $g_t$ were made arbitrarily close to the identity, it is possible to approximate $g_t$ by a smooth diffeomorphism whose derivative is also arbitrarily close to the identity and whose image also contains the complement of a $2\epsilon$ $L^\infty$ neighborhood of the zeros of $(X, \omega) + tv$. 
\end{proof}

\subsection{A slightly stronger genericity assumption}

Up until now, we have considered equivalence classes all of whose cylinders are generic. In this subsection, we introduce and discuss a slightly stronger notion of genericity for an equivalence class of cylinders. 

\begin{defn}
An equivalence class $\bfC$ of $\cM$-parallel cylinders on $(X,\omega)\in \cM$ is called \emph{$\cM$-generic}, or just \emph{generic} when $\cM$ is clear from context, if each cylinder in $\bfC$ is $\cM$-generic, and there does not exist a perturbation of $(X,\omega)$ in $\cM$ that creates new cylinders that are $\cM$-parallel to those in $\bfC$. \end{defn} 

\begin{ex}
Consider a surface $(X', \omega')$ with a pair of simple homologous cylinders $C, C'$. Let $\cM$ be the stratum containing $(X,\omega)$. Assume there are no other cylinders homologous to $C$ and $C'$.  Collapse $C'$ without degenerating the surface, to obtain a surface $(X, \omega)\in \cM$. The cylinder $C$ on $(X,\omega)$ is simple and hence $\cM$-generic. Since $(X,\omega)$ has no cylinders homologous to $C$, it is its own equivalence class. Thus $\{C\}$ is an equivalence class of generic cylinders. However, we can deform $(X, \omega)$ to make $C'$ reappear, so $\{C\}$ is not a generic equivalence class; see Figure \ref{F:GenericButNotGeneric}.
\begin{figure}[h]
\includegraphics[width=.8\linewidth]{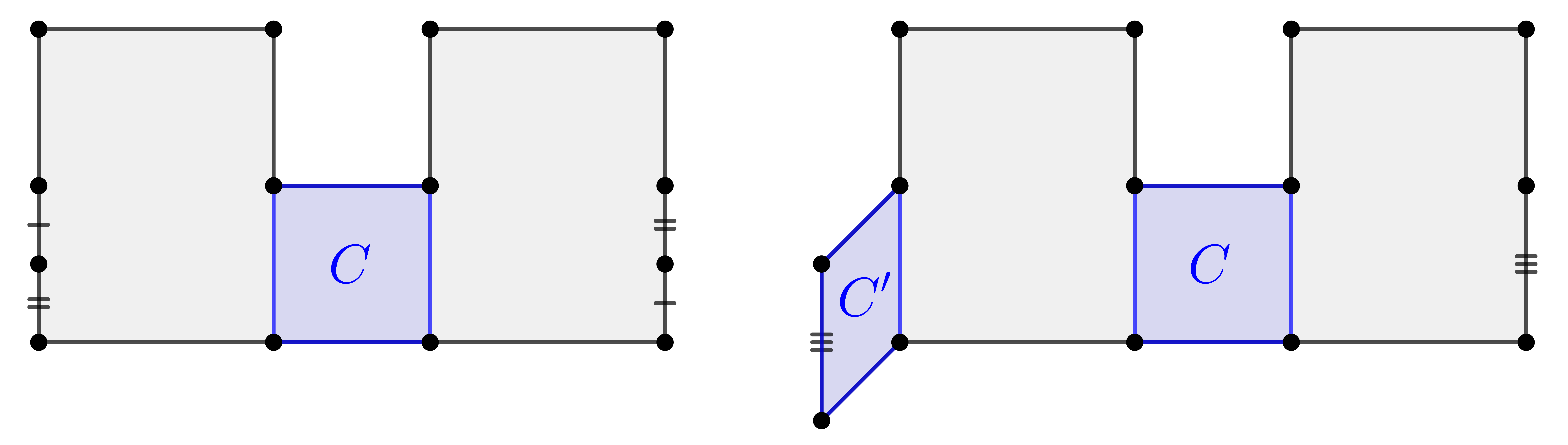}
\caption{Left: $(X,\omega)$. Right: $(X', \omega')$.}
\label{F:GenericButNotGeneric}
\end{figure}
\end{ex}

Despite the example, there are mild conditions which guarantee that an equivalence class of generic cylinders is a generic equivalence class. 

\begin{lem}\label{L:generic}
Let $\bfC$ be an equivalence class of generic cylinders on $(X,\omega)\in \cM$. Suppose either 
\begin{enumerate}
\item $\cM$ has no rel, or 
\item every saddle connection parallel to $\bfC$ is generically parallel to $\bfC$. 
\end{enumerate}
Then $\bfC$ is a generic equivalence class. 
\end{lem}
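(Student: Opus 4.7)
My plan is to argue by contradiction in both cases. Suppose $\bfC$ is not a generic equivalence class. Since every cylinder in $\bfC$ is already generic by hypothesis, the failure of genericity forces the existence of a perturbation $(X',\omega')\in \cM$ of $(X,\omega)$, which I may take as close to $(X,\omega)$ as I wish, supporting a cylinder $C'$ that is $\cM$-parallel to $\bfC$ on $(X',\omega')$ but is not one of the cylinders of $\bfC$. Thus on $(X',\omega')$ the collection $\bfC \cup \{C'\}$ lies inside a single $\cM$-equivalence class, and my task is to derive a contradiction in each of the two hypothesized settings.

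In case (1), I would exploit Corollary \ref{C:ConstantModuli}: $\cM$-parallel cylinders have rational ratios of moduli, and the subsequent remark explains that these ratios are locally constant on $\cM$. Fix $C_0\in \bfC$ and set $r=m(C')/m(C_0)$, a fixed positive rational on a connected neighborhood of $(X',\omega')$ in $\cM$. Along a short path from $(X',\omega')$ back to $(X,\omega)$ in $\cM$, $m(C_0)$ is continuous and limits to its strictly positive value on $(X,\omega)$, so $m(C')=r\cdot m(C_0)$ stays uniformly bounded below by a positive constant. Since the circumference of $C'$ is a continuous period, its height is also bounded below along the path, so $C'$ cannot degenerate and persists as a cylinder on $(X,\omega)$. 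Because $\cM$-parallelism is a linear relation on the tangent bundle that holds on a neighborhood of $(X',\omega')$, it extends by analytic continuation to $(X,\omega)$, giving $C'\in \bfC$ and contradicting the choice of $C'$.

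In case (2), my approach is to lock the boundary of $C'$ in place using the hypothesis. After a small further perturbation, which is possible because genericity of a cylinder is Baire-generic, I may assume $C'$ itself is generic on $(X',\omega')$. Then its boundary saddle connections remain parallel to the core of $C'$ throughout a neighborhood of $(X',\omega')$, so in particular they are horizontal on $(X,\omega)$. Assumption (2) then applies and makes them generically parallel to $\bfC$ on $(X,\omega)$, so they stay horizontal along any short path from $(X,\omega)$ to $(X',\omega')$. The topological region they bound is therefore constant along the path and contains no interior zeros on $(X,\omega)$, since it contains none on $(X',\omega')$ and zeros are topologically labeled. If the vertical height of this region on $(X,\omega)$ is positive, then it is a cylinder on $(X,\omega)$, and the analytic continuation step from case (1) gives $C'\in \bfC$, a contradiction. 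If instead its height on $(X,\omega)$ is zero, then a relative cycle $\alpha$ across the region admits, on $(X,\omega)$, a representative along the coincident top and bottom boundaries, yielding an identity $[\alpha]=\sum n_i[s_i]$ in relative homology where the $s_i$ are horizontal saddle connections on $(X,\omega)$. This topological identity persists at $(X',\omega')$, where assumption (2) guarantees each $s_i$ remains horizontal; hence $\int_\alpha \omega'$ must be real, contradicting $\int_\alpha \omega'= ih_{C'}\neq 0$.

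The main obstacle is the degenerate sub-case of (2), where the height of the bounded region collapses to zero on $(X,\omega)$ and $C'$ genuinely fails to exist there. The resolution described above uses assumption (2) to propagate the vanishing of vertical periods from horizontal saddle connections of $(X,\omega)$ to the vertical cycle across the would-be cylinder, which is then incompatible with the positive vertical period witnessed on $(X',\omega')$.
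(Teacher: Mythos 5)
Your overall strategy coincides with the paper's. Case (1) is handled there in one sentence by citing Corollary \ref{C:ConstantModuli}, and your elaboration --- the modulus of the new cylinder $C'$ is a fixed rational multiple of that of a cylinder in $\bfC$, hence bounded below along a path back to $(X,\omega)$, so $C'$ persists and by continuation of the linear relations lands in $\bfC$ --- is exactly what is intended. Case (2) reaches the same contradiction as the paper: the transverse period of a cross curve of the new cylinder is nonzero, yet in the limit that cross curve is homologous to a sum of saddle connections parallel to $\bfC$, which hypothesis (2) forces to be generically parallel to $\bfC$. The difference is in the setup. The paper takes a sequence $(X_n,\omega_n)\to(X,\omega)$ carrying new cylinders $D_n$, uses the Cylinder Finiteness Theorem to bound their circumferences, and passes to limits of the $D_n$ and of bounded-length cross curves $\gamma_n$, which degenerate to collections of saddle connections parallel to $\bfC$. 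You instead fix one perturbation and transport the boundary of $C'$ back to $(X,\omega)$. This yields a concrete homology identity, but it introduces two steps the compactness argument avoids and which you should justify more carefully: the ``further perturbation to make $C'$ generic'' (perturbing can change which saddle connections bound $C'$ or merge it with adjacent regions, so one must re-verify that the perturbed surface still witnesses non-genericity via the same cylinder); and the claim that ``the topological region they bound is constant along the path'' (the geometric region cut out by a collection of relative homology classes is not determined by period data, and the boundary saddle connections may break into collinear pieces at $(X,\omega)$ --- the degenerate sub-case is better phrased purely in terms of the relative homology class of the cross curve, as you in fact do at the end). Also note that genericity of $C'$ only gives parallelism of its boundary on a possibly tiny neighborhood of $(X',\omega')$; getting horizontality at $(X,\omega)$ requires the same analytic-continuation step you use in case (1), not merely ``in particular.'' None of these is fatal, but the sequence-plus-finiteness route is what lets the paper dispense with them entirely.
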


In particular, the second criterion of Lemma \ref{L:generic} implies that the set of surfaces in $\cM$ where all equivalence classes are generic is dense in $\cM$. The proof will use the following definition.

\begin{defn}
Recall that a \emph{cross curve} of a cylinder $D$ is a saddle connection contained in $\overline{D}$ that crosses the core curve exactly once. 
\end{defn}

\begin{proof}
First suppose $\cM$ has no rel. Then Corollary \ref{C:ConstantModuli} implies that $\bfC$ is generic. 

Next suppose that every saddle connection parallel to $\bfC$ is generically parallel to $\bfC$. Suppose, in order to find a contradiction, that there is a sequence of surfaces $(X_n, \omega_n)\in \cM$  that converge to $(X,\omega)$, and such that the cylinders of $\bfC$ persist on $(X_n,\omega_n)$, but where each $(X_n, \omega_n)$ has a cylinder $D_n$ that is generically parallel to $\bfC$ but not contained in $\bfC$. 

By the Cylinder Finiteness Theorem of \cite[Theorem 5.1]{MirWri} or \cite[Theorem 5.3]{ChenWright}, the circumference of the $D_n$ is uniformly bounded in $n$. After passing to a subsequence, $D_n$ converges to a collection of saddle connections on $(X,\omega)$ parallel to the core curves of $\bfC$.
%
%

If $\gamma_n$ are bounded length cross curves of $D_n$, then similarly we may assume that $\gamma_n$  converges to a collection of saddle connections parallel to $\bfC$.  Since $\gamma_n$ is not parallel to $\bfC$, at least one of these saddle connections is not generically parallel to $\bfC$. 

This contradicts the assumption, so the sequence must not exist, and we conclude that $\bfC$ is generic. 
\end{proof}

\subsection{The twist space decomposition}

\begin{lem}\label{L:SCinRelHomology}
In relative homology, the difference between any two cross curves of a cylinder $C$ that cross the cylinder in the same direction can be written as a linear combination of the boundary saddle connections of $C$. 

Let $\bfC$ be any collection of parallel cylinders on a translation surface. In relative homology, every saddle connection in $\overline{\bfC}$ can be written as a linear combination of boundary saddle connections and cross curves.
\end{lem}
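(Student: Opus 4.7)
The plan is to prove both parts via relative-homology computations, viewing boundary saddle connections, cross curves, and $\sigma$ as classes in $H_1(X,\Sigma;\bZ)$ via the inclusion of the relevant cylinder closure into $X$. The main tool in each case is the long exact sequence of an appropriate triple of pairs, combined with the standard observation that for any $1$-complex $\Gamma$ with vertex set $V$, $H_1(\Gamma, V)$ is freely generated by the $1$-cells.

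For the first part, consider a single cylinder $\overline{C}$ with boundary $\partial C$. Since $(\overline{C},\partial C)$ is homotopy equivalent to $(S^1\times [0,1],\, S^1\times\{0,1\})$, the group $H_1(\overline{C},\partial C)\cong \bZ$ is generated by the class of any cross curve traversing the cylinder in a fixed direction. The long exact sequence of the triple $(\overline{C},\partial C,\Sigma\cap\partial C)$ contains
$$H_1(\partial C,\Sigma\cap\partial C)\to H_1(\overline{C},\Sigma\cap\partial C)\to H_1(\overline{C},\partial C).$$
Both $\gamma_1$ and $\gamma_2$ map to the same generator of the right-hand group, so their difference lies in the image of the left-hand group, which is freely generated by the boundary saddle connections. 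Pushing forward via the inclusion $\overline{C}\hookrightarrow X$ then proves the first claim in $H_1(X,\Sigma)$.

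For the second part, equip $\overline{\bfC}=\bigcup_i\overline{C_i}$ with a CW structure whose $0$-cells are the singularities $\Sigma\cap\overline{\bfC}$, whose $1$-cells are the boundary saddle connections together with one chosen cross curve $\gamma_i$ for each cylinder, and whose $2$-cells are the parallelograms obtained by cutting each cylinder along its chosen cross curve. The cellular boundary of each such $2$-cell consists of two arcs in $\partial\overline{\bfC}$ together with two traversals of $\gamma_i$ in opposite directions, so the relative boundary map $C_2(\overline{\bfC},\partial\overline{\bfC})\to C_1(\overline{\bfC},\partial\overline{\bfC})$ vanishes. This yields $H_1(\overline{\bfC},\partial\overline{\bfC})\cong \bZ^k$, freely generated by the classes $[\gamma_i]$. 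Since the endpoints of $\sigma$ lie in $\Sigma\cap\partial\overline{\bfC}$, the saddle connection $\sigma$ defines a class in $H_1(\overline{\bfC},\Sigma\cap\partial\overline{\bfC})$; writing its image in $H_1(\overline{\bfC},\partial\overline{\bfC})$ as $\sum_i n_i[\gamma_i]$, the long exact sequence of the triple $(\overline{\bfC},\partial\overline{\bfC},\Sigma\cap\partial\overline{\bfC})$ forces $\sigma-\sum_i n_i\gamma_i$ to lie in the image of $H_1(\partial\overline{\bfC},\Sigma\cap\partial\overline{\bfC})$, which is freely generated by boundary saddle connections. Pushing forward to $H_1(X,\Sigma)$ gives the required expression.

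I do not expect any substantive obstacle: both claims reduce to short segments of the long exact sequence of a triple, and the relevant homology groups are computed directly from the cylinder combinatorics. The only step worth checking carefully is that the cellular boundary of each parallelogram really does vanish modulo boundary saddle connections, which is immediate once one orients the two sides coming from the cross curve consistently with the cylinder's gluing.
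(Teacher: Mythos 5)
Your argument is correct, but it is considerably more formal than the one in the paper. The paper disposes of the first claim by saying it "follows from the definition of relative homology" and proves the second by observing that every saddle connection in $\overline{\bfC}$ is homotopic, rel endpoints, to a concatenation of cross curves and boundary saddle connections — a purely geometric path-homotopy argument. You instead run the long exact sequence of the triple $(\overline{\bfC},\partial\overline{\bfC},\Sigma\cap\partial\overline{\bfC})$ and compute $H_1(\overline{\bfC},\partial\overline{\bfC})$ from an explicit CW structure, which makes rigorous exactly the point the paper treats as obvious (that a cylinder rel its boundary graph carries only the cross-curve class, so any saddle connection's "defect" lands in the edge group of the boundary graph). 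What your approach buys is a clean bookkeeping of where each correction term lives and why the boundary graph contributes freely on its edges; what it costs is the CW machinery, which is heavier than needed for a lemma the paper uses only as a soft structural fact. One small point worth making explicit: $\overline{C}$ as a subset of $X$ need not be homeomorphic to $S^1\times[0,1]$, since a single saddle connection can appear on both boundary components (or be shared with an adjacent cylinder of $\bfC$); your argument is fine because you work with the abstract compactified cylinder and push forward, but you should say that the relevant homotopy equivalence and CW structure live on the abstract cylinders, with the map to $(X,\Sigma)$ applied only at the end.
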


\begin{proof}
The first statement follows from the definition of relative homology. 

Every saddle connection in $\overline{\bfC}$ is homotopic to a concatenation of cross curves of cylinders in $\bfC$ and boundary saddle connections of cylinders in $\bfC$.
\end{proof}

\begin{prop}\label{P:TwistDecomp} 
Let $\bfC$ be a generic equivalence class of cylinders on $(X,\omega)\in \cM$.

Fix an element $w \in T_{(X, \omega)}(\cM)$  that pairs non-trivially with the core curve of a cylinder in $\bfC$. Then any $\eta \in T_{(X, \omega)}(\cM)$ admits a unique decomposition 
$$\eta = aw + \eta_\bfC + \eta_{(X,\omega)\setminus \bfC}$$
where $a \in \mathbb{C}$, $\eta_\bfC \in \TwistC$, and $\eta_{(X,\omega)\setminus \bfC}\in T_{(X, \omega)}(\cM)$ evaluates to zero on every saddle connection in $\overline{\bfC}$. For any $C\in \bfC$, we have $a=\frac{\eta(\gamma_C)}{w(\gamma_C)}$.
\end{prop}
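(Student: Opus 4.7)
The plan is to first pin down $a$ from the constraints, then construct $\eta_\bfC$ explicitly using chosen cross curves, and finally verify that the resulting $\eta_{(X,\omega)\setminus \bfC}$ has the required vanishing.

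I would begin by verifying that the formula $a = \eta(\gamma_C)/w(\gamma_C)$ is independent of the choice of $C \in \bfC$. Since the cylinders in $\bfC$ are $\cM$-parallel, the ratio $\omega(\gamma_C)/\omega(\gamma_{C'})$ is locally constant on $\cM$; differentiating at $(X,\omega)$ in any direction $\zeta \in T_{(X,\omega)}(\cM)$ gives $\zeta(\gamma_C)/\omega(\gamma_C) = \zeta(\gamma_{C'})/\omega(\gamma_{C'})$. Applying this to both $\eta$ and $w$ and taking a quotient yields independence of $C$, and it also upgrades the hypothesis $w(\gamma_{C_0}) \neq 0$ for some $C_0$ to $w(\gamma_C) \neq 0$ for every $C \in \bfC$. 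Since core curves of parallel cylinders have zero algebraic intersection, each $\gamma_{C'}^*$ pairs trivially with each $\gamma_C$, so every element of $\TwistC$ vanishes on each $\gamma_C$. Similarly $\gamma_C$ is homologous in relative homology to a sum of boundary saddle connections of $C$, so any class vanishing on all saddle connections in $\overline{\bfC}$ also vanishes on $\gamma_C$. Evaluating the proposed decomposition on $\gamma_C$ therefore forces $a = \eta(\gamma_C)/w(\gamma_C)$.

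Now set $\eta' := \eta - aw$, so that $\eta'(\gamma_C) = 0$ for every $C \in \bfC$. The key step is to show $\eta'$ vanishes on every boundary saddle connection of $\bfC$: if $s$ bounds $C \in \bfC$, then genericity of $C$ says $\omega'(s)/\omega'(\gamma_C)$ is locally constant on $\cM$, and differentiating gives $\eta'(s) = (\omega(s)/\omega(\gamma_C))\, \eta'(\gamma_C) = 0$. Next, for each $C \in \bfC$ pick an oriented cross curve $s_C$ with $\gamma_C^*(s_C) = 1$, and define $\eta_\bfC := \sum_{C \in \bfC} \eta'(s_C)\, \gamma_C^* \in \TwistC$ together with $\eta_{(X,\omega)\setminus \bfC} := \eta' - \eta_\bfC$, which automatically lies in $T_{(X,\omega)}(\cM)$ since $\TwistC$ does.

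To confirm $\eta_{(X,\omega)\setminus\bfC}$ vanishes on every saddle connection in $\overline{\bfC}$, Lemma \ref{L:SCinRelHomology} reduces the task to boundary saddle connections and cross curves. The former case follows because $\eta'$ already vanishes there and each $\gamma_C^*$ evaluates to zero on curves parallel to $\gamma_C$. For a cross curve $s$ of a cylinder $C_0 \in \bfC$, Lemma \ref{L:SCinRelHomology} writes $s - s_{C_0}$ as a combination of boundary saddle connections, so $\eta'(s) = \eta'(s_{C_0})$; meanwhile $\eta_\bfC(s) = \eta'(s_{C_0})$ because $s$ meets only $\gamma_{C_0}$ among the core curves of $\bfC$, giving cancellation. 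Uniqueness is then obtained by repeating the pairing arguments on a supposed decomposition of $0$: evaluating on $\gamma_C$ forces $a = 0$, and evaluating on each $s_C$ forces the corresponding coefficient of $\eta_\bfC$ to vanish. I expect the main subtlety to be the vanishing of $\eta'$ on boundary saddle connections, which is the only place the generic cylinder hypothesis enters essentially.
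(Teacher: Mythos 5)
Your setup (pinning down $a$, checking it is independent of $C$, showing $\eta':=\eta-aw$ vanishes on core curves and hence, by genericity, on all boundary saddle connections, and then defining $\eta_\bfC=\sum_{C\in\bfC}\eta'(s_C)\gamma_C^*$ via chosen cross curves) matches the paper's argument, and your verification that $\eta'-\eta_\bfC$ kills every saddle connection in $\overline{\bfC}$ via Lemma \ref{L:SCinRelHomology} is also the same. But there is a genuine gap at the central step: you write $\eta_\bfC\in\TwistC$ as if it were automatic and then conclude that $\eta_{(X,\omega)\setminus\bfC}=\eta'-\eta_\bfC$ lies in $T_{(X,\omega)}(\cM)$. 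By definition $\TwistC$ is the subspace of $T_{(X,\omega)}(\cM)$ of classes expressible as $\sum a_C\gamma_C^*$, so membership requires showing that this \emph{particular} combination of core-curve duals is tangent to $\cM$. That is false for a generic choice of coefficients: by Theorem \ref{T:CDTConverse}, $\TwistC$ projects to a line in absolute cohomology, so when, say, $\cM$ has no rel, $\TwistC$ is one-dimensional while the space of all formal combinations $\sum a_C\gamma_C^*$ has dimension $|\bfC|$. Nothing in your argument explains why the coefficients $\eta'(s_C)$ land in this small subspace.

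This is exactly the content of the paper's Lemma \ref{L:InTM}, which is the substantive part of the proposition. The paper proves it by deforming $(X,\omega)$ in the direction $i\epsilon\,\mathrm{Re}(\eta)$, applying the Cylinder Deformation Theorem to $\bfC$ on the deformed surface, and subtracting the standard deformation at $(X,\omega)$; the difference is $\epsilon\sum_C \mathrm{Re}(\eta)(s_C)\gamma_C^*$, which therefore lies in the tangent space (and similarly for the imaginary part). Crucially, this step uses that $\bfC$ is a \emph{generic equivalence class} (no new $\cM$-parallel cylinders appear under perturbation), not merely an equivalence class of generic cylinders, so that $\bfC$ is still an equivalence class on the deformed surface and the Cylinder Deformation Theorem applies to it there. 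Your closing remark that the genericity hypothesis enters only through the vanishing of $\eta'$ on boundary saddle connections misidentifies where the real difficulty lies; without an argument along these lines the decomposition you construct need not have $\eta_\bfC$ and $\eta_{(X,\omega)\setminus\bfC}$ in $T_{(X,\omega)}(\cM)$ at all.
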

Recall that $\gamma_C$ denotes the core curve of $C$. 
\begin{proof}
With the specified value of $a$, the class $\eta - a w \in T_{(X, \omega)}(\cM)$  evaluates to zero on $\gamma_C$. Since every saddle connection in the boundary of $\bfC$ is generically parallel to $\gamma_C$, $\eta - a w$ evaluates to zero on the core curves of $\bfC$. For simplicity of notation, we will replace $\eta$ with $\eta - aw$ so that we may suppose that $\eta$ evaluates to zero on the core curves of $\bfC$. 

For each $C\in \bfC$, fix a cross curve $s_C$ of $C$, oriented so $\gamma_C^*(s_C)=1$.
Define $$\eta_{\bfC} = \sum_{C\in \bfC} \eta(s_C) \gamma_C^*.$$

\begin{lem}\label{L:Zero}
$\eta - \eta_\bfC$ is zero on each saddle connection in $\overline\bfC$.
\end{lem}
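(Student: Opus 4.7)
The plan is to use the second part of Lemma \ref{L:SCinRelHomology} to reduce the claim to checking that $\eta-\eta_\bfC$ vanishes separately on (a) every boundary saddle connection of a cylinder in $\bfC$ and (b) every cross curve of a cylinder in $\bfC$; since every saddle connection in $\overline{\bfC}$ is an integral linear combination of such classes, linearity then finishes the argument.

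For (a), let $\sigma$ be a boundary saddle connection of some $C\in\bfC$. Every intersection number $\gamma_{C'}^*(\sigma)$ vanishes because $\sigma$ is parallel to each core curve $\gamma_{C'}$, so the definition of $\eta_\bfC$ immediately gives $\eta_\bfC(\sigma)=0$. It therefore suffices to show $\eta(\sigma)=0$. Because each cylinder of $\bfC$ is $\cM$-generic, $\sigma$ remains parallel to $\gamma_C$ throughout an $\cM$-neighborhood of $(X,\omega)$, which means the ratio
$$\omega' \longmapsto \int_\sigma \omega' \bigg/ \int_{\gamma_C} \omega'$$
is a ratio of $\bC$-linear period functionals on a complex analytic neighborhood in $\cM$ that takes only real values; as a holomorphic function with only real values, it is locally constant, say equal to $c$. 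Differentiating the relation $\int_\sigma\omega'=c\int_{\gamma_C}\omega'$ at $(X,\omega)$ in the direction of $\eta$ yields $\eta(\sigma)=c\,\eta(\gamma_C)$, which equals zero once we invoke the reduction $\eta(\gamma_C)=0$ arranged just before the lemma.

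For (b), let $s$ be a cross curve of $C$, oriented so that $\gamma_C^*(s)=1$. For every $C'\neq C$ the segment $s\subset\overline{C}$ misses the interior of $C'$, so $\gamma_{C'}^*(s)=0$; hence $\eta_\bfC(s)=\eta(s_C)$. By the first part of Lemma \ref{L:SCinRelHomology}, $s-s_C$ is a linear combination of boundary saddle connections of $C$, on which $\eta$ vanishes by (a). Therefore $\eta(s)=\eta(s_C)=\eta_\bfC(s)$, as required.

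The main obstacle is the step in (a) establishing that $\eta(\sigma)=0$. The notion of being ``generically parallel'' is a priori only a condition on direction and not on the ratio, so extracting the locally constant constant $c$ relies on the observation that a holomorphic function on a complex analytic neighborhood which takes only real values must be locally constant; once this is in place, the remainder of the argument is a routine bookkeeping with algebraic intersection numbers combined with the spanning statement in Lemma \ref{L:SCinRelHomology}.
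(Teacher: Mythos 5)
Your proof is correct and follows essentially the same route as the paper: reduce via Lemma \ref{L:SCinRelHomology} to boundary saddle connections and cross curves, observe that $\eta$ and $\eta_\bfC$ both vanish on the former, and use the first part of that lemma to pass from an arbitrary cross curve to the chosen $s_C$. The only difference is presentational: the paper folds the fact that $\eta$ vanishes on boundary saddle connections into the setup preceding the lemma (via generic parallelism of those saddle connections with the core curves), whereas you re-derive it inside the proof with the explicit real-valued-holomorphic-ratio argument.
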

\begin{proof}
We have assumed that $\eta$ evaluates to zero on every saddle connection on the boundary of a cylinder of $\bfC$. By construction, the same holds for $\eta_\bfC$. Hence, $\eta - \eta_\bfC$ is zero on every such boundary saddle connection of a cylinder in $\bfC$ as well as on every cross curve in $\{ s_C \}_{C\in \bfC}$. The result now follows by Lemma \ref{L:SCinRelHomology}.
\end{proof}

\begin{lem}\label{L:InTM}
$\eta_\bfC\in T_{(X, \omega)}(\cM)$.
\end{lem}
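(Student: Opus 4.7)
The plan is to invoke the Cylinder Deformation Theorem not at $(X,\omega)$ itself but at nearby surfaces of the form $(X,\omega) + t\eta \in \cM$, and then differentiate the resulting standard deformations in $t$ in order to exhibit $\eta_\bfC$ as a tangent vector to $\cM$. After applying a rotation from $\GL$ (which preserves $\cM$), we may assume that $\bfC$ is horizontal. Because $\bfC$ is a generic equivalence class, on every sufficiently nearby surface in $\cM$ the collection $\bfC$ persists as a generic equivalence class of $\cM$-parallel cylinders, and so the Cylinder Deformation Theorem applies at every point of the path $(X,\omega)+t\eta$.

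Set $a_C := \eta(s_C)$. The standing hypothesis $\eta(\gamma_C) = 0$ in force throughout the proof of Proposition \ref{P:TwistDecomp} ensures that the core curve holonomies are constant to first order in $t$, so every cylinder of $\bfC$ remains horizontal to first order and its height at time $t$ equals $h_C + t\Im(a_C) + O(t^2)$. Under the flat (Gauss--Manin) identification underlying period coordinates, the classes $\gamma_C^*$ are constant along the path, so the standard deformations assemble into a smooth curve
$$\sigma_\bfC^{(t)} \;=\; \sum_{C \in \bfC} h_C(t)\,\gamma_C^* \;\in\; T_{(X,\omega)}(\cM).$$
Since $T_{(X,\omega)}(\cM)$ is a fixed complex subspace of $H^1(X,\Sigma;\bC)$, differentiating at $t = 0$ produces another element of $T_{(X,\omega)}(\cM)$, namely
$$\sum_{C \in \bfC} \Im(a_C)\,\gamma_C^* \;\in\; T_{(X,\omega)}(\cM).$$

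Finally, because $T_{(X,\omega)}(\cM)$ is complex-linear we have $i\eta \in T_{(X,\omega)}(\cM)$, and rerunning the previous step with $\eta$ replaced by $i\eta$ replaces each $a_C$ by $ia_C$. This yields $\sum_C \Im(ia_C)\,\gamma_C^* = \sum_C \Re(a_C)\,\gamma_C^* \in T_{(X,\omega)}(\cM)$, and combining by complex-linearity gives
$$\eta_\bfC \;=\; \sum_{C \in \bfC} a_C\,\gamma_C^* \;=\; \sum_C \Re(a_C)\,\gamma_C^* \;+\; i\sum_C \Im(a_C)\,\gamma_C^* \;\in\; T_{(X,\omega)}(\cM).$$
The main technical point requiring care is that these standard deformations really do assemble into a smooth curve in the fixed space $T_{(X,\omega)}(\cM)$ under the flat identification, so that their derivative at $t=0$ lies there; this reduces to the persistence of the cylinders of $\bfC$ under deformation (a consequence of genericity of the equivalence class) and the smooth dependence of the heights $h_C$ on period coordinates.
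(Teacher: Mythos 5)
Your proof is correct and is essentially the paper's argument: both rely on $\bfC$ being a generic equivalence class so that the Cylinder Deformation Theorem applies to the same collection $\bfC$ on nearby surfaces in $\cM$, then compare the parallel-transported standard deformations to extract $\sum_C \Re(a_C)\gamma_C^*$ and $\sum_C \Im(a_C)\gamma_C^*$ separately. The only cosmetic difference is that the paper perturbs in the directions $i\epsilon\,\Re(\eta)$ and $i\epsilon\,\Im(\eta)$ and takes a finite difference, whereas you perturb along $t\eta$ and $ti\eta$ and differentiate.
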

\begin{proof}
Assume that the cylinders in $\bfC$ are horizontal. We will show that both the real and imaginary parts of $\eta_\bfC$ are in the tangent space. 

Since $\eta \in T_{(X, \omega)}(\cM)$ and since $\cM$ is defined by real linear equations, $\mathrm{Re}(\eta)$ and $\mathrm{Im}(\eta)$ both belong to $T_{(X, \omega)}(\cM)$. Therefore, for sufficiently small $\epsilon$, 
\[ (X', \omega') = (X, \omega) + i \epsilon\Re(\eta) \]
belongs to $\cM$. The standard deformation on $(X', \omega')$ is 
\[ \sigma_\bfC'=\sum_{C\in \bfC} \left(h_C + \e \mathrm{Re}(\eta)(s_C) \right)\gamma_C^*,\]
where $h_C$ continues to denote the height of $C$ on $(X,\omega)$. Here, crucially, we use that $\bfC$ is a generic equivalence class, rather than just an equivalence class of generic cylinders, to ensure that $\bfC$ is still an equivalence class on the deformation $(X', \omega')$. 

By the Cylinder Deformation Theorem, the standard deformation of $\bfC$ on $(X', \omega')$ belongs to $T_{(X', \omega')}(\cM)$. By parallel transport, we can also consider this vector as an element of $T_{(X, \omega)}(\cM)$. The standard deformation at $(X,\omega)$, namely $\sigma_\bfC = \sum_{C\in \bfC} h_C \gamma_C^*$, also belongs to $T_{(X, \omega)}(\cM)$. Hence 
\[\sigma_\bfC'-\sigma_\bfC =  \e\sum_{C\in \bfC} \mathrm{Re}(\eta)(s_C) \gamma_C^*\in T_{(X, \omega)}(\cM).\]
This shows that $\mathrm{Re}(\eta_{\bfC}) \in T_{(X, \omega)}(\cM)$. A similar argument shows that $\mathrm{Im}(\eta_{\bfC}) \in T_{(X, \omega)}(\cM)$ and hence that $\eta_{\bfC} \in T_{(X, \omega)}(\cM)$.
\end{proof}

Proposition \ref{P:TwistDecomp} now follows from Lemmas \ref{L:Zero} and \ref{L:InTM}; uniqueness is left as an exercise. 
\end{proof}

We now establish the following version of the twist space decomposition using $p( T_{(X,\omega)}(\cM))$ instead of $T_{(X,\omega)}(\cM)$. Recall that $T_{\Col_v(X, \omega)}(\cM_v)$ can be naturally identified with   $\Ann(V) \cap T_{(X, \omega)}(\cM)$, where $V$ is the space of vanishing cycles. It will also be convenient to use the following definition. 

\begin{defn}
A cylinder degeneration specified by $v\in \TwistC$ will be called rank-reducing if $\rank(\cM_v)<\rank(\cM)$, and rank-preserving if $\rank(\cM_v)=\rank(\cM)$.
\end{defn}

\begin{prop}\label{P:AbsoluteTwistDecomp}
Let $\bfC$ be a generic equivalence class of cylinders on $(X,\omega)\in \cM$ and let $v \in \TwistC$ define a rank-reducing cylinder degeneration. Then 
$$p(T_{(X,\omega)}(\cM))=p(\Ann(V) \cap T_{(X, \omega)}(\cM))\oplus p(\bC\cdot \sigma_\bfC).$$
\end{prop}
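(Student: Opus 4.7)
The plan is to use Proposition~\ref{P:TwistDecomp} with the auxiliary vector $w$ chosen to lie in $\Ann(V) \cap T_{(X,\omega)}(\cM)$, so that after applying $p$ two of the three summands land in $p(\Ann(V) \cap T_{(X,\omega)}(\cM))$ and the third in $p(\bC \cdot \sigma_\bfC)$.

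The first step is to show that there exists $w \in \Ann(V) \cap T_{(X,\omega)}(\cM)$ with $w(\gamma_C) \ne 0$ for some $C \in \bfC$. Suppose no such $w$ existed. Then $\gamma_C$, viewed as a functional on $T_{(X,\omega)}(\cM)$, vanishes on $\Ann(V) \cap T_{(X,\omega)}(\cM)$, so by linear-algebraic duality there is $v \in V$ with $\eta(v) = \eta(\gamma_C)$ for every $\eta \in T_{(X,\omega)}(\cM)$. The vanishing cycles for this degeneration are representable by (vertical) cross curves of cylinders in $\bfC_v$, so write $v = \sum c_i s_i$ accordingly. Pairing against $\sigma_\bfC$ gives $0 = \sigma_\bfC(\gamma_C) = \sigma_\bfC(v) = \sum c_i h_i$, using that parallel core curves have zero intersection. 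Pairing against $\omega$ then gives $\ell_C = \omega(\gamma_C) = \omega(v) = i \sum c_i h_i = 0$, contradicting $\ell_C > 0$.

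Next, for an arbitrary $\eta \in T_{(X,\omega)}(\cM)$, apply Proposition~\ref{P:TwistDecomp} with this $w$ to write $\eta = aw + \eta_\bfC + \eta_{(X,\omega) \setminus \bfC}$. The term $aw$ lies in $\Ann(V) \cap T_{(X,\omega)}(\cM)$ by construction of $w$. The term $\eta_{(X,\omega) \setminus \bfC}$ vanishes on every saddle connection in $\overline{\bfC}$, hence on the cross curves of cylinders in $\bfC_v \subseteq \bfC$, and hence on $V$; so it too lies in $\Ann(V) \cap T_{(X,\omega)}(\cM)$. Finally, $\eta_\bfC \in \TwistC$, so $p(\eta_\bfC) \in p(\TwistC) = \bC \cdot p(\sigma_\bfC)$ by Theorem~\ref{T:CDTConverse}. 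Combining, $p(\eta) \in p(\Ann(V) \cap T_{(X,\omega)}(\cM)) + p(\bC \cdot \sigma_\bfC)$, giving the spanning half of the statement.

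For the direct-sum claim, if $p(\sigma_\bfC)$ lay in $p(\Ann(V) \cap T_{(X,\omega)}(\cM))$, the spanning inclusion just proved would collapse to $p(T_{(X,\omega)}(\cM)) = p(\Ann(V) \cap T_{(X,\omega)}(\cM))$, which by Lemma~\ref{L:RankReducing} contradicts the rank-reducing hypothesis. Hence $p(\bC \cdot \sigma_\bfC) \cap p(\Ann(V) \cap T_{(X,\omega)}(\cM)) = 0$. The main obstacle I expect is in the first step: pinning down a sufficiently concrete description of $V$ so that the pairing computation is rigorous. The identification of $V$ with the span of vertical cross curves of $\bfC_v$ is transparent when the collapse map does not glue distinct zeros, but one must be careful with relative cycles arising from zero identifications in the general case.
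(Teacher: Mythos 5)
Your overall architecture coincides with the paper's: apply Proposition~\ref{P:TwistDecomp} with an auxiliary $w\in\Ann(V)\cap T_{(X,\omega)}(\cM)$ pairing nontrivially with a core curve, note that $aw$ and $\eta_{(X,\omega)\setminus\bfC}$ land in $\Ann(V)\cap T_{(X,\omega)}(\cM)$ while $p(\eta_\bfC)\in\bC\cdot p(\sigma_\bfC)$ by Theorem~\ref{T:CDTConverse}, and get directness from the rank-reducing hypothesis. The spanning and direct-sum halves are essentially correct. (For $\eta_{(X,\omega)\setminus\bfC}\in\Ann(V)$ you should argue via Lemma~\ref{L:VinCv} together with Lemma~\ref{L:SCinRelHomology}: vanishing cycles are combinations of cross curves \emph{and} boundary saddle connections, but since $\eta_{(X,\omega)\setminus\bfC}$ vanishes on every saddle connection in $\overline{\bfC}$ this step survives.)

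The genuine gap is the one you flag yourself: the existence of $w$. Your contradiction argument rests on the assertion that $V$ is spanned by vertical cross curves of cylinders in $\bfC_v$. This is false in general. First, a cylinder in $\bfC_v$ need not contain any saddle connection in the maximally contracted direction crossing it once, so such ``vertical cross curves'' may not exist. Second, even when they do, vanishing cycles generally also involve boundary saddle connections (e.g.\ differences of boundary saddle connections that become identified under $\Col_v$). Once boundary saddle connections $b_j$ enter the expression $v=\sum c_is_i+\sum d_jb_j$, your second pairing becomes $\ell_C=\omega(v)=i\sum c_ih_i+\sum_j d_j\,\omega(b_j)=\sum_j d_j\,\omega(b_j)$, and the horizontal contributions $\omega(b_j)$ are not constrained by the pairing against $\sigma_\bfC$ (which kills boundary saddle connections), so no contradiction results. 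The paper sidesteps all of this by exhibiting $w$ explicitly: take $w$ to be the cohomology class of the Abelian differential on $\Col_v(X,\omega)$, viewed inside $\Ann(V)\cap T_{(X,\omega)}(\cM)$ via its identification with $T_{\Col_v(X,\omega)}(\cM_v)$; then $w(\gamma_C)$ is the holonomy of $\Col_v(\gamma_C)$, a nonzero non-negative combination of consistently oriented parallel saddle connections in $\Col_v(\bfC_v)$, hence nonzero. Equivalently, your duality setup closes immediately if you evaluate the putative identity $\eta(\gamma_C)=\eta(v)$ at this same class $\eta$: the right-hand side vanishes because $\eta$ annihilates $V$, while the left-hand side is the nonzero holonomy just described.
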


Before we give the proof, we note the following. 

\begin{lem}\label{L:VinCv}
Let $\bfC$ be an equivalence class on a surface in an invariant subvariety $\cM$. 
For any cylinder degeneration determined by $v\in \TwistC$, there is a basis of $V$ consisting of $1$-chains supported in $\overline\bfC_v$. 
\end{lem}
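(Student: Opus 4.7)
The plan is to use a standard excision and diagram chase in homology. Let $f: X \to Y'$ denote the collapse map underlying $\Col_v$, so that by definition $V = \ker(f_*: H_1(X, \Sigma) \to H_1(Y', \Sigma_{Y'}))$. Setting $K = f(\overline{\bfC}_v)$, the goal is to show that every vanishing cycle is representable by a $1$-chain in $\overline{\bfC}_v$, from which the existence of a basis of $V$ with this property follows immediately.

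The first step is to verify that $f$ restricts to a homeomorphism from $X \setminus \overline{\bfC}_v$ onto $Y' \setminus K$. This is essentially contained in Lemma \ref{L:Converge}: off $\overline{\bfC}$ the map $\Col_v$ is the identity, on the cylinders of $\bfC \setminus \bfC_v$ it has invertible derivative, and multivaluedness occurs only on a finite union of line segments inside $\overline{\bfC}_v$. Consequently, $f$ descends to a homeomorphism of quotient spaces $X/(\Sigma \cup \overline{\bfC}_v) \cong Y'/(\Sigma_{Y'} \cup K)$, and excision gives an isomorphism
$$f_*: H_1(X, \Sigma \cup \overline{\bfC}_v) \xrightarrow{\cong} H_1(Y', \Sigma_{Y'} \cup K).$$

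Next, I would run the diagram chase. For any $\alpha \in V$, the image of $\alpha$ in $H_1(X, \Sigma \cup \overline{\bfC}_v)$ corresponds under the above isomorphism to the image of $f_*(\alpha) = 0$ in $H_1(Y', \Sigma_{Y'} \cup K)$, and hence is zero. Exactness of the long exact sequence of the pair $(X, \Sigma \cup \overline{\bfC}_v)$ then shows that $\alpha$ lies in the image of the natural map $H_1(\overline{\bfC}_v, \Sigma \cap \overline{\bfC}_v) \to H_1(X, \Sigma)$, so $\alpha$ is represented by a $1$-chain supported in $\overline{\bfC}_v$. This produces a spanning set of $V$ consisting of such chains, from which a basis can be extracted.

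I expect the main technical difficulty to be the excision step, since one must verify that the potential multivaluedness of $\Col_v$ and the pairwise identifications of boundary points in the construction of Lemma \ref{L:Converge} really do yield a homeomorphism of the appropriate quotient spaces. All of this is local in $\overline{\bfC}_v$ and harmless for the relative $H_1$ group in question, but confirming it carefully is the only non-routine ingredient; once it is in place, the diagram chase and extraction of a basis are entirely standard.
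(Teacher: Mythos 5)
Your proof is correct, and it reaches the conclusion by a genuinely different (though equally standard) homological route than the paper. The paper's own proof is a two-line chain-level argument: it identifies $V$ with $\ker((\Col_v)_*)$ on $H_1(X,\Sigma;\bC)$ and invokes the fact that for a simplicial map surjective on $2$-chains, the kernel on first homology is generated by $1$-chains that map to zero as chains; such chains are automatically supported in $\overline{\bfC}_v$, since that is the only place $\Col_v$ collapses anything. You instead run an excision isomorphism $H_1(X,\Sigma\cup\overline{\bfC}_v)\cong H_1(Y',\Sigma_{Y'}\cup K)$ followed by the long exact sequence of the triple $(X,\Sigma\cup\overline{\bfC}_v,\Sigma)$ (which is what you are actually using where you say ``the pair''). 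Both arguments rest on the same geometric input from Lemma \ref{L:Converge}, namely that $\Col_v$ is a homeomorphism off $\overline{\bfC}_v$ and only identifies points inside it. Your version buys a cleaner, triangulation-free justification that representatives can be taken to be relative cycles of $(\overline{\bfC}_v,\Sigma\cap\overline{\bfC}_v)$; the paper's version buys brevity and the marginally stronger statement that the generators map to zero on the chain level. The technical worry you flag about multivaluedness is in fact harmless for your excision step: the multivaluedness of $\Col_v$ arises only from the ungluing map $g:Y\to Y'$ and occurs only along segments contained in $\overline{\bfC}_v$, whereas the vanishing cycles are defined via $f_*$ into $H_1(Y',\Sigma_{Y'})$, where the collapse is single-valued and restricts to a homeomorphism $X\setminus\overline{\bfC}_v\to Y'\setminus K$ exactly as you need.
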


\begin{proof}
Letting $\Col_v$ be the PL collapse map defined in Lemma \ref{L:Converge}, $V$ is the kernel of $(\Col_v)_*$ on $H_1(X, \Sigma; \mathbb{C})$ where $\Sigma$ denotes the singularities of $\omega$. The claim now follows from the fact that for any simplicial map that is surjective on 2-chains, the kernel of first homology is generated by 1-chains that map to zero. 
%
%
%
%
%
%
%
\end{proof}

\begin{proof}[Proof of Proposition \ref{P:AbsoluteTwistDecomp}]
Consider $u\in T_{(X,\omega)}(\cM)$.

Let $\eta$ denote the cohomology class of the Abelian differential on $\Col_v(X, \omega)$, viewed as an element of $\Ann(V) \cap T_{(X, \omega)}(\cM)$ via the identification of this space and $T_{\Col_v(X,\omega)}(\cM_v)$.

We see that $\eta$ is nonzero on core curves of $\bfC$, since the Abelian differential on $\Col_v(X, \omega)$ is nonzero on nonzero, non-negative linear combinations of consistently oriented parallel saddle connections in $\Col_v(\bfC_v)$, and every core curve of a cylinder in $\bfC_v$ degenerates to such a sum. 

Using Proposition \ref{P:TwistDecomp}, write $$u = a\eta + u_\bfC + u_{(X,\omega)\setminus\bfC}.$$
Since $u_{(X,\omega)\setminus\bfC}$ is zero on all saddle connections in $\overline\bfC$, it is contained in $\Ann(V)$ by Lemma \ref{L:VinCv}. 

By Theorem \ref{T:CDTConverse}, there is a constant $c$ such that $p(u_\bfC)= c p(\sigma_\bfC)$, so we can write $$p(u) =   p(a\eta + u_{(X,\omega)\setminus\bfC})+ c p(\sigma_\bfC).$$
Since both $\eta$ and $u_{(X,\omega)\setminus\bfC}$ are in $\Ann(V) \cap T_{(X, \omega)}(\cM)$, the first summand is in $p(\Ann(V) \cap T_{(X, \omega)}(\cM))$.

Since this holds for all $u$, this shows that 
$$p(T_{(X,\omega)}(\cM))=p(\Ann(V) \cap T_{(X, \omega)}(\cM))+ p(\bC\cdot \sigma_\bfC).$$
Since $p(\bC\cdot \sigma_\bfC)$ is one dimensional, to show that this is a direct sum decomposition it suffices to show that $p(T_{(X,\omega)}(\cM))\neq p(\Ann(V) \cap T_{(X, \omega)}(\cM))$. That follows from Corollary \ref{C:RankCharacterization} and Lemma \ref{L:BilinearForm}, since the degeneration is rank-reducing. 
\end{proof}

\begin{cor}\label{C:RankMinus1}
Under the same assumptions as Proposition \ref{P:AbsoluteTwistDecomp}, if $\mathrm{rank}( \cM_v ) < \mathrm{rank}( \cM )$, then $\mathrm{rank}( \cM_v ) = \mathrm{rank}( \cM )-1$.
\end{cor}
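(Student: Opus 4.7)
The proof will translate the statement into a linear-algebraic comparison of an antisymmetric bilinear form on a vector space and on a codimension-one subspace. By Corollary \ref{C:RankCharacterization} applied to $\cM$, the subspace $p(T_{(X,\omega)}(\cM))\subseteq H^1(X,\bC)$ is symplectic, so $2\rank(\cM)$ equals the rank of the symplectic form $\langle\cdot,\cdot\rangle$ when restricted to it. Similarly, applying Corollary \ref{C:RankCharacterization} to $\cM_v$, and then using Lemma \ref{L:BilinearForm} together with the identification $\Ann(V)\cap T_{(X,\omega)}(\cM)\cong T_{\Col_v(X,\omega)}(\cM_v)$ from the main results of \cite{MirWri,ChenWright}, I find that $2\rank(\cM_v)$ equals the rank of $\langle\cdot,\cdot\rangle$ when restricted to $p(\Ann(V)\cap T_{(X,\omega)}(\cM))$.

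By Proposition \ref{P:AbsoluteTwistDecomp}, the subspace $p(\Ann(V)\cap T_{(X,\omega)}(\cM))$ has complex codimension one inside $p(T_{(X,\omega)}(\cM))$, with complementary line $p(\bC\cdot\sigma_\bfC)$. Extending a basis of $p(\Ann(V)\cap T_{(X,\omega)}(\cM))$ by $p(\sigma_\bfC)$, the matrix representing $\langle\cdot,\cdot\rangle$ on $p(T_{(X,\omega)}(\cM))$ is obtained from the matrix representing the restricted form by appending a single row and a single column. The key linear algebra observation is that the rank of an antisymmetric matrix is always even, and appending one row and one column can change the rank by at most two; consequently the difference $2\rank(\cM)-2\rank(\cM_v)$ must lie in $\{0,2\}$. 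Since the cylinder degeneration is rank-reducing by the hypothesis of Proposition \ref{P:AbsoluteTwistDecomp}, this difference is nonzero and hence equal to $2$, yielding $\rank(\cM_v)=\rank(\cM)-1$.

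There is essentially no substantial obstacle in this plan: once Lemma \ref{L:BilinearForm} and Proposition \ref{P:AbsoluteTwistDecomp} are in hand, the corollary reduces to the parity of the rank of an antisymmetric bilinear form. The only delicate point is to ensure that the ranks of $\cM$ and $\cM_v$ are correctly identified with the ranks of the restricted bilinear forms on $p(T_{(X,\omega)}(\cM))$ and $p(\Ann(V)\cap T_{(X,\omega)}(\cM))$ respectively, which is precisely what Lemma \ref{L:BilinearForm} combined with Corollary \ref{C:RankCharacterization} guarantees.
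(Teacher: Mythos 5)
Your proof is correct and follows essentially the same route as the paper, whose own proof is just a citation of Proposition \ref{P:AbsoluteTwistDecomp}, Lemma \ref{L:BoundarySymplectic}, and Corollary \ref{C:RankCharacterization}: both arguments reduce the claim to computing the rank of the antisymmetric form on the codimension-one subspace $p(\Ann(V)\cap T_{(X,\omega)}(\cM))$ of the symplectic space $p(T_{(X,\omega)}(\cM))$. Your parity-of-antisymmetric-rank phrasing is a perfectly valid way to carry out that linear algebra, and you correctly identify and justify the one delicate point, namely that $2\rank(\cM_v)$ equals the rank of the restricted form via Lemma \ref{L:BilinearForm} and Corollary \ref{C:RankCharacterization}.
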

\begin{proof}
This follows from Proposition \ref{P:AbsoluteTwistDecomp}, Lemma \ref{L:BoundarySymplectic}, and Corollary \ref{C:RankCharacterization}.  
\end{proof}

\section{A cylinder degeneration dichotomy (with Mirzakhani)}\label{S:degens}

Any collection of parallel saddle connections on a translation surface can be considered as a directed graph in two ways, by directing each of the saddle connections so they all point in the same direction. The two directed graphs obtained in this way differ by changing the direction of each edge. It will not matter in the subsequent analysis which of the two directed graphs are chosen. We will consider these graphs both as abstract graphs and as subsets of the surface. 
 
In this section, we will consider degenerations in which some cylinders become a collection of parallel saddle connections. We will show that the resulting directed graph of saddle connections is either strongly connected or satisfies a very restrictive property that, in particular, implies that it is acyclic. Recall that a directed graph is \emph{acyclic} if it contains no directed cycles, and it is \emph{strongly connected} if every edge is contained in a directed cycle, so this is a dichotomy between opposite extremes. 

\subsection{Statement} 
We will call a  collection of parallel saddle connections $\Gamma$ on a translation surface $(X',\omega')$ in an invariant subvariety $\cM'$ \emph{$\cM'$-rel-scalable} (or simply \emph{rel-scalable} when $\cM'$ is clear from context) if there is purely relative cohomology class $s\in T_{(X',\omega')}(\cM')$ which evaluates on each edge of $\Gamma$ to give the holonomy of the edge. In that case we say that $s$ \emph{certifies that $\Gamma$ is rel-scalable}. 

By definition, a cohomology class is purely relative if it evaluates to zero on any absolute homology class. Hence, if $\Gamma$ is rel-scalable, then the integral of $\omega'$ along every loop in $\Gamma$ is zero. This implies that $\Gamma$ is acyclic as a directed graph.

Fix an equivalence class of cylinders $\bfC$ on a translation surface $(X, \omega)$ in an invariant subvariety $\cM$. Pick a direction $v \in \TwistC$ in which to deform the cylinders  in $\bfC$ in order to degenerate the surface. We will continue to use the notation introduced in the previous section: 
 $\bfC_v\subset \bfC$ denotes the subset of the cylinders that reach zero height first along the degeneration path; $\Col_v(X,\omega)$ denotes the limit surface; and $\cM_v$ is the boundary invariant subvariety that contains it.

\begin{thm}[The Cylinder Degeneration Dichotomy]\label{T:GraphsFull}
Let $\bfC$ be a generic equivalence class of cylinders on a translation surface $(X, \omega)$ in an invariant subvariety $\cM$. Consider a cylinder degeneration given by $v \in \TwistC$.  
\begin{itemize}
\item If $\mathrm{rank}(\cM_v) < \mathrm{rank}(\cM)$, then $\bfC_v = \bfC$ and $\Col_v(\bfC)$ is $\cM_v$-rel-scalable. 
\item If $\mathrm{rank}(\cM_v) = \mathrm{rank}(\cM)$, then $\Col_v(\bfC_v)$ is strongly connected. 
\end{itemize}
%
%
\end{thm}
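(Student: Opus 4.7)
The proof splits naturally into the rank-reducing and rank-preserving cases. The main technical tools I would use throughout are the rank characterization from Lemma \ref{L:RankReducing}, the direct-sum decomposition from Proposition \ref{P:AbsoluteTwistDecomp}, the structure of vanishing cycles from Lemma \ref{L:VinCv}, the twist-space decomposition from Proposition \ref{P:TwistDecomp}, and the partial converse to the Cylinder Deformation Theorem, Theorem \ref{T:CDTConverse}.

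For the rank-reducing case, I would first argue $\bfC_v = \bfC$ by contradiction. Suppose some $C_0 \in \bfC \setminus \bfC_v$ survives; then its core curve $\gamma_{C_0}$ persists as a non-zero absolute cycle on $\Col_v(X,\omega)$. Writing $v = c\sigma_\bfC + r$ with $r \in \TwistC \cap \ker p$ via Theorem \ref{T:CDTConverse}, I would use the twist-space decomposition applied to the surviving cylinders to extract a class $\eta \in \Ann(V) \cap T_{(X,\omega)}(\cM)$ whose absolute projection coincides with $p(\sigma_\bfC)$, contradicting the direct sum in Proposition \ref{P:AbsoluteTwistDecomp}. With $\bfC_v = \bfC$ in hand, I would build the rel-scalable class as follows: define a function $\phi: \Sigma_{\Col} \to \bC$ whose differences along edges of $\Col_v(\bfC)$ equal the corresponding holonomies (this construction is where the graph's acyclicity is proven, since directed cycles would obstruct the existence of $\phi$ by giving nonzero total holonomy). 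The coboundary $s = d\phi$ is automatically purely relative and annihilates every vanishing cycle, and $s(e) = \operatorname{hol}(e)$ on each edge. The subtle step is identifying $s$ with a genuine tangent direction in $T_{\Col_v(X,\omega)}(\cM_v)$: the plan is to view $s$ as the residual of the standard deformation $-i\sigma_\bfC$ under the rank-reducing collapse, since the absolute direction $p(\sigma_\bfC)$ that is lost under collapse must, by Proposition \ref{P:AbsoluteTwistDecomp}, correspond to a purely relative tangent direction on the boundary.

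For the rank-preserving case, I would argue the contrapositive: if $G := \Col_v(\bfC_v)$ is not strongly connected, then the rank drops. Failure of strong connectivity yields a proper nonempty vertex subset $S$ of $G$ such that every edge with head in $S$ also has tail in $S$. Using this partition and Lemma \ref{L:VinCv}, I would construct a vanishing cycle $v_0 \in V$ supported in $\overline{\bfC}_v$ as a combination of differences of cross curves of cylinders in $\bfC_v$ that bridge the partition; the idea is that cross curves of cylinders in $\bfC_v$ have zero-length images under $\Col_v$, so their appropriate combinations lie in $V$. I would then verify (i) $\sigma_\bfC(v_0) \neq 0$, so that $v_0$ is non-zero on $T_{(X,\omega)}(\cM)$, and (ii) $v_0$ annihilates $\ker p \cap T_{(X,\omega)}(\cM)$, which reduces via Theorem \ref{T:CDTConverse} and Proposition \ref{P:TwistDecomp} to a combinatorial computation on the partition. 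Lemma \ref{L:RankReducing} then yields the desired rank drop.

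The hard part will be the tangent-space containment in both cases. In Part 1, realizing the abstractly constructed class $s = d\phi$ as an element of $T_{\Col_v(X,\omega)}(\cM_v)$ requires leveraging the rank-reducing hypothesis geometrically rather than merely algebraically; the direct-sum decomposition of Proposition \ref{P:AbsoluteTwistDecomp} must be paired with a mechanism for transporting tangent directions through the degeneration limit. In Part 2, showing the constructed vanishing cycle is annihilated by the \emph{entire} purely relative tangent space $\ker p \cap T_{(X,\omega)}(\cM)$ — not just the purely relative twists in $\TwistC$ — similarly demands a careful application of Proposition \ref{P:TwistDecomp} together with the symplectic compatibility results of Section \ref{SS:SymplecticCompatibility}.
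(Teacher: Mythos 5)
Your high-level architecture matches the paper's --- the proof does split along the two rank cases, and Proposition \ref{P:AbsoluteTwistDecomp}, Theorem \ref{T:CDTConverse}, Proposition \ref{P:TwistDecomp} and Lemma \ref{L:RankReducing} are the right tools --- but in both cases the step you label ``the hard part'' is where the proof actually lives, and the mechanisms you propose there do not close. In the rank-reducing case the paper's entire argument runs through a single object you have not produced: a purely relative class $r_0\in T_{\Col_v(X,\omega)}(\cM_v)$ that pairs nontrivially with the core curves of $\bfC$. It comes from symplectic linear algebra: by Proposition \ref{P:AbsoluteTwistDecomp} the subspace $p(\Ann(V)\cap T_{(X,\omega)}(\cM))$ has (complex) codimension one in the symplectic space $p(T_{(X,\omega)}(\cM))$, so its radical is nonzero; any $r_0$ in that radical is rel in $\cM_v$ (via Lemma \ref{L:BilinearForm} and Corollary \ref{C:RankCharacterization}) and pairs nontrivially with $\sigma_\bfC$, hence with every core curve of $\bfC$. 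From this, $\bfC_v=\bfC$ is immediate (a surviving cylinder would have a closed core curve on $\Col_v(X,\omega)$, on which the rel class $r_0$ must vanish), and rel-scalability follows by correcting $[\omega_0]$ by a multiple of $r_0$ plus a twist-space term from Proposition \ref{P:TwistDecomp}. Your route to $\bfC_v=\bfC$ asks for $\eta\in\Ann(V)\cap T_{(X,\omega)}(\cM)$ with $p(\eta)=p(\sigma_\bfC)$, but no construction is given, and the natural candidate (the standard deformation of the surviving equivalence class on the limit) has no reason to project to $p(\sigma_\bfC)$. Worse, your construction of the scaling class as $s=d\phi$ presupposes that the holonomy of $\omega$ around every closed walk in $\Col_v(\bfC)$ vanishes --- that is exactly the acyclicity statement you say you will deduce \emph{from} the existence of $\phi$, so the argument is circular; and identifying $d\phi$ with an actual element of $T_{\Col_v(X,\omega)}(\cM_v)$ is the whole content of the theorem, for which ``view $s$ as the residual of $-i\sigma_\bfC$'' is not an argument.

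In the rank-preserving case the contrapositive is logically fine, but your proposed vanishing cycles --- combinations of cross curves bridging the partition --- are relative cycles, and there is no mechanism forcing such a class to be annihilated by all of $\ker(p)\cap T_{(X,\omega)}(\cM)$; indeed, a single maximally contracted cross curve of a cylinder in $\bfC_v$ is a vanishing cycle on which $\sigma_\bfC$ is nonzero, so in the rank-preserving case Lemma \ref{L:RankReducing} forces some rel class to be nonzero on it. The paper avoids this entirely by using \emph{closed} loops, namely the preimages $\Col_v^{-1}(\alpha_q)$ of small links of the zeros of $\Col_v(X,\omega)$: these are automatically zero on $\ker(p)$, they are vanishing cycles, and rank preservation forces every element of $T_{(X,\omega)}(\cM)$ --- in particular $\omega$ itself --- to vanish on them (Lemma \ref{L:HolZero}). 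Computing that holonomy as outgoing-minus-incoming height-weights at each vertex (Lemma \ref{L:InMinusOut}) produces a positive weighting with flow conservation, which is equivalent to strong connectivity. If you insist on the contrapositive, the same closed loops do the job --- failure of strong connectivity means the height-weights violate conservation somewhere, so some $\Col_v^{-1}(\alpha_q)$ has nonzero $\omega$-holonomy and Lemma \ref{L:RankReducing} yields the rank drop --- but cross-curve combinations will not.
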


The following special case of Theorem \ref{T:GraphsFull} was initially discovered and proved jointly with Mirzakhani.

\begin{cor}\label{C:ExactGraphsRel0}
Let $\bfC$ be an equivalence class of $\cM$-generic cylinders on a translation surface $(X, \omega)$ in an invariant subvariety $\cM$. Consider a cylinder degeneration given by $v \in \TwistC$.  If $\cM$ has no rel, then $\bfC_v = \bfC$ and $\Col_v(\bfC)$ is $\cM_v$-rel-scalable. 
\end{cor}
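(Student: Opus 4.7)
The plan is to deduce the corollary from Theorem~\ref{T:GraphsFull}. The gap between the two statements is twofold: the corollary assumes only that $\bfC$ is an equivalence class of $\cM$-generic cylinders (rather than a generic equivalence class), and it does not explicitly posit that the degeneration is rank-reducing. Both gaps close because $\cM$ has no rel.

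For the first gap, Lemma~\ref{L:generic} promotes an equivalence class of $\cM$-generic cylinders to a generic equivalence class under its first criterion, namely that $\cM$ has no rel. Hence $\bfC$ satisfies the hypothesis of Theorem~\ref{T:GraphsFull}.

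For the second gap, I would use a dimension count. The no-rel hypothesis gives $\dim_{\bC}\cM = 2\rank(\cM)$. By the definition of a collapse path, $(X,\omega)+tv$ diverges in the stratum as $t\to t_v$, so $\Col_v(X,\omega)$ lies in a proper boundary component of $\cM$, giving $\dim_{\bC}\cM_v < \dim_{\bC}\cM$. Since rel is always nonnegative, $\dim_{\bC}\cM_v \geq 2\rank(\cM_v)$. Chaining these inequalities yields $2\rank(\cM_v) < 2\rank(\cM)$, so the degeneration is rank-reducing.

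With both hypotheses of Theorem~\ref{T:GraphsFull} verified, its rank-reducing case applies and gives immediately that $\bfC_v = \bfC$ and $\Col_v(\bfC)$ is $\cM_v$-rel-scalable, as desired. The main obstacle is of course in establishing Theorem~\ref{T:GraphsFull} itself; granted that result, the corollary follows by the routine unpacking of the no-rel hypothesis through Lemma~\ref{L:generic} and the dimension inequality above.
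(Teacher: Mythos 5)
Your proposal is correct and follows essentially the same route as the paper: the paper's proof likewise invokes Lemma~\ref{L:generic} (first criterion) to upgrade $\bfC$ to a generic equivalence class and asserts that no rel forces $\rank(\cM_v)<\rank(\cM)$ before applying the rank-reducing case of Theorem~\ref{T:GraphsFull}. Your explicit dimension count ($\dim\cM = 2\rank(\cM)$, $\dim\cM_v < \dim\cM$, $\dim\cM_v \geq 2\rank(\cM_v)$) is a valid filling-in of the step the paper leaves implicit.
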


\begin{proof}
If $\cM$ has no rel, then $\mathrm{rank}(\cM_v) < \mathrm{rank}(\cM)$, and Lemma \ref{L:generic} gives that every equivalence class of generic cylinders is a generic equivalence class. 
\end{proof}

Theorem \ref{T:GraphsFull}  evolved from the ``no rel" special case (Corollary \ref{C:ExactGraphsRel0}) discovered with Mirzakhani, so we feel it is appropriate to attribute Theorem \ref{T:GraphsFull} jointly to her. In the course of this evolution, our perspective has changed significantly, so the arguments we present will have a  different flavor than those we discussed with Mirzakhani.

\subsection{Proof of Theorem \ref{T:GraphsFull}} 

 Let $V$ denote the collection of vanishing cycles for the collapse from $(X, \omega)$ to $\Col_v(X, \omega)$. The first statement (on rel-scalability) of Theorem \ref{T:GraphsFull} will be derived from the following. The reader should keep in mind that relative cohomology is the dual of relative homology.

\begin{prop}\label{P:SameImage}
If $\mathrm{rank}(\cM_v) < \mathrm{rank}(\cM)$, then the restriction of the rel subspace of $T_{\Col_v(X, \omega)}(\cM_v)$ to the subspace of relative homology generated by saddle connections in $\Col_v(\bfC_v)$ is equal to the restriction of $T_{\Col_v(X, \omega)}(\cM_v)$ to this subspace. Moreover, $\bfC = \bfC_v$. 
\end{prop}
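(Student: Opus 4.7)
The proof rests on Proposition \ref{P:AbsoluteTwistDecomp}, which under the rank-reducing hypothesis provides the direct sum decomposition
\[ p(T_{(X,\omega)}(\cM)) = p(T_{\Col_v(X,\omega)}(\cM_v)) \oplus p(\bC\cdot\sigma_\bfC). \]
Both assertions will be derived from this, together with the twist-space structure from Proposition \ref{P:TwistDecomp} and Theorem \ref{T:CDTConverse}.

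For $\bfC = \bfC_v$, I argue by contradiction. If $\bfC \setminus \bfC_v$ were nonempty, the partial standard deformation $\sigma_{\bfC\setminus\bfC_v} := \sum_{C\in\bfC\setminus\bfC_v} h_C \gamma_C^*$ would lie in $\TwistC$ by definition, and also in $T_{\Col_v(X,\omega)}(\cM_v) = \Ann(V) \cap T_{(X,\omega)}(\cM)$, since each $\gamma_C^*$ with $C \in \bfC \setminus \bfC_v$ annihilates the cross curves of cylinders in $\bfC_v$ (the cylinders being disjoint). Theorem \ref{T:CDTConverse} puts $p(\sigma_{\bfC\setminus\bfC_v})$ in $\bC\cdot p(\sigma_\bfC)$, and the direct sum above forces it to vanish. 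Hence $\sigma_{\bfC\setminus\bfC_v}$ is a nonzero purely relative tangent vector that independently scales a proper subcollection of $\bfC$; modifying $v$ by a small multiple of the rel direction $i\sigma_{\bfC\setminus\bfC_v}$ then produces a degeneration direction whose set of collapsing cylinders strictly contains $\bfC_v$, contradicting the characterization of $\bfC_v$ as the precise subset of cylinders in $\bfC$ that first reach zero height along $v$.

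For the restriction equality, take $\eta \in T_{\Col_v(X,\omega)}(\cM_v)$ and apply Proposition \ref{P:TwistDecomp} with $w = \omega_v$, the cohomology class of the Abelian differential on $\Col_v(X,\omega)$; this $w$ lies in $T_{\Col_v(X,\omega)}(\cM_v)$ and pairs non-trivially with each core curve in $\bfC$. In the resulting decomposition
\[ \eta = a\omega_v + \eta_\bfC + \eta_{(X,\omega)\setminus\bfC}, \]
both $\eta_\bfC$ and $\eta_{(X,\omega)\setminus\bfC}$ evaluate to zero on every saddle connection in $\Col_v(\bfC_v)$ (the former because its core-dual form pairs trivially with horizontal chains parallel to the cores, the latter by construction), so $\eta - a\omega_v$ belongs to $T_{\Col_v(X,\omega)}(\cM_v) \cap \Ann(W)$ for $W$ the span of the saddle connections in $\Col_v(\bfC_v)$. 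The restriction equality thus reduces to producing, for each $a \in \bC$, a rel class $\rho \in \ker p \cap T_{\Col_v(X,\omega)}(\cM_v)$ with $\rho(s) = a\omega_v(s)$ on every $s \in \Col_v(\bfC_v)$ --- precisely the $\cM_v$-rel-scalability of $\Col_v(\bfC_v)$.

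This last construction is the main obstacle: it requires both that the graph $\Col_v(\bfC_v)$ be acyclic as a directed graph (necessary even for existence of $\rho$ as an abstract cohomology class) and that $\rho$ actually lie in the tangent space to $\cM_v$. I would address both by exploiting the direct sum decomposition: the fact that $p(\sigma_\bfC)$ generates the one-dimensional complement of $p(T_v)$ in $p(T_{(X,\omega)}(\cM))$ provides just enough rel freedom within $\cM_v$ to realize $\rho$ as $a\omega_v$ minus a suitable element of $\TwistC$ whose absolute image cancels $p(a\omega_v)$, with the modification arranged so that the difference both annihilates $V$ and has zero absolute image, yielding a purely relative tangent class of $\cM_v$ with the required values on $W$.
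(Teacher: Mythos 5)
Both halves of your argument have gaps, and the common missing ingredient is a class the paper's proof constructs first (its Lemma \ref{L:NewRel}): a vector $r_0\in\Ann(V)\cap T_{(X,\omega)}(\cM)$ that pairs symplectically trivially with all of $\Ann(V)\cap T_{(X,\omega)}(\cM)$ (hence is purely relative in $T_{\Col_v(X,\omega)}(\cM_v)$) but pairs nontrivially with $\sigma_\bfC$ (hence, by duality, is nonzero on every core curve of $\bfC$). Its existence follows from the symplecticity of $p(T_{(X,\omega)}(\cM))$ together with the direct sum decomposition you quote; you never invoke the symplectic form, and without it neither claim goes through. Concerning $\bfC=\bfC_v$: your assertion that $\sigma_{\bfC\setminus\bfC_v}$ lies in $\TwistC$ ``by definition'' is false. $\TwistC$ is by definition a subspace of $T_{(X,\omega)}(\cM)$, and a partial standard deformation of a proper subcollection of an equivalence class is in general not a tangent direction --- that is essentially what it means for $\bfC$ to be a single equivalence class, and by Theorem \ref{T:CDTConverse} it would force nontrivial homological relations among the core curves. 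Moreover, even granting it, your concluding step is not a contradiction: $\bfC_{v'}$ is permitted to differ from $\bfC_v$ when $v'\neq v$. The correct mechanism is that a cylinder $C\in\bfC\setminus\bfC_v$ would survive as a cylinder on $\Col_v(X,\omega)$, so its core curve would be a closed curve on which the rel class $r_0$ must vanish, contradicting $r_0(\gamma_C)\neq 0$.

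For the restriction equality there are two further problems. First, $\eta_\bfC$ does not in general vanish on the classes in relative homology determined by saddle connections of $\Col_v(\bfC_v)$: such a class is represented upstairs by a chain in $\overline{\bfC}_v$ that may be forced to cross cylinders of $\bfC_v$, and $\gamma_C^*$ is nonzero on cross curves; this is exactly why the paper retains $\eta_\bfC$ as part of the certifying rel class rather than discarding it. Second, your proposed construction of $\rho$ cannot work as described: any element of $\TwistC$ has absolute part in $\bC\cdot p(\sigma_\bfC)$ by Theorem \ref{T:CDTConverse}, and by the very direct sum you invoke this line is transverse to $p(\Ann(V)\cap T_{(X,\omega)}(\cM))$, which contains $p(a\omega_v)$; so no element of $\TwistC$ can cancel $p(a\omega_v)$ unless $a=0$. (Reducing to the rel-scalability of $\Col_v(\bfC_v)$ is also backwards relative to the paper's architecture, where rel-scalability is deduced \emph{from} this proposition.) The working route is: subtract $cr_0$ from $w$ so the difference vanishes on core curves, decompose the difference via Proposition \ref{P:TwistDecomp}, use the direct sum once more to see the $\TwistC$-component is purely relative, and take $cr_0+\eta_\bfC$ as the rel class agreeing with $w$ on $\Col_v(\bfC)$.
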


Thus, any change to $\Col_v(\bfC_v)$ that can be accomplished locally in $\cM_v$ can also be obtained with a rel deformation in $\cM_v$. 

Before we give the proof, we explain how to apply Proposition \ref{P:SameImage}. 

\begin{proof}[Proof of Theorem \ref{T:GraphsFull} in the rank-reducing case]
Let $\omega_0$ denote the holomorphic one-form that induces the flat structure on $\Col_v(X, \omega)$.

By Proposition \ref{P:SameImage}, there is a purely relative class $r\in T_{\Col_v(X, \omega)}(\cM_v)$ whose restriction to  the subspace of relative homology generated by saddle connections in $\Col_v(\bfC_v)$ is the same as the restriction of $[\omega_0]\in T_{\Col_v(X, \omega)}(\cM_v)$. Since $\omega_0$ evaluated on any saddle connection gives its holonomy, this means that $r$ evaluated on any saddle connection in $\Gamma$ gives its holonomy. The existence of such an $r$ is the definition of rel-scalability, so this, together with the fact that $\bfC = \bfC_v$ proves the result. 
\end{proof}

\begin{proof}[Proof of Proposition \ref{P:SameImage}]
Let $w\in T_{\Col_v(X, \omega)}(\cM_v)$. We wish to find a purely relative class $r\in T_{\Col_v(X, \omega)}(\cM_v)$ which is equal to $w$ on saddle connections in $\Col_v(\bfC_v)$. We start with finding some relative class $r_0$, which can be thought of as an initial guess for $r$, and then we will show how to modify $r_0$ to get a class $r$ with the desired restriction. 

\begin{lem}\label{L:NewRel}
There is a purely relative class $r_0\in T_{\Col_v(X, \omega)}(\cM_v)$ that, when viewed using the inclusion $T_{\Col_v(X, \omega)}(\cM_v)\subset T_{(X,\omega)}(\cM)$, is non-zero on core curves of cylinders in $\bfC$. Moreover, $\bfC_v = \bfC$.
\end{lem}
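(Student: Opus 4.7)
My plan is to derive the lemma from a combination of Lemma~\ref{L:RankReducing}, Lemma~\ref{L:VinCv}, Proposition~\ref{P:AbsoluteTwistDecomp}, and Theorem~\ref{T:CDTConverse}. First I would apply Lemma~\ref{L:RankReducing} to extract a vanishing cycle $\alpha \in V$ that is nonzero as a functional on $T_{(X,\omega)}(\cM)$ but annihilates $\ker(p) \cap T_{(X,\omega)}(\cM)$; by Lemma~\ref{L:VinCv}, I may take $\alpha$ supported in $\overline{\bfC}_v$. Since $\alpha$ annihilates the rel subspace and, being in $V$, annihilates $\Ann(V) \cap T_{(X,\omega)}(\cM)$, the direct sum of Proposition~\ref{P:AbsoluteTwistDecomp} forces $\alpha(\sigma_\bfC) \neq 0$, i.e., $\sum_{C \in \bfC_v} h_C(\alpha \cdot \gamma_C) \neq 0$.

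To prove $\bfC_v = \bfC$, I would argue by contradiction. Suppose $\bfC_0' := \bfC \setminus \bfC_v \neq \emptyset$, and consider $\sigma_{\bfC_0'} := \sum_{C \in \bfC_0'} h_C \gamma_C^* \in \TwistC \subseteq T_{(X,\omega)}(\cM)$. Since each $\gamma_C$ with $C \in \bfC_0'$ is disjoint from $\overline{\bfC}_v$, and $V$ admits a basis supported in $\overline{\bfC}_v$ by Lemma~\ref{L:VinCv}, we have $\sigma_{\bfC_0'} \in \Ann(V) \cap T_{(X,\omega)}(\cM) = T_{\Col_v(X,\omega)}(\cM_v)$. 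By Theorem~\ref{T:CDTConverse} applied in $\cM$, $p(\sigma_{\bfC_0'}) = c' p(\sigma_\bfC)$ for some $c'$; symplectically pairing against $[\omega]$ identifies $c'$ with the positive ratio of total areas $\mathrm{area}(\bfC_0')/\mathrm{area}(\bfC)$. This places $c' p(\sigma_\bfC) \in p(\Ann(V) \cap T_{(X,\omega)}(\cM))$, contradicting the direct sum of Proposition~\ref{P:AbsoluteTwistDecomp}. Hence $\bfC_v = \bfC$.

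For the construction of $r_0$: a dimension count combining Corollary~\ref{C:RankMinus1} and Proposition~\ref{P:AbsoluteTwistDecomp} yields
$\dim(\ker(p_v) \cap T_{\Col_v(X,\omega)}(\cM_v)) = \dim(\ker(p) \cap T_{\Col_v(X,\omega)}(\cM_v)) + 1$,
so there exists $r_0 \in \ker(p_v) \cap T_{\Col_v(X,\omega)}(\cM_v)$ not lying in $\ker(p)$; in particular $p(r_0) \neq 0$ in $H^1(X)$. To ensure $p(r_0)$ pairs nontrivially with some core curve $\gamma_C$ for $C \in \bfC = \bfC_v$, I would use that the evaluation functional $\eta \mapsto p(\eta)(\gamma_C)$ on $p(\Ann(V) \cap T_{(X,\omega)}(\cM))$ is nonzero, since the cohomology class of the degenerate Abelian differential lies in this subspace and pairs with $\gamma_C$ to give the horizontal circumference of $C$ on $\Col_v(X,\omega)$; modifying $r_0$ by an element of $\ker(p) \cap T_{\Col_v(X,\omega)}(\cM_v)$ if necessary, I can then arrange $p(r_0)(\gamma_C) \neq 0$.

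The principal obstacle is that this last step only works if the one-dimensional ``new rel'' quotient $\ker(p_v)/\ker(p)$ inside $T_{\Col_v(X,\omega)}(\cM_v)$ is not entirely contained in the common kernel of all evaluation-on-core-curve functionals. Confirming this likely requires a concrete geometric identification of the new rel class as arising from the WYSIWYG ungluing at marked points on the boundary of the collapsed cylinders; such a class naturally evaluates non-trivially on closed loops on $(X,\omega)$ that become paths between ungued pairs on $\Col_v(X,\omega)$, and for $\bfC_v = \bfC$ the core curves of $\bfC$ are of exactly this type.
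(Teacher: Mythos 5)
There are two genuine gaps here. The first, which you do not flag, is in your argument that $\bfC_v=\bfC$: you assert that $\sigma_{\bfC\setminus\bfC_v}=\sum_{C\in\bfC\setminus\bfC_v}h_C\gamma_C^*$ lies in $\TwistC$. The Cylinder Deformation Theorem only places the standard deformation of the \emph{whole} equivalence class in $T_{(X,\omega)}(\cM)$; the standard deformation of a proper sub-collection of $\bfC$ is in general not tangent to $\cM$ (that is exactly the constraint that makes $\bfC$ a single equivalence class). Concretely, writing $v=c\sigma_\bfC+r$ with $r=\sum r_C\gamma_C^*$ purely relative as in Theorem~\ref{T:CDTConverse}, the ratio $r_C/h_C$ is forced to be constant on $\bfC_v$ but need not be constant on $\bfC\setminus\bfC_v$, so $\sigma_{\bfC\setminus\bfC_v}$ need not lie in the span of $\sigma_\bfC$ and the available rel vectors. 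The paper avoids this entirely by deducing $\bfC_v=\bfC$ \emph{from} the existence of $r_0$: if some $C\in\bfC$ survived as a cylinder on $\Col_v(X,\omega)$, its core curve would be a closed curve there, on which the purely relative class $r_0$ would have to vanish, contradicting $r_0(\gamma_C)\neq 0$.

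The second gap is the one you acknowledge at the end, and it is the heart of the lemma: you produce a class in the rel of $\cM_v$ that is not purely relative upstairs, but cannot show it is nonzero on the core curves; note also that modifying $r_0$ by an element of $\ker(p)$ changes neither $p(r_0)$ nor its value on the absolute classes $\gamma_C$, so that adjustment buys nothing. The paper closes this using the symplectic form rather than a dimension count. By Lemma~\ref{L:BilinearForm} and Corollary~\ref{C:RankCharacterization}, being purely relative in $\cM_v$ is equivalent to lying in the radical of the restriction of $\langle p(\cdot),p(\cdot)\rangle$ to $\Ann(V)\cap T_{(X,\omega)}(\cM)$. By Proposition~\ref{P:AbsoluteTwistDecomp} this subspace has codimension-one image in the symplectic space $p(T_{(X,\omega)}(\cM))$, so its radical is one-dimensional, and nondegeneracy forces a generator $r_0$ to satisfy $\langle r_0,\sigma_\bfC\rangle\neq 0$. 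Poincar\'e duality identifies $\sigma_\bfC$ with $\sum h_C\gamma_C$, so this pairing is exactly the evaluation of $r_0$ on $\sum h_C\gamma_C$, and hence $r_0(\gamma_C)\neq 0$ for every $C\in\bfC$ since the core curves give collinear functionals on $T_{(X,\omega)}(\cM)$. Your first paragraph (a vanishing cycle $\alpha$ with $\alpha(\sigma_\bfC)\neq 0$) is correct but is the dual of what is needed and is not used in the remainder of your argument.
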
 

Since core curves are absolute cohomology classes, $r_0$ is not purely relative in $T_{(X,\omega)}(\cM)$. So one might say that $r_0$ becomes purely relative in the passage from $\cM$ to $\cM_v$. 

\begin{proof}
Recall that $\sigma_{\bfC}$ denotes the standard deformation in $\bfC$. 

\begin{sublem}\label{SL:PairWithCoreCurve}
For any $w\in T_{(X,\omega)}(\cM)$, if $\langle \sigma_\bfC, w\rangle \neq 0$, then $w(\gamma)\neq 0$ for each core curve $\gamma$ of $\bfC$. 
\end{sublem}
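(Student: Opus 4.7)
The plan is to expand $\langle \sigma_\bfC, w\rangle$ as a sum over $C \in \bfC$ of $h_C$ times the evaluation $w(\gamma_C)$, and then to observe that $\cM$-parallelism forces all these evaluations to be proportional to the circumferences $c_C$. First I would assume without loss of generality that the cylinders of $\bfC$ are horizontal, so that $c_C := \int_{\gamma_C}\omega$ is positive real and $h_C c_C$ is the area of $C$. Under the Poincar\'e duality convention used to define $\gamma_C^*$ (as in Section \ref{SS:SymplecticCompatibility}), the symplectic pairing on $H^1(X)$ corresponds to the intersection pairing on $H_1(X)$, which gives $\langle \gamma_C^*, w\rangle = w(\gamma_C)$, and hence
\[ \langle \sigma_\bfC, w\rangle = \sum_{C\in\bfC} h_C\, w(\gamma_C). \]

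Next I would use the $\cM$-parallelism of the cylinders in $\bfC$. By definition, for any $C, C' \in \bfC$ and any $(X',\omega')$ near $(X,\omega)$ in $\cM$, the holonomies $\int_{\gamma_C}\omega'$ and $\int_{\gamma_{C'}}\omega'$ are real-proportional with ratio $c_C/c_{C'}$ at $(X,\omega)$. Differentiating this condition along $w\in T_{(X,\omega)}(\cM)$, and applying the real infinitesimal constraint to both $w$ and $iw$ (which is tangent since $T_{(X,\omega)}(\cM)$ is $\bC$-linear), one obtains
\[ \frac{w(\gamma_C)}{c_C} = \frac{w(\gamma_{C'})}{c_{C'}} =: \lambda(w) \in \bC \]
for every $C, C' \in \bfC$.

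Substituting, $\langle \sigma_\bfC, w\rangle = \lambda(w) \sum_{C\in\bfC} h_C c_C = \lambda(w)\cdot \Area(\bfC)$. Since $\Area(\bfC)>0$, the hypothesis $\langle \sigma_\bfC, w\rangle\ne 0$ is equivalent to $\lambda(w)\ne 0$, and in that case $w(\gamma) = \lambda(w) c_C \ne 0$ for every core curve $\gamma = \gamma_C$ of every cylinder in $\bfC$. The computation is essentially mechanical; the only point requiring care is that $\cM$-parallelism as stated only constrains the real infinitesimal behaviour, so one must invoke the complex structure on $T_{(X,\omega)}(\cM)$ to upgrade the two real constraints to the single complex identity above.
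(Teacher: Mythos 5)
Your proposal is correct and follows essentially the same route as the paper: both identify $\langle \sigma_\bfC, w\rangle$ with $w$ evaluated on $\sum_C h_C\gamma_C$ via the duality, and both use that the core curves of $\cM$-parallel cylinders induce proportional (with positive ratios) functionals on $T_{(X,\omega)}(\cM)$ to pass from nonvanishing on the weighted sum to nonvanishing on each $\gamma_C$. Your version just makes the proportionality constant $\lambda(w)$ and the resulting area factor explicit where the paper argues more abstractly.
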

\begin{proof}[Proof of Sublemma.]
Given any vector space $V$ with a non-degenerate bilinear form $\langle \cdot, \cdot \rangle$, the dual vector space $V^*$ is endowed with a dual bilinear form. The fundamental property is that $$v\mapsto \phi_v=\langle \cdot, v\rangle$$ sends the bilinear form to the dual bilinear form.  We have that $\langle \phi_v, \psi_w \rangle = \langle v, w \rangle = \psi_w(v)$. 

In our situation, we apply this to $V=p(T_{(X,\omega)}(\cM))$ with its symplectic form.
The standard deformation $\sigma_\bfC$ is dual to a linear combination $\sum h_i \gamma_i$ of the core curves of $\bfC$. From $\langle w, \sigma_\bfC\rangle\neq 0$ we then get that $w$ is non-zero on $\sum h_i \gamma_i$. Since all the $\gamma_i$ give the collinear functionals on $T_{(X, \omega)}(\cM)$, it is non-zero on all of them.
\end{proof}

By Proposition \ref{P:AbsoluteTwistDecomp},  since $\mathrm{rank}(\cM_v) < \mathrm{rank}(\cM)$, 
\[ p(T_{(X,\omega)}(\cM))=p(\Ann(V) \cap T_{(X, \omega)}(\cM))\oplus p(\bC\cdot \sigma_\bfC). \]
In particular, there is an element $r_0$ of $\Ann(V) \cap T_{(X, \omega)}(\cM)$ so that 
\begin{enumerate}
\item $\langle r_0, \sigma_{\bfC} \rangle \ne 0$, and 
\item $r_0$ pairs trivially with every element of $\Ann(V) \cap T_{(X, \omega)}(\cM)$.
\end{enumerate}
Using the identification of $T_{\Col_v(X, \omega)}(\cM_v)$ with $\Ann(V) \cap T_{(X, \omega)}(\cM)$, the second property shows that $r_0$ is rel in $T_{\Col_v(X, \omega)}(\cM_v)$. Together with Sublemma \ref{SL:PairWithCoreCurve}, the first shows that $r_0$, when viewed as an element of $T_{(X, \omega)}(\cM)$, pairs with the core curve of every cylinder in $\bfC$ nontrivially.

Finally, if $\bfC \ne \bfC_v$, then there is a cylinder $C$ in $\bfC$ that remains a cylinder on $\Col_v(X, \omega)$. Since $r_0$ pairs nontrivially with the core curve of every cylinder in $\bfC$, the same is true for $\Col_v(C)$. But this contradicts the fact that $r_0$ is rel and hence must evaluate to zero on any closed curve. This concludes the proof of Lemma \ref{L:NewRel}. 
\end{proof}

Since the $r_0$ supplied by Lemma \ref{L:NewRel} is non-zero on core curves of $\bfC$, there is constant $c$ such that $w-cr_0$ is zero on core curves of $\bfC$. (All the core curves are generically parallel, so being zero on one core curve implies being zero on all of them.) 

For notational convenience, define $\eta = w-cr_0$. By Proposition \ref{P:TwistDecomp}, we can decompose  $\eta$ as
\[ \eta = \eta_\bfC + \eta_{(X, \omega) \backslash \bfC}, \]
where $\eta_{\bfC} \in \TwistC$ and $\eta_{(X, \omega) \backslash {\bfC}}$ pairs trivially with any saddle connection contained in $\overline{\bfC}$. 

By definition, $\eta$ is in $T_{\Col_v(X, \omega)}(\cM_v)$, which we identify with $\Ann(V) \cap T_{(X, \omega)}(\cM)$. By Lemma \ref{L:VinCv}, $\eta_{(X, \omega) \backslash {\bfC}}$ is also in $\Ann(V)$, and hence so is $\eta_\bfC=\eta-\eta_{(X, \omega) \backslash {\bfC}}$.

\begin{lem}
$\eta_\bfC$ is purely relative in $T_{(X,\omega)}(\cM)$. 
\end{lem}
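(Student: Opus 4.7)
The plan is to show $p(\eta_\bfC) = 0$ by combining Theorem \ref{T:CDTConverse} with the direct sum decomposition of Proposition \ref{P:AbsoluteTwistDecomp}. Since $\eta_\bfC \in \TwistC$, Theorem \ref{T:CDTConverse} forces $p(\eta_\bfC) = c' p(\sigma_\bfC)$ for some scalar $c' \in \bC$. So the task reduces to showing $c' = 0$.

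For that, I would use the fact, established in the previous paragraph, that $\eta_\bfC \in \Ann(V)$. Combined with $\eta_\bfC \in T_{(X,\omega)}(\cM)$, this puts $p(\eta_\bfC) \in p(\Ann(V) \cap T_{(X,\omega)}(\cM))$. Now Proposition \ref{P:AbsoluteTwistDecomp}, which applies because the degeneration is rank-reducing and $\bfC$ is a generic equivalence class, gives the direct sum decomposition
\[ p(T_{(X,\omega)}(\cM)) = p(\Ann(V) \cap T_{(X,\omega)}(\cM)) \oplus p(\bC \cdot \sigma_\bfC). \]
Thus $c' p(\sigma_\bfC)$ lies in the intersection of the two summands, which is zero. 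Since the second summand is one-dimensional (as noted in the proof of Proposition \ref{P:AbsoluteTwistDecomp}), $p(\sigma_\bfC) \neq 0$, and so $c' = 0$.

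There is no serious obstacle here: all the machinery has already been assembled, and the lemma is essentially a formal consequence of Theorem \ref{T:CDTConverse} together with the directness of the decomposition in Proposition \ref{P:AbsoluteTwistDecomp}. The only point requiring a moment's care is noting that $p(\sigma_\bfC)$ is nonzero, which we get for free from the one-dimensionality already recorded in Proposition \ref{P:AbsoluteTwistDecomp}.
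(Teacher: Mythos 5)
Your proof is correct and follows essentially the same route as the paper: apply Theorem \ref{T:CDTConverse} to write $p(\eta_\bfC)$ as a multiple of $p(\sigma_\bfC)$, then use $\eta_\bfC \in \Ann(V) \cap T_{(X,\omega)}(\cM)$ together with the directness of the decomposition in Proposition \ref{P:AbsoluteTwistDecomp} to force that multiple to vanish. You merely spell out the final step (the intersection of the two summands is zero and $p(\sigma_\bfC) \neq 0$) that the paper leaves implicit.
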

\begin{proof}
Since $\eta_\bfC \in \TwistC$, we use Theorem \ref{T:CDTConverse} to write it as 
$$\eta_\bfC= a \sigma_\bfC + \tau,$$ 
where $a\in \bC$ and $\tau$ is a purely relative element of $\TwistC$. We must show that $a=0$. 

The result now follows from Proposition \ref{P:AbsoluteTwistDecomp}, since $p(\eta_\bfC) = ap(\sigma_\bfC)$ and $\eta_\bfC \in \Ann(V) \cap T_{(X, \omega)}(\cM)$.
\end{proof}

We can now conclude. We have that $r_0$ is purely relative, and 
$$w = (c r_0 + \eta_\bfC) + \eta_{(X,\omega)\backslash\bfC},$$ 
where the first summand is purely relative and the second is zero on all saddle connection in $\overline{\bfC}$, and both summands are in $\Ann(V)$. Since all saddle connections in $\Col_v(\bfC)$ arise from saddle connections in $\overline{\bfC}$, and since $\eta_{(X,\omega)\backslash\bfC}$ is zero on saddle connections in $\overline\bfC$, we get that $w$ and $c r_0 + \eta_\bfC$ are equal on $\Col_v(\bfC)$, concluding the proof of Proposition \ref{P:SameImage}. 
\end{proof}

It remains only to consider the rank-preserving case.

\begin{proof}[Proof of Theorem \ref{T:GraphsFull} in the rank-preserving case]
For concreteness, assume that $\bfC$ is horizontal, and direct $\Col_v(\bfC_v)$ so the edges point in the positive real direction. 

We will verify that $\Col_v(\bfC_v)$ is strongly connected using the following criterion: A directed graph is strongly connected if and only if the edges can be assigned positive weights such that for each vertex the sum of the incoming edge weights is equal to the sum of the outgoing edge weights.

%
%

To each edge of $\Col_v(\bfC_v)$, we assign a positive weight as follows. Pick any point $p$ on the edge, and define the weight to be the imaginary part of the holonomy of $\Col_v^{-1}(p)$. More informally, this can be described as the sum of the heights of the cylinders degenerating to this edge. It does not depend on the choice of the point $p$.  

Let $q$ be any singularity in $\Col_v(\bfC_v)$ and let $\alpha_q$ denote the clockwise oriented boundary of any small embedded flat disk centered at $q$ that contains no other singularities or marked points of $\Col_v(X, \omega)$, as in Figure \ref{F:alphaq}. The following two lemmas complete the proof. 

\begin{figure}[h]
\includegraphics[width=0.75\linewidth]{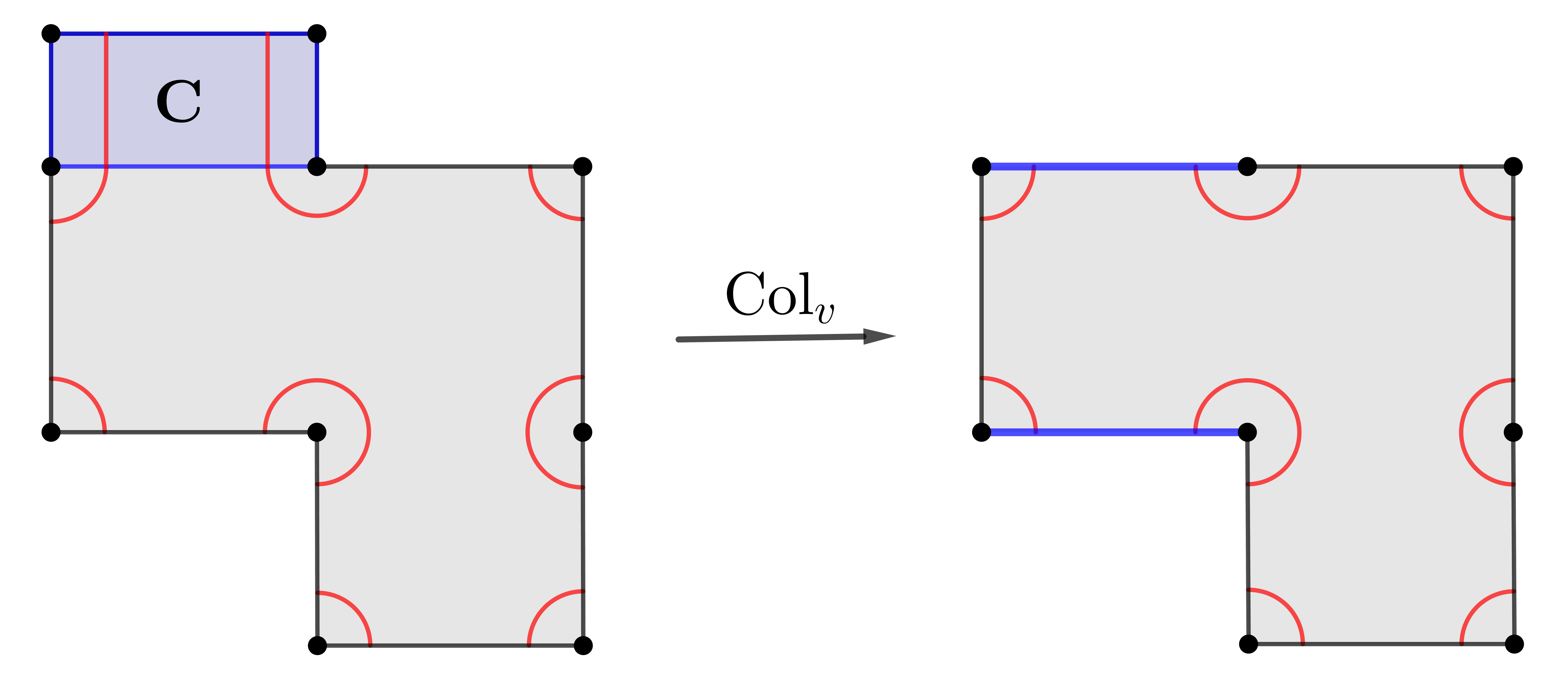}
\caption{An example $\Col_v^{-1}(\alpha_q)$ (left) and $\alpha_q$ (right).}
\label{F:alphaq}
\end{figure}

\begin{lem}\label{L:InMinusOut}
The imaginary part of the holonomy of the loop $\Col_v^{-1}(\alpha_q)$ is equal to sum of the weights of the edges leaving $q$ minus the sum of the weights of the edges entering $q$. 
\end{lem}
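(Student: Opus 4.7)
The plan is to decompose the preimage loop $\Col_v^{-1}(\alpha_q)$ into two types of pieces: arcs lifted isometrically from $\alpha_q$ away from the edges of $\Col_v(\bfC_v)$, and vertical ``jumps'' through the degenerating cylinders at each edge crossing. After computing each contribution, the holonomy of the arcs will drop out by Stokes' theorem, leaving a sum of signed weights.

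Concretely, I would parametrize $\alpha_q$ as a concatenation of arcs $\beta_1, \ldots, \beta_m$ separated by transverse crossings with edges $e_1, \ldots, e_m$ of $\Col_v(\bfC_v)$ incident to $q$. After perturbing $\alpha_q$ if necessary, we may assume all crossings are transverse and isolated, and that $\alpha_q$ avoids any other singularities. Each arc $\beta_i$ lies in $\Col_v(X,\omega) \setminus \Col_v(\bfC_v)$, where $\Col_v$ is a local isometry by Lemma \ref{L:Converge}, so $\beta_i$ lifts to an isometric arc $\tilde\beta_i$ in $(X,\omega) \setminus \overline{\bfC}_v$. Between consecutive lifts $\tilde\beta_i$ and $\tilde\beta_{i+1}$, the preimage loop contains a segment $\tau_i$ in the maximally contracted direction that traverses all cylinders in $\bfC_v$ degenerating to $e_i$, so as to close up the loop.

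Next I would compute the holonomy piece by piece. By isometry, $\sum_i \mathrm{hol}(\tilde\beta_i) = \int_{\alpha_q} \omega_0$, where $\omega_0$ is the Abelian differential on $\Col_v(X,\omega)$; this vanishes by Stokes' theorem applied to the small disk bounded by $\alpha_q$, since $\omega_0$ is holomorphic on that disk. So the total holonomy of $\Col_v^{-1}(\alpha_q)$ comes entirely from the jumps $\tau_i$. By construction, the vertical component of $\tau_i$ has length equal to the sum of heights of cylinders in $\bfC_v$ degenerating to $e_i$, namely $w(e_i)$, while any horizontal component (arising from a shear in the collapse) contributes only to the real part of the holonomy. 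Hence the imaginary part contributed by $\tau_i$ is $\pm w(e_i)$.

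The remaining step is to pin down the sign for each $i$. Since $\alpha_q$ is clockwise oriented and the edges of $\Col_v(\bfC_v)$ are directed in the positive real direction, the orientation with which $\alpha_q$ crosses $e_i$ near $q$ is forced by whether $e_i$ leaves or enters $q$. Tracing this through the identification of the two sides of $e_i$ in $\Col_v(X,\omega)$ with the top and bottom boundaries of the collapsing cylinders in $(X,\omega)$ determines whether $\tau_i$ is an upward or downward traversal, and hence the sign of its imaginary holonomy. A careful case analysis yields $+w(e)$ for outgoing edges and $-w(e)$ for incoming edges, with loop edges at $q$ cancelling to zero contribution. Summing gives the formula. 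The main delicacy I anticipate is in this last step: ensuring the sign convention is applied consistently at a cone point of potentially high order, where $\alpha_q$ may cross several edges (or the same edge more than once) while sweeping through the full cone angle.
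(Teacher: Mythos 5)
Your decomposition of $\Col_v^{-1}(\alpha_q)$ into arcs lifted from $\alpha_q$ away from the graph and vertical jumps through the collapsing cylinders is exactly the paper's decomposition, and your treatment of the jump contributions and the final sign bookkeeping is at the same level of detail as the paper's. The one step that does not go through as written is the claim that $\Col_v$ is a local isometry on $\Col_v(X,\omega)\setminus\Col_v(\bfC_v)$, and hence that $\sum_i \mathrm{hol}(\tilde\beta_i)=\int_{\alpha_q}\omega_0$. By Lemma \ref{L:Converge}, the derivative of $\Col_v$ is the identity only off $\overline{\bfC}$; on each cylinder of $\bfC\setminus\bfC_v$ it is a constant invertible but generally non-isometric linear map (a shear composed with a vertical dilation, cf.\ Example \ref{E:Derivative}). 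This lemma is used in the rank-preserving case, where $\bfC_v$ is typically a proper subset of $\bfC$, and a cylinder of $\bfC\setminus\bfC_v$ can have the vertex $q$ on its boundary, so that every loop $\alpha_q$ passes through its image. On the portion of an arc inside such a cylinder, $\int_{\tilde\beta_i}\omega$ and $\int_{\beta_i}\omega_0$ differ --- in general even in their imaginary parts, which are rescaled by the dilation factor --- so your Stokes computation downstairs does not control the arc contributions upstairs. Nor can you apply Stokes directly on $(X,\omega)$: the loop $\Col_v^{-1}(\alpha_q)$ is a vanishing cycle and is in general not null-homotopic in $X$ minus the singularities; the vanishing of its holonomy is precisely the nontrivial content of Lemma \ref{L:HolZero}.

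The repair is the one the paper uses: shrink the radius of the disk defining $\alpha_q$. The resulting loops $\Col_v^{-1}(\alpha_q)$ are all homologous, so the holonomy of the loop is independent of the radius; the imaginary holonomy of each jump is independent of the radius, since the weight of an edge does not depend on the chosen point of that edge; and the total arc contribution tends to zero with the radius, because the arcs shrink and the relevant derivatives are uniformly bounded. Hence the arc contributions in fact sum to zero for every radius, which is the statement your Stokes step was meant to supply, and the rest of your argument then goes through.
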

\begin{proof}
Let $P_q$ (resp. $N_q$) denote the set of points of positive (resp. negative) intersection of $\alpha_q$ with the saddle connections in $\Col_v(\bfC_v)$. The imaginary part of the holonomy of $\Col_v^{-1}(\alpha_q)$ is 
\[ \Im\left( \sum_{p \in P_q} \int_{\Col_v^{-1}(p)} \omega - \sum_{p \in N_q} \int_{\Col_v^{-1}(p)}\omega\right),\]
as can be seen by using smaller and smaller embedded discs to define $\alpha_q$. 

This quantity is precisely the sum of the incoming edge weights minus the sum of the outgoing edge weights for $q$.
\end{proof}

\begin{lem}\label{L:HolZero}
The holonomy of the loop $\Col_v^{-1}(\alpha_q)$ is zero.
\end{lem}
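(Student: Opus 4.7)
The plan is to use Lemma \ref{L:RankReducing} together with the fact that $\Col_v^{-1}(\alpha_q)$ represents a vanishing cycle which, being an absolute cycle, automatically kills purely relative cohomology classes.

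First I would observe that $\alpha_q$ bounds the chosen embedded flat disk in $\Col_v(X,\omega)$, so $[\alpha_q]=0$ in the absolute homology of $\Col_v(X,\omega)$. Pushing forward by $(\Col_v)_*$ then shows $(\Col_v)_*[\Col_v^{-1}(\alpha_q)]=0$, so $[\Col_v^{-1}(\alpha_q)]$ lies in the space $V$ of vanishing cycles. Moreover, $\Col_v^{-1}(\alpha_q)$ is a genuine closed loop in $X$, so it represents a class in $H_1(X)\subset H_1(X,\Sigma)$. Any purely relative class $r\in\ker(p)$ vanishes on $H_1(X)$ by the very definition of $p$, so $[\Col_v^{-1}(\alpha_q)]$ is zero as a functional on $\ker(p)\cap T_{(X,\omega)}(\cM)$.

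Now, because we are in the rank-preserving case, the contrapositive of Lemma \ref{L:RankReducing} says that any vanishing cycle which is zero on $\ker(p)\cap T_{(X,\omega)}(\cM)$ must also be zero on all of $T_{(X,\omega)}(\cM)$. Applying this to $u=[\Col_v^{-1}(\alpha_q)]$ and pairing with the tautological class $[\omega]\in T_{(X,\omega)}(\cM)\otimes\bC$ yields
\[ \int_{\Col_v^{-1}(\alpha_q)}\omega \;=\; [\omega]\bigl([\Col_v^{-1}(\alpha_q)]\bigr) \;=\; 0,\]
which is exactly the desired vanishing of holonomy.

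The main obstacle I anticipate is purely technical, namely verifying that $(\Col_v)_*[\Col_v^{-1}(\alpha_q)]$ really equals $[\alpha_q]$ given that $\Col_v$ can be multivalued along a finite union of line segments. This is not a serious difficulty: it follows either directly from the PL description supplied by Lemma \ref{L:Converge}, or, more cleanly, by factoring $\Col_v$ through the pre-gluing collapse map $f:X\to Y'$ discussed in Section \ref{SS:SymplecticCompatibility}, for which pushforward on singular chains is unambiguously defined.
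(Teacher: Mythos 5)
Your proposal is correct and follows essentially the same route as the paper: both arguments observe that $\Col_v^{-1}(\alpha_q)$ is a vanishing cycle which, being a closed loop, annihilates $\ker(p)\cap T_{(X,\omega)}(\cM)$, and then use the rank-preservation hypothesis to conclude it annihilates all of $T_{(X,\omega)}(\cM)$, in particular $[\omega]$. The only cosmetic difference is that you cite Lemma \ref{L:RankReducing} directly for the last step, whereas the paper re-derives the same fact inline from Corollary \ref{C:RankCharacterization} and Lemma \ref{L:BilinearForm}; this is a legitimate shortcut, since Lemma \ref{L:RankReducing} encapsulates exactly that computation.
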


\begin{proof}
Notice that $\Col_v^{-1}(\alpha_q)$ is a vanishing cycle. Since $\Col_v^{-1}(\alpha_q)$ is a closed loop, any class in $\ker(p) \cap T_{(X, \omega)}(\cM)$ evaluates to zero on it. 

Since $\rank(\cM)=\rank(\cM_v)$, Lemma \ref{L:BilinearForm} and Corollary \ref{C:RankCharacterization} imply that 
$$p\left( T_{(X, \omega)}(\cM) \right) = p\left( \Ann(V) \cap T_{(X, \omega)}(\cM) \right),$$
where $V$ continues to denote the vanishing cycles. This shows that every element of $T_{(X, \omega)}(\cM)$ evaluates to zero on $\Col_v^{-1}(\alpha_q)$. Since the cohomology class of $\omega$  is in $ T_{(X,\omega)} (\cM)$, we get the result. 
\end{proof}

At each vertex $q$ of the graph $\Col_v(\bfC_v)$, Lemmas \ref{L:InMinusOut} and \ref{L:HolZero} together show that the incoming weights equal the outgoing weights, establishing the criterion for being strongly connected. 
\end{proof}

\begin{rem}
In the rank-preserving case, the assumption that $\bfC$ is generic is only used to ensure that the PL map $\Col_v$ is defined (by Lemma \ref{L:Converge}). In fact that only requires that $\bfC$ consists of generic cylinders, and likely even that could be weakened. 
\end{rem}

\section{Double degenerations (with Mirzakhani)}\label{S:DoubleDegen}

Using Theorem \ref{T:GraphsFull}, when $\rank(\cM_v)<\rank(\cM)$ we will prove the existence of a degeneration of $\Col_v(X,\omega)$ that contracts the graph $\Col_v(\bfC)$. When $\cM$ has no rel, this degeneration will be canonically defined, but even in this case we remark in Warning \ref{W:DD} that it is more subtle than may be expected. 

The degeneration will involve deforming in the direction of a vector that verifies rel-scalability. The end result, given in Definition \ref{D:DD}, will be called a \emph{double degeneration}, keeping in mind that it results from the two step process of first degenerating $(X,\omega)$ to obtain $\Col_v(X,\omega)$ and then degenerating $\Col_v(X,\omega)$ to contract the graph $\Col_v(\bfC)$.

The main ideas in this section were initially discovered in the special case when $\cM$ has no rel jointly with Mirzakhani. 

\subsection{Rel}\label{SS:Rel}

In this subsection and the next, we start by recalling some well-known material; compare to  \cite[Section 6]{BSW}, \cite[Section 4.2]{KZ}, \cite[Section 8.1]{EMZboundary} and \cite{McM:nav, McM:iso, MW2, Wol}. 

Let $(X,\omega)$ be a translation surface and let $\Sigma=\{z_1, \ldots, z_s\}$ be the set of zeros of $\omega$. Consider a vector $\lambda = (\lambda_1, \ldots, \lambda_s)\in \bC^s$. 

For each $i$, we can consider the set of directions at $z_i$ that point in the same direction as $\lambda_i$; if the cone angle is $2\pi k_i$ then there are $k_i$ such directions. For example, if $\lambda_i$ is positive and real, these are the east pointing directions at $z_i$. 

Define $\Star_i(\lambda)\subset (X,\omega)$ to be the union of the $k_i$ line segments of length $|\lambda_i|$ leaving $z_i$ in the direction of $\lambda_i$.   We will assume that all these line segments are embedded. We will moreover assume that the different  $\Star_i(\lambda)$ are disjoint, and define
$$\Star(\lambda)=\cup_i\Star_i(\lambda).$$

Under these assumptions, we now define a surface $\Sch_\lambda(X,\omega)$,  which lies in the same stratum as $(X,\omega)$. The notation is chosen to emphasize the similarity to classical Schiffer variations. 
The surface  $\Sch_\lambda(X,\omega)$ is constructed from $(X,\omega)$ via the following two step process illustrated in Figure \ref{F:RelReglue}. 
\begin{enumerate}
\item First cut each $\Star_i(\lambda)$. For each $i$, we thus obtain a degenerate  $2k_i$-gon, all of whose sides have length $|\lambda_i|$. 
\item Then, for each $i$, re-glue the collection edges thus created so that each is glued to the adjacent edge that it wasn't previously glued to. 
\end{enumerate}
\begin{figure}[h]
\includegraphics[width=0.35\linewidth]{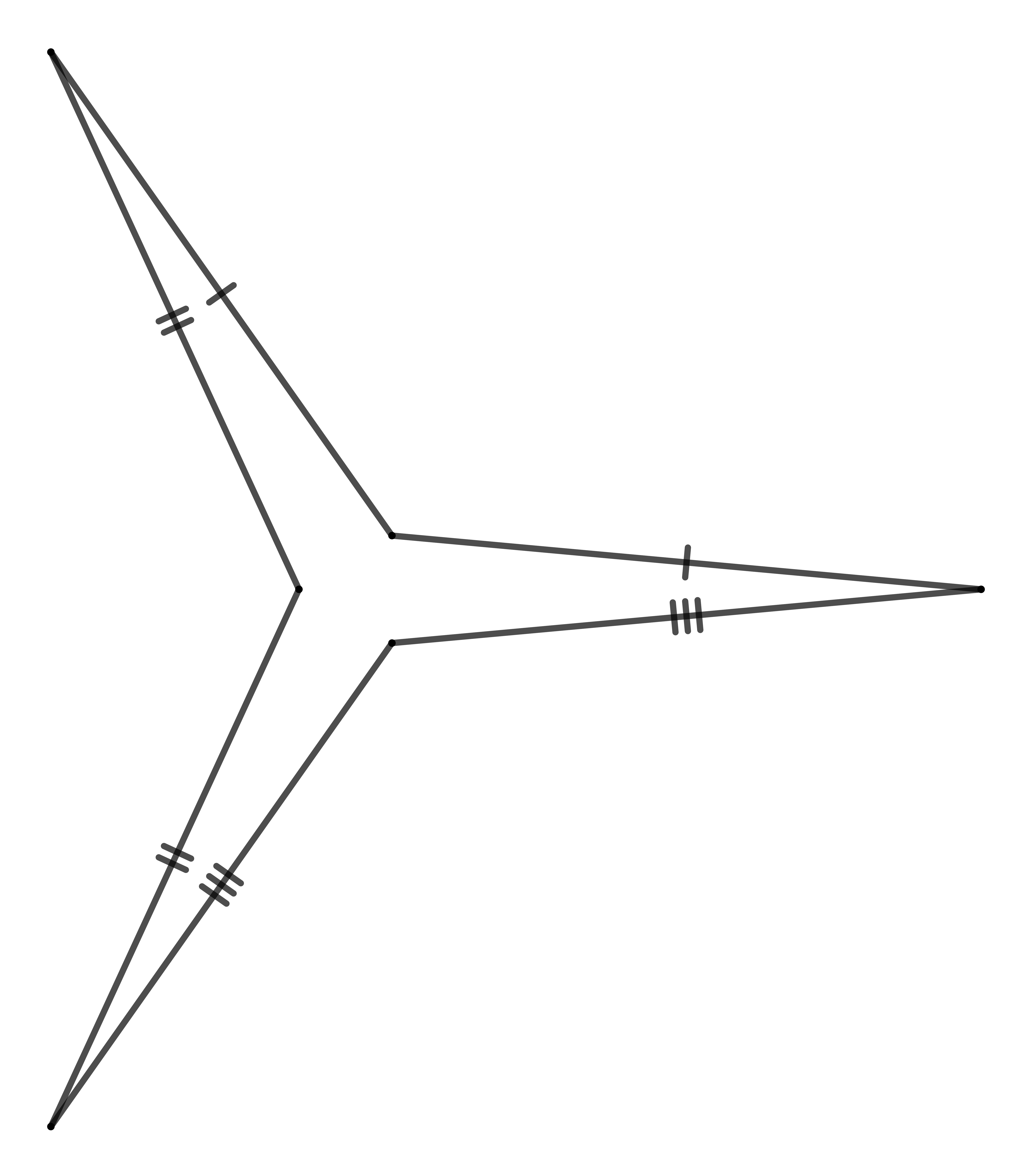}
\caption{A local surgery at a cone point of angle $6\pi$. }
\label{F:RelReglue}
\end{figure}
The cut and re-gluing surgeries corresponding to different $i$ are independent, and can be done simultaneously or sequentially. 

The relevance of this construction is given by the following observation. 

\begin{lem}
Along the path $\Sch_{\tau\lambda}(X,\omega), \tau\in [0,1]$, the absolute periods are constant, and the derivative of a cycle joining $z_i$ to $z_j$ is constant and equal to $\lambda_j-\lambda_i$.  
\end{lem}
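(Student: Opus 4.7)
I will establish the two assertions by exploiting the locality of the Schiffer surgery. For each $\tau\in[0,1]$, the surgery $\Sch_{\tau\lambda}$ modifies $(X,\omega)$ only on the 1-dimensional set $\Star(\tau\lambda) \subset \Star(\lambda)$; indeed, the complement $(X,\omega) \setminus \Star(\tau\lambda)$ embeds isometrically into $\Sch_{\tau\lambda}(X,\omega)$. Consequently, constancy of absolute periods is immediate: any class $[\gamma] \in H_1(X;\bZ)$ has a representative loop disjoint from $\Star(\lambda)$, so $\int_\gamma \omega$ takes the same value on $(X,\omega)$ and every $\Sch_{\tau\lambda}(X,\omega)$. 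This already shows that the tangent vector to $\tau \mapsto \Sch_{\tau\lambda}(X,\omega)$ in $H^1(X,\Sigma;\bC)$ lies in $\ker(p)$.

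To identify this tangent vector, I will use the long exact sequence of the pair $(X,\Sigma)$, which identifies $\ker(p)$ with $H^0(\Sigma;\bC)/H^0(X;\bC)$; under this identification, a function $\mu \in \bC^\Sigma$ corresponds to the purely relative class evaluating to $\mu_j - \mu_i$ on a path from $z_i$ to $z_j$. Since the surgeries at distinct zeros act on disjoint regions of $X$ and commute, the infinitesimal effects on periods add, so by linearity it suffices to treat the case where $\lambda$ is supported at a single zero $z_i$. In that case, any cycle not ending at $z_i$ has a representative disjoint from $\Star_i(\lambda)=\Star(\lambda)$ and hence constant period, reducing the problem to showing that for $j \ne i$, the period of a path from $z_i$ to $z_j$ changes linearly in $\tau$ at rate $-\lambda_i$.

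For this final step, I will choose a representative $\gamma$ starting at $z_i$ along a segment $\sigma \subset \Star_i(\lambda)$ of length $|\lambda_i|$ in the direction of $\lambda_i$, then continuing through $X\setminus\Star(\lambda)$ to $z_j$; on $(X,\omega)$ this yields $\int_\gamma \omega = \lambda_i + M$, where $M$ is the contribution of the middle portion. On $\Sch_{\tau\lambda}(X,\omega)$ the middle portion is preserved, and the initial portion is replaced by the parallel-transported image of $\sigma$. A direct examination of the cyclic re-gluing of the degenerate $2k_i$-gon---which at each $\tau$ effectively translates the zero $z_i$ by $\tau\lambda_i$ within its branched-cover neighborhood---shows that the new initial portion contributes $(1-\tau)\lambda_i$, giving total period $\lambda_i + M - \tau\lambda_i$ and hence derivative $-\lambda_i$, as required.

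The main obstacle is this last combinatorial computation: one must verify that the single-step cyclic shift of the gluings on the $2k_i$-gon implements precisely this translation at every $\tau$, so that the derivative is constant in $\tau$ and not merely correct to first order. The cleanest way to carry it out is to work in local flat coordinates on the $k_i$-sheeted branched cover of a small disc around $z_i$, where both the star $\Star_i(\tau\lambda)$ and the new gluing become explicit, and where the resulting displacement of $z_i$ is manifest.
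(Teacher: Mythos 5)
Your argument is correct and is the standard one: absolute periods are unchanged because the surgery is supported on the contractible stars, and the relative period computation reduces to the observation that the re-gluing of the degenerate $2k_i$-gon identifies the tips of $\Star_i(\tau\lambda)$ into the new cone point, i.e.\ translates $z_i$ by $\tau\lambda_i$, giving the constant derivative $\lambda_j-\lambda_i$ on a path from $z_i$ to $z_j$. The paper offers no proof at all here --- the lemma is recalled as well-known material with references to the literature on Schiffer variations and (real) rel --- so your write-up, including the local branched-cover verification you flag at the end, supplies exactly the standard details the paper omits.
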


%
%



\begin{cor}
For $\xi\in \ker(p)$, there is a unique $\lambda \in \bC^s$ with $\sum \lambda_i=0$ such that for $\tau$ sufficiently small, the surface $(X,\omega)+\tau\xi$ is equal to $\Sch_{\tau\lambda}(X,\omega)$. 

If $\xi$ is real, then so is $\lambda$. 
\end{cor}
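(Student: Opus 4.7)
The plan is to exhibit both sides as having the same complex dimension, then use the preceding lemma to identify the linear map $\lambda \mapsto \xi$ and show it is a bijection.

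First I would compute dimensions. Since $\omega$ has $s$ zeros, $\dim_\bC H^1(X,\Sigma;\bC) = 2g+s-1$ and $\dim_\bC H^1(X,\bC)=2g$, so $\dim_\bC \ker(p)=s-1$. On the other hand, the hyperplane $H=\{\lambda\in\bC^s:\sum \lambda_i=0\}$ also has complex dimension $s-1$. So we need only exhibit a linear injection (or surjection) $H\to \ker(p)$ coming from Schiffer variations.

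Next I would use the preceding lemma to define the map. For $\lambda\in H$ small enough that the construction of $\Sch_\lambda(X,\omega)$ is well defined, the path $\tau\mapsto \Sch_{\tau\lambda}(X,\omega)$ in the stratum has, by that lemma, a derivative $\xi_\lambda$ at $\tau=0$ which evaluates to $\lambda_j-\lambda_i$ on any relative $1$-cycle joining $z_i$ to $z_j$ and evaluates to zero on absolute cycles. Thus $\xi_\lambda\in\ker(p)$, and $\lambda\mapsto \xi_\lambda$ is clearly linear. To see injectivity, note that if $\xi_\lambda=0$ then $\lambda_j-\lambda_i=0$ for all $i,j$, so $\lambda$ is constant, and the constraint $\sum \lambda_i=0$ forces $\lambda=0$. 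By the dimension count this map is an isomorphism $H\xrightarrow{\sim}\ker(p)$, which gives the existence and uniqueness of $\lambda$ with $\xi=\xi_\lambda$.

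Finally, I would identify the two paths. For $\tau$ sufficiently small, period coordinates are a local homeomorphism on the stratum, so a path is determined by its image in period coordinates. The two paths $\tau\mapsto(X,\omega)+\tau\xi$ and $\tau\mapsto \Sch_{\tau\lambda}(X,\omega)$ both start at $(X,\omega)$ and, by the preceding lemma together with the fact that $\xi_\lambda=\xi$, have the same constant tangent vector $\xi$ in period coordinates. Hence they agree for $\tau$ small, which is the desired equality. For the reality statement: if $\xi$ is real, then for each $j$ the value $\xi$ of on a relative cycle from $z_1$ to $z_j$, namely $\lambda_j-\lambda_1$, is real, and the linear constraint $\sum \lambda_i=0$ then forces $\lambda_1=-\tfrac{1}{s}\sum_{j>1}(\lambda_j-\lambda_1)\in\bR$, hence all $\lambda_i\in\bR$.

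The only mildly subtle point is the need to check that $\Sch_{\tau\lambda}(X,\omega)$ is defined for small $\tau$, which amounts to the segments $\Star_i(\tau\lambda)$ being embedded and pairwise disjoint; this holds for $\tau$ small because these are finitely many segments of length $O(\tau)$ based at isolated points. No step here looks like a real obstacle; the work is essentially a dimension count plus the use of period coordinates as a local chart.
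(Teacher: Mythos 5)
The paper does not actually prove this corollary (or the lemma preceding it); both are presented as recalled standard material with citations to \cite{BSW}, \cite{KZ}, \cite{EMZboundary}, etc. So there is no in-paper argument to compare against. Your proposal is a correct and complete justification. The dimension count $\dim\ker(p)=s-1$ is right (via the long exact sequence of the pair, $p$ is surjective and $\ker(p)$ is the image of $H^0(\Sigma)\to H^1(X,\Sigma)$ modulo constants, which is exactly your hyperplane $H$ — your injectivity-plus-dimension-count argument is this same fact in different clothing). The identification of the two paths via period coordinates as a local chart, using that the preceding lemma makes $\tau\mapsto\Sch_{\tau\lambda}(X,\omega)$ affine in period coordinates with derivative $\xi_\lambda$, is the right mechanism, and your reality argument and the remark about $\Star_i(\tau\lambda)$ being embedded and disjoint for small $\tau$ close the remaining gaps. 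The only point worth stating slightly more carefully is that uniqueness of $\lambda$ satisfying the surface equality reduces to injectivity of $\lambda\mapsto\xi_\lambda$ by differentiating at $\tau=0$, which you use implicitly.
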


\subsection{Real rel}\label{SS:RealRel}

When $\xi\in \ker(p)\cap H^1(X, \Sigma, \bR)$ is real, the construction above is somewhat nicer, because all the segments in $\Star(\lambda)$ are horizontal and hence parallel to each other. 

We consider the path $(X,\omega)-\tau\xi, \tau\geq 0$. (The minus sign will prove convenient in our application.) 
Either $(X,\omega)-\tau\xi$ is defined for all $\tau>0$, or it is defined for $\tau$ less than some constant $\tau_\xi$ such that a horizontal saddle connection on $(X,\omega)$ reaches zero length as $\tau\to \tau_\xi$. 

To this standard discussion, we add the following observation. 

\begin{lem}\label{L:RelLimit}
If $(X,\omega)-\tau\xi$ is only defined until time  $\tau_\xi<\infty$, then the path $(X,\omega)-\tau\xi$ converges as $\tau\to \tau_\xi$ in the WYSIWYG partial compactification. 

The limit can be obtained from $(X,\omega)$ by a cutting and regluing surgery.  Conversely,  $(X,\omega)-\tau\xi$ can be obtained from the limit by making cuts at the zeros of size proportional to $\tau-\tau_\xi$, and regluing these cuts in some pattern. 

The space $V$ of vanishing cycles is spanned by the horizontal saddle connections that reach zero length as $\tau\to \tau_\xi$, which are exactly the saddle connections $s$ where $\omega(s) = \tau_\xi \cdot \xi(s)$. 
\end{lem}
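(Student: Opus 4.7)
The plan is to leverage the Schiffer variation description established in Section~\ref{SS:Rel}. For small $\tau$, we have $(X,\omega) - \tau\xi = \Sch_{-\tau\lambda}(X,\omega)$ for the unique real vector $\lambda \in \bR^s$ with $\sum_i \lambda_i = 0$ representing $\xi$. I would first identify the set $\Gamma$ of horizontal saddle connections that reach zero length at time $\tau_\xi$: the period of any horizontal saddle connection $s$ at time $\tau$ along the path is $\omega(s) - \tau \xi(s)$, so $s$ shrinks to zero exactly at time $\omega(s)/\xi(s)$ (when $\xi(s) \neq 0$). The first such time is $\tau_\xi$, and $\Gamma$ consists precisely of those $s$ with $\omega(s) = \tau_\xi \xi(s)$. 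The fact that the path diverges exactly at $\tau_\xi$ (and not sooner) follows from the fact that the Schiffer surgery remains valid as long as every horizontal saddle connection on the deformed surface has positive length.

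Next, I would construct the limit surface by performing the cutting-and-regluing surgery with parameter $-\tau_\xi \lambda$. Although the stars $\Star_i(-\tau_\xi \lambda)$ may no longer be embedded in disjoint discs, inside a neighborhood of $\Gamma$ we can still perform a local surgery that collapses each saddle connection in $\Gamma$ to a point, using the combinatorial gluing pattern dictated by the nearby valid surgeries. The result is an Abelian differential $(Y, \omega_Y)$ together with a PL map $f: (X,\omega) \to (Y,\omega_Y)$ whose derivative is the identity outside a neighborhood of $\Gamma$ and whose kernel on $H_1(X, \Sigma; \bC)$ is the $\bC$-span of the saddle connections in $\Gamma$. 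Well-definedness of the collapse uses that $\Gamma$, oriented in the positive horizontal direction, is acyclic: any oriented cycle $\gamma$ in $\Gamma$ would satisfy $\omega(\gamma) = \tau_\xi \xi(\gamma) = 0$ (since $\xi$ is purely relative), which contradicts the positivity of $\omega$ on an oriented horizontal cycle.

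Then, I would verify WYSIWYG convergence by constructing, for $\tau$ close to $\tau_\xi$, PL homeomorphisms $g_\tau : (Y, \omega_Y) \setminus U \to (X,\omega) - \tau\xi$ whose derivatives are arbitrarily close to the identity, where $U$ is a small $L^\infty$-neighborhood of the zeros and marked points of $(Y, \omega_Y)$; this parallels the strategy at the end of the proof of Lemma~\ref{L:Converge}. Off a neighborhood of the collapsed graph, $(X,\omega) - \tau\xi$ agrees with $(X,\omega)$ via the Schiffer surgery, so $g_\tau$ can be taken to be the inverse of $f$ there; near each collapsed saddle connection, one extends $g_\tau$ linearly across a small parallelogram. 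The converse claim that $(X,\omega) - \tau\xi$ is recovered by cutting at the zeros of $(Y, \omega_Y)$ and regluing follows by reversing this construction: at each zero created by the collapse, we cut along the appropriate horizontal directions of length proportional to $\tau_\xi - \tau$ and reglue using the combinatorial pattern encoded at that zero. The claim about the vanishing cycles is then immediate from the description of $f$, since by definition $V = \ker(f_*)$ as a subspace of $H_1(X, \Sigma; \bC)$.

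The main obstacle I expect is carefully describing the local surgery near a vertex of $\Gamma$ where several edges meet, and verifying that the resulting gluing pattern produces a genuine Abelian differential without extraneous identifications. The acyclicity of $\Gamma$ guarantees that the collapse is globally well-defined, but the local analysis must confirm that no two stars $\Star_i(-\tau_\xi\lambda), \Star_j(-\tau_\xi\lambda)$ interact in pathological ways, and that the limit embeds in the Deligne--Mumford boundary with precisely the combinatorial data required for the WYSIWYG compactification.
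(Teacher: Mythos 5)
Your proposal is correct and follows essentially the same route as the paper: describe $(X,\omega)-\tau\xi$ as a Schiffer surgery supported near the zeros, take the candidate limit to be the same surgery at the critical parameter $\tau_\xi$ (where cut segments' endpoints may collide), verify convergence via the criterion of \cite[Definition 2.2]{MirWri}, and read off the vanishing cycles from the period formula $\omega(s)-\tau\xi(s)$. The only real difference is that the paper's verification of convergence is simpler than your near-identity maps $g_\tau$: since both $(X,\omega)-\tau\xi$ and the limit are obtained from $(X,\omega)$ by surgeries of size $O(\tau-\tau_\xi)$ at the zeros, there is an honest isometry from the complement of a small neighborhood of the zeros of the limit into $(X,\omega)-\tau\xi$, so no distortion estimates or graph-collapse analysis are needed.
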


\begin{proof}
$(X,\omega)-\tau\xi$ is obtained by cutting a collection of line segments emanating from the zeros, as described above. The length of the segments is proportional to $\tau$. 

Define a translation surface $(X', \omega')$ by the same process, with cuts of lengths proportional to $\tau=\tau_\xi$. The $\tau=\tau_\xi$ case differs from the $\tau<\tau_\xi$ case only in that the endpoints of two different line segments being cut may coincide, or an endpoint of one of these line segments may now be a zero. 

This $(X', \omega')$ is the candidate limit for the path. By definition, it is obtained by local surgeries, and conversely $(X, \omega)$ can be recovered from $(X', \omega')$ by reversing these surgeries, cutting all the line segments produced in the re-gluing and then identifying them in their original gluing to obtain $(X,\omega)$. 

Similarly, $(X', \omega')$ can be obtained from $(X,\omega)-\tau\xi$ via cutting and re-gluing, where the cuts have size proportional to $\tau-\tau_\xi$. Conversely, we can obtain $(X,\omega)-\tau\xi$ from $(X', \omega')$ by cutting and re-gluing a collection of segments of size proportional to $\tau-\tau_\xi$, all of which start at the zeros and marked points of $(X', \omega')$. 

We thus see that there is an isometric map from the complement of a small neighborhood of the zeros on $(X', \omega')$ to $(X,\omega)-\tau\xi$, verifying the criterion for convergence from \cite[Definition 2.2]{MirWri}. 

For any relative homology class $s$, its holonomy on $(X,\omega)-\tau\xi$ is by definition $\omega(s)-\tau\xi(s)$. Thus, the horizontal saddle connections that reach zero length are exactly those where $\omega(s) = \tau_\xi \cdot \xi(s)$. That there are no other vanishing cycles follows as in the proof of Lemma \ref{L:VinCv}. 
\end{proof}

\subsection{Assumptions}\label{SS:AssForDD}
We will require some assumptions to define the double degeneration, which are motivated by the following two lemmas.

\begin{lem}\label{L:ScaleCert}
If the assumptions of Theorem \ref{T:GraphsFull} hold and $\rank(\cM_v)<\rank(\cM)$, then
\begin{enumerate}
\item if $\cM$ has no rel, then the rel vector in $T_{\Col_v(X, \omega)}(\cM_v)$ that certifies that $\Col_v(\bfC)$ is rel-scalable is unique, and
\item regardless of whether $\cM$ has rel, this vector can be chosen to be a complex multiple of a real vector. 
\end{enumerate}
\end{lem}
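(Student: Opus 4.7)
My plan is to handle the two parts separately, with part (2) being largely a formal consequence of the real structure on $\cM_v$ and part (1) requiring more care.

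For part (2), I would exploit the fact that $\cM_v$ is cut out by real linear equations on period coordinates, so that $T_{\Col_v(X,\omega)}(\cM_v)$ and its rel subspace $\ker(p) \cap T_{\Col_v(X,\omega)}(\cM_v)$ are complexifications of real subspaces. Given any certificate $s$, first rotate the surface (via the $\GL$-action, under which $\cM$ is invariant) so that the common direction of the edges of $\Col_v(\bfC)$ is horizontal. The edges then all have real holonomy, so the real part $\Re(s)$ is itself a rel class in $T_{\Col_v(X,\omega)}(\cM_v)$ that evaluates to $\Re(\text{hol}(e)) = \text{hol}(e)$ on each edge $e$. Thus $\Re(s)$ is a real-valued certificate, and rotating back shows the original certificate has direction equal to a complex multiple of a real vector.

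For part (1), my strategy is to show that if $r_1$ and $r_2$ are both rel certificates, then $\delta := r_1 - r_2$ vanishes. By construction $\delta \in T_{\Col_v(X,\omega)}(\cM_v)$ is rel in $\cM_v$ and evaluates to zero on each edge of $\Col_v(\bfC)$. Using the inclusion $T_{\Col_v(X,\omega)}(\cM_v) = \Ann(V) \cap T_{(X,\omega)}(\cM) \subset T_{(X,\omega)}(\cM)$, I would show that $\delta$ is in fact a rel vector of $\cM$, i.e.\ zero on every absolute cycle of $X$; the no-rel assumption on $\cM$ then forces $\delta = 0$.

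To verify that $\delta$ vanishes on any absolute cycle $\alpha \in H_1(X)$, I would decompose $\alpha$ up to homology as $\alpha \sim \alpha' + \sum_{C \in \bfC} n_C \gamma_C$, where $\alpha'$ is supported in $X \setminus \bfC$ and the $\gamma_C$ are the core curves of cylinders in $\bfC$, using a Mayer-Vietoris argument for the cover $X = \bfC \sqcup (X \setminus \overline\bfC)$ together with the tubular neighborhoods of the $\overline\bfC$. Under the collapse map, each $\gamma_C$ becomes a concatenation of edges of $\Col_v(\bfC)$ (since $\bfC_v = \bfC$), on which $\delta$ vanishes by hypothesis. The loop $\alpha'$, after collapse, yields a class in $H_1(Y,\Sigma_Y)$ that differs from an honest absolute cycle of $Y = \Col_v(X,\omega)$ by a combination of edges of $\Col_v(\bfC)$ — these correction terms arise precisely when $\alpha'$ passes through a point of $\Sigma_{Y'}$ whose preimage in $Y$ under the ungluing consists of several points, and the connecting paths between those preimages run along collapsed cylinders. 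Since $\delta$ vanishes on absolute cycles of $Y$ (rel in $\cM_v$) and on all edges of $\Col_v(\bfC)$, we get $\delta(\alpha) = 0$.

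The main obstacle will be making rigorous the claim that the ``correction'' 1-chains needed to turn $(\Col_v)_*(\alpha')$ into a genuine absolute cycle of $Y$ are supported on $\Col_v(\bfC)$ — this is a matter of carefully analyzing the ungluing step $g \colon Y \to Y'$ from Section~\ref{SS:SymplecticCompatibility} and verifying that the only points of $\Sigma_{Y'}$ with multiple preimages in $Y$ are those arising from collapsed cylinders in $\bfC$, so any connecting path between preimages can be chosen to lie within $\Col_v(\overline\bfC)$.
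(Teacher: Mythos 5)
Your part (2) is exactly the paper's argument (rotate so that $\bfC$ is horizontal, take the real part of a certificate) and is fine. Part (1) takes a genuinely different route, and its central step fails. The decomposition $\alpha \sim \alpha' + \sum_{C\in\bfC} n_C\gamma_C$ with $\alpha'$ supported in $X\setminus\bfC$ does not exist for general absolute cycles: any class of that form has zero algebraic intersection number with every core curve $\gamma_C$ (the $\gamma_C$ are disjoint from one another and from $\alpha'$), whereas whenever some $\gamma_C$ is nonzero in $H_1(X;\bQ)$ -- the typical situation -- Poincar\'e duality produces an absolute class $\alpha$ with $\langle \alpha,\gamma_C\rangle\neq 0$, for instance a cross curve of $C$ closed up through the complement. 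The Mayer--Vietoris sequence you invoke in fact exhibits these ``crossing'' classes as the obstruction: the cokernel of $H_1(\bfC)\oplus H_1(X\setminus\overline\bfC)\to H_1(X)$ is detected by $H_0$ of the overlap. So your verification that $\delta=r_1-r_2$ lies in $\ker(p)\cap T_{(X,\omega)}(\cM)$ is silent on exactly the cycles that see the cylinders. The strategy could probably be repaired by decomposing $\alpha$ instead into \emph{relative} cycles supported in $\overline\bfC$ (where Lemma \ref{L:SCinRelHomology}, the equality $\bfC_v=\bfC$, and the vanishing of $\delta$ on every edge of $\Col_v(\bfC)$ give $\delta=0$) and relative cycles supported in the complement, and then closing up their images in $\Col_v(X,\omega)$; but that requires precisely the bookkeeping with the ungluing map that you defer as your ``main obstacle,'' so the proof is not complete as written.

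More importantly, you have missed that part (1) is a short dimension count, which is how the paper proves it. Corollary \ref{C:RankMinus1} gives $\rank(\cM_v)=\rank(\cM)-1$. Since $\cM$ has no rel, $\dim\cM=2\rank(\cM)$, and $\dim\cM_v<\dim\cM$ forces $\rel(\cM_v)=\dim\cM_v-2\rank(\cM_v)<2$, so $\rel(\cM_v)\leq 1$; rel-scalability supplies a nonzero rel vector, so $\rel(\cM_v)=1$. Two certificates then differ by a rel vector vanishing on the edges of $\Col_v(\bfC)$, and in a one-dimensional rel space spanned by a vector that is nonzero on those edges (the edges have nonzero holonomy), the only such vector is zero. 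No homological analysis of the collapse map is needed.
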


\begin{proof}
We begin by proving the first statement. Corollary \ref{C:RankMinus1} implies that $\mathrm{rank}(\cM_v) = \mathrm{rank}(\cM) - 1$. Since $\Col_v(\bfC)$ is rel-scalable, $\cM_v$ has at least one dimension of rel. This implies that, in the case that $\cM$ has no rel, $\cM_v$ has exactly one dimension of rel, since $\dim(\cM_v) < \dim(\cM)$. In particular, the vector certifying rel-scalability must be unique up to scaling. 

To prove the second statement, it suffices to show that the vector can be chosen to be real when $\bfC$ is horizontal. When $\bfC$ is horizontal, the real part of a vector certifying rel-scalability again certifies rel-scalability.
\end{proof}

We will now establish the following, which is a slight variant of \cite[Corollary 3.23]{ApisaWrightDiamonds}.

\begin{lem}\label{L:GenericallyParallel}
Suppose that $\bfC$ is an equivalence class  of generic cylinders on a surface $(X, \omega)$ in an invariant subvariety $\cM$. Let $v \in \TwistC$ define a cylinder degeneration. 

Suppose that the ratio of heights of any two cylinders in $\bfC_v$ is constant on a neighborhood of $(X, \omega)$ in $\cM$. Then all the saddle connections in $\Col_v(\bfC_v)$ are generically parallel on $\cM_v$. Moreover, $\Ann(V) \cap T_{(X, \omega)}(\cM)$ is codimension one in $T_{(X, \omega)}(\cM)$.
\end{lem}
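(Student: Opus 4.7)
The plan is to combine the twist space decomposition (Proposition \ref{P:TwistDecomp}) with the constant height ratio hypothesis to show that evaluation of $T_{(X,\omega)}(\cM)$ on cross curves of cylinders in $\bfC_v$ factors through a single complex line, yielding codimension one, and then to leverage generic parallelism of boundary saddle connections of $\bfC$ to deduce generic parallelism of the edges of $\Col_v(\bfC_v)$.

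I will first establish codimension one. By Lemma \ref{L:VinCv}, $V$ is generated by the cross curves $s_C$ for $C\in\bfC_v$, together with cycles of boundary saddle connections, such as top-minus-bottom of a cylinder in $\bfC_v$, that become null-homologous under $\Col_v$. Such cycles have zero holonomy (the top and bottom of any cylinder have equal total length), so by generic parallelism of $\bfC$ they already pair trivially with $T_{(X,\omega)}(\cM)$ and impose no further constraints on $\Ann(V)\cap T_{(X,\omega)}(\cM)$ beyond those coming from the cross curves. The constant height ratio hypothesis says that $\Im(\eta(s_C))/h_C$ is independent of $C\in\bfC_v$ for every $\eta\in T_{(X,\omega)}(\cM)$; since $T_{(X,\omega)}(\cM)$ is a complex subspace, applying the same identity to $i\eta$ also forces $\Re(\eta(s_C))/h_C$ to be independent of $C$. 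So the full complex ratio $\eta(s_C)/h_C$ depends only on $\eta$, and the image of the evaluation map $\eta\mapsto(\eta(s_C))_{C\in\bfC_v}$ is the single complex line $\mathbb{C}\cdot(h_C)_{C\in\bfC_v}$, giving codimension one.

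For generic parallelism, I will lift each edge $s\in\Col_v(\bfC_v)$ to a horizontal $1$-chain $\tilde s$ on $(X,\omega)$ along, say, the top boundary of $\overline{\bfC}_v$, after enlarging $\Sigma$ with the preimages of the new graph vertices of $\Col_v(\bfC_v)$. Each such lift decomposes as a concatenation of sub-intervals of boundary saddle connections of cylinders in $\bfC$. Since $\bfC$ is a generic equivalence class, each of these boundary saddle connections pairs proportionally to a core-curve functional on $T_{(X,\omega)}(\cM)$, with real ratio equal to its length ratio; combined with the codimension-one pinning of the remaining degree of freedom of $\eta\in\Ann(V)\cap T_{(X,\omega)}(\cM)=T_{\Col_v(X,\omega)}(\cM_v)$, I expect the pairings $\eta(\tilde s)$ to be proportional across edges with proportionality constants equal to the edge length ratios on $\Col_v(X,\omega)$, which is precisely generic parallelism.

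The main obstacle is making sense of the pairing $\eta(\tilde s)$ when $\tilde s$ is a sub-interval of a boundary saddle connection, hence not a relative homology class on $(X,\omega)$ with endpoints in $\Sigma$. One approach is to extend the invariant subvariety with tracked marked points at the preimages of new graph vertices and to verify that the tangent space of the extension relates correctly to $T_{(X,\omega)}(\cM)$; a second is to argue entirely in the relative cohomology of $\Col_v(X,\omega)$ via the natural inclusion recalled in Section \ref{SS:SymplecticCompatibility}. I also anticipate invoking the Cylinder Degeneration Dichotomy (Theorem \ref{T:GraphsFull}) to separate the rank-reducing case, where rel-scalability furnishes an additional constraint on the lifts, from the rank-preserving case, where the strong connectedness of $\Col_v(\bfC_v)$ must be used to control the graph cycles that contribute extra elements to $V$.
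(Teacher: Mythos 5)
Your extraction of the constant-height-ratio hypothesis is correct and matches the paper: since $T_{(X,\omega)}(\cM)$ is a complex subspace, applying the real identity to both $\eta$ and $i\eta$ shows $\eta(s_C)/h_C$ is independent of $C\in\bfC_v$, so the cross-curve functionals span a single complex line. However, there is a genuine gap in your description of $V$. The cross curves $s_C$ are generally \emph{not} vanishing cycles: under $\Col_v$ a cross curve of a collapsing cylinder maps to a nontrivial chain of edges of the graph $\Col_v(\bfC_v)$ (equivalently, its holonomy converges to a nonzero horizontal vector at time $t_v$) unless it happens to lie in the maximally contracted direction. The actual generators of $V$ are chains such as segments in the maximally contracted direction, which in relative homology have the form $s_C+\beta$ with $\beta$ a nonzero combination of boundary saddle connections. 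Consequently the functionals on $T_{(X,\omega)}(\cM)$ induced by $V$ a priori span a \emph{two}-dimensional space --- the cross-curve line you identified plus the core-curve line --- and your argument only yields codimension at most two. The paper closes this by noting that, since the core curve $\gamma$ is not a vanishing cycle, some vanishing cycle has generic holonomy $s'=c\gamma+s$; writing the generic holonomy of any $1$-chain $\alpha$ in $\overline{\bfC}_v$ as $a\gamma+bs'$ and using that the holonomy of $\Col_v(\alpha)$ equals $a$ times that of $\Col_v(\gamma)$, one sees that $\alpha\in V$ forces $a=0$, so the induced functionals are collinear.

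The generic-parallelism half also stalls at exactly the point you flag: an individual edge of $\Col_v(\bfC_v)$ lifts along the boundary only to a \emph{sub-interval} of a boundary saddle connection, which is not a class in $H_1(X,\Sigma)$, and neither of your proposed remedies is carried out. The paper avoids this entirely: each edge of $\Col_v(\bfC_v)$ is $\Col_v(\alpha)$ for a genuine relative class $\alpha$ supported in $\overline{\bfC}_v$ with endpoints at singularities (such a lift typically crosses collapsing cylinders rather than running along their boundaries), and the same identity --- holonomy of $\Col_v(\alpha)$ equals $a$ times the holonomy of $\Col_v(\gamma)$ with $a$ real and locally constant --- is precisely generic parallelism to $\Col_v(\gamma)$. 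In particular no case split via Theorem \ref{T:GraphsFull} and no marked-point extension is needed; the single device $s'$ handles both conclusions at once.
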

\begin{rem}
By Corollary \ref{C:ConstantModuli}, if $\cM$ has no rel then it is automatically the case that the ratio of heights of any two cylinders in $\bfC$ is constant in any neighborhood where those cylinders persist. 
\end{rem}
\begin{proof}
%
Pick an arbitrary cylinder in $\bfC_v$. Let $\gamma$ denote its core curve, and let $s$ denote one of its cross curves. 

By Lemmas \ref{L:SCinRelHomology} and \ref{L:VinCv}, every vanishing cycle can be written as a linear combination of cross curves and core curves of cylinders in $\bfC_v$. Since the heights of any two cylinders in $\bfC_v$ have a constant ratio in a neighborhood of $(X, \omega)$, it follows that the holonomy of any such chain is a linear combination of the holonomies of $\gamma$ and $s$. Since $\gamma$ is not a vanishing cycle, there is some constant $c\in \bR$ so that $s' := c\gamma + s$ has generically the same holonomy as a vanishing cycle.

For any $1$-chain, $\alpha$ in $\overline{\bfC}_v$, the holonomy of $\alpha$ can be written as $a\gamma + bs'$ for some real constants $a$ and $b$. Since $s'$ is a vanishing cycle, the holonomy of $\Col_v(\alpha)$ can be written as $a$ times the holonomy of $\Col_v(\gamma)$. This shows that every saddle connection contained in $\Col_v(\bfC_v)$ is generically parallel to $\Col_v(\gamma)$. 

If $\alpha$ is a vanishing cycle, $\Col_v(\alpha)$ has zero holonomy, and so $a=0$. This shows that all vanishing cycles induce collinear functionals on $T_{(X,\omega)}(\cM)$, giving the codimension one statement. 
\end{proof}

\subsection{The definition}

Motivated  by the previous two lemmas, we make the following assumption for the rest of this section.

\begin{ass}\label{A:DD-Assumptions}
Suppose that $\Col_v(\bfC_v)$ is rel-scalable and that all the saddle connections parallel to $\Col_v(\bfC_v)$ are generically parallel to each other. Let $\eta$ denote a rel vector in $T_{\Col_v(X, \omega)}(\cM_v)$ that certifies that $\Col_v(\bfC_v)$ is rel-scalable and that is a complex multiple of a real vector. 
\end{ass}

\begin{rem}\label{R:DD-Assumptions}
When $\cM$ has no rel and all saddle connections parallel to $\bfC$ are generically parallel, Assumption \ref{A:DD-Assumptions} holds. The ``generically parallel" condition can always be obtained by an arbitrarily small perturbation.
\end{rem}

\begin{lem}\label{L:RelFlow}
The path $\Col_v(X, \omega) - t\eta$ is defined and remains in $\cM_v$ for $t \in [0, 1)$. 
It converges as $t\to 1$, and the space $V$ of vanishing cycles is spanned by the saddle connections parallel to $\Col_v(\bfC)$. 
\end{lem}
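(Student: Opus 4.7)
The plan is to reduce the statement to the real rel convergence result (Lemma \ref{L:RelLimit}), after using Assumption \ref{A:DD-Assumptions} to understand how $\eta$ pairs with all horizontal saddle connections, not just those in $\Col_v(\bfC_v)$. After rotating the plane, I assume without loss of generality that $\bfC$ is horizontal and that $\eta$ is real, so that $-t\eta$ is a real rel deformation in the sense of Section \ref{SS:RealRel}.

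The key computation is that for every horizontal saddle connection $s$ on $\Col_v(X,\omega)$, one has $\eta(s) = \omega(s)$. When $s \in \Col_v(\bfC_v)$, this is the definition of $\eta$ certifying rel-scalability. For a horizontal saddle connection $s \notin \Col_v(\bfC_v)$, Assumption \ref{A:DD-Assumptions} says $s$ is generically parallel to any $s' \in \Col_v(\bfC_v)$; thus there is a local constant $c$ such that $\omega(s) = c\,\omega(s')$ and, more importantly, $\xi(s) = c\,\xi(s')$ for every $\xi \in T_{\Col_v(X,\omega)}(\cM_v)$. Applying this to $\xi = \eta$ yields $\eta(s) = c\,\eta(s') = c\,\omega(s') = \omega(s)$.

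Consequently, along $\Col_v(X,\omega) - t\eta$ each horizontal saddle connection $s$ has holonomy $(1-t)\omega(s)$, which is strictly positive for $t \in [0,1)$ and vanishes precisely at $t=1$. Any non-horizontal saddle connection has nonzero imaginary part of holonomy, which is unchanged by the real deformation $-t\eta$, so no such saddle connection can collapse. Therefore the path is well defined on $[0,1)$, remains in $\cM_v$ because $\eta \in T_{\Col_v(X,\omega)}(\cM_v)$, and the critical time $\tau_\eta$ of Lemma \ref{L:RelLimit} is exactly $1$. Lemma \ref{L:RelLimit} then supplies convergence as $t \to 1$ and identifies the space $V$ of vanishing cycles as the span of the horizontal saddle connections that reach zero length, which by the preceding paragraph are precisely the saddle connections on $\Col_v(X,\omega)$ parallel to $\Col_v(\bfC)$.

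The main obstacle is the identification $\eta(s) = \omega(s)$ for horizontal $s \notin \Col_v(\bfC_v)$: a priori $\eta$ is only pinned down on $\Col_v(\bfC_v)$, and without generic parallelism another horizontal saddle connection could collapse at some time $t < 1$ or fail to collapse at $t = 1$, either of which would destroy the claimed interval of definition and the identification of $V$. This is precisely why the generic-parallelism clause is built into Assumption \ref{A:DD-Assumptions} (and why Remark \ref{R:DD-Assumptions} emphasizes that it can always be achieved by an arbitrarily small perturbation).
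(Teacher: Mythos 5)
Your proof is correct and follows the same route as the paper: rotate so that $\bfC$ is horizontal and $\eta$ is real, then invoke Lemma \ref{L:RelLimit} after observing that $t_\eta=1$ because $\eta(s)=\omega(s)$ for every horizontal saddle connection. The paper simply asserts this last identity, whereas you spell out why the generic-parallelism clause of Assumption \ref{A:DD-Assumptions} forces it for horizontal saddle connections outside $\Col_v(\bfC_v)$; this is a worthwhile elaboration, not a deviation.
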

\begin{proof}
Rotating the surface if required, we can assume that $\bfC$ is horizontal and $\eta$ is real. The result then follows from Lemma \ref{L:RelLimit}, noting that $t_\eta=1$ since $\eta(s)=\omega(s)$ for all horizontal saddle connections. 
\end{proof} 

\begin{defn}\label{D:DD}
Let $\Col_v^{doub}(X, \omega)$ denote the limit as $t$ approaches $t_\eta = 1$ of $\Col_v(X, \omega) - t\eta$. Let $\cM_v^{doub}$ denote the component of the boundary containing $\Col_v^{doub}(X, \omega)$. 
\end{defn}

Note that these definitions depend on the choice of $\eta$. In this paper, they will be exclusively used in the case when $\cM$ has no rel, where Lemma \ref{L:ScaleCert} gives that $\eta$ is unique, so the dependence on $\eta$ is not reflected in the notation. 

\begin{war}\label{W:DD}
It is dangerously incomplete to describe the double degeneration simply by saying it contracts the graph $\Col_v(\bfC)$. First of all, there might be saddle connections generically parallel to this graph but not contained in this graph, and they get contracted as well. But more importantly, our proof of the rel-scalability of $\Col_v(\bfC)$ does not give an explicit description of a cohomology class $\eta$ certifying the rel-scalability, even when this $\eta$ is unique. So our understanding of the double degeneration is not very explicit. We have not ruled out the possibility that the local surgeries required to obtain $(X,\omega)-t\eta$ might require surgeries at zeros not on saddle connections generically parallel to $\Col_v(\bfC)$, so  singularities not on the graph $\Col_v(\bfC_v)$ might a priori ``move around" along the path $\Col_v(X, \omega) - t\eta$. 
\end{war}

\subsection{Basic results}
Keeping in mind that double degenerations are only defined under Assumption \ref{A:DD-Assumptions}, we  compare $\cM_v^{doub}$ to $\cM_v$ and $\cM$. 

\begin{lem}\label{L:DoubleBasic1}
$\dim \cM_v^{doub} = \dim\cM_v-1$ and $\rank\cM_v^{doub} = \rank\cM_v$. 
\end{lem}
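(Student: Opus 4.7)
The plan is to study the degeneration $\Col_v(X,\omega) \rightsquigarrow \Col_v^{doub}(X,\omega)$ inside $\cM_v$ using the real-rel framework of Section \ref{SS:RealRel}. After rotating so that $\bfC$ is horizontal and $\eta$ is real, Lemma \ref{L:RelLimit} identifies the space $V'$ of vanishing cycles for this degeneration as being spanned by the horizontal saddle connections $s$ on $\Col_v(X,\omega)$ with $\omega(s) = \eta(s)$. By rel-scalability, this set contains every saddle connection of $\Col_v(\bfC_v)$; moreover any such collapsed $s$ is horizontal, hence parallel to $\Col_v(\bfC_v)$, so Assumption \ref{A:DD-Assumptions} ensures that it is generically parallel to the saddle connections of $\Col_v(\bfC_v)$ on $\cM_v$. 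Consequently every generator of $V'$ induces a functional on $T_{\Col_v(X,\omega)}(\cM_v)$ that is collinear with the functional induced by a fixed $s_0 \in \Col_v(\bfC_v)$, and the same then holds for every element of $V'$.

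For the dimension claim, I would identify $T_{\Col_v^{doub}(X,\omega)}(\cM_v^{doub})$ with $\Ann(V') \cap T_{\Col_v(X,\omega)}(\cM_v)$ using the identification from Section \ref{SS:CompabilityApplications}. Collinearity forces this annihilator to have codimension at most one in $T_{\Col_v(X,\omega)}(\cM_v)$. The codimension is exactly one because $[\omega] \in T_{\Col_v(X,\omega)}(\cM_v)$ evaluates on any horizontal saddle connection to its nonzero real holonomy, so the functional induced by $V'$ is not identically zero on the tangent space. This yields $\dim \cM_v^{doub} = \dim \cM_v - 1$.

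For the rank claim, I would invoke Lemma \ref{L:RankReducing}: one has $\rank \cM_v^{doub} = \rank \cM_v$ unless some vanishing cycle in $V'$ pairs nontrivially with $T_{\Col_v(X,\omega)}(\cM_v)$ yet vanishes on $\ker(p) \cap T_{\Col_v(X,\omega)}(\cM_v)$. However $\eta$ itself lies in $\ker(p) \cap T_{\Col_v(X,\omega)}(\cM_v)$, and for any $s \in \Col_v(\bfC_v)$ rel-scalability gives $\eta(s) = \omega(s) \ne 0$; by the collinearity established above, $\eta$ pairs nontrivially with every element of $V'$ that induces a nonzero functional on $T_{\Col_v(X,\omega)}(\cM_v)$. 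Hence the obstruction to rank preservation in Lemma \ref{L:RankReducing} never occurs, giving $\rank \cM_v^{doub} = \rank \cM_v$.

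The step I expect to be the main obstacle is the collinearity conclusion: one must verify that every saddle connection contributing to $V'$, not just those lying in $\Col_v(\bfC_v)$, satisfies the generic parallelism hypothesis of Assumption \ref{A:DD-Assumptions}. This reduces to noting that the characterization $\omega(s) = \eta(s)$ from Lemma \ref{L:RelLimit}, together with the reality of $\eta$ after rotation, forces every collapsed saddle connection to have real horizontal holonomy and therefore to be parallel to $\Col_v(\bfC_v)$, placing it under the scope of Assumption \ref{A:DD-Assumptions}.
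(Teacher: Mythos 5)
Your proof is correct and follows essentially the same route as the paper: Lemma \ref{L:RelLimit} identifies the vanishing cycles as spanned by saddle connections parallel to $\Col_v(\bfC)$, Assumption \ref{A:DD-Assumptions} gives that these are all generically parallel (hence the annihilator has codimension exactly one), and rel-scalability supplies the purely relative class $\eta$ that is nonzero on these vanishing cycles, so the obstruction in Lemma \ref{L:RankReducing} cannot occur. You simply spell out the details that the paper's two-sentence proof leaves implicit.
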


\begin{proof}
By Lemma \ref{L:RelLimit}, the vanishing cycles are spanned by saddle connections parallel to $\Col_v(\bfC)$, which are all generically parallel by Assumption \ref{A:DD-Assumptions}. Hence $\dim \cM_v^{doub} = \dim\cM_v-1$. The fact that $\Col_v(\bfC)$ is rel-scalable, together with Lemma \ref{L:RankReducing}, gives that $\rank\cM_v^{doub} = \rank\cM_v$. 
\end{proof}

\begin{cor}\label{C:L:DDRelZero}
$\rank(\cM_v^{doub})=\rank(\cM)-1$. If $\rel(\cM)=0$ then $\rel(\cM_v^{doub})=0$. 
\end{cor}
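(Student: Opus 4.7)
The plan is to assemble the corollary from Lemma \ref{L:DoubleBasic1}, Theorem \ref{T:GraphsFull}, Corollary \ref{C:RankMinus1}, and Lemma \ref{L:ScaleCert}; no genuinely new ingredients should be required, and the main ``obstacle'' is just keeping the dichotomy and the dimension count straight.

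For the rank statement, I would start by invoking Lemma \ref{L:DoubleBasic1}, which gives $\rank(\cM_v^{doub}) = \rank(\cM_v)$, so it suffices to show that the first (single) degeneration is rank-reducing by exactly one. Assumption \ref{A:DD-Assumptions} says $\Col_v(\bfC_v)$ is rel-scalable, and the paragraph introducing rel-scalability notes that this forces $\Col_v(\bfC_v)$ to be acyclic as a directed graph. Since $v$ defines a cylinder degeneration, at least one cylinder actually collapses, so $\Col_v(\bfC_v)$ is nonempty and in particular contains a saddle connection that cannot lie in a directed cycle. Thus $\Col_v(\bfC_v)$ is \emph{not} strongly connected, and so the second bullet of the Cylinder Degeneration Dichotomy (Theorem \ref{T:GraphsFull}) fails. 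This places us in the rank-reducing first bullet, and Corollary \ref{C:RankMinus1} then gives $\rank(\cM_v) = \rank(\cM) - 1$, which is what we needed.

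For the rel statement, I would use a dimension count that does not require additional geometric input. By Lemma \ref{L:DoubleBasic1}, $\dim(\cM_v^{doub}) = \dim(\cM_v) - 1$ and $\rank(\cM_v^{doub}) = \rank(\cM_v)$. Using the identity $\dim(\cN) = 2\rank(\cN) + \rel(\cN)$ for any invariant subvariety $\cN$, this yields $\rel(\cM_v^{doub}) = \rel(\cM_v) - 1$. On the other hand, the proof of Lemma \ref{L:ScaleCert} observes that when $\rel(\cM) = 0$, the boundary $\cM_v$ has exactly one dimension of rel (it has at least one from the rel-scalability certificate, and at most one because $\dim(\cM_v) < \dim(\cM) = 2\rank(\cM)$ while $2\rank(\cM_v) = 2\rank(\cM) - 2$). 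Substituting $\rel(\cM_v) = 1$ gives $\rel(\cM_v^{doub}) = 0$.

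Neither step presents a real obstacle; the only subtlety is noticing that the rel-scalability built into Assumption \ref{A:DD-Assumptions} immediately selects the rank-reducing branch of the dichotomy, so that the double degeneration is automatically in the regime covered by Lemma \ref{L:ScaleCert}.
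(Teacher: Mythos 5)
Your proof is correct and takes essentially the same route as the paper's: Lemma \ref{L:DoubleBasic1} combined with Corollary \ref{C:RankMinus1} for the rank statement, and the identity $\dim = 2\rank + \rel$ together with the dimension drop from Lemma \ref{L:DoubleBasic1} for the rel statement (the paper compares $\dim(\cM_v^{doub})$ directly to $\dim(\cM)$, you route through $\rel(\cM_v)=1$; the arithmetic is the same). Your one addition---explicitly deriving that the first degeneration is rank-reducing from the fact that rel-scalability forces $\Col_v(\bfC_v)$ to be acyclic and hence not strongly connected, so the second branch of Theorem \ref{T:GraphsFull} is excluded---is a correct justification of a point the paper leaves implicit in its standing assumptions.
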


\begin{proof}
The first statement follows from Lemma \ref{L:DoubleBasic1} and Corollary \ref{C:RankMinus1}. The second statement follows from the first, since $\cM_v^{doub}$ has dimension at least two less than $\cM$, and dimension is twice rank plus rel.  
\end{proof}

%

\begin{lem}\label{L:DefinitionOfLC}
Let $L_\bfC \subset H_1(X,\Sigma)$ be the span of all saddle connections contained in $\overline\bfC$ as well as all saddle connections parallel to $\bfC$. Then 
$$T_{\Col_v^{doub}(X, \omega)} (\cM_v^{doub}) = T_{(X,\omega)} (\cM) \cap \Ann(L_\bfC).$$
Moreover, the genus of $\Col_v^{doub}(X,\omega)$ is at least $d$ less than that of $(X,\omega)$, where $d$ is the dimension of the span of the core curves of $\bfC$ in homology. 
\end{lem}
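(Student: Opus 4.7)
The plan is to prove the two statements separately: the tangent-space identification by tracing the two-step degeneration through the annihilator-of-vanishing-cycles picture, and the genus drop by tracking the topological effect of each step independently.

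For the first statement, I would use that the double degeneration factors as $(X,\omega)\xrightarrow{\Col_v}\Col_v(X,\omega)\xrightarrow{-t\eta}\Col_v^{doub}(X,\omega)$, so applying the identification of the boundary tangent space with $\Ann(\text{vanishing cycles})\cap T_{(X,\omega)}(\cM)$ twice gives
$$T_{\Col_v^{doub}(X,\omega)}(\cM_v^{doub})=T_{(X,\omega)}(\cM)\cap\Ann(V)\cap\Ann(\text{lift}(V')),$$
where $V$ is the vanishing cycles for the collapse $\Col_v$ and $V'$ for the rel-flow step. By Lemma \ref{L:VinCv}, $V$ has a basis of $1$-chains supported in $\overline\bfC$; by Lemma \ref{L:RelFlow} together with Assumption \ref{A:DD-Assumptions}, $V'$ is spanned by the saddle connections of $\Col_v(X,\omega)$ generically parallel to $\Col_v(\bfC)$. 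Lifting $V'$ back to $(X,\omega)$ via the inclusion $T_{\Col_v(X,\omega)}(\cM_v)\subset T_{(X,\omega)}(\cM)$ is well-defined modulo $V$, so $V+\text{lift}(V')$ is an unambiguous subspace of $H_1(X,\Sigma)$.

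The key move is then to check $V+\text{lift}(V')=L_\bfC$ in $H_1(X,\Sigma)$. The inclusion $\subseteq$ is straightforward: $V$-generators are $1$-chains in $\overline\bfC$, hence in $L_\bfC$; and each element of $V'$ lifts to either a boundary saddle connection of a cylinder in $\bfC$, a cross curve of such a cylinder (which collapses to a saddle connection parallel to $\Col_v(\bfC)$), or a saddle connection parallel to $\bfC$ lying outside $\overline\bfC$. For the reverse inclusion, every saddle connection in $\overline\bfC$ is either a boundary saddle connection, or can be written modulo boundary saddle connections and core curves as a cross curve, and core curves themselves are homologous to sums of top-boundary saddle connections in $H_1(X,\Sigma)$; every saddle connection parallel to $\bfC$ outside $\overline\bfC$ is by definition in $\text{lift}(V')$. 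Taking annihilators inside $T_{(X,\omega)}(\cM)$ yields the stated equality.

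For the second statement, I would analyze the two steps of the degeneration topologically. The collapse $\Col_v$ replaces each cylinder $C\in\bfC$ by a node, which after un-gluing amounts to excising the open cylinder from $(X,\omega)$ and capping each of the two resulting boundary circles with a disk carrying a new marked point; this drops the genus by one for each non-separating cylinder core. After collapsing a maximal set of $d$ cylinders whose cores are independent in $H_1(X)$, any remaining cylinder in $\bfC$ has null-homologous core in the collapsed surface and its collapse only splits off components, not reducing the sum of genera; thus the total genus of $\Col_v(X,\omega)$ equals $g-d$. The second step terminates in the limit $\Col_v^{doub}(X,\omega)$, which by Lemma \ref{L:RelLimit} is obtained from $\Col_v(X,\omega)$ by a cut-and-reglue surgery at the zeros. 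Such local surgeries can merge zeros or split components but can never increase the total genus, so the genus of $\Col_v^{doub}(X,\omega)$ is at most $g-d$.

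The main obstacle is the bookkeeping in the second paragraph: I need to keep careful track of which saddle connections on $\Col_v(X,\omega)$ are generically parallel to $\Col_v(\bfC)$ and verify that every such saddle connection really lifts to one of the three listed types. The generic-parallelism hypothesis in Assumption \ref{A:DD-Assumptions} is what makes this clean, ruling out saddle connections that only become parallel in the limit.
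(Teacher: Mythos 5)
Your treatment of the tangent-space identity is essentially the paper's argument: the paper simply asserts that $L_\bfC$ is the preimage under $(\Col_v)_*$ of the span of the saddle connections parallel to $\Col_v(\bfC)$, which are the vanishing cycles of the second step, and your computation of $V+\mathrm{lift}(V')=L_\bfC$ is the unpacking of that one line. That part is fine.

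The genus statement, however, has a genuine gap: your topological model of the first step is wrong. The collapse $\Col_v$ sends the height of each cylinder in $\bfC_v$ to zero, gluing the top boundary of the cylinder to its bottom boundary; it does \emph{not} excise the cylinder and cap the two boundary circles with disks (that would be the model for pinching the core curve, i.e.\ a Deligne--Mumford type degeneration). In particular the core curve of a collapsed cylinder survives on $\Col_v(X,\omega)$ as a concatenation of edges of the graph $\Col_v(\bfC_v)$ with holonomy equal to the (nonzero) circumference, so there is no reason for the genus of $\Col_v(X,\omega)$ to equal $g-d$, and in general it does not: collapsing a simple cylinder whose cross curve joins two distinct zeros merely merges those zeros and changes the genus not at all (this is exactly why the rank-preserving case of Theorem \ref{T:GraphsFull} can produce strongly connected graphs carrying nontrivial cycles). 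Since your second paragraph's conclusion rests entirely on ``genus of $\Col_v(X,\omega)=g-d$'' followed by ``the second step cannot increase genus,'' the argument does not establish the bound.

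The correct mechanism, and the one the paper uses, is that the core curves only die under the \emph{composition} of the two collapse maps: it is the second step, which contracts every saddle connection parallel to $\Col_v(\bfC)$, that finally kills the (images of the) core curves in homology. One then converts the vanishing of a $d$-dimensional span of core curves into a genus drop of at least $d$, either directly or via the symplectic route the paper indicates ($\Ann(L_\bfC)$ is the tangent space to the stratum of $\Col_v^{doub}(X,\omega)$, genus is the rank of that stratum, and one applies Lemma \ref{L:BoundarySymplectic} and Corollary \ref{C:RankCharacterization}). If you want to keep a step-by-step accounting, you would have to track precisely how much of the $d$-dimensional loss occurs at each stage, which is exactly the bookkeeping the composition argument avoids.
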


\begin{proof}
The first statement follows because $L_\bfC$ is  the preimage under $(\Col_v)_*$ of the span (in relative homology) of the saddle connections parallel to $\Col_v(\bfC)$, which span the vanishing cycles for the degeneration from $\Col_v(X, \omega)$ to $\Col_v^{doub}(X, \omega)$. 

The second statement follows because all the core curves of cylinders are in the kernel of the induced map on homology for the composition of the collapse maps that map $(X, \omega)$ to $\Col_v^{doub}(X, \omega)$.  Alternatively, the second statement can be derived from the fact that $\Ann(L_\bfC)$ is the tangent space to the stratum of $\Col_v^{doub}(X,\omega)$, the fact that genus is the rank of the stratum,  Lemma \ref{L:BoundarySymplectic}, and Corollary \ref{C:RankCharacterization}.
\end{proof}

\section{Classification using a nested free cylinder}\label{S:nested}

In this section we develop some results of independent interest. 


\begin{defn}
Suppose that $C$ is a cylinder on a surface $(X, \omega)$ in an invariant subvariety $\cM$. 
We say that $C$ is \emph{nested} in another cylinder $H$ if $C\subset \overline{H}$ and $C$ crosses $H$ exactly once. 
\end{defn}

Nested cylinders are automatically simple. If $C$ is nested in $H$, then there is a saddle connection which appears in both the top and bottom boundaries of $H$ and is  a cross curve for $C$.  The property of being nested can typically be destroyed by perturbation, so when we discuss nested cylinders we will typically be considering ``non-generic" surfaces.  

\begin{thm}\label{T:nested}
Let $\cM$ be an invariant subvariety with no rel. If  a surface $(X,\omega)\in \cM$ has a nested free cylinder then $\cM$ is a stratum of Abelian differentials or a quadratic double.
\end{thm}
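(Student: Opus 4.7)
The plan is to degenerate the free nested cylinder $C$, apply the no-rel hypothesis to control the degeneration, and then argue by induction on the dimension of $\cM$.

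Since $C$ is free, no other cylinder on $(X,\omega)$ is generically parallel to $C$, so the $\cM$-equivalence class of $C$ is $\bfC=\{C\}$, and $\Twist(\bfC,\cM) = \bC\cdot \gamma_C^*$. Taking the standard deformation $v = -ih_C \gamma_C^*$ defines a collapse path. By Corollary \ref{C:ExactGraphsRel0}, the no-rel hypothesis forces this degeneration to be rank-reducing with $\bfC_v = \bfC$, and the image $\Col_v(C)$ is $\cM_v$-rel-scalable. Since $C$ is simple (being nested), $\Col_v(C)$ consists of a single saddle connection $s'$, and rel-scalability implies $s'$ is acyclic, so its two endpoints are distinct singularities of $\Col_v(X,\omega)$.

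Next, I would track the cylinder $H$ under the collapse. Because the cross curve $s$ of $C$ lies in both the top and bottom boundaries of $H$, and $s$ shrinks to a point along the collapse, the image $\Col_v(H)$ is a cylinder $H'$ on $\Col_v(X,\omega)$ whose top and bottom boundaries both contain $s'$. Using the rel vector $\eta$ of Lemma \ref{L:ScaleCert} that certifies rel-scalability of $s'$, I would carry out the double degeneration of Section \ref{S:DoubleDegen} to obtain $\Col_v^{doub}(X,\omega)\in\cM_v^{doub}$. By Corollary \ref{C:L:DDRelZero}, $\cM_v^{doub}$ has rank $\rank(\cM)-1$ and no rel. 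The picture of $H'$ with $s'$ on both boundaries degenerates to an analogous ``shared saddle-connection'' configuration on $\Col_v^{doub}(X,\omega)$, which I expect to produce a nested free cylinder (or an equivalent structure) on the doubly degenerate surface.

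Finally, I would induct on $\dim(\cM)$, taking as the base case McMullen's classification in genus two. The inductive hypothesis applied to $\cM_v^{doub}$ identifies it as a component of a stratum or a quadratic double; reconstructing $(X,\omega)$ from the doubly degenerate surface by undoing the two degenerations in a codimension-two way, one then argues that only a stratum or quadratic double structure on $\cM$ is compatible with both the no-rel condition and the data of the $C$-$H$ configuration. The main obstacle will be the propagation of the nested free cylinder structure through the double degeneration: one must show that the inductive hypothesis really does apply to $\cM_v^{doub}$ (ruling out that the shared boundary saddle connection of $H'$ fails to define a free cylinder after the rel deformation), and then one must rigorously lift the stratum/quadratic-double conclusion back along the two degenerations while excluding exotic intermediate orbit closures compatible with a nested free cylinder.
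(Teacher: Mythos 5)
Your proposal takes a completely different route from the paper, and it has two genuine gaps, either of which is fatal as written. First, the inductive step requires the nested-free-cylinder hypothesis to persist on $\Col_v^{doub}(X,\omega)$, and you have not shown this; indeed the paper's own Warning \ref{W:DD} emphasizes that the rel vector $\eta$ certifying rel-scalability is produced non-constructively, so that zeros not on $\Col_v(\bfC)$ may move during the second degeneration. Thus even if $\Col_v(H)$ survives with the contracted point on both of its boundaries, you have no control over whether a cylinder nested in it exists on $\Col_v^{doub}(X,\omega)$, let alone whether such a cylinder is \emph{free} in $\cM_v^{doub}$ (freeness is a statement about $T(\cM_v^{doub})$, which you have not computed). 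There is also a mismatch between the perturbations needed to satisfy Assumption \ref{A:DD-Assumptions} and the fragility of the nested configuration, and a base-case problem when $\rank(\cM)=1$, where the rank cannot drop. Second, and more fundamentally, your concluding step --- ``reconstructing $(X,\omega)$\dots one then argues that only a stratum or quadratic double structure on $\cM$ is compatible'' --- is precisely the implication the entire paper is architected to avoid. A proper invariant subvariety can perfectly well have a component of a stratum or a quadratic double in its boundary, so knowing that $\cM_v^{doub}$ is one of these does not let you lift the conclusion back to $\cM$ without substantial extra input (a dimension count that matches codimensions in $\cM$ against codimensions in the ambient stratum for the stratum case, and an actual construction of an involution on $(X,\omega)$ for the quadratic-double case). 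This is why the paper's global strategy passes through the notion of geminal subvarieties and certificates rather than inducting on the conclusion of Theorem \ref{T:main} directly.

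The paper's proof of Theorem \ref{T:nested} is entirely different and never leaves $\cM$: it uses only the Cylinder Deformation Theorem and its partial converse (Corollary \ref{C:ConstantModuli}). One first collapses the complement of $H$ onto $H$ (Lemma \ref{L:ColOntoH}) to obtain a one-horizontal-cylinder presentation with a vertical free cylinder $V$ nested in $H$; one then studies, for each horizontal saddle connection $a$, the transverse cylinder $W_a$ through $a$ avoiding $V$, and shows by ``over-collapsing $V$'' (Lemma \ref{L:WaPrime}) that $W_a$ has at most one $\cM$-parallel partner $W_a'=W_b$ of equal height and circumference. If no $W_a'$ exists, the deformations of $H$, $V$, and the $W_a$ exhaust the stratum's tangent space; otherwise these pairings assemble into an involution exhibiting $\cM$ as a quadratic double. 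If you want to salvage your approach, you would essentially have to re-prove this explicit structure anyway in order to carry out the lifting step, so the boundary induction buys nothing here.
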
  

Recall that ``free" is defined before Theorem \ref{T:Geminal}. After proving Theorem \ref{T:nested}, we will derive from it Proposition \ref{P:FindNested}. These results are  important ingredients in the next section. 

Proposition \ref{P:FindNested} also has the following surprisingly strong consequence. 

\begin{cor}\label{C:easy}
Suppose that $\cM$ has no rel and is contained in a genus $g$ stratum with $s$ zeros. Then if 
$$\rank(\cM)>\frac{g+s-1}2$$
then $\cM$ is either a component of a stratum or a quadratic double.
\end{cor}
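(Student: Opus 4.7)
By Theorem \ref{T:nested}, it suffices to exhibit a surface in $\cM$ carrying a nested free cylinder. So I would directly invoke Proposition \ref{P:FindNested}, established earlier in the section precisely to produce such a cylinder under the rank hypothesis in force here.

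It is instructive to separate out which cases are truly new. Because $\rank(\cM) \in \bZ$, the inequality $\rank(\cM) > (g+s-1)/2$ automatically forces $\rank(\cM) \geq g/2 + 1$ unless $s = 1$ and $g$ is odd with $\rank(\cM) = (g+1)/2$. In every case except this borderline one, $\cM$ is of high rank, so Theorem \ref{T:main} alone already yields that $\cM$ is a component of a stratum or the locus of all holonomy double covers of surfaces in a stratum of quadratic differentials; without marked points, the latter is by definition a quadratic double, so the conclusion holds. The genuine improvement offered by Corollary \ref{C:easy} is thus restricted to $\cM \subseteq \cH(2g-2)$ with $g$ odd and $\rank(\cM) = (g+1)/2$, for which $\dim \cM = g + 1$ and the no-rel hypothesis is automatic, since $|\Sigma| = 1$ forces $H^1(X, \Sigma) \cong H^1(X)$.

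The main obstacle is therefore internal to Proposition \ref{P:FindNested}: producing a nested free cylinder in this tight regime where the surplus of the rank over the trivial threshold $g/2$ is only one half. I would expect the single-zero constraint of $\cH(2g-2)$ to be leveraged to control horizontal cylinder decompositions and extract a simple cylinder which is nested inside a larger cylinder and stays inside $\cM$ under arbitrary cylinder deformations. Once located, Theorem \ref{T:nested} finishes the argument, yielding that $\cM$ is a stratum or a quadratic double.
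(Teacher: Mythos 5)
Your identification of the two ingredients (Proposition \ref{P:FindNested} feeding into Theorem \ref{T:nested}) is correct, and your observation that the only case not already covered by the high rank hypothesis of Theorem \ref{T:main} is $s=1$, $g$ odd, $\rank(\cM)=(g+1)/2$ is accurate. But the proposal stops exactly where the proof has to happen: you never derive the existence of a nested free cylinder from the hypothesis $\rank(\cM)>\frac{g+s-1}{2}$, and your stated expectation that one should ``leverage the single-zero constraint of $\cH(2g-2)$'' is not how the argument goes and is not obviously workable. The actual mechanism is a counting argument that treats all $s$ uniformly (so the detour through Theorem \ref{T:main} is unnecessary, though not circular). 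Apply Proposition \ref{P:FindNested} with $\bfC=\emptyset$ — the remark following that proposition explicitly permits this — to get a cylindrically stable surface satisfying one of its two alternatives. Alternative (2) must be ruled out: on a cylindrically stable surface in an invariant subvariety with no rel there are exactly $\rank(\cM)$ horizontal equivalence classes (Lemma \ref{L:CS}\eqref{L:CS:rel0criterion}), and an equivalence class consisting of a single cylinder is a free cylinder; so if no horizontal cylinder were free, every class would contain at least two cylinders, forcing at least $2\rank(\cM)$ horizontal cylinders. Since a horizontally periodic surface in a genus $g$ stratum with $s$ zeros carries at most $g+s-1$ cylinders (Lemma \ref{L:CS}\eqref{L:CS:TwistSpace}), this contradicts $2\rank(\cM)>g+s-1$. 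Hence alternative (1) holds, producing the nested free cylinder, and Theorem \ref{T:nested} concludes.

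A secondary point: Proposition \ref{P:FindNested} carries the hypothesis $\bk(\cM)=\bQ$, which neither you nor the paper's one-line proof addresses explicitly; it follows from the rank hypothesis via the field-of-definition bound quoted later in the paper, but if you invoke the proposition you should at least note why its hypotheses are met. Your remark that no rel is automatic when $s=1$ is correct but idle, since no rel is already a hypothesis of the corollary.
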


The rank bound in Corollary \ref{C:easy} is strictly weaker than that in Theorem \ref{T:main} when $s>2$, equivalent when $s=2$, and slightly stronger when $s=1$. The $g=3$, $s=1$ case recovers the classification of rank two invariant subvarieties of $\cH(4)$, established in \cite{NW,ANW}. We prove Corollary \ref{C:easy} at the end of this section.

\subsection{The proof}
The arguments in this section use only the Cylinder Deformation Theorem and its partial converse Theorem \ref{C:ConstantModuli}. 

%
%

\begin{proof}[Proof of Theorem \ref{T:nested}:] 
Assume $\cM$ has a surface with a nested free cylinder.

\begin{lem}\label{L:ColOntoH}
$\cM$ contains a horizontally periodic surface $(X,\omega)$ with only a single horizontal cylinder $H$, such that $H$ contains a vertical free cylinder $V$, and such that any pair of horizontal saddle connections on $(X,\omega)$ that have the same length continue to have the same length in a neighborhood of $(X, \omega)$ in $\cM$.
\end{lem}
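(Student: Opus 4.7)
The plan is to start with a surface in $\cM$ having a nested free cylinder, place it in a horizontally periodic configuration, and then reduce to a single horizontal cylinder through a minimization argument.

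First, I would fix a surface $(X_0, \omega_0) \in \cM$ in which a free cylinder $C$ is nested in another cylinder $H$, and rotate so that $H$ is horizontal and $C$ is vertical. Applying Smillie–Weiss \cite{SW2} to the horocycle orbit of $(X_0, \omega_0)$ produces a horizontally periodic surface in $\cM$. Since the horocycle flow preserves the horizontal foliation and its cylinders, $H$ remains a horizontal cylinder on this new surface; after a further small perturbation by dilating $C$ (whose deformations stay in $\cM$ because $C$ is free), I can arrange that a nested vertical free cylinder still sits inside $H$. Call the resulting horizontally periodic surface $(X_1, \omega_1)$.

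Next, consider the class of horizontally periodic surfaces in $\cM$ that contain a horizontal cylinder with a nested vertical free cylinder inside. This class is non-empty by the previous step. Choose a surface $(X, \omega)$ in this class minimizing the number of horizontal cylinders, and let $H$ and $V$ denote the nested cylinder pair. I claim the minimum equals one. Suppose, for contradiction, that $(X, \omega)$ has at least two horizontal cylinders. Then $H$ sits in some equivalence class $\bfH$ of horizontal cylinders, and there is at least one other equivalence class $\bfH'$. Using the Cylinder Deformation Theorem together with the freedom of $V$ and Corollary \ref{C:ConstantModuli} (which applies because $\cM$ has no rel and strongly restricts horizontal cylinder height ratios), I would construct a path in $\cM$ along which the cylinders of $\bfH'$ collapse. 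By Theorem \ref{T:GraphsFull}, the resulting limit gives rise to new horizontal saddle connections; perturbing $V$ by a twist (permitted because $V$ is free) before reaching the limit produces a horizontally periodic surface in $\cM$ still carrying a nested vertical free cylinder inside $H$, but with strictly fewer horizontal cylinders than $(X, \omega)$. This contradicts minimality, so the minimum must be one.

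Finally, the genericity condition on pairs of equal-length horizontal saddle connections is automatic after a generic small deformation of $(X, \omega)$ inside $\cM$ that preserves horizontal periodicity and the nested-free-cylinder structure: the locus of surfaces in $\cM$ where the lengths of two given horizontal saddle connections are equal is a closed analytic subset cut out by a real-linear equation in period coordinates, so either the equality persists locally (which is what we want) or the locus is a proper closed subset of $\cM$, and we take $(X, \omega)$ in the complement of all such non-persistent loci.

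The main obstacle I expect is the minimization step: concretely carrying out the degeneration of $\bfH'$ while keeping a nested free cylinder inside $H$ on a surface that still lies in $\cM$ (rather than in a proper boundary subvariety). Controlling this requires carefully combining the rigidity from the no-rel hypothesis, the freedom of $V$ for independent cylinder deformations, and the Cylinder Degeneration Dichotomy of Theorem \ref{T:GraphsFull} to ensure the result is a genuine surface of $\cM$ of the required form.
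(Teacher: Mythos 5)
There is a genuine gap at the heart of your argument, and it is exactly the one you flag at the end: the minimization step does not work. Collapsing a horizontal equivalence class $\bfH'$ to zero height is a cylinder degeneration in the sense of Section \ref{S:CylinderDegenerations}; by definition its limit diverges in the stratum and lands in a component $\cM_v$ of the \emph{boundary} of $\cM$, a lower-dimensional invariant subvariety in a smaller stratum. Theorem \ref{T:GraphsFull} describes that boundary point; it cannot hand you back a horizontally periodic surface \emph{in $\cM$} with fewer horizontal cylinders. So the induction/minimization has no legal move, and no amount of care with the no-rel hypothesis or the freedom of $V$ repairs this: any path in $\cM$ that kills a horizontal cylinder by sending its height to zero while creating a short saddle connection has, by that very fact, left $\cM$.

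The paper's proof circumvents this with a different mechanism: it never collapses any cylinder. Starting from a surface with $V$ nested in a horizontal $H$ and \emph{no vertical saddle connections}, one first observes that $H$ is free (deforming the free cylinder $V$ changes the circumference of $H$ but not of any cylinder $\cM$-parallel to it). Then one vertically contracts the whole surface by $\epsilon$ using the $\GL$-action (which preserves $\cM$) and re-stretches $H$ alone by $\epsilon^{-1}$ using the cylinder deformation of the free cylinder $H$ (which also preserves $\cM$). As $\epsilon\to 0$ the complement of $H$ is squashed onto the boundary of $H$; since there are no vertical saddle connections, Masur's compactness criterion shows no degeneration occurs, so the limit is a genuine surface of $\cM$ whose horizontal foliation consists of the single cylinder $H$. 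Your final genericity step (a small generic real perturbation so that only generically equal horizontal saddle connections have equal lengths) matches the paper and is fine, as is the idea of shearing at the end to make $V$ vertical; but without the ``collapse the complement of $H$ onto $H$'' device, the reduction to a single horizontal cylinder is not achieved.
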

\begin{proof}
Start with any surface with a cylinder $V$ nested in a cylinder $H$. Without loss of generality, assume $H$ is horizontal and the surface doesn't contain any vertical saddle connections. Note that $H$ must be free, since deformations of $V$ can be used to change the circumference of $H$ without changing the circumference of any parallel cylinders. 

We now describe how to ``collapse the complement of $H$ onto $H$", via a procedure illustrated in Figure \ref{F:ColOntoH}.
\begin{figure}[h]
\includegraphics[width=.85\linewidth]{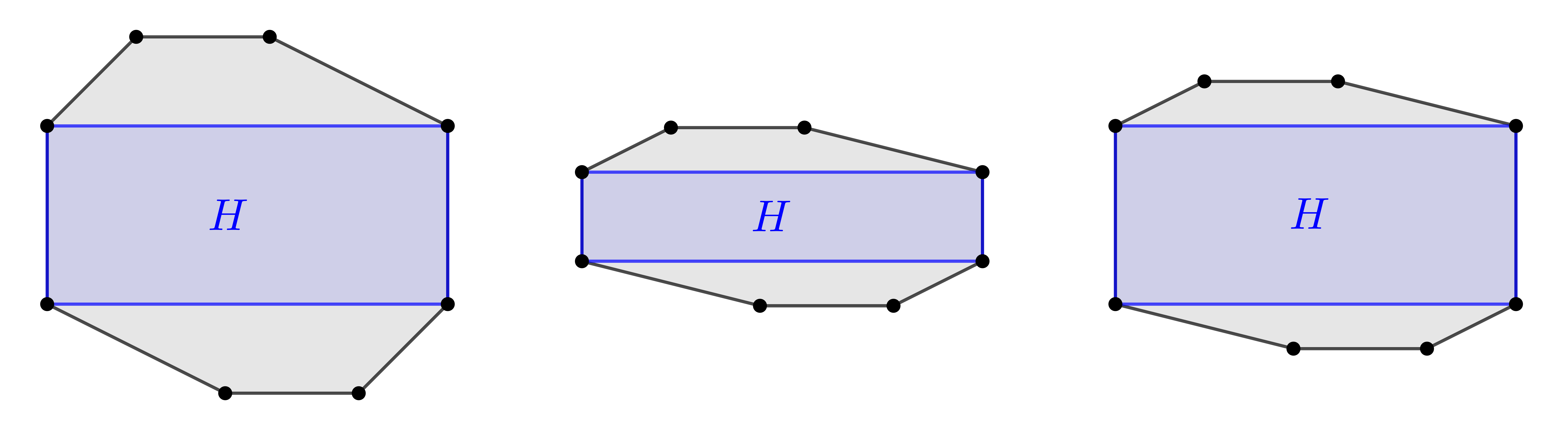}
\caption{The two step process illustrated with $\epsilon=\frac12$.}
\label{F:ColOntoH}
\end{figure}
Formally, this is a limit as $\epsilon\to 0$ of the surfaces obtained in two steps by first vertically scaling the surface by $\epsilon$ using the $GL(2, \bR)$ action, and then vertical stretching $H$ by $\epsilon^{-1}$ using a cylinder deformation, so that the height of $H$ stays constant. Since there are no vertical saddle connections, Masur's Compactness Criterion implies this process does not degenerate the surface. (In fact, the limit can be described concretely: it is obtained by gluing each point $p$ on the top of $H$ to the point on the bottom of $H$ where the straight line which leaves $p$ in the north direction first returns to $H$; this gluing is well defined on a co-finite set.) 

To conclude, perform a small generic real deformation so that any two horizontal saddle connections that are not $\cM$-parallel have different lengths, and then shear the surface so $V$ is vertical. 
\end{proof}

The rest of the proof takes place on a surface produced by Lemma \ref{L:ColOntoH}. For each horizontal saddle connection $a$ not contained in $V$, define $W_a$ to be the cylinder that crosses $H$ once, has $a$ as a cross curve, and does not intersect $V$. See Figure \ref{F:Wa}. 
\begin{figure}[h]
\includegraphics[width=.5\linewidth]{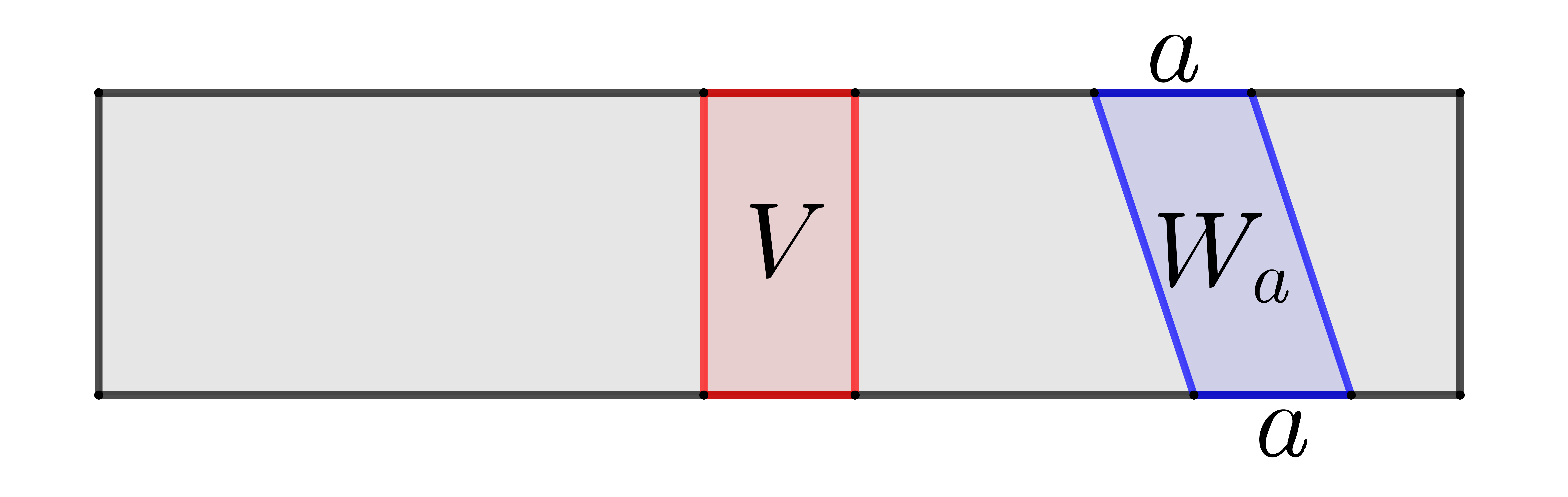}
\caption{}
\label{F:Wa}
\end{figure}

\begin{lem}\label{L:WaPrime}
For each $W_a$, there is at most one cylinder that is $\cM$-parallel to $W_a$. If such a cylinder exists, it has the same height as $W_a$. 
\end{lem}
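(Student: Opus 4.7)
The plan is to combine Corollary \ref{C:ConstantModuli}, which applies since $\cM$ has no rel and gives locally constant rational modulus ratios among parallel cylinders, with the freeness of $V$ and the horizontal saddle connection length assumption from Lemma \ref{L:ColOntoH}. Any cylinder $W$ that is $\cM$-parallel to $W_a$ must be vertical and, as a distinct cylinder on the surface, disjoint from both $V$ and $W_a$. If I can further argue that such a $W$ is nested in $H$, then $W$ and $W_a$ share the circumference of $H$, so the constant rational ratio of moduli reduces to a constant rational ratio of heights which the lemma asserts must equal $1$.

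The main tool is a dimension count in relative cohomology. Since $V$ is free, the class $\gamma_V^*$ belongs to $T_{(X,\omega)}(\cM)$; by Theorem \ref{T:CDTConverse} and the no-rel hypothesis, the twist space $\Twist(\bfW,\cM)$ is one-complex-dimensional and spanned by the standard deformation $\sigma_\bfW$. I would study the span $U \subseteq H^1(X,\Sigma;\bC)$ of $\gamma_V^*$ together with $\gamma_{W'}^*$ for all $W' \in \bfW$, and examine $T_{(X,\omega)}(\cM) \cap U$ using injectivity of $p$ on $T_{(X,\omega)}(\cM)$. The crucial evaluations are on cross saddle connections: the cross curve $a$ of $W_a$, lying entirely in the strip of $W_a$ in $H$, is disjoint from $\gamma_V$ and from $\gamma_{W'}$ for any other $W' \in \bfW$, giving $\gamma_V^*(a) = \gamma_{W'}^*(a) = 0$ while $\gamma_{W_a}^*(a) = 1$; analogous evaluations hold using cross curves of $V$ and of the hypothetical second cylinder $W$. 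Combining these with the constraint $\sigma_\bfW \in T_{(X,\omega)}(\cM)$ and the horizontal saddle connection length assumption, which prevents spurious length coincidences from persisting under deformation, should yield both the uniqueness statement and the height equality.

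The main obstacle I expect is correctly accounting for the homology classes of $\gamma_V, \gamma_{W_a}$, and $\gamma_W$ in $H_1(X)$. These are not automatically equal, since the top-bottom identification of $H$ under its first-return map generally prevents cancellation of the horizontal parts of the natural 2-chains joining two vertical core curves. Handling the case where a cylinder in $\bfW$ winds around $H$ more than once, rather than being nested, will likely require a parallel but more involved dimension argument; I anticipate that it can be ruled out by the same freeness of $V$ combined with the constancy of modulus ratios.
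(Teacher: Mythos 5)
Your proposal heads in a genuinely different direction from the paper --- a cohomological dimension count in place of a geometric deformation argument --- but as written it has two serious gaps.

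First, the logical order is backwards. You propose to first argue that a cylinder $W$ that is $\cM$-parallel to $W_a$ is nested in $H$, and then reduce the modulus ratio to a height ratio. In the paper, the fact that such a cylinder crosses $H$ only once is Lemma \ref{L:WaNice}, which is proved \emph{using} Lemma \ref{L:WaPrime} (via Lemma \ref{L:WaSnug}, whose proof needs the equal-heights conclusion). So nestedness cannot be taken as an input here; you correctly flag the winding case as an obstacle, but it is not a side case to be ruled out afterwards --- a priori it is the main case, and deferring it defers the whole lemma.

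Second, and more fundamentally, the dimension count you sketch cannot bound the number of cylinders in the equivalence class. That $\Twist(\bfW,\cM)$ is one-dimensional and spanned by $\sigma_{\bfW}$ (Theorem \ref{T:CDTConverse} plus no rel) is consistent with the class containing arbitrarily many cylinders --- this is exactly what happens for non-free classes in quadratic doubles and in loci of covers. The evaluations $\gamma_V^*(a)=0$ and $\gamma_{W'}^*(a)=0$ only record that these deformations do not move $a$; they produce no contradiction with a large equivalence class. The ingredient your outline never uses in an essential way is the freeness of $V$ as a license to deform \emph{past} degeneration: the paper's proof shears $V$ so it contains no horizontal saddle connection, collapses it to zero height, and then continues the linear path in period coordinates (``over-collapsing $V$''), which amounts to moving two triangles through $H$. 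Only the cylinders of $\bfW$ hit by those triangles change, at most two can be hit first, and the two that are hit lose equal height; Corollary \ref{C:ConstantModuli} then forces every cylinder of the class to be one of the (at most two) cylinders being hit, and forces their heights to agree. Some genuinely non-local deformation of this kind is needed; a static linear-algebra computation at $(X,\omega)$ will not see it.
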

\begin{proof}
This follows by ``over-collapsing $V$ to attack $W_a$", a procedure illustrated in Figure \ref{F:Vattack2}. The same idea was used previously in \cite{ApisaWrightDiamonds} and \cite{ApisaWrightGemini}. 
\begin{figure}[h]
\includegraphics[width=.455\linewidth]{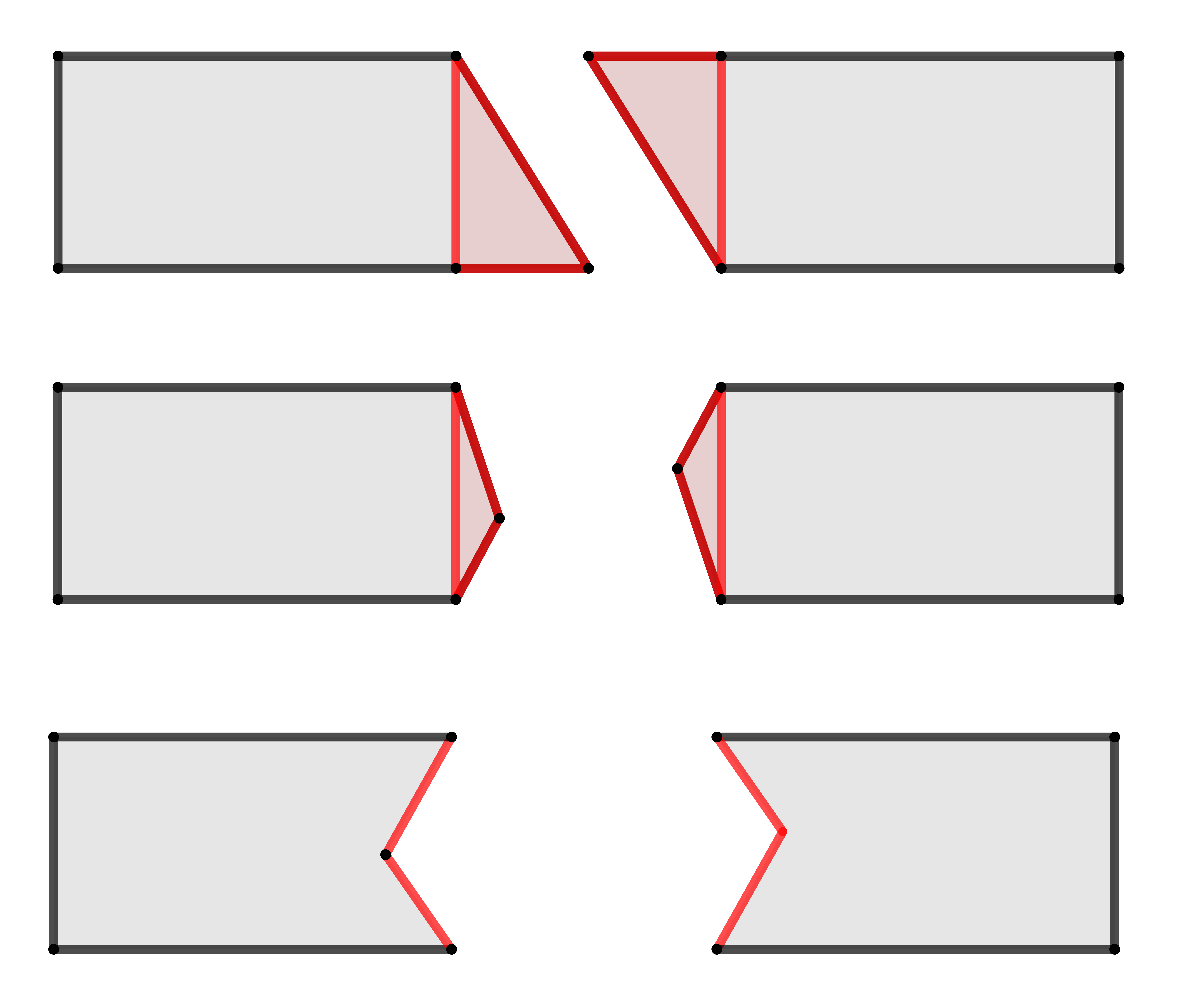}
\caption{}
\label{F:Vattack2}
\end{figure}

To accomplish this, start by shearing $V$ slightly so that it no longer contains a horizontal saddle connection. Then use the standard cylinder deformation of $V$ to reduce the height of $V$ without changing the horizontal foliation of the surface. Continuing in this direction, eventually $V$ reaches zero height, but because it does not contain horizontal saddle connections this does not degenerate the surface. Hence, this linear path in period coordinates can be continued past the point where $V$ reaches zero height, thus ``over-collapsing $V$". 

In this case the overcollapse can be understood concretely. We can think of $V$ has being composed of two triangles, and the deformation above simply changes these triangles as in Figure \ref{F:Vattack2}. The over-collapse can be obtained by removing two triangles from $H-V$, and making appropriate edge identifications. 

When these triangles first hit a cylinder $\cM$-parallel to $W_a$, it can hit at most two such cylinders. As the attack continues, these two cylinders change height (and indeed they lose an equal amount of height).   Corollary \ref{C:ConstantModuli} thus gives that there are at most two cylinders in the equivalence class of $W_a$. 

The same argument shows that if there are two cylinders in the equivalence class then their heights must be equal, since the moduli must stay rationally related throughout the attack. 
\end{proof}

If there is a cylinder $\cM$-parallel to $W_a$, we will denote it $W_a'$. If not, we will say that $W_a'$ does not exist. 

\begin{lem}\label{L:WaSnug}
If $W_a'$ exists, then every horizontal saddle connection that it passes through has the same length as $a$. 
\end{lem}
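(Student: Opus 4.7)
The plan is to give a direct geometric argument based on the structure of $W_a'$ as an embedded vertical cylinder. By Lemma \ref{L:WaPrime}, $W_a'$ has the same height as $W_a$, namely $|a|$, since $a$ is a horizontal cross curve of the vertical cylinder $W_a$. Because $W_a'$ is $\cM$-parallel to $W_a$, it is also vertical, and because $H$ is the only horizontal cylinder on $(X,\omega)$, the core curve of $W_a'$ must traverse $H$. Being a maximal embedded cylinder, $W_a'$ crosses $H$ exactly once, so its horizontal extent at every level $y \in [0, h_H]$ is a single interval of length $|a|$.

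Given this, I would argue as follows. Fix a horizontal saddle connection $b$ contained in $\overline{W_a'}$. Since $H$ is the unique horizontal cylinder, $b$ lies on the top or bottom boundary of $H$, say at height $y = h_H$. The two endpoints of $b$ are zeros of $\omega$, and since cylinders contain no zeros in their interior, both endpoints must lie on the boundary of $W_a'$. For the vertical cylinder $W_a'$, this boundary consists of two vertical loops at its leftmost and rightmost horizontal extents. A horizontal segment cannot have both endpoints on the same vertical loop, so $b$ must run from the left boundary of $W_a'$ to the right, spanning the full horizontal extent of $W_a'$. Hence $|b| = |a|$, and the same reasoning applies at $y = 0$.

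The main obstacle is to rule out the possibility of an intermediate zero on the top of $H$ strictly inside $W_a'$'s horizontal range; such a zero would lie in the interior of the cylinder $W_a'$, contradicting that cylinders are open and zero-free. An alternative, more cohomological approach would use the standard deformation $\sigma_{\bfC}$ for $\bfC = \{W_a, W_a'\}$ together with Theorem \ref{T:CDTConverse} and the no-rel hypothesis of Theorem \ref{T:nested}: from Lemma \ref{L:WaPrime} we would have $\sigma_{\bfC} = |a|(\gamma_{W_a}^{\ast} + \gamma_{W_a'}^{\ast})$, and since $b$ is disjoint from $W_a$, the identity $\sigma_{\bfC}(b) = |a|\cdot \gamma_{W_a'}^{\ast}(b)$ would exhibit $|b|$ as a nonnegative integer multiple of $|a|$; combined with $|b| \leq |a|$ this again forces $|b| = |a|$. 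The direct geometric argument is more transparent, but the cohomological version could be useful for generalizations beyond the normal form produced by Lemma \ref{L:ColOntoH}.
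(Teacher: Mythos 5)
There is a genuine gap. Your argument rests on the claim that $W_a'$ crosses $H$ exactly once, justified only by the phrase ``being a maximal embedded cylinder.'' That inference is invalid: a maximal cylinder transverse to $H$ can cross $H$ several times, its intersection with $H$ being several parallel bands glued cyclically along horizontal intervals on the boundary of $H$. In fact, the assertion that $W_a'$ crosses $H$ only once is precisely Lemma \ref{L:WaNice}, which comes \emph{after} Lemma \ref{L:WaSnug} and is proved using it, so assuming it here is circular. (A smaller inaccuracy: $W_a$ and $W_a'$ are not vertical in general; only $V$ is.) When $W_a'$ crosses $H$ more than once, its closure meets a horizontal saddle connection $c$ that it passes through in an interval of length $|a|$ whose endpoints lie on the boundary saddle connections of $W_a'$ (which are parallel to its core) and need not be zeros; that interval can therefore be a proper subinterval of $c$. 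The local picture only yields $|c|\geq |a|$, since a length-$|a|$ interval avoiding singularities must sit inside a single open saddle connection. Ruling out $|c|>|a|$ is the actual content of the lemma, and the ``main obstacle'' you identify (a zero strictly inside the horizontal range) is not the real obstruction.

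The paper's proof is a deformation argument, not a static one: supposing $W_a'$ passes through some $c$ with $|c|>|a|$, one horizontally compresses the pair $W_c$, $W_c'$ until $|c|\to 0$. Since $W_c$ passes only through $c$, and $W_c'$ cannot pass through $a$ (each of its crossing intervals has length $|c|>|a|$, too long to fit inside $a$), the length of $a$ is unchanged and $W_a$ keeps positive height, while the height of $W_a'$ tends to $0$ because it passes through $c$; this contradicts the equal-heights conclusion of Lemma \ref{L:WaPrime}. Your alternative ``cohomological'' sketch does not repair the gap: $\sigma_{\bfC}(b)$ is a multiple of the intersection number of $b$ with the core curves, not the length $|b|$, and the inequality $|b|\leq |a|$ you invoke is unjustified --- the elementary inequality goes the other way.
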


\begin{proof}
Otherwise, since $W_a$ and $W_a'$ have the same height, $W_a'$ passes through a horizontal saddle connection $c$ that is longer than $a$. 

Consider the cylinder deformation that horizontally compresses $W_c$ and, if it exists, $W_c'$; this deformation does not change the horizontal foliation. This deformation eventually reduces the length of $c$ to zero, and also changes the lengths of the horizontal saddle connections that $W_c'$ passes through. But, since $W_c'$ has the same height as $W_c$, and $a$ is smaller than $c$, $W_c'$ does not pass through $a$. So, the length of $a$ is unchanged. 

Our explicit definition of $W_a$ shows that $W_a$ persists along this deformation, and its height  does not go zero. But, the height  of $W_a'$ does go to zero, because $W_a'$ passes through $c$, and the length of $c$ goes to zero. This contradicts the fact that $W_a$ and $W_a'$ must always have the same height.
\end{proof}

\begin{lem}\label{L:WaNice}
If $W_a'$ exists, it is equal to $W_b$ for some horizontal saddle connection $b$. 
\end{lem}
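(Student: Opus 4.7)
The plan is to (i) locate $W_a'$ inside $\overline{H}$ as a slanted parallelogram that crosses $H$ exactly once, (ii) use Lemma \ref{L:WaSnug} to identify its bottom cross-section on $\partial H$ with a single horizontal saddle connection $b$, and (iii) verify that $W_a'$ is disjoint from $V$, so that $W_a'$ matches the definition of $W_b$.

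For step (i), since $W_a'$ is $\cM$-parallel to $W_a$ and, by Lemma \ref{L:WaPrime}, has the same cylinder height as $W_a$, and since $H$ is the unique horizontal cylinder of $(X,\omega)$, the cylinder $W_a'$ lies in $\overline{H}$ and crosses $H$ exactly once. In the flat coordinates on $H$ the intersection $W_a' \cap H$ is therefore a parallelogram whose slanted sides are boundary saddle connections of $W_a'$ in the common core direction $\phi$ of $W_a$ and $W_a'$, and whose bottom edge $I$ lies along the bottom of $H$. Because every cone point of $(X,\omega)$ lies on $\partial H$ and saddle connections must run between cone points, each slanted side of the parallelogram is a single saddle connection from a cone point on the bottom of $H$ to a cone point on the top; in particular the endpoints of $I$ are cone points. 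Since $W_a'$ and $W_a$ share both perpendicular height and direction, the horizontal widths of their cross-sections in $H$ agree, giving $|I|=|a|$.

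For step (ii), $I$ is a horizontal segment bounded by cone points, hence a concatenation of complete horizontal saddle connections on the bottom of $H$; by Lemma \ref{L:WaSnug} each such saddle connection has length $|a|$, so $|I|=|a|$ forces $I$ to consist of a single saddle connection $b$. Thus $b$ is a horizontal cross curve of $W_a'$, and $W_a'$ is the unique cylinder in direction $\phi$ that crosses $H$ once with $b$ as cross curve.

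The main obstacle is step (iii), ruling out $W_a' \cap V \ne \emptyset$. Here I would argue by contradiction using the freeness of $V$: the purely imaginary standard deformation of $V$ lies in $T_{(X,\omega)}(\cM)$, vertically compresses $V$, and leaves $W_a$ completely untouched because $W_a\cap V=\emptyset$, so the modulus of $W_a$ is preserved for small $t>0$. If $W_a'\cap V$ were nonempty, this same deformation would change the circumference or the perpendicular height of $W_a'$, because the portion of $W_a'$ lying inside $V$ is genuinely squeezed; consequently the modulus of $W_a'$ would strictly change. This breaks the rational proportionality of the moduli of the $\cM$-parallel pair $\{W_a,W_a'\}$ imposed by Corollary \ref{C:ConstantModuli} (valid because $\cM$ has no rel), a contradiction. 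Hence $W_a'\cap V=\emptyset$, and $W_a'=W_b$.
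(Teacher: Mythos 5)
Your step (i) assumes exactly what the lemma asserts. You write that since $W_a'$ is $\cM$-parallel to $W_a$, has the same height, and lies in $\overline{H}$, it ``crosses $H$ exactly once.'' But none of those hypotheses rule out the real danger: a cylinder parallel to $W_a$ with the same perpendicular height whose core curve winds around $H$ several times, so that $W_a'\cap H$ consists of $k>1$ parallelogram bands, each of horizontal width $|a|$ and each passing through one horizontal saddle connection of length $|a|$ (this is all that Lemma \ref{L:WaSnug} gives you). Equal height constrains the width of each band, not the number of bands, and the circumference of $W_a'$ is not controlled by being $\cM$-parallel to $W_a$. The statement ``$W_a'$ passes through only one horizontal saddle connection rather than more'' is, as the paper notes, precisely equivalent to the lemma, so your argument is circular at its key step; once you grant one band, steps (ii) and (iii) are essentially bookkeeping.

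The paper's proof is devoted entirely to excluding $k>1$, and the mechanism is combinatorial rather than metric: label the saddle connections $b_1,\dots,b_k$ that $W_a'$ passes through, choose $b_1$ to be the closest of $a,b_1,\dots,b_k$ to $V$ along the top of $H$ and $b_2$ the closest along the bottom (using that $W_a'$ avoids $V$ to see these are distinct crossings of the same band), and then deform the equivalence class $\{W_{b_1},W_{b_1}'\}$. Since $W_{b_1}'$ cannot pass through $b_2$, this deformation changes the length of $b_1$ but not of $b_2$, contradicting the normalization from Lemma \ref{L:ColOntoH} that equal-length horizontal saddle connections are generically equal in length. Your step (iii) (disjointness of $W_a'$ from $V$ via freeness of $V$ and Corollary \ref{C:ConstantModuli}) is a reasonable version of a fact the paper also uses, but it does not repair the missing argument for $k=1$.
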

This is equivalent to the statement that $W_a'$ passes through only one horizontal saddle connection, rather than more. If that saddle connection is $b$, then $W_a'=W_b$. Equivalently, $W_a'$ has the same circumference as $W_a$. Equivalently, the core curve of $W_a'$ intersects the core curve of $H$ only once. 

\begin{proof}
By Lemma \ref{L:WaSnug}, every horizontal saddle connection that $W_a'$ passes through is a core curve of $W_a'$. In particular, all these saddle connections are of the same length. Let $b_1, \ldots, b_k$ be the horizontal saddle connections that $W_a'$ passes through. We want to show $k=1$, so suppose to the contrary that $k>1$.

Without loss of generality (up to switching the words ``right" and ``left"), assume that $b_1$ is the closest of $a, b_1, \ldots, b_k$ to $V$ on the top boundary of $H$ in the left direction, i.e. the left-to-right distance from the right endpoint of $b_1$ to the left side of $V$ is minimal. Let $s$ be the closest of  $a, b_1, \ldots, b_k$ to $V$ on the bottom boundary of $H$ in the left direction. 

Because $W_a'$ does not cross $V$, it must pass up through $s$ and then through $b_1$, so $H\cap W_a'$ contains a parallelogram with $s$ on the bottom and $b_1$ on the top. Hence, because $k>1$, we conclude that $s\neq b_1$. One can also check that $s\neq a$, since $W_a$ and $W_a'$ are parallel and disjoint from $V$.  So, without loss of generality, assume $s=b_2$, as in Figure \ref{F:b1b2}. 
\begin{figure}[h]
\includegraphics[width=.5\linewidth]{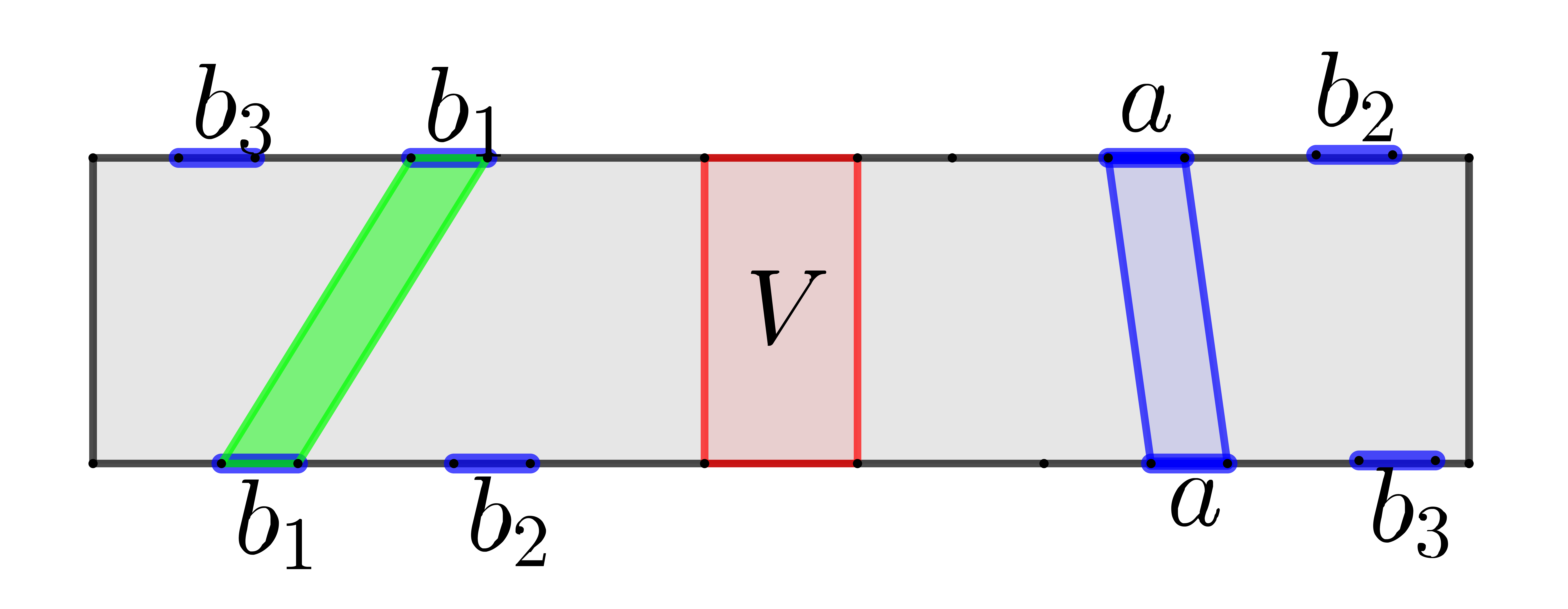}
\caption{The proof of Lemma \ref{L:WaNice}.}
\label{F:b1b2}
\end{figure}

Lemma \ref{L:WaPrime} gives that $W_{b_1}'$ has the same height as $W_{b_1}$. Our assumption on $(X,\omega)$ implies that the only horizontal saddle connections of the same length as $a$ are $ b_1, \ldots, b_k$. Hence Lemma \ref{L:WaSnug} implies that $W_{b_1}'$, if it exists, cannot pass through any horizontal saddle connections other than $a, b_1, \ldots, b_k$. However, $W_{b_1}'$ cannot pass through $b_2$, because $W_{b_1}'$ is disjoint from $V$, and by our choice of $b_1$ and $b_2$. 

Hence, considering the cylinder deformation in $W_{b_1}$, and $W_{b_1}'$ if it exists, we see that in fact $b_1$ and $b_2$ are not generically the same length. This is a contradiction, so we get the desired result. 
\end{proof}

\begin{lem}\label{L:Mstrat}
If, for every horizontal saddle connection $a$, $W_a'$ does not exist, then $\cM$ is a component of a stratum of Abelian differentials. 
\end{lem}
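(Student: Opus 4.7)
The plan is to show that $T_{(X,\omega)}(\cM)$ equals the full tangent space $H^1(X, \Sigma; \bC)$ of the ambient stratum $\cH$ at $(X,\omega)$, thereby forcing $\cM$ to be a connected component of $\cH$.

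By the Cylinder Deformation Theorem applied to the free cylinders $H$, $V$, and each $W_a$, the classes $\gamma_H^*$, $\gamma_V^*$, and $\gamma_{W_a}^*$ all lie in $T_{(X,\omega)}(\cM)$. Via Poincar\'e--Lefschetz duality $H^1(X, \Sigma; \bC) \cong H_1(X \setminus \Sigma; \bC)$, this reduces the task to showing that the homology classes of the core curves $\gamma_H$, $\gamma_V$, and $\gamma_{W_a}$ span $H_1(X \setminus \Sigma; \bC)$.

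The first step is a structural observation: since $V$ is nested in $H$ and crosses $H$ exactly once, the interval exchange transformation describing the top/bottom gluing of $H$ acts as the identity on the sub-interval corresponding to $V$, forcing the top and bottom boundary saddle connections of $V$ to coincide as a single saddle connection on $X$. The same conclusion applies to each $W_a$. Consequently, every horizontal saddle connection is either the shared top and bottom of $V$ or indexes some $W_a$, and the total number of free cylinders in the collection $\{H, V\} \cup \{W_a\}$ is exactly $2g + s - 1$, matching $\dim H_1(X \setminus \Sigma; \bC)$.

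The second step is to verify linear independence of these core curve classes in $H_1(X \setminus \Sigma; \bC)$. The horizontal core $\gamma_H$ is distinguished from the vertical cores by its non-trivial intersection pairing with each of them, while among the parallel vertical cores, the differences $[\gamma_V] - [\gamma_{W_a}]$ and $[\gamma_{W_a}] - [\gamma_{W_{a'}}]$ bound sub-regions of $H$ whose horizontal edges contain individual zeros of $\omega$; these differences therefore produce the $s-1$ independent small loops around the singularities that together with the absolute classes fill out $H_1(X \setminus \Sigma; \bC)$. This is the main technical obstacle, since the way the vertical cylinders tile $H$ and the zeros sit along its boundary must be tracked carefully to identify exactly which puncture classes are generated by which differences.

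Once the spanning is established, $T_{(X,\omega)}(\cM) = H^1(X, \Sigma; \bC)$, so $\cM$ is open in $\cH$ and, by closedness and irreducibility, equals a connected component of $\cH$, as required. The no-rel hypothesis on $\cM$ then forces $\cH$ to have no rel as well, so $\cH = \cH(2g-2)$ and $s=1$ is automatic.
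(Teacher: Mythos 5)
Your overall strategy is the right one and is in fact the same as the paper's: the free cylinders $H$, $V$, and the $W_a$ contribute $\gamma_H^*$, $\gamma_V^*$, $\gamma_{W_a}^*$ to $T_{(X,\omega)}(\cM)$, and one checks that these $2g+s-1$ classes span the $(2g+s-1)$-dimensional tangent space of the stratum, so that $\cM$ is open and hence a component. Your dimension count is correct: there are $2g+s-2$ horizontal saddle connections, one of which is the cross curve of $V$, so the collection $\{H,V\}\cup\{W_a\}$ has exactly $2g+s-1$ elements.

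The gap is in your independence step, which you yourself flag as incomplete, and whose stated justification does not hold. First, the core curves of the $W_a$ are not ``parallel vertical cores'': each $\gamma_{W_a}$ crosses $H$ once but must wind around $H$ to avoid $V$, so these curves have varying slopes. More seriously, the differences $[\gamma_V]-[\gamma_{W_a}]$ and $[\gamma_{W_a}]-[\gamma_{W_{a'}}]$ do not in general bound sub-regions of $H$: the region of the rectangle $\overline{H}$ between two such arcs has boundary consisting of the two arcs \emph{together with} arcs of the top and bottom of $H$ that are identified to \emph{different} arcs of the glued boundary circle, so the difference of core curves is typically not even null-homologous in $X$ (on a one-cylinder surface the $\gamma_{W_a}$ generally represent independent absolute classes). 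So these differences cannot be the ``small loops around singularities,'' and the claimed identification of the $s-1$ puncture classes fails. The independence is nonetheless true, and the clean way to see it is dual to your setup: evaluate the classes on the basis of $H_1(X,\Sigma)$ given by one cross curve $\tau$ of $H$ together with the horizontal saddle connections. Since $\overline{W_a}$ meets the identified top/bottom circle of $H$ exactly in $\overline{a}$, the class $\gamma_{W_a}^*$ vanishes on every horizontal saddle connection other than $a$ and is $\pm1$ on $a$; likewise $\gamma_V^*$ is supported (among horizontal saddle connections) on the one contained in $\overline{V}$; and $\gamma_H^*$ vanishes on all horizontal saddle connections and is $\pm1$ on $\tau$. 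The resulting matrix is triangular with nonzero diagonal, which is exactly what the paper means by saying these deformations ``arbitrarily deform each edge in the rectangular presentation.'' With that substitution your proof closes up.
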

\begin{proof}
We will show that every deformation of $(X,\omega)$ in its stratum can be obtained by deforming the cylinders $H$, $V$, and the various $W_a$. Indeed, these deformation are sufficient to arbitrarily deform each edge in the rectangular presentation of $(X,\omega)$ illustrated above. 
%
\end{proof}

Therefore we assume that there is at least one horizontal saddle connection $s$ such that $W_s'$ exists. 

\begin{lem}
Under this assumption, the involution of the interior of $H$ that maps $V$ to itself extends to an involution of the surface.
\end{lem}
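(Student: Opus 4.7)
The involution of the interior of $H$ sending $V$ to itself is the $180°$ rotation $\iota$ about the center of $V$.  It maps $V$ to itself and swaps the top and bottom boundaries of $H$, while restricting to the $180°$ rotation of $V$ about its center; in particular, $\iota$ swaps the top and bottom of $V$, which is consistent with their being identified via the gluing of the cylinder $V$ as a single saddle connection $s_V$.  Thus $\iota$ already extends automatically across $\overline{V}$.  What remains is to show that $\iota$ respects the identifications of $(X,\omega)$ on $\partial H \setminus \partial V$, i.e.\ that for each horizontal saddle connection $a$ on one boundary of $H$, the segment $\iota(a)$ on the other boundary is a horizontal saddle connection of $(X,\omega)$ and the gluings around $a$ and $\iota(a)$ are $\iota$-equivariant.

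\emph{Step 1: Establish the seed symmetry $\iota(W_s) = W_b$.}  By hypothesis some $W_s'$ exists, and by Lemma \ref{L:WaNice} we may write $W_s' = W_b$.  By Lemmas \ref{L:WaPrime} and \ref{L:WaSnug}, $W_s$ and $W_b$ form the entire $\cM$-equivalence class of $W_s$, have equal heights, and both pairs of cross curves have length $|s|$.  The image $\iota(W_s)$ is a parallelogram in $H \setminus V$ of the same dimensions as $W_s$, parallel to $W_s$ in the reversed direction.  I would argue that $\iota(W_s)$ is itself a cylinder in the same $\cM$-equivalence class --- using that it is parallel to $W_s$, has the same height, and is a maximal parallelogram of those dimensions disjoint from $V$ --- and hence must equal $W_s$ or $W_b$.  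Since the two fixed points of $\iota$ in $H$ are the center of $V$ and its antipode, a generic positional check rules out $\iota(W_s) = W_s$, leaving $\iota(W_s) = W_b$.

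\emph{Step 2: Propagate the symmetry to all of $\partial H$.}  Step 1 gives $\iota$-equivariance of the surface identifications on the cross curves $s, s^\flat, b, b^\flat$ of $W_s$ and $W_b$.  To extend this to every other horizontal saddle connection $a$, I would invoke the Cylinder Deformation Theorem on the equivalence class $\{W_s, W_b\}$: the resulting deformations preserve $\cM$ and move the $W_s$--$W_b$ pair coherently, so $\iota$ remains the correct candidate involution along the deformation.  Combined with our genericity assumption that horizontal saddle connections of the same length are $\cM$-parallel (Lemma \ref{L:ColOntoH}), the arguments used to prove Lemmas \ref{L:WaSnug} and \ref{L:WaNice} should force, for every horizontal saddle connection $a$ on the top of $H$, a matching saddle connection on the bottom coinciding with $\iota(a)$, together with a pairing of cylinders $W_a \leftrightarrow W_{a'}$ realized by $\iota$.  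Equivariance of the surface identifications then follows.

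\medskip
\noindent\textbf{Main obstacle.}  The hard part is Step 2: turning the single instance of symmetry $\iota(W_s) = W_b$ into a global involution of $(X,\omega)$.  One must either show that $W_a'$ exists for \emph{every} $a$ (perhaps by an over-collapse/attack argument on $V$ analogous to the proof of Lemma \ref{L:WaPrime}, exploiting that $V$ is free) and that the resulting pairing agrees with $\iota$; or give a direct global argument that $\iota$-equivariance of the gluings holds everywhere once it holds near one pair of cross curves.  The positional uniqueness in Step 1 and the constancy of the modulus ratio of $W_s$ and $W_b$ along cylinder deformations are the key tools, but care is needed because the rotation $\iota$ moves singularities and must be shown to send singularities on the top of $H$ to singularities on the bottom.
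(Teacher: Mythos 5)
There is a genuine gap, and in fact two. First, your Step 1 is circular: to say that $\iota(W_s)$ ``is itself a cylinder in the same $\cM$-equivalence class'' you need its two horizontal edges (which lie on opposite boundary components of $H$) to be identified in $X$, and that is exactly the statement that $\iota$ respects the gluings along $\partial H$ --- the thing being proved. The paper gets the symmetry of each pair $\{W_a, W_a'\}$ by a different mechanism: the over-collapse (``attack from $V$'') used in the proof of Lemma \ref{L:WaPrime}. That attack is $\iota$-symmetric because $V$ is preserved by $\iota$, and since $W_a$ and $W_a'$ are $\cM$-parallel they must be hit simultaneously and lose height equally; this forces $W_a$ and $W_a'=W_b$ to sit in $\iota$-symmetric positions. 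Note this argument is applied uniformly to \emph{every} $a$ for which $W_a'$ exists, so there is no need for your Step 2 ``propagation'' from a single seed pair via cylinder deformations, which you do not actually carry out.

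Second, and more importantly, your plan for Step 2 (``show that $W_a'$ exists for every $a$'') cannot work, because $W_c'$ genuinely need not exist for some $c$; the theorem's dichotomy (stratum versus quadratic double) reflects exactly this. The missing idea is how to handle such $c$: when $W_c'$ does not exist, $W_c$ is its own equivalence class, hence free by the Cylinder Deformation Theorem, and it is nested in $H$ --- so $W_c$ is a nested free cylinder just like $V$. Running the entire construction with $W_c$ in place of $V$ produces a $180^\circ$ involution of the interior of $H$ preserving $W_c$; applying the first part of the argument to that involution shows it also interchanges $a$ and $b$ (for the seed pair with $W_a'=W_b$), and two such rotations of $H$ that both carry $a$ to $b$ coincide. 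Hence the new involution is $\iota$, so $\iota$ fixes $c$ and $W_c$. This is how the paper shows $\iota$ respects every identification along $\partial H$: pairs $\{a,b\}$ are interchanged and singletons $c$ are fixed.
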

\begin{proof}
Let $a$ be any horizontal saddle connection for which $W_a'$ exists, and let $W_a'=W_b$. As in the proof of Lemma \ref{L:WaPrime}, by attacking from $V$ we see that $W_a$ and $W_b$ are interchanged by the involution. So in particular, $a$ and $b$ are interchanged by the involution. 

It now suffices to show that any saddle connection $c$ for which $W_c'$ does not exist is fixed by the involution.  This follows since, like $V$, $W_c$ is a nested free cylinder. So, letting $a$ and $b$ be horizontal saddle connections so that $W_a' = W_b$, the previous paragraph implies that the involution fixing $W_c$ must exchange $a$ and $b$ and hence coincide with the involution fixing $V$.
\end{proof}

We now claim that $\cM$ is equal to a quadratic double, having already shown in the previous lemma that it is contained in one. To do this, we will show that every deformation of $(X,\omega)$ on which the holonomy involution persists can be obtained using cylinder deformations of the pairs $\{W_a, W_a'\}$ when $W_a'$ exists, of individual $W_a$ when $W_a'$ does not exist, of $V$, and of $H$. Indeed, these deformation are sufficient to arbitrarily deform each fixed edge or exchanged pair of edges in the rectangular presentation of $(X,\omega)$ illustrated above. 
\end{proof}

\subsection{Background on cylindrical stability}\label{SS:CS}

Given a horizontally periodic surface $(X,\omega)\in\cM$, its \emph{twist space} is defined as the subspace of $T_{(X,\omega)}(\cM)$ of vectors that can be written as linear combinations of duals of core curves of horizontal cylinders. Its \emph{cylinder preserving space} is defined to be the subspace of $T_{(X,\omega)}(\cM)$ of cohomology classes evaluating to zero on all the core curves of horizontal cylinders on $(X,\omega)$. It is immediate from the definition that the twist space is contained in the cylinder preserving space. 

Following the terminology in \cite[Definition 2.4]{aulicino2016rank}, we say a  surface in $\cM$ is \emph{$\cM$-cylindrically stable}, or just cylindrically stable when $\cM$ is clear from context, if it is horizontally periodic and its twist space  is equal to its cylinder preserving space. 

We summarize a number of results that we will need on cylindrically stable surfaces. 

\begin{lem}\label{L:CS} 
Let $\cM$ be an invariant subvariety. 
\begin{enumerate}
\item\label{L:CS:LinCombo} A surface is cylindrically stable if and only if for every horizontal saddle connection there is a linear combination of core curves of horizontal cylinders that has the same period locally in $\cM$. 
\item\label{L:CS:TwistSpace} A surface is cylindrically stable if and only if its twist space has dimension $\rank(\cM)+\rel(\cM)$. No surface has twist space of larger dimension. 
\item\label{L:CS:rel0criterion} If $\cM$ has no rel, then a surface is cylindrically stable if and only if it has $\rank(\cM)$ horizontal equivalence classes. No surface has more horizontal equivalence classes. 
\item\label{L:CS:CoreCurves} If a surface is cylindrically stable, then the horizontal core curves span a subset of $T_{(X,\omega)}(\cM)^*$  of dimension $\rank(\cM)$.
\item\label{L:CS:rel0independent} If $\cM$ has no rel and a surface is cylindrically stable, then taking one core curve of each equivalence class gives a linearly independent subset of $T_{(X,\omega)}(\cM)^*$. 
\item\label{L:CS:perturb} If $\bk(\cM)=\bQ$, every surface  in $\cM$ can be perturbed to become cylindrically stable and square-tiled in such a way that all horizontal cylinders on the original surface stay horizontal in the perturbation. 
\end{enumerate}
\end{lem}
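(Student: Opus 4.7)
The overall plan is to derive (1)--(5) from an analysis of the twist and cylinder-preserving spaces using Poincar\'e duality, Theorem \ref{T:CDTConverse}, and the symplectic structure from Corollary \ref{C:RankCharacterization}; (6) will follow from the density of square-tiled surfaces. Most of these facts essentially appear in \cite{Wcyl}, \cite{MirWri}, and \cite{aulicino2016rank}, so the plan is to collect and unify rather than rediscover.

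First I would handle (1). The Poincar\'e dual $\gamma_i^*$ of a horizontal core curve $\gamma_i$ vanishes on every horizontal core curve and on every horizontal saddle connection (none of which meet $\gamma_i$ transversally), and equals $\pm 1$ on each cross curve of $C_i$. A dimension count then shows that $\mathrm{span}(\gamma_i^*)$ is exactly the set of classes in $H^1(X,\Sigma)$ vanishing on every horizontal saddle connection. Hence the twist space equals the set of $v \in T_{(X,\omega)}(\cM)$ with $v(s)=0$ for every horizontal saddle connection $s$, while the cylinder-preserving space only requires $v(\gamma_i)=0$. Equality of the two amounts to the condition that every horizontal saddle connection be expressible, as a functional on $T_{(X,\omega)}(\cM)$, as a linear combination of horizontal core curves, which is (1).

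Next I would combine (1) with two further ingredients. Let $e$ denote the number of horizontal equivalence classes. Since $\cM$-parallel cylinders have locally proportional core-curve periods, the core curves within a class give collinear functionals on $T_{(X,\omega)}(\cM)$, and the functionals from different classes are independent because pairing with the cross curve of a single cylinder in a class detects only that class. Hence the core curves span an $e$-dimensional subspace of $T_{(X,\omega)}(\cM)^*$. Since core curves are pairwise disjoint in homology, the symplectic form on $p(T_{(X,\omega)}(\cM))$ from Corollary \ref{C:RankCharacterization} makes them isotropic, forcing $e \leq \rank(\cM)$. Moreover $p$ sends the twist space onto the $e$-dimensional span of the standard deformations $p(\sigma_\bfC)$ by Theorem \ref{T:CDTConverse}, and $\ker(p)\cap T_{(X,\omega)}(\cM)$ is automatically cylinder-preserving (purely relative classes vanish on absolute cycles), hence lies in the twist space on a cylindrically stable surface. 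A dimension count yields $\dim(\mathrm{twist}) \leq e + \rel(\cM) \leq \rank(\cM) + \rel(\cM)$, with equality throughout precisely when the surface is cylindrically stable. This proves (2) and (3); then (4) is immediate from the equality $e = \rank(\cM)$ in the stable case, and (5) follows because the collinearity within each class makes one representative per class suffice.

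For (6), the assumption $\bk(\cM) = \bQ$ means $\cM$ is locally cut out by rational linear equations in period coordinates, so rational (and hence, after rescaling, square-tiled) surfaces are dense inside every rationally-defined subvariety of $\cM$. The sub-locus on which the given horizontal cylinders of $(X,\omega)$ persist is such a subvariety, and by (2) cylindrical stability is a generic condition within it, corresponding to the twist space attaining its maximal dimension $\rank(\cM) + \rel(\cM)$. A generic square-tiled perturbation inside this sub-locus therefore satisfies all the requirements. The subtlest step I anticipate is (6): verifying that a single sufficiently small perturbation can simultaneously achieve square-tiling and cylindrical stability while preserving the given horizontal cylinders, without drifting out of $\cM$ or collapsing one of those cylinders.
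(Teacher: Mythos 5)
There is a genuine gap in your treatment of parts (2)--(5), concentrated in one step: the assertion that the core-curve functionals coming from \emph{different} horizontal equivalence classes are linearly independent on $T_{(X,\omega)}(\cM)$, so that the core curves span an $e$-dimensional subspace of $T_{(X,\omega)}(\cM)^*$ and hence $e\leq \rank(\cM)$. This is false in general when $\cM$ has rel, and it is essentially the content of part (5), which is why that part carries both the no-rel and the cylindrical-stability hypotheses. Concretely, a three-cylinder horizontally periodic surface in $\cH(1,1)$ (rank $2$, rel $1$) can have three equivalence classes, each a single free cylinder, so $e=3>\rank(\cM)$; the three disjoint core curves span only a Lagrangian (two-dimensional) subspace of $H_1(X)$, so their functionals are dependent. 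Your justification (``pairing with the cross curve of a single cylinder in a class detects only that class'') actually shows that the standard deformations $\sigma_{\bfC_i}$ are independent \emph{tangent vectors} (detected by evaluating cross curves on them), which gives $\dim(\mathrm{twist})\geq e$; independence of the core curves as \emph{functionals} would instead require, for each class, a tangent vector changing the period of that class's core curves and no others, and such vectors need not exist. A secondary issue: the claim that $p$ maps the full twist space into the span of the $p(\sigma_{\bfC_i})$ does not follow from Theorem \ref{T:CDTConverse}, which is stated for a single equivalence class.

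The paper's route (following \cite[Section 8]{Wcyl}) avoids this by never computing the dimension of the span $T$ of the core curves in terms of $e$. Instead: $\dim T\leq\rank(\cM)$ by the isotropy argument you already have; the cylinder-preserving space is the annihilator of $T$, so it has dimension $2\rank(\cM)+\rel(\cM)-\dim T$; the twist space is always contained in the cylinder-preserving space and always satisfies $\dim(\mathrm{twist})\leq\rank(\cM)+\rel(\cM)$ (since $p(\mathrm{twist})$ is isotropic and $\mathrm{twist}\cap\ker(p)$ has dimension at most $\rel(\cM)$). Cylindrical stability forces these bounds to pinch, yielding $\dim T=\rank(\cM)$ (part (4)) and the ``if and only if'' in (2); parts (3) and (5) then follow, using that under no rel the $e$ independent vectors $\sigma_{\bfC_i}$ exhaust the twist space. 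Your parts (1) and (6) are essentially the paper's arguments (the latter ultimately rests on the iterative perturbation of \cite[Sublemma 8.7]{Wcyl}, which your ``genericity'' framing elides but which you correctly flag as the delicate point); the proof needs to be repaired only where indicated above.
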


Here $\bk(\cM)$ denotes the smallest field which can be used to define the linear equations locally defining $\cM$ in period coordinates, as in \cite{Wfield}. 

\begin{proof}
This result can be thought of as a black box coming from \cite[Section 8]{Wcyl}, but nonetheless we give more specific references. 

The Cylinder Deformation Theorem and Theorem \ref{T:CDTConverse} imply that when $\cM$ has no rel, the dimension of the twist space of a horizontally periodic surface is equal to the number of horizontal equivalence classes. This observation shows that Claim \eqref{L:CS:rel0criterion} follows from Claim \eqref{L:CS:TwistSpace}. Notice too that Claim \eqref{L:CS:rel0independent} follows from Claims \eqref{L:CS:rel0criterion} and \eqref{L:CS:CoreCurves}. So we will prove the remaining claims.

Claim \eqref{L:CS:LinCombo} is immediate from \cite[Definition 8.1, Corollary 8.3]{Wcyl}, which gives that, the twist space can also be defined as the subspace of $T_{(X,\omega)}(\cM)$ of cohomology classes that are zero on all horizontal saddle connections.  

We will now prove Claim \eqref{L:CS:CoreCurves}. Let $T$ be the span of the core curves of the horizontal cylinders in $(T_{(X, \omega)}(\cM))^*$. By \cite[Lemma 8.8 and Corollary 8.11]{Wcyl}, $\dim T \geq \mathrm{rank}(\cM)$. Since the core curves of parallel cylinders span an isotropic subspace of $p(T_{(X, \omega)}(\cM))$, and any isotropic subspace has dimension at most $\mathrm{rank}(\cM)$, $\dim p(T) \leq \mathrm{rank}(\cM)$. Since the core curves of the cylinders are absolute homology classes, their span in $(p(T_{(X, \omega)}(\cM))^*$ is isomorphic to their span in $(T_{(X, \omega)}(\cM))^*$. Therefore, $\dim(T) = \mathrm{rank}(\cM)$.

For the first part of Claim \eqref{L:CS:TwistSpace}, note that if a surface is cylindrically stable, Claim \eqref{L:CS:CoreCurves} and \cite[Lemma 8.8]{Wcyl} imply that the cylinder preserving space has codimension $\rank(\cM)$ in $T_{(X, \omega)}(\cM)$. Hence it has dimension $\rank(\cM)+\rel(\cM)$. By  definition of cylindrically stable, the twist space is equal to the cylinder-preserving space, and hence must also have dimension $\rank(\cM)+\rel(\cM)$. Conversely, if the twist space has this dimension, then \cite[Lemma  8.10]{Wcyl} implies that the core curves span a subspace of $(T_{(X, \omega)}(\cM))^*$ of dimension $\rank(\cM)$, and hence that the cylinder preserving space has codimension $\rank(\cM)$.

The second part of Claim \eqref{L:CS:TwistSpace} is \cite[Corollary 8.11]{Wcyl} 

For Claim \eqref{L:CS:perturb}, recall that since $\bk(\cM)=\bQ$, square-tiled surfaces are dense in $\cM$; see for example \cite[Lemma 3.3]{ApisaWrightDiamonds}. Given this, the  claim follows from \cite[Sublemma 8.7]{Wcyl}, keeping in mind that perturbations can be arranged to be square-tiled and hence horizontally periodic.  The same argument is used in \cite[Lemma 6.9]{ApisaWrightDiamonds}.
\end{proof}

\subsection{An application}
We start with a somewhat cumbersome definition.

\begin{defn}\label{D:StrongCPath}
Suppose that $\bfC$ is a collection of parallel cylinders on a surface $(X, \omega)$ in an invariant subvariety $\cM$. A \emph{$\bfC$-path} is a continuous map $\gamma:[0, 1] \ra \cM$ such that $\gamma(0) = (X, \omega)$, and $\bfC$ persists at all points along this path, and the ratio of moduli and circumferences of any two cylinders in $\bfC$ is constant along the path. We will say that $\gamma(1)$ is \emph{$\bfC$-related to $(X, \omega)$}.
\end{defn}

\begin{rem}
If $\cM$ has no rel, Corollary \ref{C:ConstantModuli} gives that the condition that ``the ratio of moduli and circumferences of any two cylinders in $\bfC$ is constant along the path" is automatic whenever the cylinders in $\bfC$ are all pairwise $\cM$-equivalent.
\end{rem}

\begin{prop}\label{P:FindNested}
Suppose that $\bk(\cM)=\bQ$ and $\cM$ has no rel. Let $\bfC$ be a horizontal equivalence class on a surface $(X,\omega)$ in $\cM$. Then there is a $\bfC$-related cylindrically stable surface  $(X', \omega')$ on which the cylinders in $\bfC$ remain horizontal and where one of the following occurs:
\begin{enumerate}
    \item There is a free horizontal cylinder not in $\bfC$ that contains a nested free cylinder.
    \item None of the horizontal cylinders not in $\bfC$ are free.
\end{enumerate}
\end{prop}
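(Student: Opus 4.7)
The plan is to begin by applying Lemma~\ref{L:CS}\eqref{L:CS:perturb} to obtain a small perturbation $(X_0,\omega_0)$ of $(X,\omega)$ that is cylindrically stable, square-tiled, and in which the cylinders of $\bfC$ remain horizontal. Since $\cM$ has no rel, Corollary~\ref{C:ConstantModuli} implies that the ratios of moduli and circumferences of $\cM$-parallel cylinders are locally constant, so along any sufficiently small deformation in which $\bfC$ persists the defining conditions of Definition~\ref{D:StrongCPath} are automatic. Hence $(X_0,\omega_0)$ is $\bfC$-related to $(X,\omega)$.

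If no horizontal cylinder of $(X_0,\omega_0)$ outside $\bfC$ is free, then alternative (2) holds and we are done. Otherwise, fix a free horizontal cylinder $H$ that is not in $\bfC$. The goal is to modify $(X_0,\omega_0)$ further, keeping it cylindrically stable, $\bfC$-related, and with $\bfC$ horizontal, so that some free horizontal cylinder outside $\bfC$ contains a nested free cylinder.

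The main step is to produce a nested free cylinder inside $H$ (or inside some other free horizontal cylinder outside $\bfC$). Because $H$ is free and disjoint from $\bfC$, we may independently shear or dilate $H$ and deform the other free horizontal cylinders outside $\bfC$ without disturbing $\bfC$. Adapting the ``collapse complement onto $H$'' procedure of Lemma~\ref{L:ColOntoH}, the plan is to shrink the vertical extent of the free horizontal cylinders outside $\bfC \cup \{H\}$, compensating with the free deformations of $H$ so that the total height of the surface remains bounded below and no vertical saddle connection forces degeneration. As the complement thins, two saddle connections on $\partial_{+} H$ and $\partial_{-} H$ that represent the same edge of the surface become aligned along a short transversal arc through $H$. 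Cutting off this deformation just before degeneration and, if necessary, perturbing once more via Lemma~\ref{L:CS}\eqref{L:CS:perturb}, yields a new cylindrically stable surface $(X',\omega')$ on which a cylinder $V$ is nested in $H$. Any cylinder deformation of $V$ can be realized as an internal deformation of the free cylinder $H$, so $V$ itself is free.

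The hard part will be step three: carrying out the collapse toward $H$ in a way that (a) stays in $\cM$ and remains $\bfC$-related, (b) avoids degenerating the surface, (c) genuinely produces the identification of boundary saddle connections that gives rise to $V$, and (d) leaves the surface cylindrically stable at the end. In particular, one must control how the horizontal equivalence classes evolve under the collapse so that Lemma~\ref{L:CS}\eqref{L:CS:rel0criterion} continues to apply, and one must rule out the possibility that $H$ or the cylinders of $\bfC$ fail to persist because of an unexpected vertical saddle connection arising during the deformation.
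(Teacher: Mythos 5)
Your setup (perturbing via Lemma \ref{L:CS}\eqref{L:CS:perturb} and noting via Corollary \ref{C:ConstantModuli} that such a perturbation is automatically $\bfC$-related) is fine, but the heart of your argument has a genuine gap: the mechanism you propose cannot produce a nested cylinder. A cylinder nested in $H$ exists precisely when some saddle connection appears on both the top and the bottom boundary of $H$; this is a combinatorial feature of the horizontal cylinder decomposition. Shrinking or shearing the other free horizontal cylinders does not change which saddle connections border $H$ or how they are identified, so no amount of ``thinning the complement'' creates such a coincidence short of degenerating the surface. The full collapse of Lemma \ref{L:ColOntoH} does create nested cylinders, but only because it yields a surface with a \emph{single} horizontal cylinder, which is incompatible with keeping the cylinders of $\bfC$ horizontal with positive height. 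The paper's actual mechanism is different and is the key idea you are missing: among cylindrically stable $\bfC$-related square-tiled surfaces, choose one minimizing the number of free horizontal cylinders; then tilt one free horizontal cylinder $F$ slightly, using Lemma \ref{L:CS}\eqref{L:CS:rel0independent} and $\bk(\cM)=\bQ$ to find a rational, purely imaginary deformation supported on $\gamma_F$ alone. Minimality forces a \emph{new} free horizontal cylinder $F'$ to appear on the perturbed (still square-tiled, hence periodic) surface; since $F'$ must absorb the now slightly tilted $F$, its height is small and its circumference exceeds the sum of the circumferences of all other horizontal cylinders, and a pigeonhole argument then forces a saddle connection to occur on both boundaries of $F'$, producing the nested cylinder.

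Your final step is also wrong: a nested cylinder $V$ crossing a free cylinder $H$ is \emph{not} automatically free. Deformations of $V$ are transverse to $H$ and are not ``internal deformations of $H$''; the correct implication runs the other way (a nested free cylinder forces the ambient cylinder to be free, as noted before Lemma \ref{L:ColOntoH}). The paper needs a separate argument here: it deforms so that the nested cylinder $T$ becomes horizontal while the cylinders in $\bfA-\{F\}$ stay horizontal, and then invokes the minimality of the number of free horizontal cylinders a second time to conclude that the equivalence class of $T$ is a single free cylinder. Without the minimality set-up, neither the existence of the nested cylinder nor its freeness can be established by your route.
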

\begin{rem}
Throughout this work, when we have considered collections $\bfC$ of cylinders, there has been a tacit understanding that $\bfC$ is nonempty. In this lemma, however, it is permissible to take $\bfC$ to be empty. We tacitly use this observation in the proof of Corollary \ref{C:easy}.
\end{rem}
\begin{proof}
Assume that every cylindrically stable $\bfC$-related surface $(X', \omega')$ on which $\bfC$ remains horizontal contains a free horizontal cylinder not in $\bfC$. Assume without loss of generality, perhaps after replacing $(X, \omega)$ with a  $\bfC$-related surface, that $(X, \omega)$ is square-tiled and that it contains as few horizontal free cylinders as possible, subject to the constraints that the cylinders in $\bfC$ remain horizontal and that the surface is cylindrically stable (such a surface exists by Lemma \ref{L:CS} \eqref{L:CS:perturb}).

By assumption there is a free horizontal cylinder $F$ on $(X, \omega)$ that does not belong to $\bfC$. Let $\gamma_F$ denote its core curve. Let $\bfA$ denote the collection of all horizontal cylinders on $(X, \omega)$. 

By Lemma \ref{L:CS} \eqref{L:CS:rel0independent}, there is a nonzero tangent vector $v \in T_{(X, \omega)}(\cM)$ that evaluates to zero on the core curves of every cylinder in $\bfA - \{ F \}$ but so that $v(\gamma_F)$ is a nonzero purely imaginary number. Since $\bk(\cM)=\bQ$, we may also assume that $\mathrm{Re}(v) = 0$ and that $\mathrm{Im}(v)$ is rational. 

By assuming that $v$ is sufficiently small, we can assume that the cylinders in $\bfA - \{F\}$ persist (and necessarily remain horizontal) on $(X, \omega) + v$. Since $v$ is rational, $(X, \omega) + v$ is square-tiled and hence horizontally periodic. Since $(X, \omega)$ had the fewest number of free horizontal cylinders, there must be a new free horizontal cylinder $F'$ on $(X, \omega) +  v$. 
 
By Corollary \ref{C:ConstantModuli}, $\{F'\}$ is an equivalence class, and the perturbation does not create new cylinders generically parallel to those in $\bfA - \{F\}$. By Lemma \ref{L:CS} \eqref{L:CS:rel0criterion}, since $(X,\omega)+v$ has at least $\rank(\cM)$ many equivalence classes, it is cylindrically stable. Since it cannot have more horizontal equivalence classes, horizontal cylinders on $(X, \omega) + v$ are exactly the ones in $(\bfA- \{ F\}) \cup \{ F'\}  $.
 
Assuming $v$ is sufficiently small, we can assume that $F$ has very small non-zero slope on $(X,\omega)+v$. Let $s$ be one of its boundary saddle connections. Since $F$ must be contained in $\overline{F}'$, we see that $F'$ must cross $s$, and hence that the height of $F'$ must be small. Since the area of $F'$ is at least that of $F$ on $(X,\omega)+v$, we get that $F'$ must have long circumference. In particular, we can assume that $F'$ has circumference longer than the sum of the circumferences of the cylinders in $\bfA - \{F\}$. That ensures that $F'$ contains a saddle connection on its top boundary that also appears on its bottom boundary. 
 
Therefore, the new horizontal cylinder $F'$ contains a nested cylinder, call it $T$. Let $\bfT$ be the equivalence class of $T$. 

\begin{lem}
$\bfT=\{T\}$. 
\end{lem}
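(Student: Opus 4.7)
The plan is to argue by contradiction: suppose there exists a cylinder $T' \in \bfT$ with $T' \neq T$, and derive a contradiction from the structural constraints on $(X,\omega)+v$.

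First I would exploit the freeness of $F'$. Since $F'$ is free, the cohomology class $\gamma_{F'}^*$ belongs to $T_{(X,\omega)+v}(\cM)$, so for any small $t \in \bC$ the deformation $t\gamma_{F'}^*$ stays in $\cM$. For $T$ and $T'$ to remain $\cM$-parallel under this deformation, their perturbed holonomies must remain positive real multiples of each other. Since $T$ is nested in $F'$ one has $\gamma_{F'} \cdot \gamma_T = \pm 1$, and a first-order analysis of the parallelism condition forces $n := \gamma_{F'} \cdot \gamma_{T'}$ to be a nonzero integer of the same sign as $\gamma_{F'} \cdot \gamma_T$, with moreover $c_{T'} = |n| c_T$ (where $c$ denotes circumference).

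Next I would confine $T'$ to $\overline{F'}$ by a displacement count. Let $\theta$ be the common direction of $T$ and $T'$. The vertical displacement of $T$'s core curve is $c_T|\sin\theta| = h_{F'}$ (since $T$ crosses $F'$ exactly once and no other horizontal cylinder). Similarly the vertical displacement of $T'$'s core curve is $c_{T'}|\sin\theta| = |n| h_{F'}$, which is already accounted for by $|n|$ crossings of $F'$. Since all the horizontal cylinder heights are positive, $T'$ cannot cross any cylinder in $\bfA - \{F\}$, so $T' \subseteq \overline{F'}$.

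Finally I would derive the contradiction from the configuration $T, T' \subset \overline{F'}$ with $T \neq T'$. The plan is to adapt the ``over-collapsing'' technique used in the proof of Lemma \ref{L:WaPrime}, now using the free horizontal cylinder $F'$ as the attacker: after a small shear so that $F'$ contains no horizontal saddle connection, the standard deformation of $F'$ reduces its height past zero, and one tracks how the pair $\{T, T'\}$ and their shared cross structure evolves. Combined with the cylindrical stability of $(X,\omega)+v$, Corollary \ref{C:ConstantModuli}, and the minimality assumption on the number of free horizontal cylinders inherited from $(X,\omega)$, this should force $T = T'$, contradicting the assumption.

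I expect the main obstacle to be precisely this last step: $F'$ is horizontal while $T$ and $T'$ are transverse to it, so adapting the over-collapsing argument of Lemma \ref{L:WaPrime} (where the free cylinder was transverse to the target) requires care in tracking the identifications of the saddle connections $s$, $s'$ defining the nested cylinders throughout the degeneration, and in ruling out the possibility that the deformation creates a new cylindrically stable surface with the same number of free horizontal cylinders as $(X,\omega)$.
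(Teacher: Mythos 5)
Your steps 1 and 2 are plausible (the first-order parallelism analysis under the free deformation $t\gamma_{F'}^*$, and the vertical-displacement count confining $T'$ to $\overline{F'}$), but they are preliminary reductions that the paper does not need, and the entire burden of the proof falls on your step 3, which you have not actually carried out: ``this should force $T=T'$'' is a wish, not an argument. Worse, the specific mechanism you gesture at --- over-collapsing $F'$ as in Lemma \ref{L:WaPrime} --- does not transfer. In Lemma \ref{L:WaPrime} the attacker $V$ is disjoint from the target $W_a$, so the target survives the over-collapse and one can track its height. Here $T$ and $T'$ are contained in $\overline{F'}$, so $\Area(T)\leq\Area(F')\to 0$ as $F'$ collapses: the objects you want to track simply disappear at the moment of full collapse, and nothing is being ``attacked.'' (Also, to collapse the \emph{horizontal} cylinder $F'$ without degenerating the surface you must shear away its \emph{vertical} saddle connections, not its horizontal ones.)

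The missing idea is a renormalization that keeps $T$ alive while making it horizontal. The paper shears so that $\overline{F'}$ contains no vertical saddle connection (hence the core curve of the nested cylinder $T$ has nonzero real holonomy), then for each $\epsilon$ deforms by $-i\sigma_{F'}$ until $F'$ has height $\epsilon$ and by $i\sigma_{\bfT}$ until $\Area(T)=1$. As $\epsilon\to 0$ the limit exists in $\cM$, the cylinders of $\bfA-\{F,F'\}$ (in particular $\bfC$) stay horizontal, and $T$ converges to a horizontal cylinder of area one; the limit is therefore a cylindrically stable $\bfC$-related surface on which $\bfT$ is a horizontal equivalence class replacing $\{F'\}$. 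Now the minimality hypothesis bites: if $\bfT$ contained more than one cylinder, this new surface would have strictly fewer free horizontal cylinders than $(X,\omega)$, a contradiction. Your ingredient list names minimality, but without the renormalized limit there is no new cylindrically stable surface to which minimality can be applied, so the proof as proposed does not close.
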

\begin{proof}
The proof will use a variant of the technique used in the proof of Lemma \ref{L:ColOntoH} to obtain a surface where the cylinders in $\bfA-\{F\}$ stay horizontal, and where $\bfT$ becomes horizontal. Our assumption that $(X, \omega)$ had the minimum possible number of free horizontal cylinders will then give that $\bfT$ consists of a single free cylinder.

By replacing $(X,\omega)$ with the result of slightly shearing  $F'$, we can assume that $\overline{F'}$ does not contain any vertical saddle connections. Since $T$ is nested in $F'$ this implies in particular that the real part of the holonomy of the core curve of $T$ is nonzero. 

We will now build a sequence of surfaces depending on $\epsilon$ that have the same vertical foliation as $(X, \omega)$, via a two step process illustrated in Figure \ref{F:DontoT}. First  deform in the direction of $-i\sigma_{F'}$ until $F'$ has height $\epsilon$. Then  deform in the $i \sigma_{\bfT}$ direction  until the area of $T$ is one. Call this surface $(X_\epsilon, \omega_\epsilon)$.

\begin{figure}[h]\centering
\includegraphics[width=0.85\linewidth]{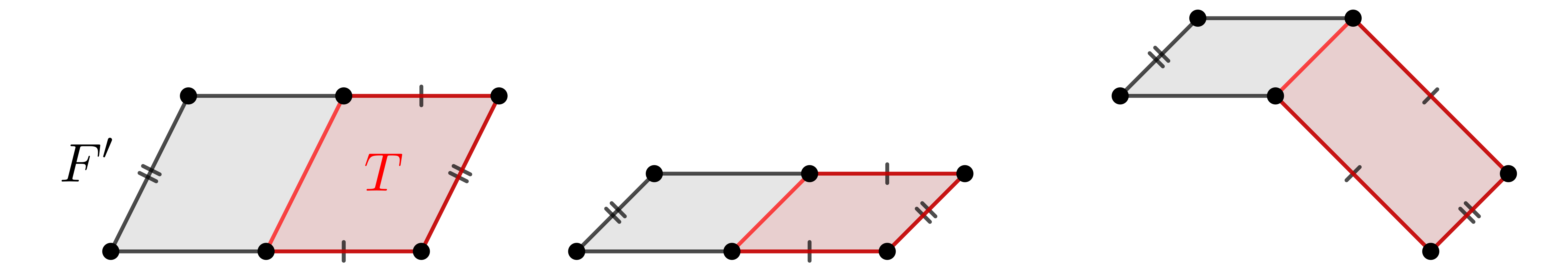}
\caption{The two step process for creating $(X_\epsilon, \omega_\epsilon)$.}
\label{F:DontoT}
\end{figure}

As $\epsilon$ tends to zero, the holonomy of the core curve of $T$ on $(X_\epsilon, \omega_\epsilon)$ tends to the real part of the holonomy of the core curve of $T$ on $(X, \omega)$,  and the area of $T$ is constant and equal to one. This shows that $T$ converges to a horizontal cylinder. Moreover, since we assumed that $F'$ had no vertical saddle connections it follows the limit of $(X_\epsilon, \omega_\epsilon)$ exists and is contained in $\cM$.  
Since the original surface was chosen to minimize the number of horizontal free cylinders, $\bfT$ must be a single free cylinder.
\end{proof}

Since $T$ is a free  cylinder nested in a free cylinder $F'$, this concludes the proof.
\end{proof}

\begin{proof}[Proof of Corollary \ref{C:easy}]
A horizontally periodic surface in a stratum has at most $g+s-1$ cylinders, for example by Lemma \ref{L:CS} \eqref{L:CS:TwistSpace}, so it is impossible for every horizontal equivalence class to have at least 2 cylinders when there are rank many equivalence classes. 
\end{proof}

\section{Finding useful cylinders}\label{S:hardest}

The goal of the section is the following result, which we will use to prove Proposition \ref{P:NoRelMain}. Given an equivalence class $\bfD$ of cylinders on $(X,\omega)\in \cM$, we will let $\what{\bfD}$ denote the union of $\bfD$ and all saddle connections parallel to $\bfD$, and we will let $\whatc{\bfD}=(X,\omega)\setminus \what{\bfD}$ denote its complement. 

\begin{thm}\label{T:FindD}
Assume that $\cM$ is an invariant subvariety with rank at least 3, no rel, and $\mathbf{k}(\cM ) = \mathbb{Q}$, and suppose that $\cM$ isn't a component of a stratum or a quadratic double. Suppose that $\bfC$ is an equivalence class of cylinders on $(X,\omega)$. Then, there exists a $\bfC$-related surface with generic equivalence classes $\bfD$ and $\bfD'$ such that
\begin{enumerate}
\item $\bfC$, $\bfD$ and $\bfD'$ are disjoint,
\item $\bfD$ has at least two cylinders, 
\item every saddle connection parallel to $\bfD$ is generically parallel to $\bfD$, and
\item  every component of $\whatc{\bfD}$ contains a cylinder from  $\bfC$ and one from  $\bfD'$. 
\end{enumerate}
\end{thm}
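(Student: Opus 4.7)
The plan is to get to the conclusion through three layers of set-up. First, apply Proposition \ref{P:FindNested} to $\bfC$: since $\cM$ has no rel and is not a stratum or a quadratic double, Theorem \ref{T:nested} forbids any nested free cylinder on any surface in $\cM$, so case (1) of Proposition \ref{P:FindNested} is ruled out. This lands us in case (2): a $\bfC$-related cylindrically stable square-tiled surface $(X', \omega')$ in which $\bfC$ remains horizontal and every horizontal cylinder not in $\bfC$ is non-free. Since in a no-rel setting an equivalence class of size one forces freeness, every horizontal equivalence class outside $\bfC$ has at least two cylinders.

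Second, exploit the no-rel and cylindrical stability assumptions via Lemma \ref{L:CS}\eqref{L:CS:rel0criterion} to conclude that $(X',\omega')$ has exactly $\rank(\cM) \geq 3$ horizontal equivalence classes, call them $\bfC = \bfE_0,\bfE_1,\dots,\bfE_r$ with $r \geq 2$, each $\bfE_i$ ($i\geq 1$) consisting of at least two cylinders. Then apply a small generic real perturbation within $\cM$ that keeps every horizontal cylinder horizontal (possible because $\bk(\cM)=\bQ$ ensures nearby square-tiled perturbations), arranged so that any horizontal saddle connection parallel to a given $\bfE_i$ is in fact generically parallel to it. Lemma \ref{L:generic}(2) then promotes each $\bfE_i$ to a generic equivalence class; this secures the ``generic'' and ``generically parallel'' parts of conditions (2) and (3), and disjointness in (1) is automatic since distinct horizontal equivalence classes are disjoint.

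Third, select $\bfD$ and $\bfD'$ from $\{\bfE_1,\dots,\bfE_r\}$ to make condition (4) hold. This is the step I expect to be the main obstacle, since (4) is a topological statement about the components of $\whatc{\bfD}$. The natural approach is to build the dual ``separation graph'' whose vertices are the connected components of $\whatc{\bfE_i}$ and whose edges correspond to cylinders of $\bfE_i$ (each cylinder has a component of $\whatc{\bfE_i}$ on each side). Because each $\bfE_i$ has at least two cylinders and the original surface is connected, this graph is connected. Labelling each vertex by the set of equivalence classes whose cylinders meet the corresponding component, we need to find an index $i$ such that the label of every vertex contains $\{\bfE_0\}$ and some common further class $\bfE_j$, $j\neq i$. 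With $\geq 3$ equivalence classes available, a counting/pigeonhole argument should yield such a pair: if, say, removing $\bfE_i$ isolates $\bfC$ into one component, then swapping the roles of $\bfE_i$ with an equivalence class meeting the ``other'' component and re-applying the same analysis reduces to a smaller case.

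The technical heart will be ensuring this combinatorial search terminates; if no choice of pair from the horizontal classes works, I expect to obtain a contradiction with either the bound $\rank(\cM)\geq 3$ or with the fact that every non-$\bfC$ equivalence class has at least two cylinders (so there are ``enough'' edges in each separation graph to keep the vertex labels rich). If needed, one may further adjust $(X',\omega')$ along $\bfC$-paths—twisting or perturbing so as to merge or redistribute other cylinders across components—while using Corollary \ref{C:ConstantModuli} and Lemma \ref{L:CS}\eqref{L:CS:rel0criterion} to preserve the count of horizontal equivalence classes and their non-freeness. Once an admissible $(\bfD,\bfD')$ is identified, all four conclusions of the theorem hold on the resulting $\bfC$-related surface.
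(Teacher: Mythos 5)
Your first two layers coincide with the paper's proof: rule out case (1) of Proposition \ref{P:FindNested} via Theorem \ref{T:nested}, land on a cylindrically stable $\bfC$-related surface whose horizontal equivalence classes are exactly $\bfC,\bfD_1,\dots,\bfD_{\rank(\cM)-1}$ (Lemma \ref{L:CS}\eqref{L:CS:rel0criterion}), each non-$\bfC$ class having at least two cylinders since a singleton class is free, and then perturb to make everything generic with all parallel saddle connections generically parallel. Up to here you are on track.

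The gap is in your third layer, which is precisely where the paper's real work lies (its Theorem \ref{T:InnerMost}). You frame the selection of $\bfD$ as a pigeonhole search on a ``separation graph,'' but you never supply the two facts that make any such search succeed, and without them the search can fail. First, you need an \emph{all-or-nothing} statement: for disjoint generic classes $\bfD,\bfD',\bfD''$, either every component of $\whatc{\bfD}$ containing a cylinder of $\bfD'$ also contains one of $\bfD''$, or no component contains both. This is not combinatorial bookkeeping; the paper proves it (Lemmas \ref{L:Isolation} and \ref{L:AllOrNothing}) by using Smillie--Weiss to produce a cylinder in a minimal component of the $\bfD$-direction flow that is nearly dense, hence meets $\bfD'$ and $\bfD''$ but not $\what{\bfD}$, and then invoking Corollary \ref{C:CylinderProportion}. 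Your vertex-labelling scheme implicitly assumes the labels are coherent across components in exactly this way, but you do not justify it. Second, you must rule out a component of $\whatc{\bfD}_i$ that contains \emph{no} cylinder from any of the chosen classes; conclusion (4) quantifies over all components, not just those meeting $\bfC$. The paper handles this by producing (again via Smillie--Weiss) a new equivalence class inside such a component, yielding $\rank(\cM)+1$ pairwise disjoint equivalence classes whose standard shears span an isotropic subspace of dimension $\rank(\cM)+1$ inside the $2\rank(\cM)$-dimensional symplectic space $p(T_{(X,\omega)}(\cM))$ --- a contradiction with no rel. Nothing in your counting argument excludes this ``empty component'' scenario. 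Finally, the actual selection of $i$ in the paper is not a pigeonhole count but an extremal choice: set $G_i$ to be the union of components of $\whatc{\bfD}_i$ meeting $\bfC$, prove a nesting relation ($\bfD_j\not\subset G_i$ implies $G_i\subset G_j$), and take $i$ with $G_i$ maximal. I would recommend replacing your heuristic search with these three ingredients; your proposed fallback of ``adjusting along $\bfC$-paths to redistribute cylinders'' has no mechanism behind it and cannot substitute for them.
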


The main purpose of Theorem \ref{T:FindD} is to produce $\bfD$ with a favourable relationship with $\bfC$; the existence $\bfD'$ is of secondary importance.

We will derive Theorem \ref{T:FindD} from the following. 

\begin{thm}\label{T:InnerMost} 
Assume $\cM$ is an invariant subvariety with  rank at least 2 and no rel, and suppose that all parallel saddle connections on  $(X,\omega)\in \cM$ are generically parallel. Suppose that $\bfC, \bfD_1, \ldots, \bfD_{\rank(\cM)-1}$ are disjoint equivalence classes on $(X,\omega)$. Then, for some $1 \leq i \leq  \rank(\cM)-1$, every component of $\whatc{\bfD}_i$ contains a cylinder in $\bfC$ and a cylinder in each $\bfD_j, j\neq i$. 
\end{thm}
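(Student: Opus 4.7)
The plan is to proceed by induction on $r = \rank(\cM) \geq 2$, using Theorem \ref{T:CDTConverse} as the principal rigidity tool (noting that since $\cM$ has no rel, the converse Cylinder Deformation Theorem gives $\Twist(\bfE, \cM) = \bC \cdot \sigma_{\bfE}$ for each equivalence class $\bfE$). Since distinct equivalence classes lie in distinct $\cM$-parallel directions, the sets $\bfC, \bfD_1, \ldots, \bfD_{r-1}$ are in $r$ different directions on $(X, \omega)$, and the hypothesis that all parallel saddle connections are generically parallel ensures that components of $\whatc{\bfD}_i$ are stable under small perturbation.

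For the base case $r = 2$, there is a single $\bfD_1$, and we must show every component of $\whatc{\bfD}_1$ contains a cylinder of $\bfC$. Supposing some component $U$ contains no $\bfC$-cylinder, I would partition the cylinders of $\bfD_1$ into those adjacent to $U$ (forming $\bfD_1^U$) and the remainder. Since $\bfC$ is contained in some other component of $\whatc{\bfD}_1$, at least one $\bfD_1$-cylinder must be adjacent to that $\bfC$-containing component, so $\bfD_1^U \subsetneq \bfD_1$. The core step is to construct the partial standard deformation $\sigma^{(U)} := \sum_{D \in \bfD_1^U} h_D \gamma_D^*$ as an element of $T_{(X,\omega)}(\cM)$; this yields a twist vector in $\Twist(\bfD_1, \cM)$ that is not a scalar multiple of $\sigma_{\bfD_1}$, contradicting Theorem \ref{T:CDTConverse} together with no rel. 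To justify that $\sigma^{(U)}$ is tangent to $\cM$, I would degenerate $\bfD_1$ via $-i\sigma_{\bfD_1}$ (valid by Corollary \ref{C:ExactGraphsRel0}), which disconnects the boundary surface into pieces corresponding to components of $\whatc{\bfD}_1$; on that degeneration $U$ is a distinct component whose adjacent $\bfD_1$-cylinders can be independently recovered, producing $\sigma^{(U)}$ via pullback.

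For the inductive step $r \geq 3$, I would collapse some $\bfD_i$ via its standard deformation. Corollary \ref{C:ExactGraphsRel0} ensures the collapse is rank-reducing, landing in a boundary component $\cM_v$ of rank $r-1$ and no rel. The remaining $r-1$ classes $\bfC, \bfD_j$ ($j \neq i$) persist as disjoint equivalence classes on $\Col_v(X,\omega)$, so the induction hypothesis provides some $\bfD_\ell$ whose complement is ``nice" on the collapsed surface. One would then lift this property back to $(X,\omega)$, arguing that components of $\whatc{\bfD}_\ell$ on $(X,\omega)$ surject onto components on $\Col_v(X,\omega)$ and that the missing equivalence class appears in the preimage.

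The main obstacle is twofold. In the base case, rigorously showing that the partial deformation $\sigma^{(U)}$ lies in $T_{(X,\omega)}(\cM)$ requires carefully analyzing the multi-component limit surface and relating its tangent space back to that of $\cM$ via the identification from Section \ref{SS:SymplecticCompatibility}. In the inductive step, the lifting is subtle because the collapse of $\bfD_i$ may merge multiple components of $\whatc{\bfD}_\ell$ on $(X,\omega)$ into a single component on the degenerate surface, so the conclusion on the boundary does not transfer verbatim; overcoming this likely requires a more careful choice of which class to degenerate, or an auxiliary argument ensuring that the merged components each individually inherit the required cylinders. An alternative route, which may be cleaner, is a direct graph-theoretic minimality argument: define a directed graph on $\{\bfC, \bfD_1, \ldots, \bfD_{r-1}\}$ with edges $\bfD_i \to \bfE$ whenever $\bfE$ lies in a component of $\whatc{\bfD}_i$ missing $\bfC$, take a minimal ``bad" configuration, and derive the partial-deformation contradiction at the innermost level without explicit induction.
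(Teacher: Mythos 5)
There are two genuine gaps here, one in each half of your induction. In the inductive step, you collapse $\bfD_i$ via its standard deformation and assert that the resulting boundary component $\cM_v$ has rank $r-1$ \emph{and no rel}, so that the induction hypothesis applies. But the opposite is true: since $\cM$ has no rel, the degeneration is rank-reducing, and the rank-reducing case of Theorem \ref{T:GraphsFull} says precisely that $\Col_v(\bfD_i)$ is $\cM_v$-rel-scalable, which forces $\cM_v$ to have at least one dimension of rel (this is stated explicitly in the proof of Lemma \ref{L:ScaleCert}). So the inductive hypothesis is never available on the boundary, independently of the lifting/merging problem you already flag. In the base case, the entire argument rests on the claim that the partial deformation $\sigma^{(U)}$ lies in $T_{(X,\omega)}(\cM)$, and your justification --- that after collapsing $\bfD_1$ the components of the limit can be deformed independently and the adjacent cylinders ``independently recovered'' --- is contradicted by the paper's own structure theory: cylinder degenerations land in \emph{prime} boundary components (Lemma \ref{L:CylDegenPrimeBoundary}), and in a prime subvariety the absolute periods on one component determine those on all others (Theorem \ref{T:PrimeRank}). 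There is also a set-theoretic issue: a cylinder of $\bfD_1$ may border $U$ on one side and a different component on the other, so ``adjacent to $U$'' does not partition $\bfD_1$ cleanly.

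For comparison, the paper's proof never degenerates and is not an induction on rank. Its key mechanism is a counting bound: with no rel, $p(T_{(X,\omega)}(\cM))$ is symplectic of dimension $2\rank(\cM)$, and the standard shears of disjoint equivalence classes span an isotropic subspace, so there can be at most $\rank(\cM)$ disjoint equivalence classes. If some component of $\whatc{\bfD}_i$ contained none of the given classes, Smillie--Weiss (Lemma \ref{L:CorOfSW}) would produce a cylinder there whose equivalence class is disjoint from all $r$ of them (Corollary \ref{C:CylinderProportion}), giving $r+1$ disjoint classes --- a contradiction. The case analysis over which classes sit in which components is then handled by the ``all or nothing'' Lemma \ref{L:AllOrNothing} and a maximality argument on the sets $G_i$ of components of $\whatc{\bfD}_i$ containing $\bfC$-cylinders (Lemma \ref{L:Nesting}), which is the rigorous version of the ``innermost level'' idea you sketch at the end. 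If you want to salvage your approach, that counting argument is the missing ingredient that replaces both the unjustified claim $\sigma^{(U)}\in T_{(X,\omega)}(\cM)$ and the induction.
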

%
%
%

\begin{proof}[Proof of Theorem \ref{T:FindD} assuming Theorem \ref{T:InnerMost}] Since $\cM$ is neither a stratum nor a quadratic double, Theorem \ref{T:nested} gives that no surface in $\cM$ has a nested free cylinder. By Proposition \ref{P:FindNested}, we can therefore find a cylindrically stable  $\bfC$-related surface where the cylinders in $\bfC$ are horizontal and the only free horizontal cylinders (if there are any) belong to $\bfC$. Since an equivalence class with only one cylinder is a free cylinder, this implies that every equivalence class of horizontal cylinders aside from $\bfC$ contains at least two cylinders. 

Let $\bfC, \bfD_1, \bfD_2, \hdots, \bfD_{\mathrm{rank}(\cM)-1}$ denote the equivalence classes of horizontal cylinders on the newly constructed surface and perturb so that these equivalence classes persist and become generic and so that any two parallel saddle connections on the perturbed surfaces are actually generically parallel. By Theorem \ref{T:InnerMost}, there is some $i$ such that every component of $\whatc{\bfD}_i$ contains a cylinder from  $\bfC$ and from every $\bfD_j, j\neq i$. Setting $\bfD$ to be $\bfD_i$, and $\bfD'$ to be any $\bfD_j, j\neq i$, gives the result. 
\end{proof}

\subsection{Basic lemmas}

The following result will be a convenient tool for our analysis in this subsection. 

\begin{lem}\label{L:CorOfSW}
Let $(X,\omega)$ be a translation surface, and let $M$ be a minimal component for the horizontal straight line flow on $(X,\omega)$. Then, for all $\e>0$, there is a cylinder $C$ contained in $M$ whose $\e$-neighborhood contains $M$. 
\end{lem}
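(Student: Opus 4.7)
The plan is to take $C$ to be a cylinder in a periodic direction close to the horizontal, and to argue that its core curve---being a long, almost horizontal closed geodesic---is $\e$-dense in $M$ by minimality of the horizontal flow.

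After replacing $(X,\omega)$ with the compact translation surface $\overline{M}$, I may assume the horizontal flow on all of $(X,\omega)$ is minimal. By compactness plus minimality, a standard equicontinuity argument gives uniform minimality: for every $\delta>0$ there exists $T_0=T_0(\delta)$ such that every horizontal orbit segment of length at least $T_0$ starting away from the singularities is $\delta$-dense in $(X,\omega)$. Separately, density of cylinder directions on a translation surface---a consequence of Masur's theorem, as amplified by \cite[Corollary 6]{SW2}---yields a sequence of directions $\theta_n\to 0$ carrying cylinders $C_n$ on $(X,\omega)$. Since $(X,\omega)$ has no horizontal cylinder, a short compactness argument forces the circumferences $L_n$ of $C_n$ to tend to infinity, as any bounded-circumference subsequential limit would yield a horizontal cylinder.

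For $n$ large, I would compare the $\theta_n$-directional flow to the horizontal flow over the fixed time window $[0,T_0(\e/4)]$: since the velocity vectors differ by $O(|\theta_n|)$, orbits starting at the same point stay within $T_0\cdot|\theta_n|<\e/4$ of each other throughout this window. Hence the initial length-$T_0(\e/4)$ arc of the core curve of $C_n$ is $\e/4$-close to a horizontal trajectory of the same length, which is $\e/4$-dense by uniform minimality; therefore the core curve of $C_n$ is $\e/2$-dense and the cylinder $C_n$ itself is $\e$-dense, giving the desired $C$. The main obstacle will be handling singularities in this flow-comparison step: one must choose the base point of the core curve of $C_n$ to remain uniformly bounded away from the singular set of $\omega$ so that the comparison is valid on the full interval $[0,T_0]$. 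This can be arranged by choosing the base point optimally inside $C_n$ and using that the large circumference and bounded area of $C_n$ force its height to be controlled, possibly after passing to a further subsequence.
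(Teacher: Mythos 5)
Your overall strategy --- produce nearly horizontal cylinders in $M$ via \cite[Corollary 6]{SW2} and argue that minimality forces them to become dense --- is exactly the paper's (the paper only sketches the second step, which is the step you try to carry out in detail). Two preliminary points need care but are not fatal: $\overline{M}$ is a translation surface with horizontal boundary rather than a closed surface, so it is \cite[Corollary 6]{SW2} applied after stretching by the Teichm\"uller geodesic flow, not Masur's density of periodic directions, that yields the cylinders $C_n\subset M$ with $\theta_n\to 0$; and your uniform minimality statement must be phrased for horizontal orbit segments that survive for time $T_0$ without hitting a zero, not merely those ``starting away from the singularities.''

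The genuine gap is in the flow-comparison step, and your proposed fix does not address it. Two unit-speed straight trajectories leaving the same point $p$ in directions $0$ and $\theta_n$ stay within $T_0|\theta_n|$ of each other on $[0,T_0]$ only so long as the thin wedge they bound develops isometrically into the plane, i.e.\ contains no zero of $\omega$. If at some time $t\in(0,T_0]$ the horizontal orbit of $p$ passes within roughly $t|\theta_n|$ of a cone point, the two trajectories can be separated by that cone point, pass it on opposite sides of a cone angle exceeding $2\pi$, and thereafter need not be close at all. Keeping the \emph{base point} uniformly away from $\Sigma$ is therefore irrelevant; what you need is a point of $C_n$ whose entire length-$T_0$ horizontal orbit stays at distance greater than $T_0|\theta_n|$ from $\Sigma$. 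The set of bad base points has area $O(T_0^2|\theta_n|)\to 0$, but that does not produce a good point on $C_n$, since $\mathrm{Area}(C_n)=L_n h_n$ may also tend to $0$; your observation that large circumference and bounded area ``control'' the height gives $h_n\to 0$, which only makes this worse. One way to close the gap is to replace orbit-shadowing by a transversal argument: fix a short vertical segment $I$ in $M$, use minimality to bound the widths of the rectangles in the zippered-rectangle decomposition of $\overline{M}$ over $I$ by some $W$, and check directly that every straight segment of length at least $3W$ in a direction $\theta$ with $|\theta|$ sufficiently small must cross $I$; applied to the core curve of $C_n$, whose length $L_n\to\infty$, this shows $C_n$ eventually meets every such $I$ and hence is $\e$-dense. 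Alternatively, a weak-$*$ limit of the normalized area measures on $C_n$ is a horizontal-flow-invariant probability measure on $\overline{M}$, which by minimality has full support, so the $C_n$ cannot all miss a fixed ball.
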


\begin{proof}[Sketch of proof]
A result of Smillie-Weiss implies that $M$ contains cylinders such that the imaginary part of the holonomy of the core curve is arbitrarily small \cite[Corollary 6]{SW2}; such cylinders are necessarily almost horizontal. Because $M$ is a minimal component, every sequence of cylinders in $M$ that become more and more horizontal must become more and more dense. 
\end{proof}

We also use the following direct corollary of the Cylinder Deformation Theorem, which is a variant of \cite[Proposition 3.2]{NW}.

\begin{cor}\label{C:CylinderProportion}
Let $\cM$ be an invariant subvariety. Let $\bfA$ and $\bfB$ be equivalence classes of cylinders. Then either $\bfA$ is disjoint from ${\bfB}$ and the saddle connections generically parallel to $\bfB$, or $\bfA$ intersects every cylinder in $\bfB$ and every saddle connection generically parallel to $\bfB$. 
\end{cor}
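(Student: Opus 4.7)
The claim is a direct consequence of the Cylinder Deformation Theorem, in the spirit of \cite[Proposition 3.2]{NW}. My plan is to apply the standard deformation on $\bfA$: the class $\sigma_{\bfA} = \sum_{A \in \bfA} h_A \gamma_A^*$ lies in $T_{(X,\omega)}(\cM)$, so $(X,\omega) + t\sigma_\bfA$ remains in $\cM$ for small $t > 0$. Geometrically this deformation shears each $A \in \bfA$ along its core direction while leaving $(X,\omega) \setminus \overline{\bfA}$ pointwise fixed. Throughout the deformation, the equivalence class $\bfB$ persists, so its core curves remain $\cM$-parallel to one another.

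The heart of the argument is to compare the holonomies of $\gamma_B, \gamma_{B'}$ for $B, B' \in \bfB$ along this path. Each $\gamma_B$ picks up a shift of the form $t\sigma_\bfA(\gamma_B)$, which is a real multiple of the $\bfA$-direction. In the generic case where $\bfA$ and $\bfB$ are not geometrically parallel at $(X,\omega)$, requiring that the sheared holonomies $w_B + t\sigma_\bfA(\gamma_B)$ remain parallel forces a short linear computation to conclude that the ratio $\sigma_\bfA(\gamma_B)/h_B$ is a single constant $\kappa$ independent of $B \in \bfB$. Since $\sigma_\bfA(\gamma_B) \ne 0$ exactly when $\gamma_B$ has nonzero algebraic intersection with some core curve of $\bfA$, which in turn forces $B \cap A \ne \emptyset$ for some $A \in \bfA$, either $\kappa = 0$ (so no $B \in \bfB$ meets $\bfA$) or $\kappa \ne 0$ (so every $B \in \bfB$ meets $\bfA$). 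This gives the dichotomy for cylinders of $\bfB$.

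To extend the dichotomy to saddle connections generically parallel to $\bfB$, I would apply the same linear functional $\sigma_\bfA$ to such a saddle connection $s$, since $s$ is a well-defined class in $H_1(X,\Sigma;\bZ)$ and the shift of its holonomy along the deformation is still $t\sigma_\bfA(s) \in \bR \cdot (\text{direction of } \bfA)$; generic parallelism of $s$ to $\bfB$ then yields an analogous constraint relating $\sigma_\bfA(s)$ to the common $\kappa$ above, giving the dichotomy for $s$. The main obstacle to making this fully rigorous is the degenerate case where $\bfA$ and $\bfB$ are geometrically parallel as distinct equivalence classes: here the shear of $\bfA$ produces no direction change for $\bfB$, and the parallelism computation collapses. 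However, on a translation surface any two distinct cylinders of a given direction have disjoint interiors, and any saddle connection parallel to $\bfB$ is parallel to $\bfA$ and so cannot cross the interior of any $\bfA$-cylinder; thus $\bfA$ is automatically disjoint from $\what{\bfB}$, placing us in the first clause of the dichotomy.
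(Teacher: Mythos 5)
Your argument is correct and is exactly the paper's (one-line) proof: deform $\bfA$ by its standard shear and use that the holonomies of $\cM$-parallel cylinders and generically parallel saddle connections must remain parallel, noting that all intersections with the parallel core curves of $\bfA$ carry the same sign so no cancellation can occur. The only quibble is that the constant ratio you extract should be $\sigma_{\bfA}(\gamma_B)$ divided by the circumference of $B$ rather than its height $h_B$, but this has no effect on the all-zero-or-all-nonzero dichotomy.
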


\begin{proof}
The result follows from deforming $\bfA$ using the Cylinder Deformation Theorem. 
\end{proof}

The next two lemmas are morally related to Theorem \ref{T:PrimeDecomp}, but the proofs we give will not make use of that result. 

\begin{lem}\label{L:Isolation}
Suppose $\bfD$ and $\bfD'$ are disjoint equivalence classes of cylinders on a surface $(X,\omega)$ in an invariant subvariety $\cM$. Suppose all parallel saddle connections on $(X,\omega)$ are generically parallel. 

Then every component of $\whatc{\bfD}$ that contains a saddle connection parallel to $\bfD'$ also contains a cylinder in $\bfD'$. 
\end{lem}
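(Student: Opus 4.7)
The plan is to combine Corollary \ref{C:CylinderProportion} with the hypothesis that all parallel saddle connections are generically parallel to tightly constrain the position of $\bfD$ relative to $\bfD'$, and then to produce the desired cylinder of $\bfD'$ inside the component $\Omega \subset \whatc{\bfD}$ containing $s$ by following the straight line flow in the direction of $\bfD'$.

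First I would apply Corollary \ref{C:CylinderProportion} with $\bfA = \bfD$ and $\bfB = \bfD'$: since $\bfD \cap \bfD' = \emptyset$, the corollary gives that $\bfD$ is disjoint from every saddle connection generically parallel to $\bfD'$; by symmetry, $\bfD'$ is disjoint from every saddle connection generically parallel to $\bfD$. The generic-parallelism hypothesis of the lemma upgrades both statements so that $\bfD$ is disjoint from every saddle connection parallel to $\bfD'$ on $(X,\omega)$, and vice versa. Since $s \in \whatc{\bfD}$ is parallel to $\bfD'$, $s$ is not parallel to $\bfD$, so the directions of $\bfD$ and $\bfD'$ differ on $(X,\omega)$. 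It follows that for every $D' \in \bfD'$ the open cylinder $D'$ is disjoint from $\bfD$ (by the disjointness of the equivalence classes) and from every saddle connection parallel to $\bfD$ (by the above), and its boundary saddle connections, being parallel to $\bfD'$ rather than $\bfD$, also lie outside $\what{\bfD}$; thus $D'$ and its boundary saddle connections sit inside $\whatc{\bfD}$. A second application of the generic-parallelism hypothesis shows that any cylinder on $(X,\omega)$ whose core curve is parallel to $\bfD'$ must actually belong to $\bfD'$, since its boundary saddle connections are generically parallel to those of $\bfD'$ and the cylinder is therefore $\cM$-parallel to $\bfD'$.

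Next, with $\Omega$ the connected component of $\whatc{\bfD}$ containing $s$, I would flow from an interior point of $s$ in the direction of $\bfD'$, extending across each intermediate singularity along a direction which re-enters $\Omega$ (this is possible because the boundary of $\Omega$ lies in the direction of $\bfD$, transverse to the flow, so some outgoing horizontal half-leaf at each singularity enters $\Omega$). By the classification of straight line flow on a translation surface, the trajectory ultimately either enters a cylinder parallel to $\bfD'$ or becomes dense in a minimal component in that direction. In the cylinder case, the cylinder lies in $\bfD'$ by the first paragraph, and together with the trajectory from $s$ it forms a connected subset of $\whatc{\bfD}$, so it lies in $\Omega$ and we are done. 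In the minimal-component case, I would chain horizontal (meaning parallel to $\bfD'$) components of $(X,\omega)$ together along their shared boundary saddle connections, which all lie in $\whatc{\bfD}$ because they are parallel to $\bfD'$ rather than $\bfD$; since $\bfD'$ is nonempty, at least one horizontal cylinder exists, and the connectedness of the adjacency graph of horizontal components provides a chain leading from our minimal component to such a cylinder, which must then lie in $\bfD'$ and in $\Omega$.

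The main obstacle will be the minimal-component case, and specifically verifying that the chain of horizontal-component adjacencies can be realized inside the single component $\Omega$ of $\whatc{\bfD}$ rather than passing through other components. For each shared horizontal saddle connection along the chain, one has to check that local neighborhoods on both sides lie in $\whatc{\bfD}$; this can be delicate where $\bfD$-cylinders or $\bfD$-parallel saddle connections accumulate near the endpoints of the shared saddle connection. The essential inputs, all established in the first paragraph, are that $\bfD$ is disjoint from every horizontal saddle connection and that the direction of $\bfD$ is transverse to the horizontal direction; these should prevent $\bfD$ from fully separating a horizontal component along its horizontal boundary, but turning this into a rigorous local connectivity statement near singularities is the technical heart of the proof.
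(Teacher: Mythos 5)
There is a genuine gap, and it sits exactly where you place it — but it is not a technical nuisance that more careful local analysis will fix; it is the essential content of the lemma, and your chaining strategy cannot close it. Two problems. First, the flow set-up is already confused: a straight-line trajectory in the direction of $\bfD'$ launched from an interior point of $s$ (which is itself parallel to $\bfD'$) simply travels along $s$ into its endpoint; and for a trajectory launched elsewhere, the transversality of $\partial\Omega$ to the flow direction means the trajectory \emph{crosses} the $\bfD$-parallel boundary and exits $\Omega$, rather than being confined to it. Second, and more seriously, the pieces of the $\bfD'$-direction decomposition are not subordinate to the decomposition of $(X,\omega)$ by $\what{\bfD}$: a minimal component in the direction of $\bfD'$ will in general contain cylinders of $\bfD$ and straddle many components of $\whatc{\bfD}$, so the adjacency chain from the minimal component containing $s$ to a $\bfD'$-cylinder has no reason to stay inside $\Omega$. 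Your second paragraph is a purely flat-geometric argument that never uses that $s$ is \emph{generically} parallel to $\bfD'$, i.e.\ that its direction is locked to $\bfD'$ under deformations in $\cM$; without that input the statement is not true, so no amount of care near the singularities will rescue the chaining argument.

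The missing idea is a cylinder-deformation argument. Decompose $(X,\omega)$ in the direction of $\bfD$; each cylinder of $\bfD'$ lies in a minimal component of that decomposition, and Lemma \ref{L:CorOfSW} produces a cylinder $E$ inside such a minimal component that intersects $\bfD'$ but not $\what{\bfD}$. Corollary \ref{C:CylinderProportion} then shows that the whole equivalence class $\bfE$ of $E$ is disjoint from $\what{\bfD}$ and meets every cylinder of $\bfD'$, so $\bfE$ is supported entirely in the components of $\whatc{\bfD}$ that contain cylinders of $\bfD'$. Deforming $\bfE$ changes the direction of $\bfD'$ and therefore must move every saddle connection generically parallel to $\bfD'$ — in particular $s$ — yet it only affects the components containing $\bfD'$-cylinders. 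Hence the component containing $s$ is one of those. Your first paragraph is fine and consistent with the preliminaries the paper needs, but the heart of the proof is this deformation step, not a connectivity analysis of the $\bfD'$-direction foliation.
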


\begin{proof}
In the direction of $\bfD$, $(X, \omega)$ is partitioned into cylinders (which necessarily belong to $\bfD$) and minimal components for straight line flow.  Lemma \ref{L:CorOfSW} implies there is a cylinder $E$ which intersects $\bfD'$ but not $\bfD$. Let $\bfE$ be the equivalence class of $E$. Corollary \ref{C:CylinderProportion} gives that every cylinder of $\bfE$ must intersect a cylinder of $\bfD'$, and that $\bfE$ is disjoint from  $\what{\bfD}$. 

Deforming $\bfE$ thus only affects the components of $\whatc{\bfD}$ containing cylinders of $\bfD'$. Since deforming $\bfE$ changes $\bfD'$, it must also change all saddle connections generically parallel to $\bfD'$, giving the result. 
\end{proof}

\begin{lem}\label{L:AllOrNothing}
Suppose $\bfD$, $\bfD'$ and $\bfD''$ are disjoint equivalence classes of cylinders on a surface $(X,\omega)$ in an invariant subvariety $\cM$. Suppose all parallel saddle connections on $(X,\omega)$ are generically parallel. 

Then either every component of $\whatc{\bfD}$ that contains a cylinder in $\bfD'$ contains a cylinder in $\bfD''$, or no component of $\whatc{\bfD}$ contains both a cylinder in $\bfD'$ and a cylinder in $\bfD''$. 
\end{lem}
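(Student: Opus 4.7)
My plan is to adapt the proof strategy of Lemma \ref{L:Isolation} and argue by contradiction. Suppose the conclusion fails, so there are components $K_0, K^*$ of $\whatc{\bfD}$ with $K_0$ containing some cylinder $C_0 \in \bfD'$ but no cylinder of $\bfD''$, while $K^*$ contains both $C_2 \in \bfD'$ and $C_3 \in \bfD''$. The goal is to produce an equivalence class $\bfE$ located in $K_0$ whose behavior forces a contradiction with the very existence of $C_3$ in $K^*$.

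First I would apply Lemma \ref{L:Isolation} to the pair $(\bfD, \bfD'')$ to conclude that $K_0$ contains no saddle connection generically parallel to $\bfD''$. Second, since the hypothesis that all parallel saddle connections are generically parallel forces every $\bfD$-direction cylinder on $(X,\omega)$ to lie in $\bfD$, the cylinder $C_0\subseteq K_0$ must be crossed by a minimal component of the $\bfD$-direction flow contained in $K_0$. Applying Lemma \ref{L:CorOfSW} inside such a minimal component produces a cylinder $E$ in a direction close to $\bfD$, contained in $K_0$, and crossing $C_0$.

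Let $\bfE$ be the equivalence class of $E$. I would then apply Corollary \ref{C:CylinderProportion} three times: $E$ is disjoint from $\bfD$, so $\bfE$ is disjoint from $\what{\bfD}$; $E\subseteq K_0$ is disjoint from both $\bfD''$ and every saddle connection generically parallel to $\bfD''$ (by the first step), so $\bfE$ is disjoint from $\what{\bfD''}$; and $E$ crosses $C_0\in\bfD'$, so $\bfE$ meets every cylinder in $\bfD'$ and every saddle connection generically parallel to $\bfD'$. In particular, $\bfE$ has a cylinder $E^*$ inside $K^*$ crossing $C_2$, and $E^*$ avoids $C_3$ and every saddle connection generically parallel to $\bfD''$.

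The contradiction should now arise from analyzing the standard deformation $\sigma_\bfE \in T_{(X,\omega)}(\cM)$. Since $\bfE$ is disjoint from $\what{\bfD''}$, the deformation leaves $\bfD''$ and its parallel saddle connections unchanged; but since $\bfE$ crosses every $\bfD'$-cylinder, it must change the boundary of every cylinder in $\bfD'$, including boundary segments of $C_2$ that lie inside $K^*$. I would then exploit the geometric interaction inside $K^*$ between $E^*$, $C_2$, and $C_3$: any $\bfD'$-boundary saddle connection of $C_2$ that passes through $C_3$ is forced to stay fixed (since $\bfE$ cannot reach it), while the sum of holonomies of boundary saddle connections around $C_2$ must shift by the prescribed $\sigma_\bfE$-change in circumference, and this inconsistency with the generic parallelism relating boundaries in $K_0$ and in $K^*$ is what yields the contradiction. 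The main obstacle, as expected, is precisely this final step: transforming the structural information about $\bfE$'s location into a concrete period-theoretic contradiction.
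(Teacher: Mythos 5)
Your setup is fine as far as it goes: producing $E\subset K_0$ via Lemma \ref{L:CorOfSW}, and using Corollary \ref{C:CylinderProportion} together with Lemma \ref{L:Isolation} to conclude that $\bfE$ is disjoint from $\what{\bfD}$ and $\what{\bfD''}$ while meeting every cylinder of $\bfD'$, are all legitimate steps. The problem is that you have gathered this information in the wrong direction, and the final step you flag as "the main obstacle" is not merely unfinished --- it is aimed at a contradiction that does not exist. The facts you end with ($\bfE$ meets all of $\bfD'$ and avoids all of $\what{\bfD''}$ and $\what{\bfD}$) are perfectly consistent: distinct equivalence classes $\bfD'$ and $\bfD''$ have no tied periods, so deforming $\sigma_{\bfE}$ changing $C_2$ while fixing $\bfD''$ violates nothing. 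Your proposed mechanism relies on "boundary saddle connections of $C_2$ that pass through $C_3$," but nothing forces $C_2$ and $C_3$ to be adjacent or to interact geometrically at all inside $K^*$; in general there are no such saddle connections and the argument is vacuous. An auxiliary class built inside the component that \emph{lacks} $\bfD''$ necessarily avoids $\bfD''$ and therefore cannot transport any information about $\bfD''$ anywhere.

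The missing idea is to build the auxiliary cylinder inside $K^*$, where $\bfD'$ and $\bfD''$ coexist. Since each component of $\whatc{\bfD}$ lies in a single minimal component of the straight-line flow in the $\bfD$-direction, Lemma \ref{L:CorOfSW} applied there produces one cylinder $E$, disjoint from $\bfD$, that is dense enough to intersect \emph{both} a cylinder of $\bfD'$ and a cylinder of $\bfD''$. Corollary \ref{C:CylinderProportion} then gives that every cylinder of $\bfE$ meets a cylinder of $\bfD'$ and a cylinder of $\bfD''$, that every cylinder of $\bfD'$ meets a cylinder of $\bfE$, and that $\bfE$ is disjoint from $\what{\bfD}$. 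Now any component of $\whatc{\bfD}$ containing a cylinder of $\bfD'$ contains a cylinder of $\bfE$ (which cannot cross $\what{\bfD}$), which in turn meets a cylinder of $\bfD''$ (also confined to a single component of $\whatc{\bfD}$), and the conclusion follows directly --- no contradiction argument is needed. This is the paper's proof; your construction of $\bfE$ from $K_0$ can simply be discarded.
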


\begin{proof}
Suppose there is a component of $\whatc{\bfD}$ that contains both a cylinder in $\bfD'$ and a cylinder in $\bfD''$. Lemma \ref{L:CorOfSW} implies there is a cylinder $E$ which intersects $\bfD'$ and $\bfD''$ but not $\bfD$.

Let $\bfE$ be the equivalence class of $E$. Corollary \ref{C:CylinderProportion} gives that every cylinder of $\bfE$ must intersect both a cylinder in $\bfD'$ and one in $\bfD''$, that every cylinder of $\bfD' \cup \bfD$ intersects a cylinder of $\bfE$, and that $\bfE$ is disjoint from $\what{\bfD}$, giving the result.
\end{proof}

\subsection{Proof of Theorem \ref{T:InnerMost}}

For each $1\leq i \leq \rank(\cM)-1$, define $G_i$ to be the union of the components of $\whatc{\bfD}_i$ that contain a cylinder from $\bfC$. Note that because $G_i$ is bounded by saddle connections parallel to $\bfD_i$, we have  $G_i\neq G_j$ if $i\neq j$. 

\begin{lem}\label{L:Nesting}
For any $i\neq j$, if $\bfD_j$ is not contained in $G_i$, then $G_i \subset G_j$. 
\end{lem}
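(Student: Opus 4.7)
The plan is to show that the hypothesis "$\bfD_j \not\subset G_i$" actually forces $\bfD_j$ to be \emph{entirely disjoint} from $G_i$, and then to use this to see that each component of $G_i$ sits inside a single component of $\whatc{\bfD}_j$ which must itself belong to $G_j$.

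First, I would apply Lemma \ref{L:AllOrNothing} with the triple $(\bfD_i, \bfC, \bfD_j)$ playing the roles of $(\bfD, \bfD', \bfD'')$. This gives the dichotomy: either every component of $\whatc{\bfD}_i$ that contains a cylinder of $\bfC$ also contains a cylinder of $\bfD_j$, or no component of $\whatc{\bfD}_i$ contains both simultaneously. The assumption $\bfD_j \not\subset G_i$ means some cylinder of $\bfD_j$ lies in a component of $\whatc{\bfD}_i$ that contains no cylinder of $\bfC$; this rules out the first alternative, so we conclude that $\bfD_j$ is disjoint from $G_i$.

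Next, let $H$ be an arbitrary component of $G_i$, so $H$ is a component of $\whatc{\bfD}_i$ containing a cylinder of $\bfC$. By the previous paragraph, $H$ contains no cylinder of $\bfD_j$. I would then apply Lemma \ref{L:Isolation} with $\bfD = \bfD_i$ and $\bfD' = \bfD_j$: its contrapositive says a component of $\whatc{\bfD}_i$ with no cylinder in $\bfD_j$ contains no saddle connection parallel to $\bfD_j$ either. Hence $H$ is disjoint from all of $\what{\bfD}_j$, so by connectedness $H$ is contained in a single component $K$ of $\whatc{\bfD}_j$. Since $H \subseteq K$ and $H$ contains a cylinder of $\bfC$, the component $K$ contains a cylinder of $\bfC$, i.e.\ $K \subseteq G_j$. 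Thus $H \subseteq G_j$, and since $H$ was arbitrary, $G_i \subseteq G_j$.

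There isn't really a hard step here, once one sets things up properly. The only thing requiring care is not to confuse "$\bfD_j \not\subset G_i$" with "$\bfD_j$ is disjoint from $G_i$" — these are a priori quite different, and bridging them is exactly the role of the strong dichotomy in Lemma \ref{L:AllOrNothing}. Once that bridge is crossed, Lemma \ref{L:Isolation} upgrades disjointness from $\bfD_j$ to disjointness from the thickened set $\what{\bfD}_j$, after which the inclusion of components is automatic.
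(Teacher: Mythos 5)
Your overall route is the same as the paper's: use Lemma \ref{L:AllOrNothing} to upgrade ``$\bfD_j\not\subset G_i$'' to ``$\bfD_j$ is disjoint from $G_i$,'' then use Lemma \ref{L:Isolation} to upgrade that disjointness to disjointness from all of $\what{\bfD}_j$, and finish by tracking components. Your second and third steps are correct and match the paper. But your first step, as written, does not go through: with the role assignment $(\bfD,\bfD',\bfD'')=(\bfD_i,\bfC,\bfD_j)$, the first alternative of Lemma \ref{L:AllOrNothing} is ``every component of $\whatc{\bfD}_i$ containing a cylinder of $\bfC$ also contains a cylinder of $\bfD_j$.'' The existence of a cylinder of $\bfD_j$ in a component \emph{without} a cylinder of $\bfC$ says nothing about the components that \emph{do} contain a cylinder of $\bfC$, so it does not rule out this alternative; a priori both could hold at once (every component of $G_i$ meets $\bfD_j$, and additionally some component outside $G_i$ meets $\bfD_j$). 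So the sentence ``this rules out the first alternative'' is not justified.

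The fix is simply to swap the roles: apply Lemma \ref{L:AllOrNothing} with $(\bfD,\bfD',\bfD'')=(\bfD_i,\bfD_j,\bfC)$, so that the first alternative reads ``every component of $\whatc{\bfD}_i$ containing a cylinder of $\bfD_j$ also contains a cylinder of $\bfC$.'' That alternative \emph{is} directly contradicted by the hypothesis $\bfD_j\not\subset G_i$, and the second alternative --- no component contains both a cylinder of $\bfD_j$ and one of $\bfC$ --- is symmetric in $\bfD'$ and $\bfD''$, so it gives exactly the disjointness of $\bfD_j$ from $G_i$ that you need. This is how the paper argues. With that one correction your proof is complete and identical in substance to the paper's.
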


\begin{proof}
If $\bfD_j$ is not contained in $G_i$, it follows from Lemma \ref{L:AllOrNothing} that all cylinders of $\bfD_j$ are contained in components of $\whatc{\bfD}_i$ not containing cylinders of $\bfC$. Lemma \ref{L:Isolation} thus gives that  all the saddle connections  parallel to $\bfD_j$ are contained in components of $\whatc{\bfD}_i$ that do not contain cylinders of $\bfC$. 

Thus, each component of $\whatc{\bfD}_i$  that contains a cylinder of $\bfC$ is contained in such a component of $\whatc{\bfD}_j$. 
\end{proof}

For the remainder of the proof, fix $i$ such that $G_i$ is not contained in $G_j$ for any $j\neq i$. For this $i$, Lemma \ref{L:Nesting} gives the following immediate consequence. 

\begin{cor}
$G_i$ contains $\bfD_j$ for all $j\neq i$. 
\end{cor}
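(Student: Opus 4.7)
The proof should be essentially a one-line contrapositive argument using the preceding Lemma (Nesting), plus a remark about why the index $i$ with the required property exists in the first place.

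First I would verify the existence of such an $i$. The sets $G_1,\ldots,G_{\rank(\cM)-1}$ form a finite nonempty collection of subsets of $(X,\omega)$ that are pairwise distinct (as observed right before the Nesting Lemma, since each $G_k$ is bounded by saddle connections parallel to $\bfD_k$). Consider the partial order on this collection given by inclusion; because the collection is finite and nonempty, it has a maximal element, and the index $i$ of any such maximal element satisfies $G_i\not\subset G_j$ for all $j\neq i$ (if $G_i\subset G_j$ then $G_i=G_j$ by maximality, contradicting distinctness). So a legitimate choice of $i$ exists.

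Fix such an $i$. To prove the corollary, I would argue by contradiction: suppose there exists some $j\neq i$ such that $\bfD_j\not\subset G_i$. Then the hypothesis of the Nesting Lemma is satisfied for the pair $(i,j)$, and the lemma concludes $G_i\subset G_j$. This directly contradicts the defining property of the choice of $i$. Hence $\bfD_j\subset G_i$ for every $j\neq i$, which is precisely the conclusion of the corollary.

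There is essentially no obstacle here, since the corollary is just the contrapositive of the Nesting Lemma combined with the minimality/maximality in the choice of $i$. The substantive work was already done in proving Lemma (Nesting), where Lemmas on cylinder isolation (Lemma \ref{L:Isolation}) and the all-or-nothing property (Lemma \ref{L:AllOrNothing}) were invoked; the content of the corollary is merely to record the immediate consequence of picking $i$ appropriately.
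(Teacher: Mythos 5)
Your proof is correct and matches the paper's argument exactly: the paper fixes $i$ so that $G_i$ is not contained in any $G_j$ with $j\neq i$ (which exists since the $G_j$ are finitely many pairwise distinct sets) and then deduces the corollary as the immediate contrapositive of Lemma \ref{L:Nesting}. Your additional verification that such an $i$ exists via a maximal element under inclusion is the intended justification.
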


Lemma \ref{L:AllOrNothing} thus implies that every component of $\whatc{\bfD}_i$ that contains a cylinder from $\bfC$ also contains a cylinder from each $\bfD_j, j\neq i$. So the following lemma  completes the proof of Theorem \ref{T:InnerMost}.  

\begin{lem}
$(X,\omega) = \what{\bfD}_i \cup G_i$. 
\end{lem}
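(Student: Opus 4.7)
The plan is to argue by contradiction. Suppose some component $K$ of $\whatc{\bfD}_i$ contains no cylinder of $\bfC$. The corollary preceding this lemma already gives $G_i \supseteq \bfD_j$ for every $j \neq i$, so such a $K$ must also contain no cylinder of any $\bfD_j$ for $j \neq i$.

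The main step will be to produce a cylinder inside $K$ whose equivalence class is genuinely new. Since $K$ is an open component of $\whatc{\bfD}_i$ bounded by saddle connections parallel to $\bfD_i$, it is invariant under $\bfD_i$-direction straight-line flow and therefore decomposes in that direction into cylinders and minimal components; in the latter case I would invoke Lemma~\ref{L:CorOfSW} to extract an almost-$\bfD_i$-parallel cylinder. Either way one obtains a cylinder $E \subset K$, and by Corollary~\ref{C:CylinderProportion} its equivalence class $\bfE$ must be entirely disjoint from each of $\bfC, \bfD_1, \ldots, \bfD_{\rank(\cM)-1}$, since $E$ is. So we now have $\rank(\cM)+1$ pairwise disjoint equivalence classes on $(X,\omega)$.

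The contradiction will come from an isotropic dimension bound. For each such class $\bfA$, the standard deformation $\sigma_\bfA$ lies in $T_{(X,\omega)}(\cM)$, and these $\rank(\cM)+1$ vectors are linearly independent: testing against a cross curve of a cylinder in one class kills the standard deformations of all the others but not that class's own. Since $\rel(\cM)=0$, the map $p$ is injective on $T_{(X,\omega)}(\cM)$, so the $p(\sigma_\bfA)$ remain linearly independent in $p(T_{(X,\omega)}(\cM))$. On the other hand, all the core curves appearing in these deformations are pairwise disjoint simple closed curves, so they have zero algebraic intersection with each other and with themselves; hence the $p(\sigma_\bfA)$ span an isotropic subspace of $p(T_{(X,\omega)}(\cM))$. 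By Lemma~\ref{L:BoundarySymplectic} and Corollary~\ref{C:RankCharacterization}, $p(T_{(X,\omega)}(\cM))$ is symplectic of dimension $2\rank(\cM)$, so any isotropic subspace has dimension at most $\rank(\cM)$, contradicting the existence of $\rank(\cM)+1$ independent vectors in one. The main obstacle I anticipate is the first step --- locating $E \subset K$ and verifying that $\bfE$ is disjoint from all the previously listed classes --- since once this is done, the no-rel hypothesis together with the standard isotropic dimension bound finish the argument cleanly.
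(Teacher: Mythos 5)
Your proof is correct and follows essentially the same route as the paper: take a bad component of $\whatc{\bfD}_i$, use Lemma \ref{L:CorOfSW} and Corollary \ref{C:CylinderProportion} to extract a new equivalence class disjoint from $\bfC, \bfD_1,\ldots,\bfD_{\rank(\cM)-1}$, and contradict the no-rel hypothesis via the isotropic dimension bound in the symplectic space $p(T_{(X,\omega)}(\cM))$ of dimension $2\rank(\cM)$. The extra details you supply (using the preceding corollary to rule out cylinders of the $\bfD_j$ in the bad component, and the cross-curve argument for linear independence of the standard deformations) are correct elaborations of steps the paper leaves implicit.
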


\begin{proof}
If not, there is a component $B$ of $\whatc{\bfD}_i$ that doesn't contain any cylinders of any of the $\bfD_j$ or of $\bfC$. 

Lemma \ref{L:CorOfSW} gives that there is a cylinder $E$ contained in $S$, which must be disjoint from all the  cylinders in $\bfD_j, j=1, \ldots, \rank(\cM)-1$ and  $\bfC$. Corollary \ref{C:CylinderProportion} thus gives that the equivalence class $\bfE$ of $E$ is also disjoint from all these cylinders. Thus, we have found $\rank(\cM)+1$ disjoint equivalence classes. This contradicts the fact that $\cM$ has no rel, because $p(T_{(X,\omega)}(\cM))$ is symplectic and of dimension $2\rank(\cM)$, and the standard shears in these equivalence classes span an isotropic subspace of dimension $\rank(\cM)+1$. 
\end{proof}

\section{Primality of the boundary}\label{S:Primality}

The goal of this short section is to find a simple criterion for when surfaces in a boundary component of a high rank invariant subvariety are connected. Throughout this section we will use the notation introduced in Section \ref{S:CylinderDegenerations}. 

\subsection{Cylinder degenerations are prime}

We begin with a general argument that shows that any cylinder degeneration produces a prime component of the boundary of $\cM$. This argument does not assume that $\cM$ has high rank. 

\begin{lem}\label{L:CylDegenPrimeBoundary}
Let $\bfC$ be a collection of generic cylinders on a surface $(X, \omega)$ in an invariant subvariety  $\cM$. Let $v \in \TwistC$ specify a cylinder degeneration. Then
$\cM_v$ is prime. 
\end{lem}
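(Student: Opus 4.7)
The plan is to argue by contradiction. Assume $\cM_v$ is not prime. Then by Theorem \ref{T:PrimeDecomp} we may write $\cM_v = \cM_1 \times \cM_2$ after reordering components, where each $\cM_i$ is supported on a nonempty union $Y_i$ of connected components of $Y := \Col_v(X,\omega)$, so that $Y = Y_1 \sqcup Y_2$. The goal is to contradict the connectedness of $(X,\omega)$ by using the cylinders in $\bfC$ as the only possible ``bridge'' between $Y_1$ and $Y_2$ under $\Col_v$.

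The product structure of $\cM_v$ supplies a tangent vector $\eta_1 \in T_{Y_1}(\cM_1) \subseteq T_Y(\cM_v)$ supported on $Y_1$, for instance the scaling direction $[\omega_{Y_1}]$ extended by zero to $Y_2$. Via the inclusion $T_Y(\cM_v) \hookrightarrow T_{(X,\omega)}(\cM)$ of Section \ref{SS:SymplecticCompatibility}, $\eta_1$ lives in $T_{(X,\omega)}(\cM)$, and for each $C \in \bfC$ the value $\eta_1(\gamma_C)$ computes the holonomy of the portion of the chain $\Col_v(\gamma_C)$ lying in $Y_1$. Applying Sublemma \ref{SL:PairWithCoreCurve} to both $\eta_1$ and to the class of $\omega$ forces the ratio $\eta_1(\gamma_C)/[\omega](\gamma_C)$ to equal a single constant $\kappa$ independent of $C \in \bfC$.

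If $\kappa = 0$, then every image loop $\Col_v(\gamma_C)$ lies entirely in $Y_2$, and the same reasoning applied to the boundary saddle connections of cylinders in $\bfC$ shows that $\overline{\bfC}$ maps into $Y_2$ under $\Col_v$. Combined with Lemma \ref{L:Converge}, which says that $\Col_v$ is single-valued off a finite union of line segments contained in $\overline{\bfC}_v \subseteq \overline{\bfC}$, this exhibits $X$ as the disjoint union of the nonempty open sets $\Col_v^{-1}(Y_1)$ and $\Col_v^{-1}(Y_2)$, contradicting connectedness. The case $\kappa = 1$ is handled symmetrically with $\eta_2 = [\omega_{Y_2}]$ in place of $\eta_1$.

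The hard part will be the intermediate case $\kappa \in (0,1)$, in which every loop $\Col_v(\gamma_C)$ genuinely spans both components by passing through separating nodes of $Y'$ arising from the ungluing in the WYSIWYG construction. I anticipate handling this by combining further applications of Sublemma \ref{SL:PairWithCoreCurve} to non-scaling tangent vectors in $T_{Y_i}(\cM_i)$ -- for instance, standard deformations $\sigma_\bfD$ of cylinder equivalence classes $\bfD$ arising inside each $\cM_i$ -- with the topological observation that the core curves $\gamma_C$ are pairwise disjoint in $X$, so that their images can only meet at the finite set of separating nodes and must therefore traverse the two sides in a configuration that is inconsistent with the rigidity enforced by the sublemma.
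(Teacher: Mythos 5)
There is a genuine gap: your argument only disposes of the cases $\kappa=0$ and $\kappa=1$, and for the intermediate case $\kappa\in(0,1)$ you offer only an anticipated strategy, not a proof. That case is exactly where the difficulty lives. A core curve $\gamma_C$ with $C\in\bfC_v$ degenerates to a closed $1$-cycle of saddle connections in $\Col_v(\bfC_v)$, and this cycle can genuinely pass through the unglued nodes and distribute itself over both $Y_1$ and $Y_2$; nothing in Sublemma \ref{SL:PairWithCoreCurve} rules this out, since that sublemma only records that the core curves of $\bfC$ induce collinear functionals on $T_{(X,\omega)}(\cM)$ (which is what gives you the constancy of $\kappa$ in the first place). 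The proposed follow-up --- pairing against standard deformations $\sigma_\bfD$ inside each factor and invoking disjointness of the $\gamma_C$ in $X$ --- is not a step that can be checked; it is not clear what configuration is being excluded or why the ``rigidity'' would fail for a genuinely separating graph $\Col_v(\bfC_v)$. As written, the proof does not close.

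The paper's argument avoids the case analysis entirely by working with boundary saddle connections of $\overline{\bfC}_v$ rather than core curves. Since $(X,\omega)$ is connected and $\bfC_v\neq\emptyset$, every component of $(X,\omega)\setminus\overline{\bfC}_v$ has such a saddle connection in its closure, and every component of $\Col_v(X,\omega)$ arises from at least one such component; so every component of the limit carries saddle connections (possibly split into several pieces whose holonomies one sums) coming from $\partial\overline{\bfC}_v$. Genericity of $\bfC$ makes all of these $\cM_v$-generically parallel across components, and a nontrivial product decomposition of $\cM_v$ would let one apply different elements of $\mathrm{GL}(2,\bR)$ to different factors, destroying that parallelism. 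If you want to salvage your approach, the cleanest fix is to replace the core curves by these boundary saddle connections: your own $\kappa=0$ reasoning (zero holonomy of a nonempty sum of consistently oriented parallel segments is impossible) is essentially the observation needed, but applied to objects that every component of the limit is guaranteed to contain.
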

\begin{proof}
We begin with the following observation. 

\begin{sublem}\label{SL:sv}
Each component of $\Col_v(X,\omega)$ contains a finite set of parallel saddle connections, such that the sum of the holonomies of this finite set on one component is $\cM_v$-generically parallel to the analogous sum on any other component. 
\end{sublem}

\begin{proof}
Consider any saddle connection $s$ on the boundary of $\overline{\bfC}_v$. This $s$ remains a saddle connection along the collapse path, but on $\Col_v(X, \omega)$ may split into a finite set $s_v$ of saddle connections.

Along the collapse path $(X, \omega) - \overline{\bfC}$ is unchanged. Moreover, the cylinders in $\bfC - \bfC_v$ may have their heights change along the collapse path, but they persist on the boundary. In particular, this means that each component of $(X, \omega) - \overline{\bfC}_v$ is associated to a component of $\Col_v(X, \omega)$. Two components of $(X, \omega) - \overline{\bfC}_v$ could be associated to the same component of $\Col_v(X, \omega)$, and all components of $\Col_v(X, \omega)$ are associated to at least one component of $(X, \omega) - \overline{\bfC}_v$. 

If $s$ is contained in the boundary of a component of $(X, \omega) - \overline{\bfC}_v$, then $s_v$ is contained in the associated component of $\Col_v(X,\omega)$. 

Since the cylinders in $\bfC$ are generic, all $s$ arising this way are $\cM$-generically parallel, and hence the sums associated to each $s_v$ are $\cM_v$-generically parallel. 
\end{proof}

By Theorem \ref{T:PrimeDecomp}, if $\cM_v$ is not prime, then it is possible to produce a new surface in $\cM_v$ by applying an arbitrary element of $\mathrm{GL}(2, \mathbb{R})$ to the components of $\Col_v(X, \omega)$ in one prime factor while fixing the components of $\Col_v(X, \omega)$ in the other prime factors, contradicting Sublemma \ref{SL:sv}.
\end{proof}

\subsection{Prime boundary in high rank} We now turn to a somewhat miraculous property that distinguishes high rank invariant subvarieties.

\begin{lem}\label{L:ConnectedSurfacesinBoundary}
Suppose that $\cM'$ is a prime boundary component of a high rank invariant subvariety $\cM$ and that either
\begin{itemize}
\item $\mathrm{rank}( \cM' ) = \mathrm{rank}( \cM )$,  or 
\item $\mathrm{rank}( \cM' ) = \mathrm{rank}( \cM ) - 1$ and the surfaces in $\cM'$ have genus at least one less than those in $\cM$. 
\end{itemize}
Then the surfaces in $\cM'$ are connected.
\end{lem}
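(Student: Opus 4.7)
The plan is to argue by contradiction. Suppose a surface $(Y,\omega_Y)$ in $\cM'$ has $k \ge 2$ connected components. Since $\cM'$ is prime, part (3) of Theorem \ref{T:PrimeRank} gives that the projections $\cM_i'$ to each factor all share a common rank, and Lemma \ref{L:BoundarySymplectic} (through the isomorphism $(\pi_i)_*$ of projected tangent spaces used in its proof) identifies this common value with $r := \rank(\cM')$.

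I would then compare two bounds on the total genus $G$ of $(Y,\omega_Y)$. On one hand, $r \le g_i$ for every component, since rank is always bounded above by genus on each prime factor $\cM_i'$; summing gives $k r \le G$. On the other hand, the total genus of a boundary surface cannot exceed the genus $g$ of $\cM$ (a standard feature of the WYSIWYG compactification: collapsing a multicurve and ungluing nodes cannot increase total genus), and in case (b) the hypothesis strengthens this further to $G \le g - 1$.

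Combining these with the high-rank hypothesis closes both cases. In case (a), $r \ge g/2 + 1$ and $k \ge 2$ force $k r \ge g + 2 > g \ge G$, contradicting $k r \le G$. In case (b), $r \ge g/2$ and $k \ge 2$ force $k r \ge g > g - 1 \ge G$, again contradicting $k r \le G$. Hence $k = 1$, so surfaces in $\cM'$ are connected.

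The main obstacle is really the first step: Theorem \ref{T:PrimeRank}(3) by itself only guarantees that all factor-ranks agree, and one must still match this common value with $\rank(\cM')$ itself. Once that identification is in hand via Lemma \ref{L:BoundarySymplectic}, the rest of the argument collapses to a one-line arithmetic, which is precisely the place where the high-rank hypothesis $\rank \ge g/2 + 1$ does its work: without it, a single-component genus constraint $G \le g$ would be perfectly consistent with $k \ge 2$.
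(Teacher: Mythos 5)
Your proposal is correct and follows essentially the same route as the paper: both use primality (via Lemma \ref{L:BoundarySymplectic}) to equate each component's rank with $\rank(\cM')$, bound each component's rank by its genus, compare with the total genus bound $\sum g_i \le g$ (improved to $g-1$ in the second case), and close with the high-rank arithmetic. The two arguments differ only in how the final inequality is arranged.
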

\begin{proof}
Let $(Y, \eta)$ be a surface in $\cM'$ and let $$(Y, \eta) = (Y_1, \eta_1) \cup \cdots \cup (Y_k, \eta_k)$$ be its decomposition into connected components. Suppose that the genus of $Y_i$ is $g_i$ and that the genus of a surface in $\cM$ is $g$. Our goal is to show that $k=1$.

Since $\cM$ is high rank, $\rank(\cM)\geq 1 + \frac{g}{2}$. Since $g\geq\sum g_i $, we have 
\[\rank(\cM)\geq 1+ \frac12 \sum_{i=1}^k g_i.\]
Since $\cM'$ is prime, the rank of any component of $\cM'$ is the same as the rank of $\cM'$ (by Lemma \ref{L:BoundarySymplectic}). The rank of the component of $\cM'$ containing $(Y_i, \eta_i)$ is bounded above by $g_i$ by definition of rank. Therefore, $\min( g_i )_{i=1}^k \geq \mathrm{rank}( \cM' )$ and so,
\[ \rank(\cM)\geq 1+ \frac{k}{2}\rank(\cM').\]
Set $r=\rank(\cM)$. If $\rank(\cM')=r$, this equation becomes 
$$-1\geq  \left(\frac{k}2-1\right)r,$$
which implies $k=1$. 

If $\rank(\cM') = r-1$, then, by assumption, the surfaces in $\cM'$ have genus at least one less than those in $\cM$ and so $g\geq1+\sum g_i$. This is an improvement over the estimate $g\geq\sum g_i $ used above. By the same reasoning as in the preceding paragraphs, we have  
$$r\geq \frac32+ \frac{k}2\left(r-1\right),$$
which implies 
$$r-1>  \frac{k}2\left(r-1\right),$$
which again implies $k=1$.
\end{proof}

\section{Proof of Proposition \ref{P:NoRelMain}}\label{S:Goldilocks}

In this section we will use Theorem \ref{T:FindD} to derive the following. 

\begin{thm}\label{T:GoodDD}
Suppose $\cM$ that has no rel, rank at least 3, and $\mathbf{k}(\cM) = \mathbb{Q}$. Assume that $\bfC$ is an equivalence class of  cylinders on $(X,\omega)\in \cM$ and that $(X, \omega)$ has no marked points.  Then there is a  $\bfC$-related $(X', \omega') \in \cM$ and an equivalence class $\bfD$ of generic cylinders on $(X', \omega')$ that is disjoint from $\bfC$ such that either a single or double degeneration of $\bfD$ has genus at least two less than $(X', \omega')$ and is contained in a prime component of the boundary. 

Moreover, on this single or double degeneration, $\bfC$ persists and there are no free marked points. 
\end{thm}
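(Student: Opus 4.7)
The plan is to apply Theorem \ref{T:FindD} to extract the required $\bfC$-related surface and a suitable equivalence class $\bfD$, perform a double degeneration of $\bfD$, and then verify each of the four conclusions.

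Since the intended use of Theorem \ref{T:GoodDD} inside Proposition \ref{P:NoRelMain} assumes $\cM$ is non-geminal, and hence neither a component of a stratum nor a quadratic double, Theorem \ref{T:FindD} applies. It yields a $\bfC$-related surface $(X', \omega') \in \cM$ together with generic equivalence classes $\bfD$ and $\bfD'$ pairwise disjoint from $\bfC$, such that $\bfD$ contains at least two cylinders, every saddle connection parallel to $\bfD$ is generically parallel to $\bfD$, and every component of $\whatc{\bfD}$ contains a cylinder from each of $\bfC$ and $\bfD'$.

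Because $\cM$ has no rel, Corollary \ref{C:ExactGraphsRel0} guarantees that any cylinder degeneration of $\bfD$ via $v \in \Twist(\bfD, \cM)$ is rank-reducing with $\bfD_v = \bfD$ and $\Col_v(\bfD)$ being $\cM_v$-rel-scalable. Combined with the generically-parallel property from Theorem \ref{T:FindD} and Lemma \ref{L:generic}, Assumption \ref{A:DD-Assumptions} is satisfied, so by Definition \ref{D:DD} the double degeneration $\Col_v^{doub}(X', \omega') \in \cM_v^{doub}$ is well-defined. By Corollary \ref{C:L:DDRelZero}, $\cM_v^{doub}$ has rank $\rank(\cM)-1$ and no rel.

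For the genus drop, Lemma \ref{L:DefinitionOfLC} gives that $\Col_v^{doub}(X',\omega')$ has genus at least $d$ less than $(X',\omega')$, where $d$ is the dimension of the span of the core curves of $\bfD$ in absolute homology. The main obstacle is ensuring $d \geq 2$. When $\bfD$ contains two cylinders whose core curves are not homologous, $d \geq 2$ and the double degeneration succeeds. The subtle case is when all cylinders of $\bfD$ have homologous core curves ($d=1$), so $\bfD$ separates the surface. I would handle this by using the flexibility in the output of Theorem \ref{T:FindD}: within the cylindrically stable decomposition produced there, one can modify the choice of the distinguished equivalence class to one whose core curves span a $2$-dimensional subspace, exploiting the rank-at-least-$3$ hypothesis so that such a class exists. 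Alternatively, in the separating configuration one can appeal to the single degeneration $\Col_v(X', \omega')$, whose combinatorial structure (combined with the distribution of $\bfC$ and $\bfD'$ across the components of $\whatc{\bfD}$ guaranteed by Theorem \ref{T:FindD}) forces an additional topological reduction that delivers genus drop at least two.

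Finally, the remaining properties are comparatively routine. Primality of the boundary component containing the degeneration follows from Lemma \ref{L:CylDegenPrimeBoundary} for the single degeneration; for the double degeneration it follows from the same primality of $\cM_v$ together with the observation that the rel deformation in Definition \ref{D:DD} cannot split off a product factor, since the vanishing cycles of the rel step are the saddle connections parallel to $\Col_v(\bfD)$, which lie in a single generic parallel class by Assumption \ref{A:DD-Assumptions}. Persistence of $\bfC$ is immediate from the disjointness of $\bfC$ from $\bfD$ and from all saddle connections parallel to $\bfD$, so no cylinder of $\bfC$ is affected by either step of the degeneration. The absence of free marked points on the degeneration uses that $(X', \omega')$ has none: any marked points on $\Col_v^{doub}(X', \omega')$ arise from zeros of $\omega$ being identified at the contracted graph $\Col_v(\bfD)$, and the constraints imposed on these points by the tangent space of $\cM_v^{doub}$ (via the identification $T_{\Col_v^{doub}(X,\omega)}(\cM_v^{doub}) = T_{(X,\omega)}(\cM) \cap \Ann(L_\bfD)$ from Lemma \ref{L:DefinitionOfLC}) preclude independent motion. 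The $d=1$ genus analysis is the principal obstacle; the other verifications proceed by the tools already assembled.
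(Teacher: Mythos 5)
There is a genuine gap, and it sits exactly where you flag it: the case where all core curves of $\bfD$ are homologous ($d=1$). Neither of your two suggested fixes is carried out, and neither matches how the argument actually has to go. Theorem \ref{T:FindD} gives no control over whether the core curves of $\bfD$ span a $2$-dimensional subspace, so "modify the choice of the distinguished equivalence class" is not available. The actual resolution is a dichotomy on the configuration of $\bfD$ itself. If $\bfD$ contains a homologous pair $H_1, H_2$ such that both components of the complement of their core curves meet $\whatc{\bfD}$, then one does \emph{not} degenerate $\bfD$ at all: one performs a \emph{single} degeneration of the auxiliary class $\bfD'$, using $H_1, H_2$ as the separating homologous pair in Lemma \ref{L:SingleDegenLoseTwo} (both sides contain cylinders of $\bfD'$ by the last conclusion of Theorem \ref{T:FindD}, and the acyclicity from Theorem \ref{T:GraphsFull} forces a genus drop on each side). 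This is the reason the statement allows a single degeneration, and it is why $\bfD'$ appears in Theorem \ref{T:FindD} at all; your proposal never uses $\bfD'$ for anything. In the complementary case one double-degenerates $\bfD$, and if $d=1$ the failure of the first case means some component of the complement of two core curves is entirely covered by $\overline{\bfD}$; since all cylinders of $\bfD$ are homologous and there are no marked points, this produces two cylinders $D, D'\in\bfD$ with the top of $D$ glued entirely to the bottom of $D'$, so $\overline{D\cup D'}$ contains a genus-two subsurface that the double degeneration collapses. Your proposal contains no substitute for this argument.

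A second gap: persistence of $\bfC$ under the double degeneration is not "immediate from disjointness." The second step of the double degeneration is a rel collapse realized by cut-and-reglue surgeries at the zeros, and (see Warning \ref{W:DD}) it is not known that these surgeries are supported only near $\Col_v(\bfD)$; a priori they can move singularities elsewhere on the surface and could destroy short cylinders of $\bfC$. The paper must first replace $(X',\omega')$ by a cylinder deformation making the cylinders of $\bfC$ tall relative to the size of the surgeries (Lemma \ref{L:Cstays}); this is also what guarantees each component of the double degeneration carries a cylinder of $\bfC$, which is the input to the primality argument (Corollary \ref{C:prime}) — your alternative primality argument via the vanishing cycles of the rel step does not address the possibility that the disconnected limit is a product. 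Finally, the clean reason there are no free marked points on the double degeneration is simply that $\cM_v^{doub}$ has no rel (Corollary \ref{C:L:DDRelZero}), while a free marked point always contributes rel; your tangent-space phrasing gestures at this but does not state it.
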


\begin{rem}\label{R:GenericParallelSC}
In the double degeneration case, it is implicit that all saddle connections on $(X', \omega')$ that are parallel to $\bfD$ are generically parallel, so that Assumption \ref{A:DD-Assumptions} is satisfied and the double degeneration is defined. 
%
\end{rem}


Theorem \ref{T:GoodDD} in turn easily implies Proposition \ref{P:NoRelMain}.



\begin{proof}[Proof of Proposition \ref{P:NoRelMain} given Theorem \ref{T:GoodDD}:]
%
%
%
In general, \cite[Theorem 1.5]{Wfield} gives 
$$\rank(\cM) \cdot \deg(\mathbf{k}(\cM))\leq 2g.$$
Hence the high rank assumption implies that  $\mathbf{k}(\cM) = \mathbb{Q}$.  
So the assumptions in Proposition \ref{P:NoRelMain} imply those in Theorem \ref{T:GoodDD}.

Since $\cM$ is not geminal, it contains a surface $(X, \omega)$ and an equivalence class $\bfC$ of cylinders that cannot be partitioned into free cylinders and pairs of twins (defined before Theorem \ref{T:Geminal}). Let $\cM'$ be the boundary of $\cM$ constructed in Theorem \ref{T:GoodDD} and let $(Y, \eta)\in \cM'$ be the single or double degeneration produced in Theorem \ref{T:GoodDD}, so  $\bfC$ persists on $(Y, \eta)$.

Note that $\rank(\cM')=\rank(\cM)-1$ by Corollaries \ref{C:RankMinus1} and \ref{C:L:DDRelZero}. (Lemma \ref{L:generic} gives that $\bfD$ is generic, which is required to apply Corollary \ref{C:RankMinus1}). The fact that the surfaces in $\cM'$ are connected is immediate from Lemma \ref{L:ConnectedSurfacesinBoundary}.

Finally, we will show that $\cM'$ is not geminal. This is expected because $\bfC$ certifies that $\cM$ is not geminal, and $\bfC$ persists on $(Y, \eta)\in \cM'$. We will give one of several possible technical justifications. 

We begin by claiming that the standard deformation of $\bfC$ is contained in the tangent space of $\cM'$, using the identification of the tangent space of $\cM'$ with a subspace of the tangent space of $\cM$.

\begin{lem}\label{L:StdSurvives}
$\sigma_{\bfC}\in T_{(Y, \eta)} (\cM')$.
\end{lem}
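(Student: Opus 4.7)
The plan is to show $\sigma_\bfC \in T_{(Y,\eta)}(\cM')$ by using the identification of $T_{(Y,\eta)}(\cM')$ with $T_{(X',\omega')}(\cM) \cap \Ann(V)$ from \cite{MirWri, ChenWright}, where $V$ denotes the space of vanishing cycles for the single or double degeneration from $(X',\omega')$ to $(Y,\eta)$. With this identification the task splits into two containments: that $\sigma_\bfC$ lies in $T_{(X',\omega')}(\cM)$, and that it annihilates $V$.

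The first containment is immediate from the Cylinder Deformation Theorem applied to $\bfC$ on $(X',\omega')$. For the second, I will verify that every vanishing cycle can be represented by a $1$-chain supported in $\what{\bfD}$, the union of $\bfD$ with the saddle connections parallel to $\bfD$. In the single degeneration case this is precisely Lemma~\ref{L:VinCv}, which provides a basis of $V$ by $1$-chains in $\overline{\bfD}_v \subset \what{\bfD}$. In the double degeneration case I will treat the two stages separately: Lemma~\ref{L:VinCv} handles the cylinder degeneration $\Col_v$, producing vanishing cycles supported in $\overline{\bfD}_v$, while Lemma~\ref{L:RelFlow} handles the subsequent rel flow by identifying its vanishing cycles with saddle connections parallel to $\Col_v(\bfD)$. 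Both classes lift, via the identifications of Section~\ref{SS:SymplecticCompatibility}, to $1$-chains on $(X',\omega')$ supported in $\what{\bfD}$.

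With the supports arranged this way, I will invoke Corollary~\ref{C:CylinderProportion}: since $\bfC$ and $\bfD$ are disjoint equivalence classes, $\bfC$ is forced to be disjoint not only from $\bfD$ but from every saddle connection generically parallel to $\bfD$, hence from all of $\what{\bfD}$. Consequently, each core curve $\gamma_C$ with $C\in \bfC$ can be represented by a closed loop disjoint from $\what{\bfD}$, so its Poincar\'e dual $\gamma_C^*$ pairs to zero with any $1$-chain supported in $\what{\bfD}$. Summing with the heights gives $\sigma_\bfC(\alpha)=0$ for every vanishing cycle $\alpha$, completing the proof.

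The main obstacle I anticipate is the bookkeeping in the double degeneration case: I need to unpack the two-stage construction from Section~\ref{S:DoubleDegen} carefully and confirm that, once the intermediate surface $\Col_v(X',\omega')$ is eliminated from the picture, the full space of vanishing cycles really is represented on $(X',\omega')$ by chains supported in $\what{\bfD}$. Once that support statement is established, the pairing argument with the $\gamma_C^*$ is a one-line consequence of disjointness.
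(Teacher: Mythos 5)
Your argument is correct and is essentially the paper's: the single-degeneration case is handled via Lemma~\ref{L:VinCv} and disjointness, and the support statement you propose to verify by hand in the double-degeneration case is exactly the content of Lemma~\ref{L:DefinitionOfLC}, which the paper simply cites. The only small elision is in passing from ``saddle connections generically parallel to $\bfD$'' (what Corollary~\ref{C:CylinderProportion} gives) to ``all saddle connections parallel to $\bfD$'' (what $\what{\bfD}$ contains); this is covered by Remark~\ref{R:GenericParallelSC}, which the paper invokes explicitly at this step.
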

\begin{proof} 
In the single degeneration case, this is immediate since $\bfC$ and $\bfD$ are disjoint. (More formally, Lemma \ref{L:VinCv} gives that $\bfC$ does not intersect the vanishing cycles.) 

So suppose that we are in the case where we must double-degenerate $\bfD$. Lemma \ref{L:DefinitionOfLC} states that the tangent space to the component of the boundary containing the  double degeneration is identified with $T_{(X,\omega)} (\cM) \cap \Ann(L_\bfD)$, where 
 $L_\bfD \subset H_1(X,\Sigma)$ is the span of all saddle connections contained in $\overline\bfD$ as well as all saddle connections parallel to $\bfD$. 
 
By assumption, $\bfC$ is disjoint from $\bfD$ and hence, by Corollary \ref{C:CylinderProportion} and Remark \ref{R:GenericParallelSC}, from any saddle connection parallel to $\bfD$. 
Hence $\sigma_{\bfC}\in \Ann(L_\bfD)$ as desired. 
\end{proof}

Suppose to a contradiction that $\cM'$ is geminal. Since $\sigma_{\bfC}$ remains an element of $T_{(Y, \eta)}(\cM')$, no cylinder in $\Col(\bfC)$ can be a twin of a cylinder not in $\Col(\bfC)$, where $\Col(\bfC)$ denotes the cylinders on $(Y, \eta)$ that persist from $\bfC$. 

Moreover, since $\Twist( \Col(\bfC), \cM')$ is isomorphic to $\TwistC$, it follows that $\Twist( \Col(\bfC), \cM')$ is one-dimensional and hence $\Col(\bfC)$ consists of either one free cylinder or two twins. This implies in particular that $\sigma_{\bfC}$ is a multiple of $\sum_i \gamma_i^*$ where $\{ \gamma_i \}_i$ is the set of core curves of cylinders in $\Col(\bfC)$.  However, this cannot be the case since $\bfC$ contains two cylinders with distinct heights by assumption. This is a contradiction.
\end{proof}

\subsection{Lemmas for the single degeneration case}
In general, it is challenging to prove that genus decreases by more than one in (single) cylinder degenerations, and indeed there are surprisingly complicated situations where this does not occur. We can however verify a genus reduction of at least two in the following narrow circumstance. 

\begin{lem}\label{L:SingleDegenLoseTwo}
Let $\bfD$ be a generic equivalence class of cylinders on a surface $(X, \omega)$ in an invariant subvariety $\cM$. Let $v \in \Twist(\bfD, \cM)$ define a rank-reducing cylinder degeneration. 

Suppose additionally that there are homologous cylinders $H_1, H_2\notin \bfD$ such both components of the complement of the core curves of the $H_i$ contain cylinders of $\bfD$. 

Then the genus of $\Col_v(X,\omega)$ is at least two smaller than that of $(X,\omega)$.
\end{lem}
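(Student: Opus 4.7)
The approach is to combine the Cylinder Degeneration Dichotomy with a homological dimension count for the core curves of $\bfD$.

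By Theorem \ref{T:GraphsFull}, since the degeneration is rank-reducing, we have $\bfD_v = \bfD$: every cylinder in $\bfD$ is fully collapsed onto the graph $\Col_v(\bfD)$. Let $W \subset H_1(X; \bR)$ denote the span of the core curves of the cylinders in $\bfD$. As in the argument used for the second assertion of Lemma \ref{L:DefinitionOfLC}, each such core curve is in the kernel of $(\Col_v)_*$, because the corresponding 2-cell collapses into a 1-complex, and a symplectic argument then shows the genus of $\Col_v(X,\omega)$ drops by at least $\dim W$. Thus it suffices to show $\dim W \geq 2$.

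Choose cylinders $D_i \in \bfD$ contained in $\overline{X_i}$ for $i=1,2$, where $X_1, X_2$ are the two components of the complement of $\gamma_{H_1} \cup \gamma_{H_2}$ containing $\bfD$-cylinders, and let $\gamma_i$ denote the core curve of $D_i$. I will show $[\gamma_1]$ and $[\gamma_2]$ are linearly independent in $H_1(X;\bR)$. Suppose $a[\gamma_1] + b[\gamma_2] = 0$ with $(a,b) \ne (0,0)$. A Mayer-Vietoris computation on the decomposition $X = \overline{X_1} \cup \overline{X_2}$ with common boundary $\gamma_{H_1} \cup \gamma_{H_2}$, together with the fact that $[\gamma_{H_1}] = [\gamma_{H_2}]$ in $H_1(X;\bR)$, forces each $[\gamma_i]$ to be a scalar multiple of $[\gamma_{H_1}]$ in $H_1(X;\bR)$. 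But the holonomy $\omega(\gamma_i)$ lies in the direction of $\bfD$, while the holonomy $\omega(\gamma_{H_1})$ lies in a different direction (as $H_i \notin \bfD$ and, using the genericity of $\bfD$ together with a small perturbation if necessary, the $H_i$ are not parallel to cylinders in $\bfD$); hence the scalar must vanish and $[\gamma_i]$ is null-homologous.

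The main obstacle will be handling the case when every cylinder in $\bfD \cap \overline{X_i}$ has null-homologous core curve in $X$ for some $i$, so that the preceding argument does not directly yield a non-null element of $W$ contributed by $\overline{X_i}$. This corresponds to a very restrictive configuration in which all the parallel $\bfD$-cylinders in $\overline{X_i}$ are separating in $X$. To handle this case, I would appeal to the rel-scalability of $\Col_v(\bfD)$ furnished by Theorem \ref{T:GraphsFull}: the purely relative cohomology class certifying rel-scalability, together with the acyclicity of the resulting directed graph, imposes strong constraints that should be incompatible with the entire collection of parallel $\bfD$-cylinders in $\overline{X_i}$ being separating; alternatively, one may try to augment $W$ using core curves of additional cylinders produced via cylinder deformations of $\bfD$ as in Corollary \ref{C:CylinderProportion}. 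Working out this case analysis is the subtle part of the proof.
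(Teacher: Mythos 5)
There is a genuine gap, and it is in the central mechanism of your argument. You assert that each core curve of a cylinder in $\bfD$ lies in the kernel of $(\Col_v)_*$ "because the corresponding 2-cell collapses into a 1-complex," and you cite the argument for the second assertion of Lemma \ref{L:DefinitionOfLC}. But that lemma concerns the \emph{double} degeneration, where the graph $\Col_v(\bfD)$ is itself contracted to points; only then do the core curves die. Under the \emph{single} degeneration $\Col_v$, the core curve of a cylinder in $\bfD_v$ maps to a nonzero, non-negative combination of consistently oriented saddle connections of $\Col_v(\bfD)$ whose holonomy equals the circumference of the cylinder (this is stated explicitly in the proof of Proposition \ref{P:AbsoluteTwistDecomp}); it does not vanish. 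Consequently "genus drops by at least $\dim W$" is not available: a single degeneration can collapse a cylinder with non-separating core curve without losing any genus at all (e.g.\ a rank-preserving collapse taking a surface in $\cH(1,1)$ to one in $\cH(2)$). The genus drop in the rank-reducing case comes from acyclicity of $\Col_v(\bfD)$, not from the homological span of the core curves, and even granting $\dim W\geq 2$ together with acyclicity, two homologically independent core curves need not force two independent "essential ungluings," so the count does not follow. On top of this, your verification that $\dim W\geq 2$ is itself incomplete: you explicitly leave open the case where every $\bfD$-cylinder on one side of $H_1,H_2$ has separating core curve, and the perturbation you invoke to ensure $H_i$ is not parallel to $\bfD$ is not clearly compatible with preserving the hypotheses.

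For comparison, the paper's proof uses the homologous pair $H_1,H_2$ for a different purpose: not to certify homological independence, but to split the collapse into two steps, first degenerating the cylinders of $\bfD$ on one side of the core curves of the $H_i$ and then those on the other side. Theorem \ref{T:GraphsFull} (rank-reducing case) gives that $\Col_v(\bfD)$ is rel-scalable, hence acyclic as a directed graph; if either step failed to reduce the genus, the portion of $\Col_v(\bfD)$ on the corresponding side would have to contain a directed cycle, a contradiction. If you want to repair your write-up, the point to internalize is that acyclicity is what converts "a cylinder collapsed on this side" into "one unit of genus lost on this side," and the separation by $H_1,H_2$ is what makes the two losses cumulative.
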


\begin{proof}
By Theorem \ref{T:GraphsFull}, $\Col_v(\bfD)$ is an acyclic graph. This cylinder degeneration can be achieved in two steps, by first collapsing the cylinders in $\bfD$ on one side of $H_1, H_2$, and then collapsing those on the other side. If either step didn't reduce the genus, then the portion of $\Col_v(\bfD)$ on the corresponding side of $H_1, H_2$ could not be acyclic.
\end{proof}

\begin{lem}\label{L:TwoSCnoFree}
Let $\bfD$ be a generic equivalence class of cylinders on a surface $(X, \omega)$ with no marked points in an invariant subvariety $\cM$ with no rel. Let $v \in \Twist(\bfD, \cM)$ define a rank-reducing cylinder degeneration. 
If $\Col_v(\bfC)$ contains at least two saddle connections, then $\Col_v(X,\omega)$ does not have any free marked points. 
\end{lem}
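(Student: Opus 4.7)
The plan is to argue by contradiction. Suppose $\Col_v(X,\omega)$ has a free marked point, call it $x$. Since $(X,\omega)$ itself has no marked points, the construction of the collapse in Lemma \ref{L:Converge} shows that $x$ arises as a non-singular filled-in point of the collapse; in particular $x$ is a vertex of the graph $\Col_v(\bfD)$, incident to at least one of its saddle connections. Moreover, as a marked point $x$ has cone angle $2\pi$, so only two horizontal rays exit $x$. Since every saddle connection of $\Col_v(\bfD)$ is parallel (horizontal, say), at most two saddle connections of $\Col_v(\bfD)$ meet $x$, counted with multiplicity (a loop based at $x$ counting twice).

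The next step is to extract the narrow rel structure of $\cM_v$. Since $\cM$ has no rel and the cylinder degeneration is rank-reducing, Corollary \ref{C:ExactGraphsRel0} gives $\bfD_v=\bfD$ and produces a vector $\eta \in T_{\Col_v(X,\omega)}(\cM_v)$ certifying rel-scalability of $\Col_v(\bfD)$, while the dimension count at the end of the proof of Lemma \ref{L:ScaleCert} (combined with Corollary \ref{C:RankMinus1}) forces $\rel(\cM_v)=1$. Thus the rel subspace of $T_{\Col_v(X,\omega)}(\cM_v)$ is spanned over $\mathbb{C}$ by $\eta$. Freeness of $x$ yields a complex one-parameter family $\{\xi_{dz}\}_{dz\in\mathbb{C}}$ of rel vectors tangent to $\cM_v$ that move $x$ by $dz$ while fixing the remainder of the surface, so each $\xi_{dz}$ is a complex scalar multiple of $\eta$.

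The heart of the argument is a short evaluation on the saddle connections of $\Col_v(\bfD)$, oriented consistently so that each $\eta(s)=\mathrm{hol}(s)$ is positive real. If some saddle connection $s$ of $\Col_v(\bfD)$ is not incident to $x$, then $\xi_{dz}(s)=0$ while $\eta(s)\neq 0$, forcing $\xi_{dz}=0$ as a cohomology class; but evaluating $\xi_{dz}$ on any relative $1$-cycle from $x$ to a different singularity or marked point of $\Col_v(X,\omega)$ gives $\pm dz\neq 0$, a contradiction. Such another singularity exists, since $\rel(\cM_v)=1>0$ already forces $|\Sigma|\geq 2$ on $\Col_v(X,\omega)$. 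Otherwise every saddle connection of $\Col_v(\bfD)$ is incident to $x$; by the ``at most two'' bound there are exactly two such incidences. If they come from distinct saddle connections $s_1,s_2$, one has $x$ as its initial endpoint and the other as its terminal endpoint, so the equation $\xi_{dz}=c\eta$ yields $dz=c\cdot\mathrm{hol}(s_1)$ and $-dz=c\cdot\mathrm{hol}(s_2)$, forcing $\mathrm{hol}(s_1)/\mathrm{hol}(s_2)=-1$ and contradicting positivity. If instead both incidences come from a single loop based at $x$, then $\xi_{dz}$ vanishes on that loop (moving both endpoints by $dz$ leaves the period unchanged) while $\eta$ does not, again giving $\xi_{dz}=0$ and the same contradiction as the first case.

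The main potential obstacle is verifying carefully, from the construction in Lemma \ref{L:Converge}, that any marked point of $\Col_v(X,\omega)$ produced by the collapse is genuinely a vertex of $\Col_v(\bfD)$ with cone angle exactly $2\pi$, rather than some more exotic filled-in point hidden elsewhere. A subsidiary point is the single-loop subcase above, where the contradiction requires the existence of a second element of $\Sigma$ distinct from $x$; as noted, this is automatic once $\rel(\cM_v)\geq 1$, but the justification deserves to be recorded explicitly.
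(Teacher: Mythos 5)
Your argument is correct in its main thrust but follows a genuinely different route from the paper's, and it has one loose end that needs to be tied off. The paper's proof runs entirely through Lemma \ref{L:GenericallyParallel}: all saddle connections of $\Col_v(\bfD)$ are generically parallel; no such saddle connection can join a marked point to itself (such a loop would border a cylinder generically parallel to $\bfD$, contradicting $\bfD_v=\bfD$); and then moving a free marked point visibly destroys generic parallelism once there are at least two edges. You instead route everything through the rel structure of $\cM_v$: since $\rel(\cM_v)=1$, the purely relative vector $\xi_{dz}$ that moves $x$ must be a multiple of the rel-scaling class $\eta$, and evaluating on edges produces the contradiction. These are essentially dual formulations of the same codimension-one fact, and your evaluations (the incidence bound at a cone point of angle $2\pi$, the forced $\mathrm{hol}(s_1)/\mathrm{hol}(s_2)=-1$) are correct.

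The loose end is the loop scenario. In your first case, after deducing $\xi_{dz}=0$, the contradiction requires a relative $1$-cycle from $x$ to a second element of $\Sigma$ \emph{on the same connected component} of $\Col_v(X,\omega)$; the justification that ``$\rel(\cM_v)\geq 1$ forces $|\Sigma|\geq 2$'' does not supply this, since $\Col_v(X,\omega)$ need not be connected. If the unique edge of $\Col_v(\bfD)$ incident to $x$ were a loop based at $x$ (for instance, $x$ alone on a torus component), then $\xi_{dz}$ really would be zero and your computation would yield no contradiction, even though the lemma's conclusion would fail there. The repair is one line and is already implicit in your setup: no edge of $\Col_v(\bfD)$ can be a loop, because a loop is an absolute cycle, so the purely relative class $\eta$ vanishes on it, whereas rel-scalability forces $\eta$ to equal its nonzero holonomy --- this is exactly the acyclicity of rel-scalable graphs recorded after their definition in Section \ref{S:degens}. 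With loops excluded, $x$ is incident to an edge with a distinct second endpoint, which is precisely the relative cycle you need; both your first case and your single-loop subcase (which is in any event vacuous under the at-least-two-edges hypothesis) then close up. The paper excludes loops differently, via the cylinder bordered by such a saddle connection.
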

%
%
%

\begin{proof}
Theorem \ref{T:GraphsFull} gives that $\bfD_v=\bfD$, and hence Lemma \ref{L:GenericallyParallel} implies that all saddle connections in $\Col_v(\bfD_v)$ are generically parallel. 

Since $(X, \omega)$ does not have marked points, any marked points of $\Col_v(X,\omega)$ must lie on $\Col_v(\bfD)$. We first note that no saddle connection in $\Col_v(\bfD)$ can join a marked point to itself. If this were the case, this saddle connection would border a cylinder, which it would be generically parallel to, contradicting the fact that $\bfD_v=\bfD$. 

Hence, if any of the marked points were free, moving its position  would contradict the fact that all saddle connections in $\Col_v(\bfD_v)$ are generically parallel. 
\end{proof}

\subsection{Proof of Theorem \ref{T:GoodDD}}

Replacing $(X,\omega)$ with a  $\bfC$-related surface, let $\bfD$ and $\bfD'$ be the equivalence classes produced by Theorem \ref{T:FindD}.

\bold{The single degeneration case:} Suppose that $\bfD$ contains a pair of homologous cylinders $H_1, H_2$ such that both components of the complement of the core curves of the $H_i$ intersect $\whatc{\bfD}$. (This fails if one component is covered by cylinders in $\bfD$ and their boundary saddle connections.) In this case, the statement of Theorem \ref{T:FindD} gives that both components contain cylinders from  $\bfD'$. 

Lemma \ref{L:SingleDegenLoseTwo} gives that a single degeneration of $\bfD'$ loses two genus, and Lemma \ref{L:TwoSCnoFree} gives that this single degeneration does not contain free marked points. This proves  Theorem \ref{T:GoodDD} in this case. 

\bold{The double degeneration case:} Suppose that $\bfD$ does not contain such a pair of a pair of homologous cylinders $H_1, H_2$. In this case we will not make use of $\bfD'$. This case will however require an in-depth understanding of Section \ref{S:DoubleDegen}.  

Recall that Corollary \ref{C:CylinderProportion} gives that $\what\bfD$ is disjoint from $\bfC$. Fix $v \in \Twist(\bfD,\cM)$ that specifies a cylinder degeneration of $\cM$, and recall from Section \ref{S:DoubleDegen} that since $\cM$ has no rel this also uniquely specifies a double degeneration. 

\begin{lem}\label{L:Cstays}
Possibly after replacing $(X,\omega)$ with the result of a cylinder deformation in $\bfC$, we can assume that $\bfC$ persists on the double degeneration of $\bfD$, and each component of the double degeneration contains a cylinder of $\bfC$. 
\end{lem}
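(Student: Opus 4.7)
The plan is to verify persistence of $\bfC$ and the presence of a cylinder of $\bfC$ in each component through the two-step structure of the double degeneration of $\bfD$, with the preliminary cylinder deformation playing a genericity role.

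First I would handle the single-degeneration step. Theorem \ref{T:FindD} arranges for $\bfC$ and $\bfD$ to be disjoint equivalence classes, so Corollary \ref{C:CylinderProportion} implies that the cylinders of $\bfC$ are disjoint from $\what{\bfD}$. Moreover, the boundary saddle connections of $\bfC$, being generically parallel to $\bfC$, cannot simultaneously be generically parallel to $\bfD$ (since the two equivalence classes admit independent deformations), so the entire closure of $\bfC$ lies in $\whatc{\bfD}$. Since the collapse map $\Col_v$ is the identity on $\whatc{\bfD}$ by Lemma \ref{L:Converge}, the cylinders of $\bfC$ carry over unchanged to $\Col_v(X,\omega)$.

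Next I would handle the rel-deformation step $\Col_v(X,\omega) - t\eta$ for $t \in [0, 1)$. The rel vector $\eta$ is purely relative in $\cM_v$ and may be taken real by Lemma \ref{L:ScaleCert}. Since $\eta$ vanishes on absolute cycles, the circumferences of cylinders in $\bfC$ are preserved throughout. Since $\eta$ is real while the cross curves of the horizontal cylinders in $\bfC$ have purely imaginary holonomy, the heights of these cylinders are also preserved. Hence each cylinder in $\bfC$ remains a positive-area cylinder and limits to a cylinder on $\Col_v^{doub}(X,\omega)$. Moreover, by Lemma \ref{L:DefinitionOfLC} (using the disjointness of $\bfC$ from saddle connections parallel to $\bfD$), the standard deformation $\sigma_\bfC$ lies in $T_{\Col_v^{doub}(X,\omega)}(\cM_v^{doub})$, so the cylinders of $\bfC$ remain $\cM_v^{doub}$-parallel to one another on the double degeneration.

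For the component statement, I would use that Theorem \ref{T:FindD} guarantees every component of $\whatc{\bfD}$ on $(X,\omega)$ contains a cylinder of $\bfC$. These components correspond bijectively, via the identity map $\Col_v|_{\whatc{\bfD}}$, to the components of $\Col_v(X,\omega) \setminus \Col_v(\bfD)$. The rel step is realised by local cut-and-reglue surgeries at singularities on $\Col_v(\bfD)$ (Lemma \ref{L:RelLimit} and Section \ref{SS:RealRel}), which can only identify boundary arcs adjacent to $\Col_v(\bfD)$ and cannot split any existing component. Therefore every component of $\Col_v^{doub}(X,\omega)$ is a union of components of $\whatc{\bfD}$, and in particular contains a cylinder of $\bfC$.

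The main obstacle will be ensuring that no pathological parallelism arises: namely, that the cylinders of $\bfC$ do not accidentally become generically parallel on $\Col_v^{doub}(X,\omega)$ to cylinders that first appear on the boundary, which would prevent $\bfC$ from persisting as a coherent subcollection. This is where the preliminary cylinder deformation enters. By perturbing within the one-complex-dimensional family of $\bfC$-related surfaces parametrised by $\sigma_\bfC$, we can place the heights and moduli of $\bfC$ in sufficiently generic position to avoid such coincidences, which form at most a codimension-one condition.
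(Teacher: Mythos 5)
Your first step (the single degeneration does not touch $\bfC$) is fine and agrees with the paper. The gap is in the second step, and it causes you to misidentify what the hypothesis ``possibly after replacing $(X,\omega)$ with the result of a cylinder deformation in $\bfC$'' is actually for. You argue that $\bfC$ survives the path $\Col_v(X,\omega)-t\eta$ because $\eta$ is purely relative and real, so the circumferences and heights of the cylinders of $\bfC$ are numerically preserved. Two things go wrong. First, $\eta$ is real in the frame in which $\bfD$ is horizontal (Lemma \ref{L:RelFlow}), but after the perturbation built into Theorem \ref{T:FindD} the classes $\bfC$ and $\bfD$ are \emph{not} parallel (distinct equivalence classes are made genuinely non-parallel), so the cylinders of $\bfC$ are transverse to the rel direction and the ``real rel preserves heights of horizontal cylinders'' reasoning does not apply to them. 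Second, and more fundamentally, constancy of the periods of a core curve and a fixed cross curve does not imply that the geometric cylinder persists: by Lemma \ref{L:RelLimit} the path is realized by cut-and-reglue surgeries of definite size emanating from the zeros, and Warning \ref{W:DD} explicitly records that these surgeries are \emph{not} known to be confined to a neighbourhood of $\Col_v(\bfD)$ -- zeros elsewhere on the surface may move. A moving zero can sweep across a transverse cylinder of $\bfC$ and destroy it even though the relevant periods are unchanged. Your closing paragraph assumes the surgeries are ``local ... at singularities on $\Col_v(\bfD)$,'' which is exactly the unproved statement the warning is about.

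This is precisely the problem the preliminary cylinder deformation solves in the paper, and its role is not the genericity/parallelism issue you describe in your last paragraph. The paper deforms $(X,\omega)$ along a multiple of $\sigma_\bfC$ that stretches the cylinders of $\bfC$ in the direction parallel to $\bfD$; along this path no new saddle connections parallel to $\bfD$ appear, so the certifying rel vector $\eta$ (unique since $\cM$ has no rel, by Lemma \ref{L:ScaleCert}) and hence the sizes of the surgeries are locally constant, while the cylinders of $\bfC$ become arbitrarily large in the direction of the cuts. At the end of the path each cylinder of $\bfC$ is much larger than every surgery, so it survives to the double degeneration. Your concluding ``components correspond to components of $\whatc{\bfD}$'' statement matches the paper's, but you should justify it from the construction in Section \ref{S:DoubleDegen} rather than from the (unavailable) locality of the surgeries along $\Col_v(\bfD)$.
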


\begin{proof}
We first arrange for $\bfC$ to survive the double degeneration, using the following outline: the single degeneration of $\bfD$ doesn't affect $\bfC$, and, using a cylinder deformation of $\bfC$, we can assume that the heights of the cylinders in $\bfC$ are so large that they survive the surgeries that produce the double degeneration. We now give the details. 

Consider the one parameter path $(X_t, \omega_t)$ of translation surfaces where $\bfC$ is deformed by adding the multiple of the standard deformation $\sigma_\bfC$ that preserves and stretches out the direction parallel to $\bfD$. (For example, if $\bfD$ is vertical, this is accomplished by deforming in the direction of $i\sigma_\bfC$.) No new saddle connections parallel to $\bfD$ are created along the path, and the height of all cylinders in $\bfC$ can be assumed to be very large at the endpoint. 

The single degeneration $\Col_v(X_t, \omega_t)$ also varies continuously, and the unique vector certifying rel-scalability of $\Col_v(\bfD)$ is locally constant. 

Thus the  surgeries required to get from $\Col_v(\bfD)$ to the double degeneration, which are described in detail in Section \ref{S:DoubleDegen}, are locally constant. These surgeries involve making cuts of certain sizes at zeros and then re-gluing the boundary saddle connections of the cut surface in a prescribed pattern. 

We can assume at the end of the path that the height of each cylinder in $\bfC$ is much greater than the size of the surgeries required to pass from $\Col_v(\bfD)$ to the double degeneration. Thus, at the endpoint of the path, we can assume that each cylinder of $\bfC$ persists on the double degeneration. 

By the construction of the double degeneration described in Section \ref{S:DoubleDegen}, the components of the double degeneration  correspond bijectively to the components of $\whatc{\bfD}$. The statement of Theorem \ref{T:FindD} gives that all components of $\whatc{\bfD}$ contain cylinders of $\bfC$, and hence all components of the double degeneration contain cylinders of $\bfC$. 
\end{proof}

\begin{cor}\label{C:prime}
The invariant subvariety $\cM_v^{doub}$ containing the double degeneration of $\bfD$ is prime. 
\end{cor}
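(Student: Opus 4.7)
The plan is to argue by contradiction, following the strategy of Lemma \ref{L:CylDegenPrimeBoundary}. First I would assume $\cM_v^{doub}$ is not prime, so by Theorem \ref{T:PrimeDecomp} there is a nontrivial decomposition $\cM_v^{doub} = \cN_1 \times \cdots \times \cN_k$ with $k \geq 2$. This induces a partition of the connected components of the double degeneration $(Y, \eta)$ into $k$ groups, with the $i$-th group constituting a surface in $\cN_i$. By Lemma \ref{L:Cstays}, every connected component of $(Y, \eta)$ contains at least one cylinder of $\bfC$, so if we let $\bfC_i \subseteq \bfC$ denote the cylinders lying in the $i$-th group of components, each $\bfC_i$ is nonempty.

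Next I would observe that $\bfC$ remains in a single generic parallelism class with respect to $\cM_v^{doub}$. Indeed, since $\bfC$ is an $\cM$-equivalence class on $(X, \omega)$, its cylinders are generically parallel under $\cM$-deformations. For $(X, \omega) \in \cM$ close to $(Y, \eta) \in \cM_v^{doub}$, the inclusion $T_{(Y, \eta)}(\cM_v^{doub}) \subseteq T_{(X, \omega)}(\cM)$ recalled in Section \ref{SS:SymplecticCompatibility} implies that every $\cM_v^{doub}$-deformation of $(Y, \eta)$ is realized by an $\cM$-deformation of $(X, \omega)$, so parallelism of the cylinders in $\bfC$ is preserved under all $\cM_v^{doub}$-deformations.

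The contradiction will come from exploiting the product structure. Since each factor $\cN_i$ is $GL^+(2, \bR)$-invariant, I can apply a small nontrivial rotation $r_\theta \in GL^+(2, \bR)$ to the components of $(Y, \eta)$ lying in $\cN_1$ while leaving the remaining components fixed. The resulting surface still lies in $\cN_1 \times \cdots \times \cN_k = \cM_v^{doub}$, but its cylinders in $\bfC_1$ have been rotated by $\theta$ while those in $\bfC_2 \cup \cdots \cup \bfC_k$ are unchanged. For small nonzero $\theta$ the directions of these two families differ, contradicting the preservation of parallelism established above. The step requiring the most care is verifying that $\bfC$ persists as a single generic parallelism class on the double degeneration, but given Lemma \ref{L:Cstays} this reduces to the tangent space inclusion for boundary components; the remainder is a direct analogue of Sublemma \ref{SL:sv} and its use in the proof of Lemma \ref{L:CylDegenPrimeBoundary}.
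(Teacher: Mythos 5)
Your proposal is correct and follows essentially the same route as the paper: the paper's proof likewise invokes Theorem \ref{T:PrimeDecomp} and derives a contradiction from the fact that the cylinders of $\bfC$ appear on every component (Lemma \ref{L:Cstays}) and remain generically parallel on $\cM_v^{doub}$, which a nontrivial product decomposition would violate. Your write-up merely makes explicit the persistence-of-parallelism step and the rotation argument that the paper leaves implicit.
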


\begin{proof}
This follows from Theorem \ref{T:PrimeDecomp}, since if the component of the boundary containing the double degeneration were not prime, it would be a product, and that would contradict the fact that the cylinders in $\bfC$ are generically parallel to each other and appear on each component.
\end{proof}

\begin{lem}\label{L:DoubleDegenTwoGenus}
The double degeneration of $\bfD$ loses at least two genus. 
\end{lem}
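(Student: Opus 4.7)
The plan is to apply the second statement of Lemma~\ref{L:DefinitionOfLC}, which gives that the genus of $\Col_v^{doub}(X,\omega)$ drops by at least $d:=\dim_{\bC}\span\{[\gamma_i]:H_i\in\bfD\}\subseteq H_1(X,\bC)$. So it suffices to show $d\geq 2$. I would proceed by contradiction: assume $d=1$, so that all core curves of cylinders in $\bfD$ are pairwise homologous up to sign. Since $|\bfD|\geq 2$ by Theorem~\ref{T:FindD}(2), pick two such cylinders $H_1,H_2$ with core curves $\gamma_1,\gamma_2$.

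The case assumption (we are in the double degeneration case) forces one component $A$ of $X\setminus(\gamma_1\cup\gamma_2)$ to lie entirely in $\what{\bfD}$. Because $A$ has positive area but the saddle connections parallel to $\bfD$ are one-dimensional, $A$ must be filled by the half-cylinders of $H_1,H_2$ on the $A$-side together with possibly further cylinders of $\bfD$ glued via parallel saddle connections. Iterating this argument with innermost pairs, I expect to conclude that the cylinders of $\bfD$ are linearly ordered in a ``chain'' whose union with the connecting parallel saddle connections is the annular region $A$, while the complementary region $B:=X\setminus\overline{A}$ has genus $g-1$, boundary equal to the two outermost core curves, and contains all cylinders of $\bfC$ and $\bfD'$ (by Corollary~\ref{C:CylinderProportion} and Theorem~\ref{T:FindD}).

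The hardest step is deriving a contradiction from this chain configuration. My plan combines two ingredients: (i) Lemma~\ref{L:BoundarySymplectic} makes $\pi(L_\bfD)\subseteq H_1(X,\bC)$ symplectic and hence even-dimensional, where $\pi$ denotes projection to absolute homology (this already gives $\dim\pi(L_\bfD)\geq 2d=2$ from the core curve class $[\gamma]$ and a cross curve class $[c]$); and (ii) I would exhibit an additional absolute cycle in $L_\bfD$ whose class lies outside $\span\{[\gamma],[c]\}$, for example by showing that cross curves of distinct cylinders in the chain must represent distinct absolute classes modulo $[\gamma]$. If this were to fail and all cross curves of the chain were absolutely homologous, then, roughly, the surface would be close to a double cover of a smaller genus Abelian differential, which together with $\rank(\cM)\geq 3$ would contradict Lemma~\ref{L:NoCover}. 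Combining (i) and (ii), parity forces $\dim\pi(L_\bfD)\geq 4$, and Lemma~\ref{L:DefinitionOfLC} then yields the required drop of at least two genera.
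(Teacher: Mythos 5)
Your reduction to the case where all core curves of $\bfD$ are homologous (via the second statement of Lemma \ref{L:DefinitionOfLC}), and your use of the double-degeneration-case hypothesis to produce a component of the complement of two core curves that is covered entirely by cylinders of $\bfD$ and their boundary saddle connections, both match the paper. But the crucial step --- actually producing the second unit of lost genus --- is where your proposal has a genuine gap. Your ingredient (ii) is not a proof: the fallback claim that if all cross curves of the chain were absolutely homologous then ``the surface would be close to a double cover of a smaller genus Abelian differential, contradicting Lemma \ref{L:NoCover}'' does not work, since Lemma \ref{L:NoCover} only asserts that \emph{some} surface in $\cM$ fails to be a translation cover (so no contradiction follows from properties of the particular surface $(X',\omega')$), and in any case homologous cross curves do not yield a covering map. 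Your ingredient (i) is also a misapplication: Lemma \ref{L:BoundarySymplectic} concerns $p(T_{(X,\omega)}(\cM))$, not the subspace of homology spanned by saddle connections in $\what{\bfD}$, and there is no a priori reason for the latter to be symplectic --- the core curve class typically lies in its radical.

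The idea you are missing is the hypothesis you never invoke: $(X',\omega')$ has \emph{no marked points}. If the chain region really were ``annular,'' as you describe it, every vertex on the interior gluing circles would have cone angle $2\pi$, i.e.\ would be a marked point, and collapsing the chain would then kill only the core curve class and one symplectic partner, for a genus drop of exactly one --- so your picture, taken literally, defeats the conclusion rather than proving it. The paper's argument instead notes that since all cylinders of $\bfD$ are homologous and one side of the configuration is fully covered, there are consecutive cylinders $D,D'$ with the top of $D$ glued entirely to the bottom of $D'$; because the vertices on this gluing circle must be genuine zeros, the subsurface $\overline{D\cup D'}$ carries positive genus, and collapsing it (together with the core curve and a cross curve) forces a genus drop of at least two. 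This concrete flat-geometric construction is exactly the ``additional absolute cycle'' your step (ii) calls for; it comes from the no-marked-points hypothesis applied to the fully-glued chain, not from linear algebra or covering arguments.
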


\begin{proof}
If the core curves of $\bfD$ span a $k$-dimensional subspace in homology, then the double degeneration loses at least $k$ genus by Lemma \ref{L:DefinitionOfLC}. So if $k>1$, then the double degeneration of $\bfD$ loses at least two genus. 

Hence we suppose $k=1$, which means exactly that all cylinders in $\bfD$ are homologous. Recalling that $\bfD$ contains at least two cylinders, pick two distinct cylinders $D_1, D_2\in \bfD$. Let the core curve of $D_i$ be $\gamma_i$. 

By assumption, one of the two components of $(\gamma_1\cup \gamma_2)^c$ does not intersect $\whatc{\bfD}$, since otherwise we would be in the single degeneration case. Thus, there is a component that is covered by cylinders in $\bfD$ and their boundary saddle connections. Since all the cylinders in $\bfD$ are homologous, this means that there are cylinders $D, D' \in \bfD$ such that the top of $D$ is glued entirely to the bottom of $D'$.

Since there are no marked points, $\overline{D\cup D'}$  contains a subsurface of genus at least 2. Since double-degenerating $\bfD$ collapses this subsurface the result follows. 
\end{proof}

Lemma \ref{L:Cstays} gives that $\bfC$ persists on the double degeneration. Corollary \ref{C:prime} gives that  $\cM_v^{doub}$ is prime. Corollary \ref{C:L:DDRelZero} gives that $\cM_v^{doub}$ doesn't have any rel, and hence cannot have free marked points.  This concludes the proof of  Theorem \ref{T:GoodDD}.

\section{Typical rank-preserving degenerations}\label{S:Good}

Throughout this section $(X, \omega)$ will be a surface in an invariant subvariety $\cM$ and we will use the notation introduced in Section \ref{S:CylinderDegenerations}. In particular, if $\bfC$ is an equivalence class on $(X, \omega)$ and $v \in \TwistC$, we will use the notation $\bfC_v$, $t_v$, and $\Col_v$ introduced in Section \ref{S:CylinderDegenerations}.  

It will be necessary to place a mild genericity assumption on our degenerations, using the following definition.

\begin{defn}
A cylinder degeneration of $(X,\omega)\in \cM$ defined by $v \in \TwistC$ will be called \emph{typical} if $\bfC$ is generic and all  cylinders in $\bfC_v$ have constant ratio of heights on all perturbations of $(X,\omega)$ in $\cM$. \end{defn}

This definition is motivated by Lemma \ref{L:GenericallyParallel}, which states that, for typical degenerations, $\cM_v$ is codimension one and that all the saddle connections in $\Col_v(\bfC_v)$ are generically parallel to each other. 

The purpose of this section is to prove three results. The first produces typical degenerations, the second shows that such degenerations are often rank-preserving, and the third shows that typical rank-preserving degenerations do not create free marked points.

\begin{lem}\label{L:typical-exists}
For any generic equivalence class $\bfC$ on a surface $(X,\omega)$ in an invariant subvariety $\cM$, there exists  $v\in \mathrm{Twist}( \bfC, \cM)$ which defines a typical cylinder degeneration. 
\end{lem}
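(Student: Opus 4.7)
The plan is to find $v \in \Twist(\bfC,\cM)$ whose collapsing set $\bfC_v$ consists of cylinders with locally constant pairwise height ratios on $\cM$. First note that each $\gamma_C^*$ for $C \in \bfC$ evaluates to zero on all core curves of $\bfC$ (parallel curves have zero intersection), so $\Twist(\bfC,\cM)$ is contained in the hyperplane $T^0 := \{u \in T_{(X,\omega)}(\cM) : \Im(u(\gamma)) = 0\}$ for the common direction $\gamma$ of $\bfC$. On $T^0$ the height functional $\phi_C(u) = \Im(u(s_C))$ is unambiguous (independent of the cross curve $s_C$), and its normalization $\psi_C := \phi_C/h_C$ partitions $\bfC$ into equivalence classes $P_1, \dots, P_k$ by declaring $C \sim C' \iff \psi_C = \psi_{C'}$ on $T^0$, which is equivalent to $h_C/h_{C'}$ being locally constant on $\cM$. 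By Example \ref{E:Cv}, $\bfC_v$ is always a union of whole classes, so the typicality condition amounts to $\bfC_v = P_{i^*}$ for a single index $i^*$.

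If $k=1$ I would take $v = -i\sigma_\bfC$ directly: then $\bfC_v = \bfC = P_1$, and the path is a cylinder degeneration by the genericity of $\bfC$. For $k \geq 2$ the plan is to perturb to $v = -i\sigma_\bfC + \epsilon r$ with small $\epsilon > 0$ and $r$ in the purely relative part $N := \ker(p) \cap \Twist(\bfC,\cM)$ of the twist space, which by Theorem \ref{T:CDTConverse} is codimension one in $\Twist(\bfC,\cM)$. For such $v$ we have $\psi_C(v) = -1 + \epsilon \psi_i(r)$ for $C \in P_i$, so $\bfC_v$ equals the union of those $P_i$ attaining $\min_i \psi_i(r)$. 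It therefore suffices to find $r \in N$ at which this minimum is attained uniquely.

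The main step is to show that the restrictions $\psi_1|_N, \dots, \psi_k|_N$ are pairwise distinct functionals on $N$. Fix $C \in P_i$, $C' \in P_j$ with $i \neq j$, and pick $u \in T^0$ with $\psi_C(u) \neq \psi_{C'}(u)$. Applying Proposition \ref{P:TwistDecomp} with $w = \omega$ (which lies in $T^0$ and satisfies $\omega(\gamma_C) = c_C \neq 0$), decompose $u = a\omega + \alpha \sigma_\bfC + r_u + u_{(X,\omega) \setminus \bfC}$ with $a \in \bR$ (forced by $u \in T^0$), $\alpha \in \bC$, and $r_u \in N$ (splitting the twist component via Theorem \ref{T:CDTConverse}). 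Using $\psi_C(\omega) = 1$ (from scaling), $\psi_C(\sigma_\bfC) = 0$, $\psi_C(i\sigma_\bfC) = 1$, and the vanishing of $\psi_C$ on the outside component $u_{(X,\omega) \setminus \bfC}$ (whose period along $s_C$ is zero), a short calculation gives $\psi_C(u) = a + \Im(\alpha) + \psi_C(r_u)$ and likewise $\psi_{C'}(u) = a + \Im(\alpha) + \psi_{C'}(r_u)$. Subtracting yields $\psi_C(r_u) - \psi_{C'}(r_u) = \psi_C(u) - \psi_{C'}(u) \neq 0$, establishing $\psi_C|_N \neq \psi_{C'}|_N$.

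Once the $\psi_i|_N$ are pairwise distinct, the image of $N$ in $\bR^k$ under $r \mapsto (\psi_1(r), \dots, \psi_k(r))$ is a linear subspace avoiding every coincidence hyperplane $\{x_i = x_j\}$, so a generic $r$ has a uniquely minimizing coordinate $\psi_{i^*}(r)$, giving $\bfC_v = P_{i^*}$ for $v = -i\sigma_\bfC + \epsilon r$; the cylinder degeneration conditions persist for small $\epsilon > 0$ by continuity from the standard direction $-i\sigma_\bfC$. The main obstacle is the separation step, whose resolution crucially relies on the observation that $\Twist(\bfC,\cM) \subseteq T^0$, so that the decomposition of Proposition \ref{P:TwistDecomp} with $w = \omega$ automatically produces a real coefficient $a$, reducing the potentially tangled interaction between real and complex structures on $\psi_C$ to a clean subtraction that identifies $\psi_C - \psi_{C'}$ with its purely relative component.
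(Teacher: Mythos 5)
Your construction of a direction $v$ with the right collapsing set $\bfC_v$ is correct, and it is essentially the paper's argument in different clothing: the paper chooses $v\in\Twist(\bfC,\cM)$ avoiding the finitely many non-vacuous hyperplanes $\{\psi_C=\psi_{C'}\}$ and then invokes Proposition \ref{P:TwistDecomp} with $w=\omega$ to upgrade constancy of height ratios from the twist space to all of $\cM$; your ``separation step'' is the same application of Proposition \ref{P:TwistDecomp}, just organized around the restriction of these functionals to $N=\ker(p)\cap\Twist(\bfC,\cM)$. (A small quibble there: a linear subspace of $\bR^k$ cannot \emph{avoid} the hyperplanes $\{x_i=x_j\}$, since they all contain the origin; what your computation actually shows, and what you need, is that the image of $N$ is not \emph{contained} in any of them.)

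The genuine gap is in the final sentence. You must also check that $v$ defines a cylinder degeneration at all, i.e.\ that the path $(X,\omega)+tv$ diverges in the stratum as $t\to t_v$. This is not automatic: a cylinder can reach zero height without any saddle connection degenerating, in which case the path converges \emph{inside} the stratum and can even be continued past $t_v$ --- this is exactly the ``over-collapsing'' exploited in Section \ref{S:nested}. In particular your base direction $-i\sigma_{\bfC}$ need not itself define a cylinder degeneration: as noted in the example following the definition of a collapse path, that path diverges if and only if $\overline{\bfC}$ contains a vertical saddle connection. So there is nothing for ``continuity'' to propagate from, and divergence is in any case not preserved under perturbing the direction, since both $t_v$ and $\bfC_v$ change. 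The repair is the one the paper uses: fix $\Im(v)$ (which alone determines $t_v$ and $\bfC_v$ when $\bfC$ is horizontal) and then add a suitable real multiple $c\sigma_{\bfC}$, chosen so that at time $t_v$ some saddle connection inside a cylinder of $\bfC_v$ becomes vertical; its length is then bounded by the height of that collapsing cylinder and hence tends to zero, forcing divergence without altering $\bfC_v$.
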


\begin{proof}
Assume without loss of generality that $\bfC$ is horizontal. For each pair of cylinders in $\bfC$, the subset of $v\in \TwistC$ where these two cylinders have constant ratio of heights along the path $(X,\omega)+tv$ is defined by a single linear equation, which is vacuous if the two cylinders have generically a constant ratio of heights and otherwise defines a hyperplane.  

Consider $v \in \TwistC$ not contained in any of the hyperplanes just described. By construction, for any deformation in a direction in  $\TwistC$, the cylinders in $\bfC_v$ have constant ratio of height, since otherwise $v$ would lie in one of the hyperplanes above. 

Proposition \ref{P:TwistDecomp} with $w=\omega$ now implies that all the cylinders in $\bfC_v$ have constant ratios of heights for any deformation in $\cM$. 

A final issue is that $v$ might not define a cylinder degeneration, if the path $(X,\omega)+tv, 0 \leq  t < t_v$ does not diverge in the stratum. This however can be easily corrected as follows. First replace $v$ by its imaginary part. Then add to $v$ a multiple $c\sigma_\bfC$ of the standard shear in $\bfC$, with $c\in \bR$ chosen so that $(X,\omega)+t_v c \sigma_\bfC$ has a vertical saddle connection in some cylinder of  $\bfC_v$. Both changes only affect the real part of $v$, and since $\bfC$ is horizontal $\bfC_v$ only depends on the imaginary part, so $\bfC_v$ remains the same. Because of the choice of $c$, now there is a saddle connection in $\bfC_v$ that has length going to zero along the cylinder degeneration path. 
%
%
%
\end{proof}

\begin{defn}\label{D:involved}
We will say that $\bfC$ is \emph{involved with rel} if some vector in $\ker(p) \cap T_{(X, \omega)}(\cM)$ evaluates non-trivially on a cross curve of a cylinder in $\bfC$.
\end{defn}

\begin{lem}\label{L:good-exists}
If $\bfC$ is generic and involved with rel, then every typical cylinder degeneration of $\bfC$ is rank-preserving. 
\end{lem}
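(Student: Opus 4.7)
The plan is to argue by contradiction, supposing that $v \in \TwistC$ defines a typical rank-reducing cylinder degeneration. By the Cylinder Degeneration Dichotomy (Theorem \ref{T:GraphsFull}), $\bfC_v = \bfC$, so typicality gives that the ratios of heights are constant across all of $\bfC$ under every perturbation in $\cM$.

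Next I would unpack this constraint on the given rel vector $r \in \ker(p) \cap T$, where $T = T_{(X,\omega)}(\cM)$ and $r(s_0) \neq 0$ for some cross curve $s_0$ of some $C_0 \in \bfC$. Applying Proposition \ref{P:TwistDecomp} with $w = \omega$ (the coefficient $a = r(\gamma_{C_0})/\omega(\gamma_{C_0})$ vanishes because $r \in \ker(p)$), one can decompose
\[ r = r_\bfC + r_{(X,\omega)\setminus\bfC}, \qquad r_\bfC = \sum_{C \in \bfC} b_C \gamma_C^*, \]
where $r_{(X,\omega)\setminus\bfC}$ vanishes on every saddle connection in $\overline{\bfC}$, so that $b_C = r(s_C)$ for any cross curve $s_C$ of $C$. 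Differentiating the typicality condition in the imaginary direction and complexifying using the real structure $T = T_{\bR} \otimes_{\bR} \bC$, one obtains that $r'(s_C)/h_C$ is independent of $C \in \bfC$ for every $r' \in T$. Applied to $r$, this forces $b_C = c h_C$ for the common value $c = r(s_0)/h_{C_0}$; hence $r_\bfC = c\sigma_\bfC$ with $c \neq 0$.

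To derive the contradiction, I would project to absolute cohomology. Since $r_{(X,\omega)\setminus\bfC}$ vanishes on $\overline{\bfC}$ and $V$ has a basis of $1$-chains supported in $\overline{\bfC}_v = \overline{\bfC}$ by Lemma \ref{L:VinCv}, we have $r_{(X,\omega)\setminus\bfC} \in \Ann(V) \cap T$, and hence $p(r_{(X,\omega)\setminus\bfC}) \in p(\Ann(V) \cap T)$. On the other hand, $r \in \ker(p)$ gives $p(r_{(X,\omega)\setminus\bfC}) = -c\,p(\sigma_\bfC)$, which lies in $p(\bC\sigma_\bfC)$. Proposition \ref{P:AbsoluteTwistDecomp} asserts $p(T) = p(\Ann(V) \cap T) \oplus p(\bC \sigma_\bfC)$ in the rank-reducing case, so the common element $p(r_{(X,\omega)\setminus\bfC}) = -c\,p(\sigma_\bfC)$ must be zero. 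Since $p(\sigma_\bfC) \neq 0$ (as the direct summand $p(\bC\sigma_\bfC)$ is nontrivial in the rank-reducing case), we conclude $c = 0$, contradicting $c \neq 0$.

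The main obstacle I expect is the careful derivation of the constraint that $r'(s_C)/h_C$ is independent of $C$ for \emph{every} $r' \in T$. Typicality is phrased in terms of heights (imaginary parts of complex periods), so one must first differentiate the height-ratio constancy to obtain the constraint for every $s \in T_{\bR}$, and then invoke the fact that $\cM$ is cut out by real-linear equations to complexify the constraint to all of $T$.
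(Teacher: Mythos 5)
Your argument is correct in substance, but it is organized differently from the paper's. The paper gives a direct proof with a case split: if not all pairs of cylinders in $\bfC$ have generically constant height ratio, then typicality forces $\bfC_v\neq\bfC$ and the rank-reducing half of Theorem \ref{T:GraphsFull} is immediately violated; otherwise the rel vector $w$ is rescaled so that $w(s_C)=h_C$ on cross curves, a vanishing saddle connection $s\subset\overline{\bfC}$ is exhibited with $w(s)\neq 0$, and Lemma \ref{L:RankReducing} (together with the collinearity of vanishing-cycle functionals from Lemma \ref{L:GenericallyParallel}) concludes. You instead run a contradiction through Proposition \ref{P:TwistDecomp} and the direct sum of Proposition \ref{P:AbsoluteTwistDecomp}: the rel vector decomposes as $c\sigma_\bfC$ plus a term in $\Ann(V)$, and $p(r)=0$ forces $-c\,p(\sigma_\bfC)$ to lie in both summands, whence $c=0$. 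Both routes hinge on the same computation (that a rel vector's values on cross curves are proportional to heights, which is exactly the paper's ``possibly after re-scaling'' step), and Lemma \ref{L:RankReducing} and Proposition \ref{P:AbsoluteTwistDecomp} encode the same linear algebra about $p(\Ann(V)\cap T)$ versus $p(T)$; your version trades the explicit vanishing saddle connection for the abstract decomposition, which is slightly cleaner but less geometric.

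One intermediate claim is overstated and should be repaired: it is not true that $r'(s_C)/h_C$ is independent of $C$ for \emph{every} $r'\in T_{(X,\omega)}(\cM)$. For general $r'$ the quantity $r'(s_C)$ depends on the choice of cross curve (two cross curves of $C$ differ by boundary saddle connections, on which a general tangent vector need not vanish), and even for a fixed choice the claim fails, e.g.\ for $r'=\omega$, where $\omega(s_C)/h_C = \tau_C/h_C + i$ with $\tau_C$ the twist. The differentiation of the height-ratio condition in the direction $i\epsilon\rho$ only yields $\rho(s_C)/h_C$ constant when $\rho(\gamma_C)=0$ for the core curves, so that the cylinders stay horizontal and the height perturbs by exactly $\epsilon\rho(s_C)$; otherwise the core direction tilts and a twist-dependent correction term appears. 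Fortunately the only case you use is $r'=r\in\ker(p)$, whose real and imaginary parts do kill the core curves (and hence the boundary saddle connections, making $r(s_C)$ well defined), so the conclusion $b_C=ch_C$ with $c\neq 0$ stands. State the claim only for tangent vectors vanishing on the core curves of $\bfC$ and the proof is complete.
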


\begin{proof}
First suppose that not all pairs of cylinders in $\bfC$ have generically constant ratio of height. In this case the definition of typical gives that $\bfC_v \neq \bfC$, and so Theorem \ref{T:GraphsFull} gives that the degeneration is rank-preserving. 

Next suppose that all pairs of cylinders in $\bfC$ have generically constant ratio of height. In this case $\bfC_v=\bfC$. Also $v$ must be a multiple of the standard deformation of $\bfC$, and hence there is a saddle connection $s$ in $\overline{\bfC}$ whose length goes to zero along the cylinder degeneration path. 

Let $w \in \ker(p) \cap T_{(X, \omega)}(\cM)$ evaluate non-trivially on a cross curve of a cylinder in $\bfC$. Because the cylinders in $\bfC$ have generically the same height, possibly after re-scaling we can assume that $w$ evaluated on any cross curve of any cylinder in $\bfC$ gives the height of that cylinder. Since $s$ must cross some of the curves in $\bfC$, we can conclude that $w(s)\neq 0$. Hence Lemma \ref{L:RankReducing} gives that the degeneration is rank-preserving.
\end{proof}

For the third main result of this section, it will be helpful to first note the following consequence of Theorem \ref{T:CDTConverse}. 

\begin{cor}\label{C:p0}
Let $\bfC$ be an equivalence class on a surface $(X,\omega)$ in an invariant subvariety $\cM$. 
If $v\in \TwistC$ is such that deforming $(X,\omega)$ in the direction of any complex multiple of $v$ does not change the area, then $p(v)=0$. 

In particular, if $\gamma_1$ and $\gamma_2$ are the core curves of cylinders in $\bfC$ with equal circumference and $\gamma_1^* - \gamma_2^* \in T_{(X,\omega)}(\cM)$, then $\gamma_1$ and $\gamma_2$ are homologous. 
\end{cor}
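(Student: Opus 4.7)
The plan is to reduce the claim to the symplectic description of area on absolute cohomology, then apply Theorem~\ref{T:CDTConverse}. After rotating so that $\bfC$ is horizontal (both hypothesis and conclusion are rotation-invariant), I would use that the area of $(X,\omega)$ equals $\langle \Re p(\omega), \Im p(\omega)\rangle$ for the symplectic pairing $\langle\cdot,\cdot\rangle$ on $H^1(X,\bR)$. Consequently, the area along any path $(X,\omega)+tu$ is a quadratic polynomial in $t$, whose first-order variation is
\[ \delta_u(\mathrm{area}) \;=\; \langle \Re p(u), \Im p(\omega)\rangle + \langle \Re p(\omega), \Im p(u)\rangle. \]
By hypothesis this first variation vanishes for every complex multiple $u=cv$; specializing to $c=1$ and $c=i$ gives two linear conditions on $p(v)$.

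By Theorem~\ref{T:CDTConverse}, $p(v) = a\cdot p(\sigma_\bfC) = a\sum_{C\in\bfC}h_C\gamma_C^*$ for some $a\in\bC$. Using the Poincar\'e-duality identity $\langle \gamma_C^*,\eta\rangle = \eta(\gamma_C)$ together with the facts that $\omega(\gamma_C)=w_C\in\bR_{>0}$ and $\Im\omega(\gamma_C)=0$ (since $\bfC$ is horizontal), the $c=1$ and $c=i$ conditions collapse respectively to
\[ -\Im(a)\cdot A \;=\; 0 \quad\text{and}\quad -\Re(a)\cdot A \;=\; 0, \]
where $A = \sum_{C\in\bfC} h_C w_C$ is the strictly positive total area of the cylinders in $\bfC$. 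Hence $a=0$, and so $p(v)=0$.

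For the ``in particular'' statement, take $v = \gamma_1^*-\gamma_2^*$, which belongs to $\TwistC$ by assumption. After rotating so that $\bfC$ is horizontal, the first-order variation formula along any $cv$ reduces to $\Re(c)(\Im\omega(\gamma_1)-\Im\omega(\gamma_2))-\Im(c)(\Re\omega(\gamma_1)-\Re\omega(\gamma_2)) = -\Im(c)(w_1-w_2)$, which vanishes by the equal-circumference hypothesis. The quadratic term $t^2\langle \Re p(cv),\Im p(cv)\rangle$ also vanishes, because both $\Re p(cv)$ and $\Im p(cv)$ are real scalar multiples of the same real class $\gamma_1^*-\gamma_2^*$ and hence pair to zero under the skew form. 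Thus every complex multiple of $v$ preserves area, and the main statement yields $p(\gamma_1^*-\gamma_2^*)=0$, i.e., $[\gamma_1]=[\gamma_2]$ in absolute homology. The only nontrivial point I anticipate is bookkeeping of sign conventions in the symplectic/Poincar\'e-duality pairing, which is routine given Section~\ref{SS:SymplecticCompatibility}.
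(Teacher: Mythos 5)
Your proof is correct and follows essentially the same route as the paper: both reduce via Theorem~\ref{T:CDTConverse} to showing that the coefficient of $p(\sigma_\bfC)$ vanishes because the dilation component of the standard deformation changes area. Your symplectic first-variation computation $\delta_{cv}(\mathrm{area})=-\Im(ca)\,A$ with $A>0$ is just the explicit, quantitative form of the paper's one-line observation, and your verification of the ``in particular'' hypothesis (vanishing of both the linear and quadratic terms) correctly fills in what the paper calls immediate.
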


\begin{proof}
Using Theorem \ref{T:CDTConverse}, we can write $v= c \sigma_\bfC + r$, where $c\in \bC$ and $r$ is purely relative. Deforming in the direction of purely relative classes doesn't change the area of a translation surface, but there is a multiple of $\sigma_\bfC$ which corresponds to dilating (rather than shearing) $\bfC$, and this does change the area. This gives $c=0$, establishing the main claim. 

The second claim follows immediately from the first. 
\end{proof}

\begin{lem}\label{L:FreeMarkedPointAfterDegenerating}
If $v\in \TwistC$ defines a typical rank-preserving degeneration, and $(X, \omega)$ does not have marked points, then $\Col_v(X, \omega)$ does not have any free marked points.
\end{lem}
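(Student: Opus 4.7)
Suppose for contradiction that $p$ is a free marked point on $\Col_v(X, \omega)$. Since $(X,\omega)$ has no marked points, every marked point on $\Col_v(X,\omega)$ arises as a non-singular filled-in point in the construction of Lemma~\ref{L:Converge}, so $p$ is a vertex of the graph $\Col_v(\bfC_v)$. Rotate so that $\bfC$ is horizontal. Since the degeneration is typical, Lemma~\ref{L:GenericallyParallel} applies, so every saddle connection in $\Col_v(\bfC_v)$ is generically parallel to the others and horizontal at $\Col_v(X,\omega)$. Because $p$ has cone angle $2\pi$, at most two horizontal saddle-connection endpoints can fit at $p$, giving $p$ graph degree at most two (loops contribute two); since Theorem~\ref{T:GraphsFull} gives that $\Col_v(\bfC_v)$ is strongly connected in the rank-preserving case, $p$ has degree at least two, hence exactly two.

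The plan is to use these constraints to contradict freeness. First I would show that every saddle connection in $\Col_v(\bfC_v)$ must be incident to $p$: otherwise some $s$ has holonomy unchanged under moving $p$, while the two edges incident to $p$ would tilt out of horizontal, breaking generic parallelism with $s$ in $\cM_v$. With degree exactly two and all edges touching $p$, only two graph configurations remain: a single loop at $p$, or two non-loop edges which, by strong connectedness, must connect $p$ to a single other vertex $q$ in both directions, forming a $2$-cycle $p \to q \to p$. The single-loop configuration is ruled out because every generic cylinder in $\bfC_v$ has boundary saddle connections between zeros of $\omega$, and each such zero collapses to a point of cone angle at least its own, hence a zero vertex of the graph; so the graph cannot consist of only the marked vertex $p$.

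In the remaining $2$-cycle case, orient both edges consistently (as in Section~\ref{S:degens}) so that their holonomies $h_1, h_2$ are strictly positive reals, whence $h_1 + h_2 > 0$. Moving $p$ by $\lambda = x + iy$ changes the holonomies to $h_1 - \lambda$ and $h_2 + \lambda$; a short calculation shows these remain parallel precisely when $y(h_1 + h_2) = 0$, forcing $y = 0$ and contradicting that $p$ is free. The main obstacle I expect is not the parallelism calculation but the case analysis itself, and especially excluding the single-loop configuration, where one must bring in the concrete fact that generic cylinders have zero-endpoint saddle connections on their boundaries in order to preclude a graph consisting solely of marked-point vertices.
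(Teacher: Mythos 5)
Your degree count at $p$, your reduction to two configurations, and your parallelism computation in the two-edge case are fine; that part of the argument is a reasonable (and somewhat different) way to handle the easy case. The gap is the loop configuration, which is exactly where the paper's proof does all of its work. There are two problems. First, your step ``every saddle connection in $\Col_v(\bfC_v)$ is incident to $p$'' is justified by saying the edges at $p$ tilt out of horizontal when $p$ moves; but a horizontal loop at $p$ is an absolute cycle, and moving a marked point is a purely relative deformation, so the loop's holonomy does not change. Hence in the loop case you cannot conclude that the graph consists only of that loop, and in general it does not: $\bfC_v$ may contain many cylinders collapsing to many saddle connections, only one of which is the loop at $p$.

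Second, and more seriously, your exclusion of the loop configuration rests on the claim that a zero of $\omega$ ``collapses to a point of cone angle at least its own.'' This is false: passage to the WYSIWYG boundary involves not only collapsing but also \emph{ungluing} finitely many points (see the discussion before Lemma \ref{L:Converge}), so a single high-order zero can split into several points of strictly smaller cone angle, including marked points. For instance, in Figure \ref{F:DDexample} the order-$4$ zero of a surface in $\cH(4)$ becomes two order-$1$ zeros after one collapse and marked points after the next. So a saddle connection of $\Col_v(\bfC_v)$ joining the free marked point $p$ to itself is a genuinely possible configuration, and ruling it out requires the bulk of the paper's argument: such a loop is the common boundary component of two cylinders $D_1,D_2$ on $\Col_v(X,\omega)$ whose preimages $C_1,C_2\in\bfC$ are forced to be homologous (using freeness of $p$ and Corollary \ref{C:p0}), after which one analyzes the cylinder directly above $C_2$ and produces a loop with nonzero imaginary holonomy inside $\overline{\bfC}_v$, contradicting Lemma \ref{L:Loop2}. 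None of this is present in your proposal, so as written it does not prove the lemma.
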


\begin{proof}
Suppose, in order to find a contradiction, that $\Col_v(X, \omega)$ does have a free marked point $p$. 

Without loss of generality, we assume that $\bfC$ is horizontal and that the only horizontal cylinders on $(X, \omega)$ are contained in $\bfC$. Since $(X, \omega)$ contains no marked points, the free point $p$ on $\Col_v(X, \omega)$ must be the endpoint of a saddle connection $s$ in $\Col_v(\bfC_v)$. 

We will show now that $s$ joins $p$ to itself. If $s$ joins $p$ to another zero or marked point, then $s$ cannot be generically parallel to another saddle connection, by definition of free marked point. Lemma \ref{L:GenericallyParallel} gives that the saddle connections in $\Col_v(\bfC_v)$ must be generically parallel to each other, so we conclude $\Col_v(\bfC_v) = \{ s\}$. Since $\cM$ and $\cM_v$ have the same rank, Theorem \ref{T:GraphsFull} gives that $\Col_v(\bfC_v)$ is a strongly connected directed graph. But a single saddle connection joining two distinct points is not strongly connected. This proves our claim that $s$ joins $p$ to itself.

\begin{figure}[h]\centering
\includegraphics[width=0.35\linewidth]{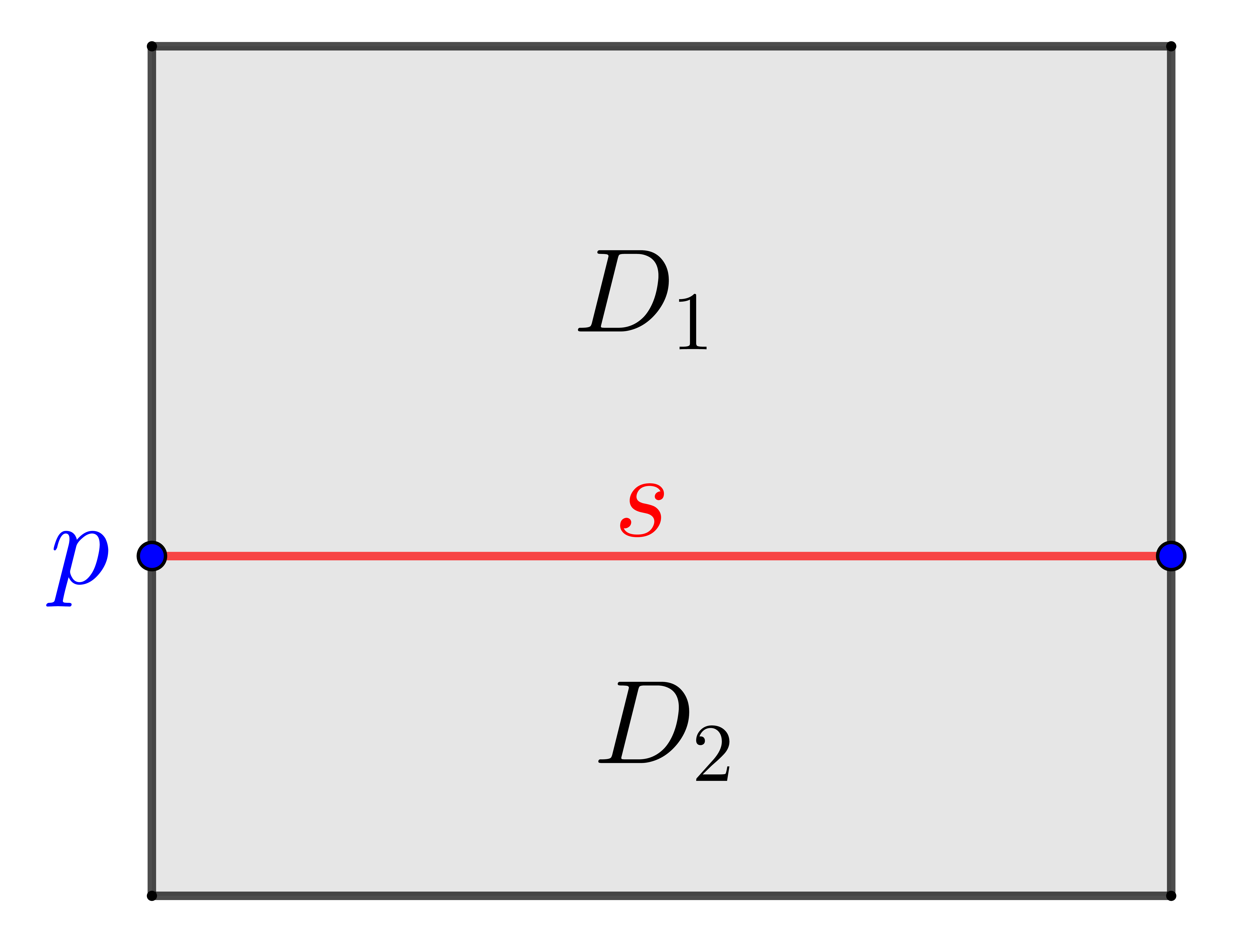}
\caption{The saddle connection $s$.}
\label{F:sD1D2}
\end{figure}

Any saddle connection joining a marked point to itself is a core curve of a cylinder on the corresponding surface without marked points. Thus, as in Figure \ref{F:sD1D2}, there are two cylinders $D_1$ and $D_2$ on $\Col_v(X, \omega)$, both of which have an entire boundary component consisting of $s$; these are the cylinders above and below $s$. Suppose without loss of generality that $D_2$ lies below $s$. 

Let $C_1$ and $C_2$ be the cylinders on $(X, \omega)$ with $\Col_v(C_i) = D_i$; these are simply the same cylinders viewed before the degeneration.

\begin{sublem}
$C_1$ and $C_2$ are homologous. 
\end{sublem}

\begin{proof}
Let $\gamma_i$ denote the core curve of $D_i$. Deforming in the direction of $\gamma_1^* - \gamma_2^*$ moves $p$ while keeping the rest of the surface unchanged. So since  $p$ is a free marked point, $\gamma_1^* - \gamma_2^*$ belongs to $T_{\Col_v(X, \omega)}(\cM_v)$. 

Notice that since the $D_i$ are horizontal, the $C_i$ are horizontal and hence contained in $\bfC$. (In particular, this shows that $\bfC \ne \bfC_v$.) We will continue to use $\gamma_i$ to denote the core curve of $C_i$. 

Since $\gamma_1^* - \gamma_2^*$ belongs to $\TwistC$, Corollary \ref{C:p0} gives that $\gamma_1$ and $\gamma_2$ are homologous. 
\end{proof}

%

 

Keeping in mind that $C_1$ and $C_2$ are homologous, let $\bfC_0$ be set of cylinders in $\bfC_v$ that lie above $C_2$ and below $C_1$, so that the closure $\overline{\bfC}_0$ covers the region above $C_2$ and below $C_1$. 

Let $D$ be the cylinder that lies directly above $C_2$. If $D$ and $C_2$ have the same circumference, then there must be a marked point separating them, which gives a contradiction. If not, we have the following. 

\begin{sublem}\label{SL:sbot}
If $D$ and $C_2$ do not have the same circumference, then there is oriented  closed loop in $\overline{\bfC}_v$ consisting of segments that travel along core curves of cylinders in $\bfC_v$, and vertical downwards oriented segments. 
\end{sublem}

\begin{figure}[h]\centering
\includegraphics[width=0.35\linewidth]{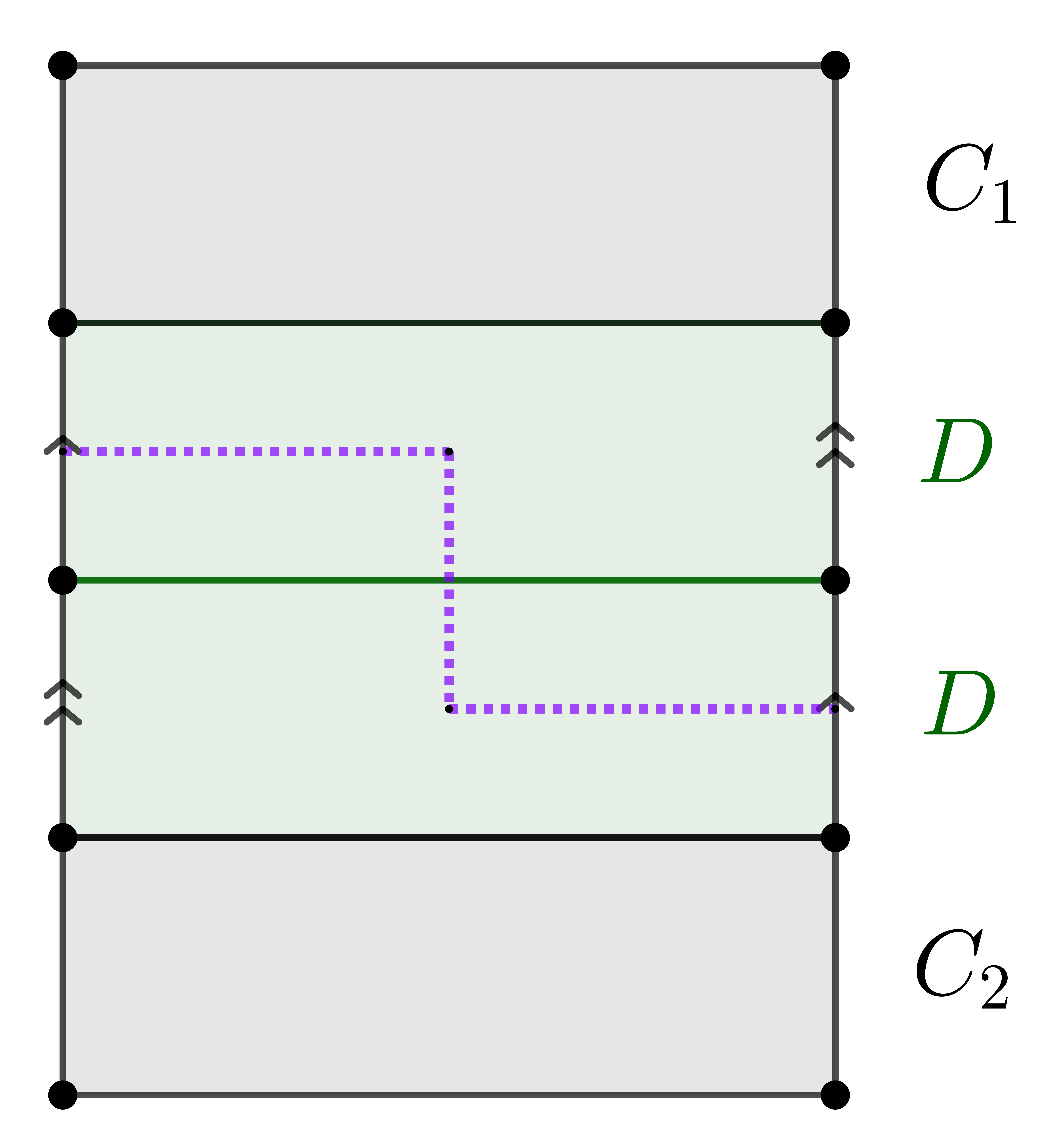}
\caption{The proof of Sublemma \ref{SL:sbot}}
\label{F:sbot}
\end{figure}

\begin{proof}
The assumption gives that there is a saddle connection on the bottom of $D$ that does not border $C_2$. On the other side of this saddle connection lies a cylinder  $D' \in \bfC_0$. See Figure \ref{F:sbot} for an example where $D'=D$. Because $C_1$ and $C_2$ are homologous, the bottom of $D'$ is glued entirely to cylinders in $\bfC_0 \cup \{C_2\}$. 

We can form a path by following the core curves of $D$, then going vertically down into $D'$, and then if necessary following the core curve of $D'$, and then going into a cylinder $D''$ below that. Eventually, this path must visit the same cylinder twice, and in this way we can obtain a closed loop as desired. See Figure \ref{F:sbot} for an example where $D'=D$. 
\end{proof}

The imaginary part of the holonomy of such a loop is non-zero, contradicting Lemma \ref{L:Loop2}. 
\end{proof}

%
%
%

\section{Proof of Proposition \ref{P:RelMain}}\label{S:RelMain}


Suppose that $\cM$ is an invariant subvariety of genus $g$ surfaces without marked points, and assume that $\cM$ has high rank, is not geminal, and has rel. Since $\cM$ is not geminal, there is a generic equivalence class $\bfC$ of cylinders on a surface $(X, \omega)$ in $\cM$ so that $\bfC$ cannot be partitioned into free cylinders and pairs of twins. 

We will give the proof of Proposition \ref{P:RelMain} in two cases. In both cases, we will use without further comment that a typical cylinder degeneration of an equivalence class involved in rel 
\begin{itemize}
\item is rank-preserving by Lemma \ref{L:good-exists}, 
\item is connected by Lemmas \ref{L:CylDegenPrimeBoundary} and \ref{L:ConnectedSurfacesinBoundary}, and 
\item does not have free marked points by Lemma \ref{L:FreeMarkedPointAfterDegenerating}. 
\end{itemize}
Hence, it suffices to find such a degeneration that is not geminal. 

The following lemma addresses the easier case, when the twist space of $\bfC$ doesn't contain the rel of $\cM$. 

\begin{lem}\label{L:Rel1:Twist}
If $\mathrm{Twist}(\bfC, \cM )$ does not contain $T_{(X,\omega)}(\cM) \cap \ker(p)$, then Proposition \ref{P:RelMain} holds.  
\end{lem}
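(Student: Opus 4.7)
The plan is to find a horizontal equivalence class $\bfE$ on $(X,\omega)$ disjoint from $\bfC$ that is involved with rel, then typically degenerate $\bfE$.

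First I set up the surface. The high rank hypothesis gives $\bk(\cM) = \bQ$ by \cite[Theorem 1.5]{Wfield}. After rotating so $\bfC$ is horizontal, Lemma \ref{L:CS}\eqref{L:CS:perturb} together with a further small generic perturbation (preserving horizontal cylinders and cylindrical stability) lets me assume that $(X,\omega)$ is cylindrically stable and that every horizontal equivalence class is generic via Lemma \ref{L:generic}. Because $\bfC$ was already a generic equivalence class, these perturbations do not enlarge it, so the hypothesis $\TwistC \not\supseteq \ker(p) \cap T_{(X,\omega)}(\cM)$ continues to hold and $\bfC$ still certifies non-geminality.

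Next I find $\bfE$. By cylindrical stability (Lemma \ref{L:CS}\eqref{L:CS:TwistSpace}) the horizontal cylinder preserving space equals the horizontal twist space. Every rel vector $r$ lies in the cylinder preserving space (since $r$ vanishes on every absolute cycle, including the horizontal core curves), and so admits an expansion $r = \sum_C a_C \gamma_C^*$ over the horizontal cylinders $C$. The hypothesis supplies $r \notin \TwistC$, i.e.\ $r$ is not in the span of $\{\gamma_C^* : C \in \bfC\}$, forcing some $a_{C_0} \neq 0$ with $C_0 \notin \bfC$. For any cross curve $s_{C_0}$ of $C_0$, disjointness of distinct cylinders gives $r(s_{C_0}) = \pm a_{C_0} \neq 0$, so the $\cM$-equivalence class $\bfE$ of $C_0$ is distinct from $\bfC$ and involved with rel.

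By Lemma \ref{L:typical-exists} I choose $v \in \Twist(\bfE, \cM)$ defining a typical cylinder degeneration, which by Lemma \ref{L:good-exists} is rank-preserving, and the three bullets opening this section then guarantee that $\cM_v$ consists of connected surfaces of the same rank as $\cM$ and has no free marked points. Since $\bfC$ and $\bfE$ are disjoint equivalence classes, every cylinder of $\bfC$ persists on the degeneration; writing $\Col(\bfC)$ for these survivors, the core curves of $\bfC$ pair trivially with every vanishing cycle (which is supported in $\overline{\bfE}$ by Lemma \ref{L:VinCv}), so $\sigma_\bfC$ lies in $\Twist(\Col(\bfC), \cM_v)$.

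The main obstacle is showing $\cM_v$ is not geminal. Assume for contradiction that it is. Then $\Twist(\Col(\bfC), \cM_v)$ admits a basis consisting of $\gamma_F^*$ for $\cM_v$-free $F \in \Col(\bfC)$ and $\gamma_T^* + \gamma_{T'}^*$ for $\cM_v$-twin pairs $\{T, T'\} \subseteq \Col(\bfC)$; a twin partner outside $\Col(\bfC)$ cannot contribute, as any nontrivial deformation of $T$ would require an equal deformation of that partner and so lie outside $\mathrm{span}\{\gamma_C^* : C \in \Col(\bfC)\}$. Expressing $\sigma_\bfC = \sum_{C \in \bfC} h_C \gamma_C^*$ in such a basis exhibits a partition of $\bfC$ into $\cM_v$-free cylinders and $\cM_v$-twin pairs entirely within $\bfC$. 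Each $\cM_v$-free cylinder is also $\cM$-free (its individual deformation lies in $T_{(X,\omega)}(\cM)$), and each $\cM_v$-twin pair is also an $\cM$-twin pair (their heights are already equal on $(X,\omega)$, and the joint deformation lies in $T_{(X,\omega)}(\cM)$), contradicting our choice of $\bfC$.
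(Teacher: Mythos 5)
Your proposal is correct and follows essentially the same route as the paper's proof: use cylindrical stability plus the hypothesis to locate a horizontal equivalence class disjoint from $\bfC$ and involved with rel, typically degenerate it, invoke the three bullet points from the start of the section, and show $\cM_v$ is not geminal because $\sigma_\bfC$ survives into $T(\cM_v)$ so the certificate $\bfC$ persists. Your final step merely spells out in more detail the implication (a partition of $\Col_v(\bfC)$ into $\cM_v$-free cylinders and $\cM_v$-twins would yield the forbidden partition of $\bfC$) that the paper leaves terse.
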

\begin{proof}
By Lemma \ref{L:CS} \eqref{L:CS:perturb}, we can perturb $(X,\omega)$ in such a way that $\bfC$ remains horizontal and the surface becomes cylindrically stable. It follows from the definition of cylindrically stable that the twist space of $(X,\omega)$ then contains $T_{(X,\omega)}(\cM) \cap \ker(p)$, since the cylinder preserving space always contains $T_{(X,\omega)}(\cM) \cap \ker(p)$ by definition. Since $\mathrm{Twist}( \bfC, \cM )$ does not contain all the rel there is an equivalence class $\bfD$ of horizontal cylinders that is disjoint from $\bfC$ and involved in rel. 

After perturbing again, we may assume that $\bfD$ is generic. Using Lemma \ref{L:typical-exists}, pick $v \in \Twist(\bfD, \cM)$ that defines a typical cylinder degeneration. 

It suffices to show that $\cM_v$ is not geminal. Indeed, no cylinder in $\Col_v(\bfC)$ can be the twin of a cylinder not in $\Col_v(\bfC)$, since  the standard twist in $\bfC$ can also be performed in $\cM_v$, as in Lemma \ref{L:StdSurvives}. So, because $\bfC$ cannot be partitioned into free cylinders and pairs of twins, we get the same statement for the equivalence class of $\Col_v(\bfC)$.
\end{proof}

In light of Lemma \ref{L:Rel1:Twist}, we will suppose that  $\mathrm{Twist}(\bfC, \cM )$ contains all the rel in $T_{(X, \omega)}(\cM)$. So in particular, $\bfC$ is involved in rel. The rest of this section gives the proof of Proposition \ref{P:RelMain} in this case, by assuming that that all  typical degenerations of $\bfC$ are geminal and deriving a contradiction. 

Using Lemma \ref{L:typical-exists}, pick $v \in \mathrm{Twist}( \bfC, \cM )$ defining a typical cylinder degeneration. By assumption, $\cM_v$ is geminal, so since $\bfC_v$ is not all of $\bfC$ we can find a set  $\bfC_0$ of either one or two cylinders in $\bfC\setminus \bfC_v$ such that $\Col_v(\bfC_0)$ is either a single free cylinder or a pair of twins. 

In particular, $\mathrm{Twist}( \bfC_0, \cM )$ is one-dimensional and, using the identification of $T_{\Col_v(X, \omega)}(\cM_v)$ with a subspace of $T_{(X, \omega)}(\cM)$, spanned by the standard shear $\sigma$ of $\Col_v(\bfC_0)$. 

\begin{lem}\label{L:Rel:UnequalHeights}
There are two cylinders in $\bfC_0$ and they do not have generically equal heights.
\end{lem}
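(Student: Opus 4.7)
The plan is to rule out both the case $|\bfC_0|=1$ and the case $|\bfC_0|=2$ with generically equal heights, deriving in each a contradiction with the hypothesis that $\bfC$ cannot be partitioned into free cylinders and pairs of twins.

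For $|\bfC_0|=1$, say $\bfC_0=\{C_0\}$, I would first observe that $\sigma$ is a nonzero scalar multiple of $\gamma_{C_0}^*$, so $\gamma_{C_0}^* \in T_{(X,\omega)}(\cM)$, meaning $C_0$ is free in $\cM$. Then $\Col_v(C_0)$ is free in $\cM_v$ and forms its own $\cM_v$-equivalence class; since every other cylinder in $\bfC\setminus\bfC_v$ would be $\cM_v$-parallel to $\Col_v(C_0)$ (as the image of an $\cM$-parallel cylinder), this would force $\bfC\setminus\bfC_v=\{C_0\}$. The Cylinder Deformation Theorem applied to $\bfC$ together with $\gamma_{C_0}^*\in T_{(X,\omega)}(\cM)$ then gives $\sigma_{\bfC_v} = \sigma_\bfC - h_{C_0}\gamma_{C_0}^* \in T_{(X,\omega)}(\cM)$, so the complementary subcollection $\bfC_v$ may be twisted independently.

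For $|\bfC_0|=2$ with generically equal heights, $\bfC_0=\{C_1,C_2\}$, I would note that $\gamma_{C_1}^*+\gamma_{C_2}^* \in T_{(X,\omega)}(\cM)$. Since $\Col_v(C_1)$ and $\Col_v(C_2)$ are twins in $\cM_v$, they have equal heights and circumferences on $\Col_v(X,\omega)$. Circumferences of cylinders in $\bfC$ are unchanged under any $v\in \mathrm{Twist}(\bfC,\cM)$ (since $v$ evaluates to zero on core curves of $\bfC$), so the circumferences of $C_1$ and $C_2$ already agree on $(X,\omega)$. Combined with the generic equality of heights and the fact that equal twists preserve $\cM$, this shows $C_1$ and $C_2$ are twins in $\cM$.

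The hard part will be completing the contradiction, since producing a free cylinder $C_0\in \bfC$ or a single twin pair $\{C_1,C_2\}\subset\bfC$ does not by itself partition all of $\bfC$. The plan is to iterate the above analysis, applying the geminality of $\cM_v$ to the remaining cylinders of $\bfC\setminus\bfC_v$ to produce further free cylinders and twin pairs in $\bfC$, and then to handle the degenerated cylinders of $\bfC_v$ using the strongly connected structure of $\Col_v(\bfC_v)$ guaranteed by Theorem \ref{T:GraphsFull} (since the degeneration is rank-preserving by Lemma \ref{L:good-exists}), together with the decomposition $\mathrm{Twist}(\bfC,\cM) = R + \bC\sigma_\bfC$ from Theorem \ref{T:CDTConverse} and the standing assumption $R\subseteq \mathrm{Twist}(\bfC,\cM)$. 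Corollary \ref{C:p0} should enable matching cylinders of $\bfC_v$ into twin pairs by identifying area-preserving combinations of their core-curve duals that must vanish in absolute cohomology. Successfully assembling a complete partition of $\bfC$ would then contradict the non-partitionability of $\bfC$ chosen at the start of the proof of Proposition \ref{P:RelMain}.
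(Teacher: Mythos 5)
Your first two paragraphs correctly identify what the contradiction hypothesis gives you: in either case ($|\bfC_0|=1$, or $|\bfC_0|=2$ with generically equal heights) the set $\bfC_0$ itself is a free cylinder or a twin pair in $\cM$. But the third paragraph is where the actual content of the lemma lives, and what you offer there is a plan, not a proof, and I do not believe it can be completed along the lines you sketch. The geminality of $\cM_v$ tells you nothing about the cylinders in $\bfC_v$, because those cylinders have been collapsed and leave no trace on $\Col_v(X,\omega)$; strong connectivity of $\Col_v(\bfC_v)$ and Corollary \ref{C:p0} give no mechanism for recovering a partition of $\bfC_v$ into free cylinders and twins in $\cM$. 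The paper's proof avoids this entirely by a short trick you are missing: under the contradiction hypothesis, the cylinders of $\bfC_{-i\sigma}=\bfC_0$ have constant height ratios on all perturbations, so (after shearing to create a vertical saddle connection in $\bfC_0$) the vector $-i\sigma$ defines a \emph{second} typical cylinder degeneration, one that collapses exactly $\bfC_0$. The standing assumption in this part of Section \ref{S:RelMain} is that \emph{every} typical degeneration of $\bfC$ is geminal, so $\cM_{-i\sigma}$ is geminal; since $\bfC-\bfC_0$ survives that degeneration isometrically, its free/twin partition pulls back to $\cM$, and together with $\bfC_0$ being free-or-twins this partitions all of $\bfC$ — the desired contradiction. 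The point is to choose the degeneration so that the only collapsed cylinders are the ones you already understand.

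Two smaller issues in your first paragraph: the claim that $\Col_v(C_0)$, being free, ``forms its own $\cM_v$-equivalence class'' is false in general (e.g.\ homologous cylinders in a stratum are each free and are generically parallel to one another), so the deduction $\bfC\setminus\bfC_v=\{C_0\}$ is unjustified; fortunately it is also unnecessary. And the observation that $\sigma_{\bfC}-h_{C_0}\gamma_{C_0}^*\in T_{(X,\omega)}(\cM)$ lets you twist $\bfC_v$ independently, but an independent twist of a subcollection is far weaker than a partition of that subcollection into free cylinders and twin pairs, so it does not advance the contradiction.
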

\begin{proof}
Suppose in order to derive a contradiction that the cylinders in $\bfC_0$ all have generically identical heights; this holds vacuously if $\bfC_0$ has only a single cylinder. For simplicity of notation assume additionally that $\bfC_0$ contains a vertical saddle connection; this can be arranged by shearing the surface.

Thus $-i\sigma$ defines a typical cylinder degeneration, and hence $\cM_{-i\sigma}$ is geminal. Hence $\Col_{-i\sigma}(\bfC)$ is partitioned into twins and free cylinders. Since $\Col_{-i\sigma}(\bfC - \bfC_0)$ consists of cylinders isometric to those in $\bfC - \bfC_0$, it follows that $\bfC$ has a decomposition into twins and free cylinders, which is a contradiction.
\end{proof}

By replacing $(X, \omega)$ with a perturbation and using Lemma \ref{L:Rel:UnequalHeights}, we may assume that the two cylinders in $\bfC_0$ have different heights. We will denote the two cylinders $C_1$ and $C_2$, where $C_1$ is shorter than $C_2$. Letting $\gamma_1$ and $\gamma_2$ denote their (consistently oriented) core curves, up to scaling, $\sigma = \gamma_1^* + \gamma_2^*$. 

Now that we have perturbed, $\bfC_{-i\sigma} = \{C_1\}$ and so $-i\sigma$ is typical. Since $C_2$ is not free, there is another cylinder $C_3 \in \bfC$ such that $\{\Col_{-i\sigma}(C_j)\}_{j=2}^3$ is a pair of twins. Letting $\gamma_3$ denote the (consistently oriented) core curve of $C_3$, we have that $\gamma_2^* + \gamma_3^*$ also belongs to $T_{(X, \omega)}(\cM)$. 

\begin{lem}\label{L:hom}
$C_1$ and $C_3$ are homologous to each other.
\end{lem}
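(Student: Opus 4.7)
The plan is to apply the second part of Corollary~\ref{C:p0} to $\gamma_1$ and $\gamma_3$, for which I need two ingredients: (a) $\gamma_1^* - \gamma_3^* \in T_{(X,\omega)}(\cM)$, and (b) $\mathrm{circ}(C_1) = \mathrm{circ}(C_3)$ on $(X,\omega)$. For (a), I simply subtract the two tangent-space relations already established in the text, namely $\gamma_1^* + \gamma_2^* \in T_{(X,\omega)}(\cM)$ (the standard shear of the twin pair $\{\Col_v(C_1), \Col_v(C_2)\}$) and $\gamma_2^* + \gamma_3^* \in T_{(X,\omega)}(\cM)$ (the standard shear of the twin pair $\{\Col_{-i\sigma}(C_2), \Col_{-i\sigma}(C_3)\}$). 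This immediately yields $\gamma_1^* - \gamma_3^* \in T_{(X,\omega)}(\cM)$, and it lies in $\TwistC$ since $\gamma_1, \gamma_3$ are core curves of cylinders in $\bfC$.

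For (b), the key observation is that cylinder deformations in $\TwistC$ do not change the circumferences of cylinders in $\bfC$. Indeed, for any $\eta = \sum_{D \in \bfC} a_D \gamma_D^* \in \TwistC$ and any core curve $\gamma_C$ of a cylinder $C \in \bfC$, we have $\eta(\gamma_C) = \sum_D a_D (\gamma_D \cdot \gamma_C) = 0$, because distinct parallel core curves can be taken to be disjoint and hence have zero algebraic intersection. Applied to the original $v \in \TwistC$, this shows that the circumferences of $C_1$ and $C_2$ on $(X,\omega)$ coincide with the circumferences of $\Col_v(C_1)$ and $\Col_v(C_2)$ on $\Col_v(X,\omega)$, which are equal because those cylinders are twins on $\cM_v$. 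Hence $\mathrm{circ}(C_1) = \mathrm{circ}(C_2)$ on $(X,\omega)$. Running the same argument with $-i\sigma \in \TwistC$ and the twin pair $\{\Col_{-i\sigma}(C_2), \Col_{-i\sigma}(C_3)\}$ gives $\mathrm{circ}(C_2) = \mathrm{circ}(C_3)$ on $(X,\omega)$, and combining these yields $\mathrm{circ}(C_1) = \mathrm{circ}(C_3)$.

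With (a) and (b) in hand, Corollary~\ref{C:p0} applies directly to $\gamma_1^* - \gamma_3^*$ and produces $p(\gamma_1^* - \gamma_3^*) = 0$, i.e., $\gamma_1 = \gamma_3$ in absolute homology, which is the desired conclusion. The only substantive step is the chain of circumference equalities in (b), and the main point there is the purely topological fact that parallel core curves on a translation surface are homologically disjoint; once that is noted, everything reduces to linear algebra within $T_{(X,\omega)}(\cM)$.
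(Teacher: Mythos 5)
Your proof is correct and follows essentially the same route as the paper's: subtract $\gamma_2^*+\gamma_3^*$ from $\gamma_1^*+\gamma_2^*$ to place $\gamma_1^*-\gamma_3^*$ in $\TwistC$, observe that the two twin relations force $C_1$, $C_2$, $C_3$ to share a circumference, and invoke the second part of Corollary~\ref{C:p0}. The only difference is that you spell out the (correct) reason why circumference equality on the degenerations transfers back to $(X,\omega)$ — namely that twist deformations annihilate core curves of $\bfC$ — a step the paper leaves implicit.
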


\begin{proof}
Note that 
$$\gamma_1^* - \gamma_3^* = (\gamma_1^* + \gamma_2^*) - (\gamma_2^* + \gamma_3^*)\in \TwistC.$$
Since $C_2$ and $C_1$ become twins in a degeneration, they have the same circumference. Since $C_2$ and $C_3$ become twins in a degeneration, they also have the same circumference. So Corollary \ref{C:p0} gives the result.  
\end{proof}

For convenience, we will now assume that $C_1$ and $C_3$ contain vertical cross curves. This can be arranged for example by shearing the whole surface so a cross curve of $C_1$ becomes vertical, and then using a real multiple of the shear $\gamma_2^* + \gamma_3^*$ so that a cross curve of $C_3$ becomes vertical. Hence, if $w=i(\gamma_1^* - \gamma_3^*)$, then both $w$ and $-w$ define cylinder degenerations. Since $\bfC_w=\{C_3\}$ and $\bfC_{-w}=\{C_1\}$  both contain only a single cylinder, these cylinder degenerations are typical. 

We can thus observe that Theorems \ref{T:AAW} and \ref{T:Geminal} imply that both $\cM_{\pm w}$ are quadratic doubles. The idea of the remainder of the proof is to show that the holonomy involutions in these two degenerations are in some sense incompatible. This incompatibility is expected since (the images of) $C_1$ and $C_2$ are exchanged in one of these degenerations, while (the images of) $C_2$ and $C_3$ are exchanged in the other. 

Recall that a cylinder is said to be \emph{half-simple} if one of its boundaries is a single saddle connection and the other consists of two saddle connections of equal length. We will refer to these two boundaries as the \emph{simple} and \emph{half-simple} boundaries of the cylinder respectively. 

\begin{lem}\label{L:HighRankQDCyl}
In a high rank quadratic double, all generic cylinders are simple or half-simple. 
\end{lem}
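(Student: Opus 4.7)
The plan is to analyze a generic cylinder $C$ by projecting to the base quadratic differential and exploiting that the tangent space to $\cM$ at $(X,\omega)$ equals the $-1$-eigenspace $H^1(X,\Sigma;\bC)^{-}$ of the holonomy involution $\tau$, which satisfies $\tau^*\omega=-\omega$. I would first argue that the holonomy involution must exchange $C$ with a distinct twin cylinder $C':=\tau(C)$. If instead $\tau(C)=C$, then $\tau|_C$ would be an involutive isometry of the annulus $C$ with pointwise derivative $-I$, and a direct calculation in local cylinder coordinates shows that any such involution has two fixed points in the interior of $C$; absent $\tau$-fixed marked points, these would have to be ramification points of $\pi\colon (X,\omega)\to(Y,q)$, hence singularities of $\omega$, contradicting that the interior of a cylinder contains none. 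Thus in the main case $C\cup C'$ projects to a single cylinder $C_0$ on the base quadratic differential $(Y,q)$.

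Next I would translate the genericity condition into homology. Requiring each boundary saddle connection $s$ of $C$ to remain parallel to the core curve $\gamma$ under every deformation in $H^1(X,\Sigma;\bC)^{-}$ becomes $(s-c_s\gamma)\in\Ann(H^1(X,\Sigma;\bR)^{-})=H_1(X,\Sigma;\bR)^{+}$, which unpacks as $s^{-}=c_s\gamma^{-}$ in $H_1(X,\Sigma;\bR)^{-}$, where $s^{-}=(s-\tau_*s)/2$, $\gamma^{-}=(\gamma+\gamma_{C'})/2$, and $c_s=\ell(s)/L$ is the length ratio with $L$ the circumference of $C$. Applying the boundary operator to this identity and using that $\gamma^{-}$ is closed forces each boundary saddle connection of $C_0$ on $(Y,q)$ to be either a loop at a single singularity of $q$, or a segment between two ramification points of $\pi$, equivalently two odd-order singularities of $q$.

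The main obstacle, and the place where the high rank hypothesis is used, is upgrading these local constraints to force $C_0$ (and hence $C$) to be simple or half-simple. By Lemma \ref{L:NoCover} the base stratum $\cQ$ is non-hyperelliptic, so beyond the $\bZ/2$ action of $\tau$ there is no additional symmetry available to enforce equal lengths among independent boundary saddle connections. The plan is to observe that the requirement $s^{-}\in\bR\cdot\gamma^{-}$ confines every boundary saddle connection class to a single real line inside the high-dimensional space $H_1^{-}(X,\Sigma;\bR)$; combined with the cyclic relation $\sum_i c_i=1$ on each boundary of $C_0$, a side with three or more boundary saddle connections would impose more independent linear relations than the cylinder structure can enforce in a non-hyperelliptic quadratic double, while a side with exactly two saddle connections is forced by the twin symmetry between $C$ and $C'$ to split into two segments of equal length $L/2$. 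Making the first assertion precise, by exhibiting for each forbidden configuration an explicit deformation inside $H^1(X,\Sigma;\bC)^{-}$ that destroys parallelism of some boundary saddle connection with $\gamma$, is the heart of the argument and the step I expect to be most delicate.
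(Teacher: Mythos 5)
Your first step contains a genuine error: it is not true that a generic cylinder on a quadratic double must be exchanged with a distinct twin by the holonomy involution $\tau$. The case $\tau(C)=C$ really occurs. Concretely, a simple envelope on the base $(Y,q)$ whose folded boundary is a saddle connection joining two simple poles lifts, when the preimages of those poles are unmarked, to a single $\tau$-invariant cylinder on the double cover; the two interior fixed points of $\tau$ that your local computation correctly produces are exactly the preimages of the two poles. These are ramification points of $\pi$, but they are \emph{not} singularities of $\omega$: a cone angle $\pi$ at a simple pole doubles to $2\pi$ upstairs, a regular point. So the implication ``ramification point, hence singularity of $\omega$'' fails precisely in a case that arises, and your contradiction evaporates. (The resulting invariant cylinder happens to be simple, so the lemma is safe, but your argument's opening reduction is not.) Your blanket assumption ``absent $\tau$-fixed marked points'' is also not available, since quadratic doubles as defined in this paper may carry marked points at fixed points of the involution.

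More seriously, the step you yourself flag as ``the heart of the argument'' is missing, and the mechanism you sketch for it does not locate the high rank hypothesis correctly. Non-hyperellipticity of the base stratum (which you extract from Lemma \ref{L:NoCover}) is neither the relevant input nor sufficient: there are non-hyperelliptic strata of quadratic differentials whose generic cylinders are complex in the Masur--Zorich sense, i.e.\ neither simple nor half-simple, so no amount of counting ``independent linear relations'' can succeed without a quantitative hypothesis that excludes them. The paper's proof supplies exactly this: high rank is equivalent to the base stratum having at least $6$ odd-order singularities (Riemann--Hurwitz plus the rank formula for strata of quadratic differentials); by the Masur--Zorich classification, generic cylinders come in five types, and the two complex types have complements that can carry at most $4$ odd-order singularities on their boundaries, so they cannot occur; the three remaining types lift to simple or half-simple cylinders (or twin pairs thereof). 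Without the odd-zero count, or an equivalent structural input, your homological constraints $s^{-}=c_s\gamma^{-}$ do not rule out a boundary with three or more saddle connections.
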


\begin{proof}
The proof follows from the following two straightforward consequences of foundational results. 

\begin{sublem}\label{SL:HighRankZeros}
A quadratic double is high rank if and only if the corresponding stratum of quadratic differentials has at least 6 odd order zeros. 
\end{sublem}

\begin{proof}
This follows from Riemann-Hurwitz formula together with the formula for the rank of a stratum of quadratic differentials recorded in \cite[Lemma 4.2]{ApisaWright}. 
\end{proof} 

In \cite{MZ}, Masur and Zorich showed that the generic cylinders in strata of quadratic differentials other than $\cQ(-1^4)$ are one of five types. These cylinder types are reviewed in  \cite[Section 4.1]{ApisaWright}, where they were named simple cylinders, simple envelopes, half-simple cylinders, complex envelopes, and complex cylinders.

\begin{sublem}\label{SL:NoComplex}
Any stratum of quadratic differentials with at least 6 odd order zeros does not have generic cylinders that are complex cylinders or complex envelopes. 
\end{sublem}
\begin{proof}
This requires the work of Masur-Zorich \cite[Theorems 1 and 2]{MZ}; the exact consequence of their work we need is recalled in \cite[Theorem 4.8 (2)]{ApisaWrightDiamonds}. 

By this result, the complement of a complex envelope is a connected translation surface with boundary, and the complement of a complex cylinder is the union of two connected translation surfaces with boundary.  Therefore, any odd order zeros must be on the boundary of the complex cylinder or complex envelope. This shows that there are at most four odd order zeros in  strata with these types of cylinders. 
\end{proof}

A simple cylinder on a quadratic differential gives rise to  a twin pair of simple cylinders on the holonomy double cover; a half-simple cylinder gives rise to a twin pair of half simple cylinders; and a simple envelope gives rise to either a simple cylinder, or possibly a twin pair of simple or half-simple cylinders if there are marked points. These possibilities are illustrated in \cite[Figure 4.1]{ApisaWrightDiamonds}. Thus the lemma follows from the two sublemmas. 
\end{proof}

\begin{cor}
$C_i$ is either simple or half-simple. 
\end{cor}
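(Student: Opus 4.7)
The plan is to apply Lemma \ref{L:HighRankQDCyl} to the quadratic doubles $\cM_w$ and $\cM_{-w}$ identified above, and then transfer the conclusion back to $(X,\omega)$. Since the degenerations $\Col_{\pm w}$ are rank-preserving (by Lemma \ref{L:good-exists}, because $\bfC$ is involved with rel) and genus does not increase under degeneration, both $\cM_w$ and $\cM_{-w}$ remain high rank. Hence Lemma \ref{L:HighRankQDCyl} yields that every generic cylinder on a surface in $\cM_w$ or $\cM_{-w}$ is simple or half-simple.

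Next I would verify that the persistent cylinders $\Col_w(C_1)$ and $\Col_w(C_2)$ are $\cM_w$-generic, and similarly that $\Col_{-w}(C_2)$ and $\Col_{-w}(C_3)$ are $\cM_{-w}$-generic. This is inherited from the genericity of $\bfC$ on $(X,\omega)\in \cM$, since in each case only a single cylinder from $\bfC$ is collapsed and the remaining ones persist, perhaps after an arbitrarily small perturbation of $(X,\omega)$ to ensure that no new parallel cylinders appear in the degeneration. Applying the previous paragraph then gives that each of these four images is either simple or half-simple in the corresponding degeneration. This already covers $C_2$ (via either degeneration), $C_1$ (via $\Col_w$), and $C_3$ (via $\Col_{-w}$).

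The key step is to transfer the simple/half-simple property back from $\Col_{\pm w}(C_i)$ to $C_i$ on $(X,\omega)$. Since $\Col_w$ is a PL homeomorphism away from $\overline{C_3}$, any boundary saddle connection of $C_1$ or $C_2$ disjoint from $\overline{C_3}$ passes through unchanged; a boundary saddle connection shared with $C_3$ survives because the collapse acts only in the vertical direction while the saddle connection is horizontal. The only possible discrepancy in the number of boundary saddle connections would arise if two distinct boundary saddle connections of some $C_i$ were merged under $\Col_w$, which requires the zero separating them to be simplified to a regular point in $\Col_w(X,\omega)$. Such a regular point would be a free marked point of the degeneration, and Lemma \ref{L:FreeMarkedPointAfterDegenerating} excludes this possibility since $(X,\omega)$ has no marked points and the degeneration is typical and rank-preserving. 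The analogous argument applies to $\Col_{-w}$.

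The main obstacle is this last bookkeeping step showing that no merging of boundary saddle connections occurs in either $\Col_w$ or $\Col_{-w}$; it depends crucially on coupling the local description of a cylinder collapse with the absence of free marked points guaranteed by Lemma \ref{L:FreeMarkedPointAfterDegenerating}.
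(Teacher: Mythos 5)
Your proposal follows the same route as the paper: apply Lemma \ref{L:HighRankQDCyl} to the high rank quadratic doubles $\cM_{\pm w}$ and then transfer the simple/half-simple conclusion back to $(X,\omega)$ by comparing boundary saddle connection counts. The one step where you diverge is the transfer. The paper disposes of it in a line: under a cylinder collapse, boundary saddle connections can only split, never merge, so $\Col_{\pm w}(C_i)$ has at least as many saddle connections on each boundary component as $C_i$. You instead argue against merging by asserting that a zero simplifying to a regular point would yield a \emph{free} marked point, which Lemma \ref{L:FreeMarkedPointAfterDegenerating} forbids. That assertion is not justified: in a quadratic double, marked points may be fixed by the holonomy involution or occur in involution-exchanged pairs, and such marked points are not free, so Lemma \ref{L:FreeMarkedPointAfterDegenerating} does not exclude them. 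Fortunately the conclusion you need does not depend on this: by the construction of $\Col_v$ in Lemma \ref{L:Converge}, any non-singular filled-in point is retained as a marked point of the limit, so the two boundary saddle connections it separates remain distinct on $\Col_{\pm w}(C_i)$, and merging simply cannot occur. One further small remark: the genericity of the surviving cylinders in $\cM_{\pm w}$ is obtained in the paper from Lemma \ref{L:GenericallyParallel} rather than by a further perturbation of $(X,\omega)$; at this point in the argument an additional perturbation is risky, since the configuration of $C_1, C_2, C_3$ (vertical cross curves, equal circumferences, the specific twin relations) has been carefully arranged and could be destroyed. With these repairs your argument is correct and essentially the paper's.
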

\begin{proof}
We show the statement for $i=1$; the $i=2, 3$ cases are similar. 

By Lemma \ref{L:GenericallyParallel}, $\Col_w(C_1)$ is a generic cylinder. Hence Lemma \ref{L:HighRankQDCyl} gives that $\Col_w(C_1)$ is simple or half-simple. It follows that $C_1$ is simple or half simple, since $\Col_w(C_1)$  has at least as many saddle connections on each boundary component as $C_1$. 
\end{proof}

Perturb once more so that $(X, \omega)$ is horizontally periodic, while preserving the property that $\bfC$ is generic. Assume too after perturbation that any two horizontal cylinders that have identical heights in fact have generically identical heights. 

Assume without loss of generality that the bottom boundary of $C_2$ is simple. Let $D$ be the horizontal cylinder below $C_2$, so the single boundary saddle connection that is the bottom of $C_2$ appears in the top of $D$. 

Notice that since $(X, \omega)$ has no marked points, $D$ cannot have the same length as $C_2$ and hence $D \notin \{C_1, C_3\}$. In particular, $\Col_{\pm w}(D)$ remains a cylinder, and the single saddle connection that is the bottom of $\Col_{\pm w}(C_2)$ appears in the top of  $\Col_{\pm w}(D)$.

Let $J_{\pm w}$ denote the holonomy involution on $\Col_{\pm w}(X, \omega)$. Since the deformations specified by  $\pm w$ fix the complement of $\{C_1, C_3\}$ there are cylinders $D_{\pm}$ such that $\Col_{\pm w}(D_{\pm}) = J_{\pm w}(D)$. 

\begin{lem}
$D_+ = D_-$ and this cylinder is glued to the top boundary of $C_1$ and the top boundary of $C_3$.
\end{lem}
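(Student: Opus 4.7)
The plan is to exploit the holonomy involutions $J_{\pm w}$ on the two quadratic double degenerations, proceeding in three steps.

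For the first step, I would show that $J_w$ exchanges $\Col_w(C_1)$ with $\Col_w(C_2)$, and symmetrically $J_{-w}$ exchanges $\Col_{-w}(C_2)$ with $\Col_{-w}(C_3)$. The key observation is that $\gamma_1^* + \gamma_2^* \in T_{(X,\omega)}\cM$ also belongs to $T_{\Col_w(X,\omega)}\cM_w = \Ann(V_w) \cap T_{(X,\omega)}\cM$: by Lemma \ref{L:VinCv}, the vanishing cycles $V_w$ are supported in $\overline{C_3}$, and any cycle supported there has intersection number zero with the parallel core curves $\gamma_1, \gamma_2$. Since $\cM_w$ is a geminal high-rank quadratic double (by Theorem \ref{T:Geminal}) and the standard shear of $\{\Col_w(C_1), \Col_w(C_2)\}$ lies in the tangent space while the individual shears $\gamma_i^*$ do not, the pair must form a twin pair and hence is exchanged by $J_w$.

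For the second step, using that $J_{\pm w}^*\omega = -\omega$ (so that $J_w$ reverses both horizontal and vertical directions), the swap $\Col_w(C_1) \leftrightarrow \Col_w(C_2)$ sends the bottom boundary of $\Col_w(C_2)$ to the top boundary of $\Col_w(C_1)$. Since the former is the single saddle connection $\Col_w(s)$, the latter is a single saddle connection as well, and the cylinder immediately above it is $J_w(\Col_w(D)) = \Col_w(D_+)$. Because $\Col_w$ is an isometry on the complement of $\overline{C_3}$ and because $C_1, D_+$ are disjoint from $\overline{C_3}$, this lifts to $(X,\omega)$: the top of $C_1$ is a single saddle connection $t_1$ and $D_+$ is the cylinder adjacent to $C_1$ across $t_1$. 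A symmetric argument using $J_{-w}$ yields that $D_-$ sits above $C_3$, adjacent through a single saddle connection $t_3$.

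The main obstacle is the third step: showing $D_+ = D_-$ and that its bottom boundary contains both $t_1$ and $t_3$. The plan is to use the homology relation $[\gamma_1] = [\gamma_3]$ from Lemma \ref{L:hom} together with the rank-preserving Cylinder Degeneration Dichotomy (Theorem \ref{T:GraphsFull}). Since $\gamma_1 \cup \gamma_3$ separates $(X,\omega)$ into two components, and the tops of $C_1$ and $C_3$ face the same component, the cylinders $D_\pm$ both lie in this common component. Applying Theorem \ref{T:GraphsFull} to the $-w$-degeneration (which is rank-preserving because $w$ is rel, so $p(w)=0$) gives that the saddle-connection graph $\Col_{-w}(C_1)$ is strongly connected. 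Combined with this strong connectivity, the homology relation, and the involution $J_{-w}$ identifying the side of $D$ with the side above $C_3$, I would argue that the two cylinders $\Col_{-w}(D_+)$ and $\Col_{-w}(D_-)$ are forced to coincide in $\Col_{-w}(X,\omega)$, and hence $D_+ = D_-$ in $(X,\omega)$ with bottom boundary containing both $t_1$ and $t_3$.
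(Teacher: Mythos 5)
Your first two steps are sound and essentially reproduce the paper's argument; indeed you make explicit the twin identifications $J_w(\Col_w(C_2)) = \Col_w(C_1)$ and $J_{-w}(\Col_{-w}(C_2)) = \Col_{-w}(C_3)$ that the paper's proof uses without comment. The problem is the third step, where the central claim $D_+ = D_-$ is left as something you ``would argue'': no mechanism is offered for why strong connectivity of $\Col_{-w}(C_1)$ together with the homology relation forces $\Col_{-w}(D_+)$ and $\Col_{-w}(D_-)$ to coincide. Worse, your geometric premise is backwards. Since $\gamma_1$ and $\gamma_3$ are homologous with consistent orientations, the subsurface they cobound contains the top of one cylinder and the \emph{bottom} of the other, so the tops of $C_1$ and $C_3$ face \emph{opposite} components of $(X,\omega)\setminus(\gamma_1\cup\gamma_3)$, not the same one. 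That is exactly why the lemma produces the contradiction that finishes the proof of Proposition \ref{P:RelMain}: a single connected cylinder disjoint from $\gamma_1\cup\gamma_3$ cannot be glued to the top of both $C_1$ and $C_3$. So Lemma \ref{L:hom} is what makes the configuration $D_+=D_-$ absurd; it cannot be the tool that derives $D_+=D_-$.

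The identity $D_+=D_-$ has to come from elsewhere, and the paper's argument is short and of a different flavor: if $D_+\neq D_-$, then $\Col_w(D)$, $\Col_w(D_+)$ and $\Col_w(D_-)$ would be three pairwise generically isometric cylinders, since each of $D_\pm$ is the holonomy-involution image of $D$ on one of the two degenerations and the deformations $\pm w$ fix the complement of $C_1\cup C_3$, so generic isometry with $D$ transfers between the two boundary components. But a quadratic double cannot carry three pairwise generically isometric cylinders: the involution pairs $\Col_w(D)$ with $\Col_w(D_+)$, and that pair can be deformed while fixing the rest of the surface, breaking the isometry with the third. This counting-via-the-involution argument is the ingredient missing from your proposal.
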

\begin{proof}
$D_+ = D_-$ since otherwise $\Col_w(D_+)$, $\Col_w(D_-)$, and $\Col_w(D)$ would be generically isometric, contradicting the fact that in a quadratic double it is possible to deform a cylinder and its image under the holonomy involution, without changing the rest of the surface. 

Recall that $\bfC_w=\{C_3\}$. Since the single saddle connection which is the bottom boundary of $C_2$ is glued to the top boundary of $D$, the corresponding statement is true for $\Col_w(C_2)$ and $\Col_w(D)$. Hence the top of $J_w(\Col_w(C_2))=\Col_w(C_1)$ consists of a single saddle connection, which is glued to the bottom of $J_w(\Col_w(D)) =\Col_w(D_+)$.

The top boundary of $C_1$ cannot border $C_2$ or $C_3$, since it consists of a single saddle connection, and this would result in a marked point. Hence, keeping in mind that $\bfC_w=\{C_3\}$, the fact that the top of $\Col_w(C_1)$ is contained in the bottom of $\Col_w(D_+)$ implies that the top of $C_1$ is contained in the bottom of $D_+$. 

The same argument with $-w$ instead of $w$ shows that the top boundary of $C_1$ is contained in the bottom boundary of $D_-$.
\end{proof}

We now have a contradiction since $D_+ = D_-$ appears on the top boundary of both $C_1$ and $C_3$, and yet these cylinders have homologous core curves by Lemma \ref{L:hom}.

\bibliography{mybib}{}
\bibliographystyle{amsalpha}
\end{document}